\newtheorem{thm}{Theorem}[section]
\newtheorem{cor}[thm]{Corollary}
\newtheorem{lemma}[thm]{Lemma}
\newtheorem{prop}[thm]{Proposition}
\newtheorem{defn}[thm]{Definition}
\newtheorem{defi}[thm]{Definition} 
\theoremstyle{remark}
\theoremstyle{definition}
\newtheorem{rem}[thm]{Remark} 
\newtheorem{exa}[thm]{Example}
\newtheorem{ex}[thm]{Example} 
\numberwithin{equation}{section}
\def\beq{\begin{equation}}
\def\eeq{\end{equation}}
\def\crash#1{}
\def\N{{\mathbb N}}
\def\Z{{\mathbb Z}}
\def\C{{\mathbb C}}
\def\I{{\mathbb I}}
\def\F{{\mathbb F}}
\def\l{\left}
\def\r{\right}
\def\ie{i.e.~}
\def\cE{{\mathcal E}}
\def\cM{{\mathcal M}}
\def\veps{\varepsilon}
\def\a{\alpha}
\def\de{\delta}
\def\s{\sigma}
\def\V{\mathbb{V}}
\def\A{\mathbb{A}}
\def\Hom{\operatorname{Hom}}
\def\Gl{\operatorname{Gl}}
\def\Sl{\operatorname{Sl}}
\def\G{\mathcal{G}}
\def\Gal{\operatorname{Gal}}
\def\Aut{\operatorname{Aut}}
\def\ida{\mathfrak{a}}
\def\idb{\mathfrak{b}}
\def\Alg{\operatorname{Alg}}
\def\spec{{\operatorname{Spec}}}
\def\q{\mathfrak{q}}
\def\m{\mathfrak{m}}
\def\quot{\operatorname{Quot}}
\def\sdim{\sigma\text{-}\dim}
\def\strdeg{\sigma\text{-}\operatorname{trdeg}}
\def\Gm{\bold{G}_m}
\def\pa{\delta}
\def\<{\langle}
\def\>{\rangle}
\def\ds{{\delta\sigma}}
\def\sgal{\sigma\text{-}\operatorname{Gal}}
\author{Lucia Di Vizio, Charlotte Hardouin and Michael Wibmer}
\title{Difference Galois theory of linear differential equations
\footnotetext{Lucia Di Vizio,
Laboratoire de Math\'ematiques UMR8100, UVSQ,
45 avenue des \'Etats-Unis
78035 Versailles cedex, France.
{e-mail: {\tt divizio@math.cnrs.fr}.}}
\footnotetext{Charlotte Hardouin, {Institut de Math\'{e}matiques de Toulouse,}
{118 route de Narbonne,
31062 Toulouse Cedex 9, France.}
{e-mail: {\tt hardouin@math.ups-tlse.fr}}.}
\footnotetext{Michael Wibmer, {RWTH Aachen}, {Templergraben 64,
52062 Aachen, Germany.} {e-mail: {\tt michael.wibmer@matha.rwth-aachen.de}.}}
\footnotetext{Work supported by project ANR-2010-JCJC-0105 01 qDIFF.}}
\begin{document}
\maketitle
\bibliographystyle{alpha}


\begin{abstract}
We develop a Galois theory for linear differential equations equipped with the action of an endomorphism. This theory is aimed at studying the difference algebraic relations among the solutions of a linear differential equation. The Galois groups here are linear difference algebraic groups, i.e., matrix groups defined by algebraic difference equations.
\end{abstract}

\renewcommand{\labelenumi}{{\rm (\roman{enumi})}}
\renewcommand{\labelenumii}{{\rm (\alph{enumii})}}

\maxtocdepth{section}


\section*{Introduction}
Linear differential equations with coefficients in a differential field $(K,\de)$ and their
behavior under the action of an endomorphism $\s$ of $K$ are a frequent object of study.
Let us start with some classical examples.
For instance, one can consider the field $K=\C(\a,x)$ of rational functions in the variables $\a,x$ and equip $K$ with the derivation $\de=\frac{d}{dx}$ and
the endomorphism $\s:f(\a,x)\mapsto f(\a+1,x)$.
The Bessel function $J_\alpha(x)$, which solves Bessel's differential equation
$$x^2\de^2(y)+x\de(y)+(x^2-\alpha^2)y=0$$
satisfies the linear difference equation
$$xJ_{\alpha+2}(x)-2(\alpha+1)J_{\alpha+1}(x)+xJ_{\a}(x)=0.$$
Contiguity relations for hypergeometric series provide a large class of examples in a similar spirit. (See for instance \cite[Chapter XIV]{WW}.)

Another occasion, where a linear differential equation comes naturally equipped with the action of an endomorphism arises in the $p$-adic analysis of linear differential equations, when considering Frobenius lifts. For example,
let $p$ be a prime number and let us consider the field $\C_p$ with
its norm $|~|$, such that $|p|=p^{-1}$, and an element $\pi\in\C_p$ verifying $\pi^{p-1}=-p$.
Following \cite[Chapter II, \S 6]{DGS} the series
$\theta(x)\in\C_p[[x]]$, defined by $\theta(x)=\exp(\pi(x^p-x))$,
has a radius of convergence bigger than $1$. 
Therefore it belongs to the field $\cE^\dag_{\C_p}$, consisting of all series $\sum_{n\in\Z}a_nx^n$ with $a_n\in\C_p$ such that
\begin{itemize}
 \item $\exists\ \veps>0$ such that $\forall\ \rho\in]1,1+\veps[$ we have
$\lim_{n\to\pm\infty}|a_n|\rho^n=0$ and
\item $\sup_n|a_n|$ is bounded.
\end{itemize}
One can endow $\cE^\dag_{\C_p}$ with an endomorphism $\s\colon\sum_{n\in\Z}a_nx^n\mapsto \sum_{n\in\Z}a_nx^{pn}$. (For the sake of simplicity we assume here that $\s$ is $\C_p$-linear.)
The solution $\exp(\pi x)$ of the equation $\de(y)=\pi y$, where $\de=\frac{d}{dx}$, does not
belong to $\cE^\dag_{\C_p}$, since it has radius of convergence $1$. Moreover, $\exp(\pi x)$ is a solution of an order one linear difference
equation with coefficients in $\cE^\dag_{\C_p}$, namely:
$$
\s(y)=\theta(x) y.
$$
So, here is another very classical situation in which
one considers solutions of a linear differential equation and finds difference algebraic relations among them. (Coincidentally, in the above two examples the difference algebraic relations are linear.)

Understanding the relations among solutions of an equation is a question which is very much in the spirit of Galois theory.
In this article we introduce a Galois theory which is able to handle linear differential equations in situations like the ones
described above. More precisely, we develop a Galois theory which deals with the difference algebraic relations among solutions of linear differential equations. The Galois groups here are linear difference algebraic groups, i.e., matrix groups defined by algebraic difference equations.

\medskip

Galois theories for various types of equations have become available over the years. The classical Galois theory of linear differential (or difference) equations, also known as Picard-Vessiot theory, deals with the algebraic relations among the solutions of linear differential (or difference) equation. See \cite{vdPutSingerDifferential} (respectively \cite{vdPutSingerDifference}) for an introduction. The Galois groups in these theories are linear algebraic groups.

Differential algebraic relations among solutions of linear differential equations are addressed by the Galois theory in \cite{cassisinger}, while \cite{HardouinSinger} also addresses differential algebraic relations among solutions of linear difference equations. In these theories the Galois groups are linear differential algebraic groups (\cite{Cassidy:differentialalgebraicgroups}). The setting in \cite{Landesman:GeneralizedDifferentialGaloisTheory} encompasses the setting in \cite{cassisinger}, in a similar vein as Kolchin's strongly normal Galois theory encompasses Picard-Vessiot theory. In \cite{Landesman:GeneralizedDifferentialGaloisTheory} the Galois groups are differential algebraic groups, but they need not be linear.

A first step towards a Galois theory that can handle difference algebraic relations among solutions of linear difference equations has been made in \cite{AntieauOvchinnikovTrushin:PeriodicParameters}. The endomorphisms in \cite{AntieauOvchinnikovTrushin:PeriodicParameters} however, are required to be of finite order, so that classical examples, like the shift $\alpha\mapsto \alpha+1$ or the Frobenius operator considered in the $p$-adic example above, remain out of reach. An approach to difference algebraic relations among solutions of linear difference equations, in the spirit of this article can be found in \cite{OvchinnikovWibmer:SGaloisTheoryOfLinearDifferenceEquations}.


The above described theories have been applied in various areas, for example in questions of integrability and isomonodromy (\cite{MitschiSinger:MonodromyGroupsOfParameterizedLinearDifferentialEquationsWithRegularSingularities}, \cite{MitchSinger:ProjectiveIsomonodromyAndGaloisGroups}, \cite{Dreyfus:density}) or in combinatorial problems (\cite{ChenSinger:ResiduesAndTelescopersForBivariateRationalFunctions}).
A prototypical application is to show that certain special functions are independent in a strong sense. For example, \cite{HardouinSinger} provides a Galoisian proof of H\"{o}lders theorem, stating that the Gamma function, which satisfies the linear difference equation $\Gamma(x+1)=x\Gamma(x)$, does not satisfy an algebraic differential equation over $\C(x)$. We refer to \cite{DiVizio:ApprocheGaloisienneDeLaTranscendanceDifferentielle} for an overview of the Galoisian approach to differential transcendence of special functions.

The Galois theories in \cite{cassisinger}, \cite{HardouinSinger} and \cite{Landesman:GeneralizedDifferentialGaloisTheory} study differential algebraic relations with respect to a finite number of commuting derivations, i.e., partial differential equations are considered. In this article we only study difference algebraic relations with respect to one endomorphism, i.e., only ordinary difference algebraic equations are considered. Certain aspects of our theory, e.g. the Galois correspondence, will generalize to the partial case in a straight forward manner. However, other aspects, e.g. existence and uniqueness of Picard-Vessiot extensions appear to be more challenging.

Given the well-known analogy between difference and differential algebra, it might at first sight seem a rather straight forward matter to pass from differential algebraic relations to difference algebraic relations. However, a closer look reveals quite the contrary.
Indeed, some of our results actually differ from the statements anticipated by naive analogy. For example, a $\s$-Picard-Vessiot extension (a suitable minimal field containing all solutions and their transforms under $\s$) for a given linear differential equation is not unique, it is only unique up to powers of $\s$.

There is a certain class of results in differential algebra whose difference analogs simply fail. For example, a difference ideal which is maximal in the set of all proper difference ideals of a given difference ring, need not be prime. This jeopardizes the classical construction of a Picard-Vessiot extension by taking the quotient by some suitable ideal in the universal solution ring. So genuinely new ideas are needed. Moreover, certain results, well-known in differential algebra, have not been available in the literature for the difference case. For example, Kolchin's theory of constrained extensions (\cite{Kolchin:constrainedExtensions}) plays a crucial role in the developments in \cite{cassisinger} and \cite{HardouinSinger}. A difference analog has been made available only recently by the third author in \cite{Wibmer:Chevalley}.

\medskip

In the classical Picard-Vessiot theory of linear differential equations, it is usually assumed that the field $k=\{a\in K|\ \de(a)=0\}$ of all constant elements in the base differential field $(K,\de)$ is algebraically closed. The reason for this is twofold:
\begin{enumerate}
\item It is needed to establish the existence and uniqueness of a Picard-Vessiot extensions $L|K$ for a given differential
equation $\de(y)=Ay$, $A\in K^{n\times n}$.
\item Identifying a linear algebraic group $G$ over $k$ with its $k$-rational points $G(k)$, we can make sense of the statement that
the group $\Aut^\de(L|K)$ of differential automorphism of $L|K$ is a linear algebraic group over $k$, by identifying $\Aut^\de(L|K)$ with a subgroup of $\Gl_n(k)$ via the choice of a fundamental solution matrix for $\de(y)=Ay$ in $L$.
\end{enumerate}

If $k$ is not algebraically closed the existence and uniqueness can not be guaranteed in all generality but there are relative versions: After a finite algebraic extension of the constants, there always exists a Picard-Vessiot extension. Similarly, two Picard-Vessiot extensions become isomorphic after a finite base extension of the constants.

Concerning (ii), $\Aut^\de(L|K)$ can still be identified with the $k$-rational points $G(k)$ of a linear algebraic group $G$, but as $k$ is not algebraically closed, $G$ may contain significantly more information than $G(k)$. The Galois correspondence no longer holds in the naive sense. For example, not every element of $L$, which is fixed by $\Aut^\de(L|K)$, must lie in $K$. It is well known that this defect can be avoided by employing an appropriate functorial formalism (\cite[Appendix]{Takeuchi:hopfalgebraicapproach}, \cite{dyckdesc}, \cite{Deligne:categoriestannakien}). All the expected results can be restored by considering not only the action of $G(k)$ on the solutions of $\de(y)=Ay$, but also the action of $G(S)$, where $S$ is an arbitrary $k$-algebra.

If we are interested not only in algebraic relations among the solutions of $\de(y)=Ay$, but in differential algebraic relations or difference algebraic relations, then, instead of assuming that the $\de$-constants $k$ are algebraically closed, it is natural to assume that $k$ is a differentially closed or difference closed field. While the algebraically closed field of complex numbers comes up in the theory of linear differential equations rather naturally, differentially closed fields or difference closed fields are not the kind of objects a mathematician will encounter on a daily basis. We rather see them as a tool to make certain things work: It is sometimes convenient to work inside these large fields so that you do not need to pass to extensions when performing certain constructions.

However, for the applications of the theory, the assumption that the $\de$-constants are differentially or difference closed is somewhat of a hindrance: One has to first extend the field of constants, then apply the theory, and finally find some usually rather ad-hoc descent arguments to get back to the situation originally of interest.

In this article we completely avoid the assumption that the $\de$-constants are difference closed. We provide relative versions of the existence and uniqueness theorem and we employ a functorial formalism for difference algebraic groups, allowing us to establish the Galois correspondence and related results in maximal generality. We think that, based on the approach of this article, it will be a straight forward matter to also remove the assumption of differentially closed constants from \cite{cassisinger} and \cite{HardouinSinger}.

It is well-known that a functorial-schematic approach to algebraic groups has many benefits and introduces genuinely new phenomena, which are not visible over the algebraic closure. For example, over a field $k$ of characteristic $p>0$, the center of $\Sl_{p}$ is $\mu_p$, the group scheme of $p$-th roots of unity, whereas the center of $\Sl_p(\overline{k})$ is trivial. A similar phenomenon occurs for difference algebraic groups, even in characteristic zero, since ``difference nilpotence'' is not restricted to positive characteristic. If $G$ is a difference algebraic group over a difference field $k$, then, even if $k$ is difference closed, $G$ might contain significantly more information than $G(k)$. For example,
let $G_1,G_2$ denote the difference algebraic subgroups of $\Gl_{1,k}$ given by
$$G_1(S)=\{g\in S^\times|\ g^2=1\}\leq\Gl_1(S)$$ and
$$G_2(S)=\{g\in S^\times|\ g^2=1,\ \s(g)=g\}\leq\Gl_1(S).$$
Here $S$ is any difference algebra over $k$, i.e. a $k$-algebra, equipped with an endomorphism $\s\colon S\to S$ which extends $\s\colon k\to k$.
Then $G_1(k)=\{1,-1\}=G_2(k)$, but of course $G_1\neq G_2$. Moreover, if the $\s$-Galois group of $\de(y)=ay$ is $G_2$, then
$$g\left(\frac{\s(y)}{y}\right)=\frac{\s(gy)}{gy}=\frac{\s(y)}{y}$$ for every $g\in G_2(S)$ and every $k$-$\s$-algebra $S$. So, by the Galois correspondence, $\frac{\s(y)}{y}$ lies in the base field. In other words, $y$ satisfies a difference equation of the form $\s(y)=by$. This simple and important difference algebraic relation, expressing the $\s$-integrability of $\de(y)=ay$, is not detected by $G_2(k)$. As illustrated in \cite{articletwo}, an equation $\de(y)=ay$ with $\s$-Galois group $G_1$ is not $\s$-integrable.
So we might loose a lot of valuable information about the differential equation if we replace its Galois group $G$ by $G(k)$. This is another reason why, in our opinion, a functorial approach is indispensable. However, in Section \ref{subsec:perfsep}, we explain in all detail what the outcome will be if one really wants to insist to work with $G(k)$ instead of $G$.

The theory of difference algebraic groups is still in its infancy. In the context of groups definable in ACFA, certain groups defined by algebraic difference equations have played a quite crucial role in some of the recent applications of model theory to number theory. (See the appendix for references.) However, there is no coverage of foundational results concerning difference algebraic groups in the literature which fits our needs.
We have therefore collected certain basic aspects of the theory of difference algebraic groups in an appendix. This appendix will also be used in \cite{OvchinnikovWibmer:SGaloisTheoryOfLinearDifferenceEquations}, but in view of potential applications of difference algebraic groups beyond $\s$-Galois theory, we hope that this appendix will serve other purposes as well.

\medskip

Our main motivation to initialize this Galois theory was the creation of a versatile tool for the systematic study of the difference algebraic relations among the solutions of a linear differential equation. The application of our theory to this problem follows the usual paradigm: Since the difference algebraic relations among the solutions of a linear differential equation are governed by a difference algebraic group, they must follow a rather restricted pattern. Using structure theorems for difference algebraic groups one is often able to elucidate this pattern. In Section \ref{sec: The sGalois group of a linear differential equation}, we show that the Zariski closure of the Galois group (in the sense of our theory) of a linear differential system $\de(y)=Ay$ agrees with the Galois group $\G$ of $\de(y)=Ay$ in the sense of classical Picard-Vessiot theory. Thus, if we can classify the Zariski dense difference algebraic subgroups of $\G$, we obtain, via our Galois theory, a classification of the possible difference algebraic relations among the solutions of $\de(y)=Ay$. Such classifications are available for tori $\bold{G}_m^n$ (Lemma \ref{lemma:classgm}), vector groups $\bold{G}_a^n$ and the semidirect product $\bold{G}_a\rtimes\bold{G}_m$. Thanks to \cite{ChatHrusPet}, such a classification is also available for almost simple algebraic groups. The applications of these structure theorems for difference algebraic groups to the study of difference algebraic relations among the solutions of a linear differential equation will be presented in \cite{articletwo}.
Exemplarily, let us state here a result which corresponds to the group $\bold{G}_m$ and applies to the field $\C(x)$ of rational functions.

\begin{cor}[{\cite[Cor. 3.11]{articletwo}}]
Let $L$ be a field extension of $\C(x)$ eqipped with a derivation $\de$ and an endomorphism $\s$ such that $\de\s=\s\de$ and $\{a\in L|\ \de(a)=0\}=\C$. Assume that the restriction of $\de$ to $\C(x)$ equals $\frac{d}{dx}$ and that $\s(f(x))=f(x+1)$ for $f\in\C(x)$. If $z\in L$ satsfies $\de(z)=a z$ with $a\in\C(x)^\times$, then $z$ is transformally dependent over $\C(x)$, i.e., $z,\s(z),\s^2(z),\ldots$ are algebraically dependent over $\C(x)$, if and only if
there exist $P\in \C[x]$, $f \in \C(x)^\times$ and $N\in \Z^\times$ such that $a=P + \frac{1}{N}\frac{\de(f)}{f}$.
\end{cor}

In Theorem 5.11 of \cite{articletwo} we also show how the discrete integrability of a linear differential equation can be characterized through our new Galois group. Combining this with the structure of the Zariksi dense difference algebraic subgroups of almost simple algebraic groups yields a method to show that certain special function are independent in the sense of difference algebra. For example:

\begin{cor}[{\cite[Cor. 6.10]{articletwo}}]
Let $A(x)$ and $B(x)$ be two $\C$-linearly independent solutions of Airy's equation $\de^2(y)-xy=0$. Then $A(x),B(x)$ and $\de(B)(x)$ are transformally independent, i.e., the functions $A(x),B(x), \de(B)(x),A(x+1), B(x+1), \de(B)(x+1), A(x+2),\ldots$ are algebraically independent over $\C(x)$.
\end{cor}


\medskip

We now describe the content of the article in more detail. In Section \ref{sec: sPicard-Vessiot extensions and sPicard-Vessiot rings} we introduce $\s$-Picard-Vessiot extensions and $\s$-Picard-Vessiot rings. These are the places where the solutions to our linear differential equations live. The basic questions of existence and uniqueness of a $\s$-Picard-Vessiot extension for a given linear differential equation have already been addressed by the third author in \cite{Wibmer:Chevalley}. So, concerning these questions, we largely only recall the results from \cite{Wibmer:Chevalley}.

In Section \ref{sec: The sGalois group of a linear differential equation}, we introduce and study the $\s$-Galois group $G$ of a $\s$-Picard-Vessiot extension $L|K$. It is a difference algebraic group over the difference field $k$ of $\de$-constants of the base $\ds$-field $K$. Roughly speaking, the $\s$-Galois group consists of all automorphisms of the solutions which respect $\de$ and $\s$.
The choice of a fundamental solution matrix in $L$ determines an embedding $G\hookrightarrow \Gl_{n,k}$ of difference algebraic groups.
We also compute the $\s$-Galois group in some simple and classical examples. Concerning applications, the most important result here is that the difference transcendence degree of the $\s$-Picard-Vessiot extension agrees with the difference dimension of the $\s$-Galois group. As it is also illustrated in \cite{articletwo}, this allows one to reduce questions of difference transcendence to questions about difference algebraic groups.

In Section \ref{sec: The SGalois correspondence}, we establish the analogs of the first and second fundamental theorem of Galois theory. Here we employ some ideas of the Hopf-algebraic approach to Picard-Vessiot theory (\cite{AmanoMasuokaTakeuchi:HopfPVtheory}).

In Section \ref{sec:sseparability}, we present some refinements of our $\s$-Galois correspondence. We show how certain properties of the $\s$-Galois group are reflected by properties of the $\s$-Picard-Vessiot extension. Most notably, this concerns the property of the $\s$-Galois group to be perfectly $\s$-reduced. Perfectly $\s$-reduced $\s$-schemes correspond to what goes under the name ``difference variety'' in the classical literature \cite{Cohn:difference} and \cite{Levin}.

In the first few sections of the appendix we have collected some basic results pertaining to the geometry of difference equations. Then we present some foundational aspects of the theory of difference algebraic groups in a way that is suitable for the main text.

\medskip

We are grateful to Phyllis Cassidy, Zo\'{e} Chatzidakis, Shaoshi Chen, Moshe Kamensky, Akira Masuoka, Alexey Ovchinnikov and Michael Singer for helpful comments.
We would also like to acknowledge the support of CIRM, where part of this work was conducted.

\section{$\s$-Picard-Vessiot extensions and $\s$-Picard-Vessiot rings} \label{sec: sPicard-Vessiot extensions and sPicard-Vessiot rings}

In this section, we introduce the notions of $\s$-Picard-Vessiot extension and $\s$-Picard-Vessiot ring for a linear differential equation. We establish some first properties of these minimal solution fields, respectively rings and show that the existence of a $\s$-Picard-Vessiot extension can be guaranteed under very mild restrictions. We also show that a $\s$-Picard-Vessiot extension for a given differential equation is essentially unique.

Before really getting started, let us agree on some conventions: All rings are commutative with identity and contain the field of rational numbers. In particular all fields are of characteristic zero. A differential ring (or $\de$-ring for short) is a ring $R$ together with a derivation $\de:R\rightarrow R$. A difference ring (or $\s$-ring for short) is a ring $R$ together with a ring endomorphism $\s:R\rightarrow R$. We do not assume that $\s$ is an automorphism or injective. A $\s$-ring with $\s$ injective is called $\s$-reduced. If $\s$ is an automorphism the $\s$-ring is called inversive.
The $\de$-constants are $R^\de=\{r\in R| \ \de(r)=0\}$ and the $\s$-constants are $R^\s=\{r\in R|\ \s(r)=r\}$.

The basic algebraic concept that facilitates the study of difference algebraic relations among solutions of differential equations is the notion of a $\ds$-ring. A $\ds$-ring is a ring $R$, that is simultaneously a $\de$ and a $\s$-ring such that for some unit $\hslash\in R^\de$
\beq \label{eq:com} \de(\s(r)) = \hslash\s(\de(r)) \eeq
for all $r\in R$. If $\hslash=1$, then $\s$ and $\de$ commute. The element $\hslash$ is understood to be part of the data of a $\ds$-ring. So a morphism $\psi:R\rightarrow R'$ of $\ds$-rings is a morphism of rings such that $\psi\s=\s'\psi$, $\psi\de=\de'\psi$ and $\psi(\hslash)=\hslash'$. The reason for not simply assuming $\hslash=1$ is that this factor appears in some examples of interest. See Example \ref{exa:dsfields} below. Note that condition (\ref{eq:com}) implies that $R^\de$ is a $\s$-ring.

We refer the reader to Section \ref{sec:Someterminoplogyfromdifferencealgebra} in the appendix for an exposition of some basic notions in difference algebra. We largely use standard notations of difference and differential algebra as can be found in \cite{Cohn:difference}, \cite{Levin} and \cite{Kolchin:differentialalgebraandalgebraicgroups}.
For the convenience  of the reader we recall the basic conventions: Algebraic attributes always refer to the underlying ring. For example a $\ds$-field is a $\ds$-ring whose underlying ring is a field. By a $K$-$\ds$-algebra $R$ over a $\ds$-field $K$ one means a $K$-algebra $R$ that has the structure of a $\ds$-ring such that $K\rightarrow R$ is a morphism of $\ds$-rings. Similarly for $\de$ or $\s$ instead of $\ds$.
An extension of $\ds$-fields is an extension of fields such that the inclusion map is a morphism of $\ds$-rings.
If $R$ is a $K$-$\s$-algebra over a $\s$-field $K$ and $B$ a subset of $R$ then $K\{B\}_\s$ denotes the smallest $K$-$\s$-subalgebra of $R$ that contains $B$. If $R=K\{B\}_\s$ for some finite subset $B$ of $R$ we say that $R$ is finitely $\s$-generated over $K$. If $L|K$ is an extension of $\s$-fields and $B\subset L$ then $K\<B\>_\s$ denotes the smallest $\s$-field extension of $K$ inside $L$ that contains $B$.

\begin{exa} \label{exa:dsfields}Some basic examples of $\ds$-fields of interest for us are the following:
\begin{itemize}
\item The field $K=\C(x)$ of rational functions in one variable $x$ over the field of complex numbers becomes a $\ds$-field by setting $\de := \frac{d}{d x} $ and $\s(f(x)):=f(x+1)$ for $f\in \C(x)$. We have $\hslash=1$ and $K^\de=\C$.

One can also take $\de := x\frac{d}{d x}$ and $\s(f(x)):=f(qx)$ for some $q\in\C\smallsetminus\{0\}$. Again we have $\hslash=1$ and $K^\de=\C$.

If we set $\de := x\frac{d}{d x}$ and $\s(f(x)):=f(x^d)$ for some integer $d\geq 2$. Then $K$ becomes a $\ds$-field with $\hslash=d$ and $K^\de=\C$.

\item Let $K=\C(\alpha,x)$ be the field of rational functions in two variables $\alpha$ and $x$. We set $\de:=\frac{d}{d x}$ and $\s(f(\alpha,x)):=f(\alpha+1,x)$. Then we have $\hslash=1$ and $K^\de=\C(\alpha)$ with $\s(\alpha)=\alpha+1$. Alternatively, one could also take $\s$ defined by $\s(f(\alpha,x))=f(q\alpha,x)$. The field $\C(\alpha)((x))$ of formal Laurent series over $\C(\alpha)$ naturally is a $\ds$-extension of $K$.

\item Let $k$ be an ultrametric field of characteristic zero,
complete with respect to a discrete valuation. Assume that the residue field of $k$ is $\F_q$, a field of characteristic $p>0$ with $q$ elements. 
We denote by $|~|$ the $p$-adic norm of $k$, normalized so that $|p|=p^{-1}$.
The ring $\cE_k^\dag$ of all $f=\sum_{n\in\Z}a_nx^n$, with $a_n\in k$, such that
\begin{itemize}
\item
there exists $\veps>0$, depending on $f$, such that
for any $1<\varrho<1+\veps$
we have $\lim_{n\to \pm\infty}|a_n|\varrho^n=0$;
\item
$\sup_n|a_n|$ is bounded;
\end{itemize}
is actually a field with residue field $\F_q((x))$. (Cf. \cite[Lemma 15.1.3, p. 263]{Kedlaya:padicdifferentialequations}.)
\par
We consider the field $K= \cE_k^\dag$ as a $\ds$-field with derivation $\de:=x \frac{d}{d x}$ and endomorphism $\s\colon K\to K$, a lifting of the Frobenius endomorphism of $\F_q$.
Namely, we consider an endomorphism $\s$ of $k$ such that $|\s(a)-a^p|<1$, for any $a\in k$, $|a|\leq 1$ and we extend 
the action of $\s$ to $K$ by setting $\s(x)=x^p$, so that $\s(\sum_{n\in\Z}a_nx^n)=\sum_{n\in\Z}\s(a_n)x^{pn}$. We have
$\hslash=p$ and $K^\de=k$.

\end{itemize}
\end{exa}

%

\begin{defn}\label{defi:PVextPVring}
Let $K$ be a $\ds$-field and $A\in K^{n\times n}$. A $\ds$-field extension $L$ of $K$ is called a \emph{$\s$-Picard-Vessiot extension for $\de(y)=Ay$ (or $A$)} if
\begin{enumerate}
\item there exists $Y \in\Gl_{n}(L)$ such that $\de(Y)=AY$ and $L =  K \< Y_{ij}|\ 1\leq i,j \leq n  \>_{\s}$ and
\item $L^\de = K^\de$.
\end{enumerate}
A $K$-$\ds$-algebra $R$ is called a \emph{$\s$-Picard-Vessiot ring for $\de(y)=Ay$} if
\begin{enumerate}
\item there exists $Y \in\Gl_{n}(R)$ such that $\de(Y)=AY$ and $R=K\big\{Y_{ij},\frac{1}{\det(Y)}\big\}_\s$ and
\item $R$ is $\de$-simple, i.e., $R$ has no non-trivial $\de$-ideals.
\end{enumerate}
A $\ds$-field extension $L$ of $K$ is called a \emph{$\s$-Picard-Vessiot extension} if it is a $\s$-Picard-Vessiot extension for some differential equation $\de(y)=Ay$ with $A\in K^{n\times n}$; similarly for $\s$-Picard-Vessiot rings.
\end{defn}

To simplify the notation we write $K\<Y\>_\s$ for $K \< Y_{ij}|\ 1\leq i,j \leq n  \>_{\s}$ and $K\big\{Y,\frac{1}{\det(Y)}\big\}_\s$ for $K\big\{Y_{ij},\frac{1}{\det(Y)}\big\}_\s$.
If $R$ is a $K$-$\ds$-algebra, then a matrix $Y\in\Gl_n(R)$ such that $\de(Y)=AY$ is called a \emph{fundamental solution matrix for $\de(y)=Ay$}. Thus, a $\s$-Picard-Vessiot extension is a $\ds$-field extension of $K$ without new $\de$-constants, $\s$-generated by the entries of a fundamental solution matrix.

If $Y,Y'\in\Gl_n(R)$ are two fundamental solution matrices for $\de(y)=Ay$ in some $K$-$\ds$-algebra then there exists $C\in\Gl_n(R^\de)$ such that $Y'=YC$. This is simply because
\[\de(Y^{-1}Y')=\de(Y^{-1})Y'+Y^{-1}\de(Y')=-Y^{-1}\de(Y)Y^{-1}Y'+Y^{-1}AY'=-Y^{-1}AYY^{-1}Y'+Y^{-1}AY'=0.\]

Note that we obtain the usual definitions of Picard-Vessiot extension and Picard-Vessiot ring of a linear differential equation $\de(y)=Ay$ if we require that $\s$ is the identity (on $K,L$ and $R$) in Definition \ref{defi:PVextPVring}.

The keen reader might have noticed a slight deviation between our definition of a $\s$-Picard-Vessiot ring and the corresponding notion in \cite{cassisinger} and \cite{HardouinSinger}: We require a $\s$-Picard-Vessiot ring to be $\de$-simple and not only to be $\ds$-simple. See \cite[p. 167]{wibdesc} for some comments on this issue.

\subsection{First properties of $\s$-Picard-Vessiot rings and extensions}

Our first concern is to show that the notions of $\s$-Picard-Vessiot ring and $\s$-Picard-Vessiot extension are essentially equivalent.

$\s$-Picard-Vessiot extensions can be seen as $\s$-analogs of classical Picard-Vessiot extensions. There is, however, another relation between the classical Picard-Vessiot theory and our $\s$-Picard-Vessiot theory: As we will now explain, every $\s$-Picard-Vessiot extension is a limit of Picard-Vessiot extensions.


Let $K$ be a $\ds$-field. From $\de(\s(a))=\hslash(\s(\de(a)))$ for $a\in K$ it follows that $\de(\s^d(a))=\hslash_d\s^d(\de(a))$ for $d\geq 0$ where
\[ \hslash_d:=\hslash\s(\hslash)\cdots\s^{d-1}(\hslash).\]

Given a differential equation $\de(y)=A y$ with $A\in K^{n \times n}$ we can consider for every $d\geq 0$ the differential equation of the  $\s$-jets of order $d$
of $\de(y)=A y$, namely, the linear system $\de(y)=A_dy$, where
\begin{equation} \label{eqn:Ad}
A_d:=\left(\begin{array}{ccccc}
 A & 0 & 0 & \cdots & 0 \\
 0 & \hslash\s(A) & 0 &\cdots & 0\\
\vdots & & \ddots & & \vdots  \\
0 & \cdots & 0& \hslash_{d-1}\s^{d-1}(A)& 0\\
0 & \cdots & 0 & 0& \hslash_d\s^d(A)
\end{array}\right)\in\Gl_{n(d+1)}(K).
\end{equation}
\begin{lemma} \label{lemma: PVjets}
Let $K$ be a $\ds$-field and $A\in K^{n\times n}$. If $L|K$ is a $\s$-Picard-Vessiot extension for $\de(y)=Ay$ with fundamental solution matrix $Y\in\Gl_n(L)$, then
$R_d:=K\left[Y,\frac{1}{\det(Y)},\s(Y),\frac{1}{\det(\s(Y))},\ldots,\s^d(Y),\frac{1}{\det(\s^d(Y))} \right]$ is a (classical) Picard-Vessiot ring for $\de(y)=A_dy$ for every $d\geq 0$. \end{lemma}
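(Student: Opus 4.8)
The plan is to verify the two defining properties of a classical Picard-Vessiot ring for $\de(y)=A_dy$ directly for $R_d$. First I would produce a fundamental solution matrix for the jet equation: set
\[ Y_d:=\diag\!\bigl(Y,\s(Y),\ldots,\s^d(Y)\bigr)\in\Gl_{n(d+1)}(R_d). \]
Using the $\ds$-relation $\de(\s^j(a))=\hslash_j\s^j(\de(a))$ one checks that $\de(\s^j(Y))=\hslash_j\s^j(\de(Y))=\hslash_j\s^j(AY)=\hslash_j\s^j(A)\s^j(Y)$, so $\de(Y_d)=A_dY_d$ block by block, and $Y_d$ is invertible since $\det(Y_d)=\prod_{j=0}^d\det(\s^j(Y))$ is a unit in $R_d$. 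Moreover $R_d=K[Y_d,\det(Y_d)^{-1}]$ by construction, so the generation condition for a Picard-Vessiot ring holds.

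The remaining point is $\de$-simplicity of $R_d$. Here I would exploit the fact that $L|K$ is a $\s$-Picard-Vessiot extension, hence $L$ carries a $\de$-simple $K$-$\ds$-subalgebra; more directly, $R_d$ sits inside the field $L$, and $L^\de=K^\de$. The cleanest route: note that $R_d$ is a $\de$-subring of the $K$-$\ds$-algebra $R:=K\{Y,\det(Y)^{-1}\}_\s=\bigcup_d R_d$, which is $\de$-simple (this is the $\s$-Picard-Vessiot ring; its $\de$-simplicity is part of the theory recalled from \cite{Wibmer:Chevalley}, or follows because $R$ generates $L$ over $K$ and $L^\de=K^\de$). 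But $\de$-simplicity does not automatically pass to subrings, so instead I would argue that any nonzero $\de$-ideal $I$ of $R_d$ generates a nonzero $\de$-ideal of $R$, hence $R\cdot I=R$; then chase denominators to see that some element of $I$ is already a unit of $R_d$. Concretely, since $R=\bigcup_e R_e$ and the transition maps are localizations composed with polynomial extensions in the variables $\s^{d+1}(Y),\ldots$, an equation $1=\sum r_\alpha i_\alpha$ with $r_\alpha\in R$, $i_\alpha\in I$ can be specialized back to $R_d$ — the new variables $\s^{d+1}(Y_{k\ell})$ are $\de$-free generators over $R_d$ inside the domain $L$, so comparing "constant" coefficients (in the sense of these extra variables) yields a relation $1=\sum s_\alpha i_\alpha$ with $s_\alpha\in R_d$, forcing $I=R_d$.

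Alternatively, and perhaps more transparently, I would argue by induction on $d$: $R_0=K\{Y,\det(Y)^{-1}\}_{\mathrm{triv}}$ viewed with the trivial $\s$-action is the classical Picard-Vessiot ring for $\de(y)=Ay$ inside $L$, which is $\de$-simple because $L$ has no new $\de$-constants (standard Picard-Vessiot argument: a $\de$-simple finitely $\de$-generated $K$-algebra with field of $\de$-constants equal to $K^\de$ is characterized as a Picard-Vessiot ring, and conversely the $\de$-subring of $L$ generated by a fundamental matrix is $\de$-simple exactly when $L^\de=K^\de$). For the inductive step, $R_d=R_{d-1}[\s^d(Y),\det(\s^d(Y))^{-1}]$, and $\s^d(Y)$ is a fundamental solution matrix of the $\de$-simple equation $\de(y)=\hslash_d\s^d(A)y$ over the $\de$-field $\mathrm{Frac}(R_{d-1})$; applying the base-change/amalgamation behaviour of $\de$-simple rings (the tensor product of Picard-Vessiot rings over a common field, with the same constants, is $\de$-simple) gives $\de$-simplicity of $R_d$. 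The main obstacle is this $\de$-simplicity verification: one must be careful that passing from $R_{d-1}$ to its fraction field and back does not lose the information, and that $L^\de=K^\de$ is genuinely used to rule out spurious $\de$-constants entering at each stage. Once $\de$-simplicity is in hand, the lemma follows immediately from the definition of a classical Picard-Vessiot ring.
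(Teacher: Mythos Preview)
Your construction of the block-diagonal fundamental matrix $Y_d$ and the verification $\de(Y_d)=A_dY_d$, $R_d=K[Y_d,\det(Y_d)^{-1}]$ are correct and match the paper exactly.

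The difference is in how $\de$-simplicity of $R_d$ is obtained. The paper does not touch the big ring $R$ or use any induction; it simply passes to the fraction field $L_d$ of $R_d$, observes $L_d^\de\subset L^\de=K^\de$, concludes that $L_d|K$ is a classical Picard-Vessiot extension for $\de(y)=A_dy$, and then invokes the standard fact (e.g.\ \cite[Prop.~1.22]{vdPutSingerDifferential} or \cite[Cor.~2.6]{dyckdesc}) that the ring generated by a fundamental matrix inside a Picard-Vessiot extension is automatically $\de$-simple. You actually state this argument yourself --- for $R_0$ --- in your inductive base case; the point is that it works verbatim for every $d$, so no induction is needed.

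Your primary route, by contrast, has two problems. First, in the logical order of this paper the $\de$-simplicity of $R=\bigcup_d R_d$ is proved \emph{using} the present lemma (see Proposition~\ref{prop:pvringpvext}), so invoking it here is circular. Second, your specialization argument is not sound as written: the entries of $\s^{d+1}(Y),\ldots,\s^e(Y)$ are \emph{not} $\de$-free over $R_d$, since $\de(\s^j(Y))=\hslash_j\s^j(A)\,\s^j(Y)$ involves these very entries; there is no $\de$-compatible ``constant-coefficient'' projection $R_e\to R_d$ that would let you pull the relation $1=\sum r_\alpha i_\alpha$ back. The inductive alternative could be made to work, but it is considerably heavier than the one-line fraction-field argument the paper uses.
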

\begin{proof}

For $i=0,\ldots,d$ we have  $\de(\s^i(Y))=\hslash_i\s^i(\de(Y))=\hslash_i\s^i(A)\s^i(Y)$. It follows that the fraction field $L_d$ of $R_d$
 is a $\de$-subfield of $L$ and that
\[Y_d=\left(\begin{array}{ccccc}
 Y & 0 & 0 & \cdots & 0 \\
 0 & \s(Y) & 0 &\cdots & 0\\
\vdots & & \ddots & & \vdots  \\
0 & \cdots & 0& \s^{d-1}(Y)& 0\\
0 & \cdots & 0 & 0& \s^d(Y)
\end{array}\right)\in\Gl_{n(d+1)}(L_d).
\]
is a fundamental solution matrix for $\de(y)=A_dy$. Because $L_d^\de\subset L^\de=K^\de$, we conclude that $L_d$ is a Picard-Vessiot extension for $\de(y)=A_dy$. Therefore $R_d$ is a Picard-Vessiot ring for $\de(y)=A_dy$. (See \cite[Prop. 1.22, p. 16]{vdPutSingerDifferential} for the case that $K^\de$ is algebraically closed or \cite[Cor. 2.6, p. 6]{dyckdesc} for the general case.)
\end{proof}
In order to prove the next proposition, we need another simple lemma:

\begin{lemma}\label{lemma:sdomain} Let $R$ be a $\de$-simple $\ds$-ring. Then $R$ is a
$\s$-domain, \ie, $R$ is an integral domain and $\s$ is injective on $R$. In particular, $\de$ and $\s$ naturally extend to the field of fractions $L$ of $R$. Moreover, $L^\de=R^\de$. \end{lemma}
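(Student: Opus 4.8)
The plan is to peel off the four assertions one at a time, each reduced to the single principle that a nonzero $\de$-ideal of $R$ must equal $R$.

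First I would prove that $R$ is an integral domain. The ring $R$ is reduced: the nilradical of a $\Q$-algebra equipped with a derivation is always a $\de$-ideal (a standard fact, cf. \cite{Kolchin:differentialalgebraandalgebraicgroups}), and it is proper since $1$ is not nilpotent, hence it is $0$ by $\de$-simplicity. Next, given a nonzero zero divisor $a\in R$, I would look at $I=\{r\in R\mid ra^n=0\text{ for some }n\geq 0\}$. This is an ideal containing the nonzero annihilator of $a$, and it is a $\de$-ideal: differentiating $ra^n=0$ gives $\de(r)a^n+nra^{n-1}\de(a)=0$, and after multiplying by $a$ the second term dies, leaving $\de(r)a^{n+1}=0$. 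So $I=R$ by $\de$-simplicity, forcing $a$ to be nilpotent, hence $a=0$ since $R$ is reduced — a contradiction. Thus $R$ is a domain.

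Then I would treat injectivity of $\s$ the same way: $\ker(\s)$ is an ideal, and it is a $\de$-ideal because $\s(r)=0$ implies $0=\de(\s(r))=\hslash\,\s(\de(r))$, and $\hslash$ being a unit gives $\s(\de(r))=0$; since $\s(1)=1\neq 0$ this ideal is proper, hence $0$. So $R$ is a $\s$-domain. After that, $\de$ extends to $L:=\operatorname{Frac}(R)$ by the quotient rule and $\s$ extends by $\s(a/b):=\s(a)/\s(b)$ (well defined since $\s(b)\neq 0$ for $b\neq 0$); the identity $\de\s=\hslash\,\s\de$ holds on all of $L$ because the elements of $L$ satisfying it form a subfield containing $R$, and $\hslash\in R^\de$ remains a constant unit in $L$, so $L$ is a $\ds$-field.

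Finally, for $L^\de=R^\de$ only the inclusion $L^\de\subseteq R^\de$ needs work. Given $x\in L^\de$, I would consider the ideal of denominators $J=\{r\in R\mid rx\in R\}$; it is nonzero (it contains a denominator of $x$) and a $\de$-ideal, since for $r\in J$ with $rx=s\in R$ we get $\de(s)=\de(r)x+r\de(x)=\de(r)x$ using $\de(x)=0$, so $\de(r)x\in R$. Hence $J=R$ by $\de$-simplicity, so $x\in R$, and then $\de(x)=0$ gives $x\in R^\de$. I do not foresee a genuine obstacle in any of this; the only thing to watch is that the commutation factor $\hslash$ never gets in the way, which is guaranteed by the fact that it is a unit lying in $R^\de$.
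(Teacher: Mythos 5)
Your proof is correct and follows essentially the same route as the paper: the kernel of $\s$ is a $\de$-ideal by the commutation relation $\de\s=\hslash\s\de$ (with $\hslash$ a unit), and $L^\de=R^\de$ follows from the denominator ideal $\{r\in R\mid rx\in R\}$ being a nonzero $\de$-ideal. The only difference is that you prove from scratch the fact that a $\de$-simple $\de$-ring is an integral domain, which the paper simply cites from van der Put--Singer; your argument for that step is also correct.
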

\begin{proof}
It is well known that a $\de$-simple differential ring is an integral domain. (See e.g. \cite[Lemma 1.17, p. 13]{vdPutSingerDifferential}.)
It follows from the commutativity relation (\ref{eq:com}) for $\de$ and $\s$ that the kernel of $\s$ on $R$ is a $\de$-ideal. So by $\de$-simplicity, $\s$ must be injective on $R$.

Let $a \in L^\de$. Then, $\ida=\{ r \in R |\ ar \in R \}$ is a non-zero $\de$-ideal of $R$. Thus $a \in R^\de$.\end{proof}

As in the classical theory, $\s$-Picard-Vessiot rings and $\s$-Picard-Vessiot extensions are closely related:

\begin{prop}\label{prop:pvringpvext}
Let $K$ be a $\ds$-field and $A\in K^{n\times n}$. If $L|K$ is a $\s$-Picard-Vessiot extension for $\de(y)=Ay$ with fundamental solution matrix $Y\in\Gl_n(L)$, then
$R:=K\{Y,\frac{1}{\det(Y)} \}_\s$
 is a $\s$-Picard-Vessiot ring for $\de(y)=Ay$.
Conversely, if $R$ is a $\s$-Picard-Vessiot ring for $\de(y)=Ay$ with $R^\de=K^\de$, then the field of fractions of $R$ is a $\s$-Picard-Vessiot extension for $\de(y)=Ay$.
\end{prop}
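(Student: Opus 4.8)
The plan is to prove the two implications separately, using Lemma \ref{lemma: PVjets} and Lemma \ref{lemma:sdomain} as the main tools.

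For the first implication, suppose $L|K$ is a $\s$-Picard-Vessiot extension for $\de(y)=Ay$ with fundamental solution matrix $Y$, and set $R:=K\{Y,\frac{1}{\det(Y)}\}_\s$. Condition (i) in the definition of a $\s$-Picard-Vessiot ring is immediate by construction, so the work is to verify that $R$ is $\de$-simple. Here I would invoke Lemma \ref{lemma: PVjets}: for every $d\geq 0$ the ring $R_d=K[Y,\frac{1}{\det(Y)},\dots,\s^d(Y),\frac{1}{\det(\s^d(Y))}]$ is a (classical) Picard-Vessiot ring for $\de(y)=A_dy$, hence $\de$-simple. Now observe that $R=\bigcup_{d\geq 0} R_d$ is a directed union of $\de$-simple $\de$-rings (the inclusions $R_d\subseteq R_{d+1}$ being morphisms of $\de$-rings). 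A nonzero $\de$-ideal $I$ of $R$ would meet some $R_d$ in a nonzero $\de$-ideal $I\cap R_d$, forcing $1\in I\cap R_d\subseteq I$; hence $R$ is $\de$-simple. I expect this directed-union argument to be the crux of the first half, and the only place where one must be slightly careful is checking that $I\cap R_d$ is indeed a $\de$-ideal of $R_d$ and that it is nonzero for $d$ large enough — both are routine since $\de$ preserves each $R_d$ and any nonzero element of $R$ lies in some $R_d$.

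For the converse, suppose $R$ is a $\s$-Picard-Vessiot ring for $\de(y)=Ay$ with $R^\de=K^\de$. By Lemma \ref{lemma:sdomain}, $R$ is a $\s$-domain, so $\de$ and $\s$ extend canonically to the field of fractions $L$ of $R$, making $L$ a $\ds$-field extension of $K$; moreover the same lemma gives $L^\de=R^\de=K^\de$, which is condition (ii) in the definition of a $\s$-Picard-Vessiot extension. For condition (i), the fundamental solution matrix $Y\in\Gl_n(R)$ from the definition of $R$ lies in $\Gl_n(L)$ and satisfies $\de(Y)=AY$; since $R=K\{Y,\frac{1}{\det(Y)}\}_\s$ and $L=\operatorname{Frac}(R)$, the field $L$ is generated over $K$ as a $\s$-field by the entries of $Y$ together with $\frac{1}{\det(Y)}$. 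But $\det(Y)$ is a unit in $R$, so $\frac{1}{\det(Y)}$ is already a polynomial expression in the entries of $Y$ and their $\s$-transforms over $K$ — more precisely it lies in $K\{Y\}_\s\subseteq K\<Y\>_\s$ — whence $L=K\<Y_{ij}\mid 1\leq i,j\leq n\>_\s$. This establishes that $L|K$ is a $\s$-Picard-Vessiot extension for $\de(y)=Ay$, completing the proof.

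The main obstacle, as noted, is the $\de$-simplicity of $R$ in the first implication; everything else is bookkeeping with the definitions and an application of Lemma \ref{lemma:sdomain}. One subtlety worth flagging is that the hypothesis $R^\de=K^\de$ is genuinely needed in the converse (it is what upgrades $L^\de=R^\de$ to $L^\de=K^\de$), whereas in the first implication the equality $L^\de=K^\de$ is part of the hypothesis that $L|K$ is a $\s$-Picard-Vessiot extension and is not directly re-used — it enters only indirectly through Lemma \ref{lemma: PVjets}, where it guarantees $L_d^\de=K^\de$ and hence that $L_d$ is a classical Picard-Vessiot extension.
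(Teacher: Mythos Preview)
Your proof is correct and follows essentially the same approach as the paper's: for the first implication, the paper also reduces $\de$-simplicity of $R$ to the $\de$-simplicity of the $R_d$'s via Lemma \ref{lemma: PVjets} and the directed-union argument; for the converse, the paper simply says it is ``clear from Lemma \ref{lemma:sdomain}''.

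One small inaccuracy: in the converse you write that $\frac{1}{\det(Y)}$ is ``a polynomial expression in the entries of $Y$ and their $\s$-transforms'' and lies in $K\{Y\}_\s$. This is not true in general --- $K\{Y\}_\s$ is only a $\s$-algebra, and the inverse of $\det(Y)$ need not be polynomial in the $Y_{ij}$. The correct (and sufficient) observation is that $\det(Y)\in K\<Y\>_\s$ and $K\<Y\>_\s$ is a field, so $\frac{1}{\det(Y)}\in K\<Y\>_\s$; your conclusion $L=K\<Y\>_\s$ then follows as you intended.
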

\begin{proof}
To prove the first claim we only have to show that $R:=K\{Y,\frac{1}{\det(Y)} \}_\s$ is $\de$-simple. Suppose that $\ida$ is a non-trivial $\de$-ideal of $R$. Then, for a suitable $d\geq 0$, the ideal $\ida\cap R_d$ is a non-trivial $\de$-ideal of $$R_d:=K\left[Y,\frac{1}{\det(Y)},\s(Y),\frac{1}{\det(\s(Y))},\ldots,\s^d(Y),\frac{1}{\det(\s^d(Y))} \right].$$
This contradicts the fact that $R_d$ is $\de$-simple (Lemma \ref{lemma: PVjets}).

The second claim is clear from Lemma \ref{lemma:sdomain}.
\end{proof}

The next proposition states that the condition $R^\de=K^\de$ in Proposition \ref{prop:pvringpvext} is always satisfied if $K^\de$ is $\s$-closed.
But let us first recall what it means for a $\s$-field to be $\s$-closed:

\begin{defn} A $\s$-field $k$ is called \emph{$\s$-closed} if for every finitely $\s$-generated $k$-$\s$-algebra $R$ which is a $\s$-domain (i.e., $R$ is an integral domain and $\s\colon R\to R$ is injective) there exists a morphism $R\to k$ of $k$-$\s$-algebras.\end{defn}

Model theorists usually call $\s$-closed $\s$-fields ``existentially closed'' or a ``model of ACFA''. (See e.g. \cite{Macintyre:GenricAutomorphismsOfFields} or \cite{Hrushovskietal:ModelTheoryofDifferencefields}.)
The assumption that $R$ is a $\s$-domain in the above definition is quite crucial. If $k$ is a $\s$-closed $\s$-field and $R$ a finitely $\s$-generated $k$-$\s$-algebra, there need not exist a $k$-$\s$-morphism $R\to k$. Indeed,
there exists a $k$-$\s$-morphism $R\to k$ if and only if there exists a $\s$-prime $\s$-ideal $\q$ in $R$, i.e., a prime ideal $\q$ of $R$ with $\s^{-1}(\q)=\q$. See Lemma \ref{lemma:explainsclosed}.

As indicated in the introduction, in the classical Picard-Vessiot theory the assumption that the constants are algebraically closed is widely-used to avoid certain technicalities and it seems that many authors consider this assumption as natural. So, by way of analogy, it would be natural for us to assume that $K^\de$ is $\s$-closed. However, none of the examples of $\ds$-fields relevant for us (Example \ref{exa:dsfields}) does us the favour to have $\s$-closed $\de$-constants. So we have been careful to avoid this assumption.

\begin{prop} \label{prop:nonewconstants}
Let $K$ be a $\ds$-field such that $K^\de$ is a $\s$-closed $\s$-field. Then $R^\de=K^\de$ for every $\s$-Picard-Vessiot ring $R$ over $K$.
\end{prop}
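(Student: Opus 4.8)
The plan is to reduce, via Lemma~\ref{lemma: PVjets}, to the classical (relative) Picard--Vessiot theory, and then to use the $\s$-closedness of $k:=K^\de$ to rule out new $\de$-constants. Write $R=K\big\{Y,\tfrac{1}{\det(Y)}\big\}_\s$. Since $\s^i\!\big(\tfrac{1}{\det(Y)}\big)=\tfrac{1}{\det(\s^i(Y))}$, we have $R=\bigcup_{d\geq 0}R_d$ with $R_d:=K\big[Y,\tfrac{1}{\det(Y)},\ldots,\s^d(Y),\tfrac{1}{\det(\s^d(Y))}\big]$; each $R_d$ is stable under $\de$ because $\de(\s^i(Y))=\hslash_i\s^i(A)\s^i(Y)$, so $R^\de=\bigcup_{d\geq 0}R_d^\de$.

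Next I would show that $R^\de$ is a $\s$-field extension of $k$ which is algebraic over $k$. It is a field because $R$ is $\de$-simple (if $0\neq a\in R^\de$ then $Ra$ is a non-zero $\de$-ideal, so $a$ is a unit and $a^{-1}\in R^\de$), it is $\s$-stable because $\de\s=\hslash\s\de$, and $\s$ is injective on it, since $R$, being $\de$-simple, is a $\s$-domain by Lemma~\ref{lemma:sdomain}. For the algebraicity over $k$ it suffices, by the first paragraph, to see that each $R_d^\de$ is algebraic over $k$. By Lemma~\ref{lemma: PVjets}, $R_d$ is a classical Picard--Vessiot ring for $\de(y)=A_dy$ over $K$, hence $\de$-simple and finitely generated as a $K$-algebra; that the $\de$-constants of such a ring are algebraic over $k$ is part of the relative existence statement (\cite[Cor.~2.6]{dyckdesc}). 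Alternatively one argues directly: a constant $c\in R_d^\de$ transcendental over $K$ would make $c-\lambda$ a unit of $R_d$ for every $\lambda\in k$ (since $R_d^\de$ is a field), which is impossible because the image of the dominant morphism $\spec(R_d)\to\mathbb{A}^1_K$ attached to $c$ omits only finitely many closed points while $k$ is infinite; and a $\de$-constant algebraic over $K$ is automatically algebraic over $k$, by applying $\de$ to its minimal polynomial over $K$.

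Finally, I would argue by contradiction: assume there is $c\in R^\de\setminus k$, and let $f\in k[T]$ be its monic minimal polynomial, of degree $\geq 2$. The $k$-$\s$-subalgebra $D:=k\{c\}_\s$ of $R^\de$ is finitely $\s$-generated over $k$ and is a $\s$-domain --- it is a domain, being a subring of the field $R^\de$, and $\s$ is injective on it as a restriction of $\s$ on $R^\de$. By $\s$-closedness of $k$ there is a morphism $\varphi\colon D\to k$ of $k$-$\s$-algebras; then $f(\varphi(c))=\varphi(f(c))=0$, so $f$ has a root in $k$, contradicting its irreducibility. Hence $R^\de=k=K^\de$.

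I expect the only non-formal ingredient to be the input from the relative classical theory used in the second paragraph, namely that the $\de$-constants of a Picard--Vessiot ring over $K$ are algebraic over $K^\de$ without any hypothesis that $K^\de$ be algebraically closed; this is the sole place where real content enters (and it is available from \cite{dyckdesc}), the rest being elementary manipulations with $\de$-ideals together with the bare definition of $\s$-closedness. The last paragraph could also be replaced by the observation that a $\s$-closed field is algebraically closed, which gives $R^\de\subseteq\overline{k}=k$ at once.
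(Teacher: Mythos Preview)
The paper does not give its own argument here; it simply refers to \cite[Corollary~2.18]{Wibmer:Chevalley}. So the comparison is between your attempt and an external result.

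Your overall strategy --- first show that $R^\de$ is algebraic over $k=K^\de$, then finish using that a $\s$-closed $\s$-field is algebraically closed (or via the explicit contradiction with the minimal polynomial) --- is perfectly sound. The final paragraph is fine. The problem lies in the justification of the algebraicity step.

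You invoke Lemma~\ref{lemma: PVjets} to conclude that each $R_d$ is a classical Picard--Vessiot ring, hence $\de$-simple. But Lemma~\ref{lemma: PVjets} has as hypothesis that $L|K$ is a $\s$-Picard--Vessiot \emph{extension}, and its proof uses $L^\de=K^\de$ in an essential way to deduce $L_d^\de=K^\de$ and hence that $R_d$ is $\de$-simple. Here you only have a $\s$-Picard--Vessiot \emph{ring} $R$; by Lemma~\ref{lemma:sdomain}, $L^\de=R^\de$, and $R^\de=K^\de$ is precisely the statement you are trying to prove. So this step is circular. Your alternative direct argument has the same flaw: it assumes ``$R_d^\de$ is a field'' (to get $c-\lambda$ a unit in $R_d$), which again amounts to $R_d$ being $\de$-simple. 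Working instead with $R$ does not save the Chevalley-type argument, since $R$ is not finitely generated over $K$, and knowing only that $(c-\lambda)^{-1}\in R$ lands in some $R_{d_\lambda}$ with $d_\lambda$ depending on $\lambda$ does not yield the needed finiteness.

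What is missing is an independent reason why the $\de$-constants of $R$ are algebraic over $k$ (equivalently, why each $R_d$ is $\de$-simple, or why $R^\de$ is a constrained extension of $k$). This is exactly the content provided by the cited result in \cite{Wibmer:Chevalley}; it does not follow formally from the lemmas available in the present paper.
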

\begin{proof}
This is \cite[Corollary. 2.18, p. 1393]{Wibmer:Chevalley}.
\end{proof}

The following simple lemma is a fundamental tool for the development of our $\s$-Galois theory.
\begin{lemma} \label{lemma:fonda}
Let $K$ be a $\ds$-field and $A\in K^{n\times n}$. If $R_1$ and $R_2$ are $\s$-Picard-Vessiot rings for $\de(y)=Ay$ with $R_1^\de=R_2^\de=k:=K^\de$, then the canonical map
$$R_1\otimes_k(R_1\otimes_KR_2)^\de\longrightarrow R_1\otimes_K R_2$$
is an isomorphism of $R_1$-$\ds$-algebras. Moreover, $(R_1\otimes_KR_2)^\de$ is finitely $\s$-generated over $k$. Indeed if $Y_1\in\Gl_n(R_1)$ and $Y_2\in\Gl_n(R_2)$ are fundamental solution matrices for $\de(y)=Ay$, then $(R_1\otimes_K R_2)^\de=k\{Z,\frac{1}{\det(Z)}\}_\s$ where
$Z:=(Y_1\otimes 1)^{-1}(1\otimes Y_2)\in\Gl_n(R_1\otimes_K R_2)$.
\end{lemma}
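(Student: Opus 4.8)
The statement is the difference-algebraic analog of the classical fact that, for two Picard--Vessiot rings, the constants of their tensor product trivialize the tensor product over the base. The plan is to exploit Lemma \ref{lemma: PVjets}, which exhibits each $\s$-Picard-Vessiot ring as a filtered union of classical Picard-Vessiot rings for the $\s$-jet equations $\de(y)=A_dy$, and to push the classical statement through this limit. Concretely, write $R_{i,d}:=K[\s^j(Y_i),\tfrac{1}{\det(\s^j(Y_i))}\mid 0\le j\le d]$ for $i=1,2$; by Lemma \ref{lemma: PVjets} each $R_{i,d}$ is a classical Picard-Vessiot ring for $\de(y)=A_dy$ with constants $k$, and $R_i=\bigcup_d R_{i,d}$ as a ring (the $\s$-structure is used only to identify the $R_{i,d}$ and does not interfere with the $\de$-argument).

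**Key steps.** First I would invoke the classical fundamental lemma of Picard-Vessiot theory (in the relative form, since $k$ need not be algebraically closed; see e.g.\ \cite[Cor.\ 2.6]{dyckdesc} or the argument in \cite[Prop.\ 1.22]{vdPutSingerDifferential}): for each fixed $d$, the natural map
\[ R_{1,d}\otimes_k (R_{1,d}\otimes_K R_{2,d})^\de \longrightarrow R_{1,d}\otimes_K R_{2,d} \]
is an isomorphism of $R_{1,d}$-$\de$-algebras, and $(R_{1,d}\otimes_K R_{2,d})^\de$ is generated over $k$ by the entries of $Z_d:=(Y_{1,d}\otimes 1)^{-1}(1\otimes Y_{2,d})$ and $1/\det(Z_d)$, where $Y_{i,d}$ is the block-diagonal fundamental matrix from the proof of Lemma \ref{lemma: PVjets}. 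Note that $Z_d$ is exactly the block-diagonal matrix $\diag(Z,\s(Z),\dots,\s^d(Z))$ with $Z=(Y_1\otimes 1)^{-1}(1\otimes Y_2)$, so the constants $(R_{1,d}\otimes_K R_{2,d})^\de$ are $k[\s^j(Z)_{kl},\tfrac{1}{\det(\s^j(Z))}\mid 0\le j\le d, \text{ all }k,l]$. Second, I would take the direct limit over $d$: since $R_i=\varinjlim R_{i,d}$ and tensor products commute with direct limits, $R_1\otimes_K R_2=\varinjlim(R_{1,d}\otimes_K R_{2,d})$, and taking $\de$-constants commutes with filtered direct limits (a $\de$-constant of the union lies in some $R_{1,d}\otimes_K R_{2,d}$ and is a $\de$-constant there). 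Hence $(R_1\otimes_K R_2)^\de=\bigcup_d (R_{1,d}\otimes_K R_{2,d})^\de=k\{Z,\tfrac1{\det Z}\}_\s$, the $\s$-algebra generated by $Z$; this is finitely $\s$-generated over $k$, and passing the isomorphism displayed above to the limit gives that $R_1\otimes_k(R_1\otimes_K R_2)^\de\to R_1\otimes_K R_2$ is an isomorphism. Finally I would check compatibility with the $\s$- and $\ds$-structures: the maps in the direct system are $\s$-equivariant, $Z$ satisfies $\s(Z)=(\s(Y_1)\otimes 1)^{-1}(1\otimes\s(Y_2))$ so the $\s$-orbit of $Z$ does land in the constants, and $\hslash$ is fixed throughout, so the isomorphism is one of $R_1$-$\ds$-algebras.

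**Main obstacle.** The one point requiring genuine care is the relative (non-algebraically-closed $k$) version of the classical fundamental lemma for $\de(y)=A_dy$: one must know both that $R_{i,d}$ is a \emph{bona fide} Picard-Vessiot ring in the relative sense (already established in Lemma \ref{lemma: PVjets}) and that the no-new-constants statement $(R_{1,d}\otimes_K R_{2,d})^\de = k\{Z_d,\ldots\}$ holds without assuming $k=\oK$. This is exactly the content cited from \cite{dyckdesc}, so it can be quoted; the remaining work—identifying $Z_d$ with the block matrix built from $\s^j(Z)$, and commuting $(-)^\de$ with the filtered colimit—is routine. A secondary subtlety is that $\s$ on $R_i$ need not be surjective, but this is harmless: the argument only uses that $\s$ is an injective endomorphism compatible with the filtration, which is guaranteed by Lemma \ref{lemma:sdomain} applied to $R_1$ and $R_2$.
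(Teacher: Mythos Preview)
Your approach is correct, but it takes a genuinely different route from the paper's proof. The paper does \emph{not} pass through the jet filtration and the classical Picard--Vessiot fundamental lemma at each level $d$. Instead it argues in a single step directly on $R_1\otimes_K R_2$: since $Y_1\otimes 1$ and $1\otimes Y_2$ are both fundamental solution matrices, $\de(Z)=0$, so $k\{Z,\tfrac{1}{\det Z}\}_\s\subset (R_1\otimes_K R_2)^\de$; the relation $1\otimes Y_2=(Y_1\otimes 1)Z$ together with $R_2=K\{Y_2,\tfrac{1}{\det Y_2}\}_\s$ immediately gives $R_1\otimes_K R_2=R_1\cdot k\{Z,\tfrac{1}{\det Z}\}_\s$; and then the paper invokes the general fact (cited from \cite[Corollary 3.2]{AmanoMasuoka:artiniansimple}) that the $\de$-constants of any $\de$-algebra over a $\de$-simple $\de$-ring $R_1$ are linearly disjoint from $R_1$ over $R_1^\de$. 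This last fact does all the work at once, yielding $R_1\otimes_K R_2=R_1\otimes_k k\{Z,\tfrac{1}{\det Z}\}_\s$ and hence $(R_1\otimes_K R_2)^\de=k\{Z,\tfrac{1}{\det Z}\}_\s$.

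The trade-off: your filtered-colimit reduction is longer and relies on Lemma~\ref{lemma: PVjets} (which strictly speaking is stated for $\s$-Picard--Vessiot \emph{extensions}, so you first have to pass from $R_i$ to its fraction field via Lemma~\ref{lemma:sdomain} and Proposition~\ref{prop:pvringpvext}), plus an external classical result at each level. The paper's argument is shorter and more self-contained: it uses only the $\de$-simplicity of $R_1$ and a single linear-disjointness lemma, never needing the jet filtration or that $R_2$ is $\de$-simple. On the other hand, your approach makes the connection to the classical theory very transparent and would generalize verbatim to any situation where a $\s$-analog is built as a limit of its classical truncations.
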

\begin{proof}
Because $Y_1\otimes 1, 1\otimes Y_2\in \Gl_n(R_1\otimes_K R_2)$ are fundamental solution matrices for $\de(y)=Ay$ we have $\de(Z)=0$. It follows
 from the commuting relation (\ref{eq:com}) that $k\{Z,\frac{1}{\det{Z}}\}_\s\subset (R_1\otimes_K R_2)^\de$.
Since $1\otimes Y_2=(Y_1\otimes 1)Z$ and
 $R_2$ is $\s$-generated by $Y_2$ and $\frac{1}{\det(Y_2)}$ we see that $R_1\otimes_K R_2=R_1\cdot k\{Z,\frac{1}{\det{Z}}\}_\s$. It holds in general that the $\de$-constants
 of a $\de$-algebra over a $\de$-simple $\de$-ring $R_1$ are linearly disjoint from $R_1$ over $R_1^\de$. (See e.g. \cite[Corollary 3.2, p. 753]{AmanoMasuoka:artiniansimple}.)
 Therefore $R_1\otimes_K R_2=R_1\otimes_k k\{Z,\frac{1}{\det{Z}}\}_\s$. This also shows that $(R_1\otimes_K R_2)^\de=k\{Z,\frac{1}{\det{Z}}\}_\s$.
\end{proof}

\subsubsection{Comparison with the Hopf algebraic approach to Picard-Vessiot theory}

In \cite{Takeuchi:hopfalgebraicapproach} (see also \cite{AmanoMasuokaTakeuchi:HopfPVtheory}) M. Takeuchi gave a very general definition of Picard-Vessiot extensions which does not require any finiteness assumptions. It simply postulates the validity of an algebraic reformulation of the torsor theorem.
\begin{defn}[Definition 1.8 in \cite{AmanoMasuokaTakeuchi:HopfPVtheory}]
An extension $L|K$ of $\pa$-fields is called \emph{Picard-Vessiot in the sense of Takeuchi} if the following conditions are satisfied.
\begin{enumerate}
\item $L^\pa =K^\pa.$
\item There exists a $K$-$\de$-subalgebra $R$ of $L$ such that $L$ is the quotient field of $R$ and the canonical map
$$ R \otimes_{K^\de} ( R \otimes_{K} R)^\pa\rightarrow  R \otimes_{K} R $$
is surjective. (It is then automatically an isomorphism.) We call the $\de$-ring $R$ a Picard-Vessiot ring in the sense of Takeuchi.
\end{enumerate}
\end{defn}

As in Section \ref{subsec:ksigmaSchemeAssociatedwithkScheme}, we denote with $(-)^\sharp$ the forgetful functor that forgets $\s$. So if $R$ is a $\ds$-ring then $R^\sharp$ is a $\de$-ring.

\begin{rem} \label{rem:compareTakeuchi}
Let $L|K$ be a $\s$-Picard-Vessiot extension. Then $L^\sharp|K^\sharp$ is a Picard-Vessiot extension in the sense of Takeuchi.
\end{rem}
\begin{proof}
This is clear from Lemma \ref{lemma:fonda} (with $R_1=R_2$).
\end{proof}

Once we have defined the $\s$-Galois group $G$ of a $\s$-Picard-Vessiot extension $L|K$ we will show that also the Galois group of $L^\sharp|K^\sharp$ in the sense of Takeuchi (an affine group scheme, in general not of finite type over $k=K^\de$) can be obtained from $G$ by forgetting $\s$. See Remark \ref{rem:compareTakeuchigroup}.

Since the Picard-Vessiot theory in \cite{Takeuchi:hopfalgebraicapproach} (or \cite{AmanoMasuokaTakeuchi:HopfPVtheory}) does not at all take into account $\s$, it is clearly not an appropriate theory to discuss the questions of this article, e.g., the $\s$-algebraic relations among the solutions of a linear differential equation. Nevertheless, it is sometimes very convenient to know that every $\s$-Picard-Vessiot extension can be seen as a Picard-Vessiot extension in the sense of Takeuchi; it allows for some shortcuts in the proofs. This applies most notably to our proof of the $\s$-Galois correspondence, which can be interpreted as the restriction of Takeuchi's correspondence to the $\s$-stable objects on both sides.


The fact that every $\s$-Picard-Vessiot extension can be seen as a Picard-Vessiot extension in the sense of Takeuchi is also used in the proof of the following lemma.

\begin{lemma} \label{lemma:sigmaPVringunique}
Let $L|K$ be a $\s$-Picard-Vessiot extension. Let $A\in K^{n\times n}$ and $Y\in\Gl_n(L)$ such that $L|K$ is a $\s$-Picard-Vessiot extension for $\de(y)=Ay$ with fundamental solution matrix $Y$. Let $A'\in K^{n'\times n'}$ and $Y'\in\Gl_{n'}(L)$ be another pair of matrices such that $L|K$ is a $\s$-Picard-Vessiot extension for $\de(y)=A'y$ with fundamental solution matrix $Y'$. Then the corresponding $\s$-Picard-Vessiot rings $R=K\{Y,\frac{1}{\det(Y)}\}_\s$ and $R'=K\{Y',\frac{1}{\det(Y')}\}_\s$ are equal.
%
\end{lemma}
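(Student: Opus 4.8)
The plan is to show that both $R$ and $R'$ coincide with a single intrinsically defined subring of $L$, namely the (unique) Picard-Vessiot ring of $L^\sharp|K^\sharp$ in the sense of Takeuchi, closed up under $\s$. First I would invoke Remark \ref{rem:compareTakeuchi}: since $L|K$ is a $\s$-Picard-Vessiot extension, $L^\sharp|K^\sharp$ is a Picard-Vessiot extension in the sense of Takeuchi. The key external input needed is that the Picard-Vessiot ring in the sense of Takeuchi is unique, i.e. there is a unique $K^\sharp$-$\de$-subalgebra $R_0\subseteq L^\sharp$ with $\mathrm{Frac}(R_0)=L$ for which the torsor map $R_0\otimes_{k}(R_0\otimes_K R_0)^\de\to R_0\otimes_K R_0$ is surjective; this is the differential analog of the uniqueness of Picard-Vessiot rings inside a fixed extension and is part of Takeuchi's / Amano-Masuoka's formalism (it is the $\de$-subalgebra generated over $K$ by the coordinate ring of the torsor, hence canonical).

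Next I would identify $R_0$ with the classical Picard-Vessiot ring generated by a fundamental solution matrix. The point is that the $\de$-ring $K[Y,\det(Y)^{-1}]$ is a Picard-Vessiot ring for $\de(y)=Ay$ over $K^\sharp$ in the classical sense (its $\de$-constants are $K^\de$ by hypothesis (ii) of $\s$-Picard-Vessiot extension, and it is generated by a fundamental solution matrix), hence also a Picard-Vessiot ring in the sense of Takeuchi for the extension of fields it generates. But the field it generates is a $\de$-subfield of $L$, not all of $L$, so I would need to enlarge: the correct comparison is between $R$ (respectively $R'$) as a $\ds$-ring and the full Takeuchi ring $R_0$ of $L^\sharp|K^\sharp$. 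Here Lemma \ref{lemma: PVjets} does the work: $R_d=K[Y,\det(Y)^{-1},\ldots,\s^d(Y),\det(\s^d(Y))^{-1}]$ is the classical Picard-Vessiot ring for $\de(y)=A_dy$, and as $d\to\infty$ these exhaust $R=K\{Y,\frac1{\det(Y)}\}_\s$; since each $R_d$ is the canonical Picard-Vessiot ring inside its own fraction field and $\mathrm{Frac}(\bigcup_d R_d)=L$, the union $R=\bigcup_d R_d$ is forced to be the canonical Takeuchi ring $R_0$ of $L^\sharp|K^\sharp$. Running the same argument with $Y'$ and $A'_d$ gives $R'=R_0$ as well, whence $R=R'$.

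The main obstacle will be the middle step: making precise that $R=\bigcup_d R_d$ really is \emph{the} Picard-Vessiot ring of $L^\sharp|K^\sharp$ in the sense of Takeuchi, rather than merely \emph{a} $\de$-subalgebra whose fraction field is $L$ and which is an increasing union of classical Picard-Vessiot rings. Concretely one must check that the torsor map $R\otimes_k (R\otimes_K R)^\de\to R\otimes_K R$ is surjective — but this is exactly the content of Lemma \ref{lemma:fonda} applied with $R_1=R_2=R$, which gives $(R\otimes_K R)^\de=k\{Z,\frac1{\det Z}\}_\s$ with $Z=(Y\otimes 1)^{-1}(1\otimes Y)$ and shows the map is an isomorphism. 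Combined with the uniqueness of the Takeuchi ring inside the fixed extension $L^\sharp|K^\sharp$, this pins down $R=R_0=R'$. One small bookkeeping point is that surjectivity of the torsor map is preserved under the increasing union, so in fact one could equally derive it directly from Lemma \ref{lemma: PVjets}; either route closes the argument.
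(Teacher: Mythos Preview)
Your proposal is correct and arrives at essentially the same proof as the paper: use Lemma~\ref{lemma:fonda} (with $R_1=R_2=R$, respectively $R_1=R_2=R'$) to see that both $R$ and $R'$ are Picard--Vessiot rings in the sense of Takeuchi for $L^\sharp|K^\sharp$, then invoke the uniqueness of such a ring inside a fixed extension (as in Amano--Masuoka--Takeuchi). The detour through Lemma~\ref{lemma: PVjets} and the filtration $R=\bigcup_d R_d$ is unnecessary --- as you yourself note at the end, Lemma~\ref{lemma:fonda} already delivers the surjectivity of the torsor map directly, and this is exactly how the paper proceeds.
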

\begin{proof}
It follows from Lemma \ref{lemma:fonda} that $R$ and $R'$ are Picard-Vessiot rings in the sense of Takeuchi. Because a Picard-Vessiot ring in the sense of Takeuchi is unique (inside L) by \cite[Lemma 1.11, p. 133]{AmanoMasuokaTakeuchi:HopfPVtheory}, it follows that $R=R'$.
\end{proof}

By the above remark it makes sense to speak of the $\s$-Picard-Vessiot ring $R$ of the $\s$-Galois extension $L|K$ without reference to a specific equation $\de(y)=Ay$ and we shall henceforth adhere to this practice.

\subsubsection{Existence and uniqueness of $\s$-Picard-Vessiot extensions}

Before proceeding to develop the Galois theory of $\s$-Picard-Vessiot extensions, we shall be concerned with the fundamental questions of existence and uniqueness of $\s$-Picard-Vessiot extensions. These questions have already been addressed in \cite{Wibmer:Chevalley} to illustrate the usefulness of constrained extensions of $\s$-pseudo fields.
So we largely only recall the results from \cite{Wibmer:Chevalley}\footnote{In \cite{Wibmer:Chevalley} it is assumed that $\de$ and $\s$ commute, i.e., $\hslash=1$. However, the proofs in \cite{Wibmer:Chevalley} generalize to the slightly more general setting of this article without difficulty.}

\begin{prop}[Existence of $\s$-Picard-Vessiot rings] \label{prop:existence}
Let $K$ be a $\ds$-field and $A\in K^{n\times n}$. Then there exists a $\s$-Picard-Vessiot $R$ ring for $\de(y)=Ay$ such that $R^\de$ is an algebraic field extension of $K^\de$. 
\end{prop}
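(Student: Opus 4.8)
The strategy is to build the $\s$-Picard-Vessiot ring as a suitable quotient of a universal solution ring, after a controlled extension of the constants. First I would form the $\ds$-ring $U := K\{X, \tfrac{1}{\det(X)}\}_\s$, where $X = (X_{ij})$ is an $n\times n$ matrix of $\s$-indeterminates over $K$, with $\de$ extended to $U$ by the rule $\de(X) = AX$ and then $\de(\s^i(X)) = \hslash_i \s^i(A)\s^i(X)$ for $i \geq 0$, forced by the commutation relation (\ref{eq:com}); one checks this is consistent and makes $U$ a $K$-$\ds$-algebra with a fundamental solution matrix. A $\s$-Picard-Vessiot ring for $\de(y)=Ay$ is then precisely $U/\m$ for a $\de$-ideal $\m$ that is $\s$-stable and maximal among proper $\ds$-ideals with $\de$-simple quotient. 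The obstruction flagged in the introduction is exactly that such $\m$ need not be prime, and more basically that a maximal proper $\ds$-ideal need not even be $\de$-simple as a quotient; so one cannot simply invoke Zorn's lemma as in the classical case.

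**Reduction to the jet systems.** The key device, already set up in the excerpt, is the family of differential equations $\de(y) = A_d y$ from (\ref{eqn:Ad}), whose classical Picard-Vessiot rings are the $R_d$ of Lemma \ref{lemma: PVjets}. I would exploit that a $\s$-Picard-Vessiot ring $R$ is $\de$-simple if and only if each finitely generated $\de$-subring $R_d = K[Y, \tfrac{1}{\det Y}, \ldots, \s^d(Y), \tfrac{1}{\det \s^d(Y)}]$ is $\de$-simple, i.e. is a classical Picard-Vessiot ring for $\de(y) = A_d y$ — this is the content of the argument in Proposition \ref{prop:pvringpvext}. So the task becomes: construct, compatibly in $d$, classical Picard-Vessiot rings for the tower $(A_d)_{d\geq 0}$, glued by the endomorphism $\s$ (which must send the copy of $Y$ at level $i$ to the copy at level $i+1$). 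Concretely, one wants a $\s$-stable $\de$-ideal $\m \subset U$ such that $\m \cap R_d$ is a Picard-Vessiot ideal for every $d$.

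**Handling the constants.** Since $K^\de =: k$ need not be $\s$-closed (indeed none of our examples is), classical Picard-Vessiot rings for $A_d$ exist only after a finite algebraic extension of $k$, and the extensions needed at different levels $d$ must be made coherent under $\s$. This is where I expect the real work to lie, and where one invokes the machinery of \cite{Wibmer:Chevalley}: $k$, viewed as a constant $\s$-field, admits (constrained) extensions that are $\s$-pseudo fields, and one can choose a $\s$-stable family of maximal $\de$-ideals in the $R_d$ over such an extension. The phrase "$R^\de$ is an algebraic field extension of $K^\de$" in the statement signals precisely that only algebraic (in fact the $\s$-constants of $R$ will be finitely $\s$-generated and algebraic over $k$) enlargement of constants is incurred; no new transcendence over $k$ appears because each $R_d$ contributes only the algebraic constants of a classical Picard-Vessiot ring.

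**Main obstacle and the honest move.** The main obstacle is producing a single $\s$-stable $\de$-ideal of $U$ that restricts correctly at every level, given that $\s$ need not be injective and that maximal $\ds$-ideals misbehave. The clean resolution — and what I would actually do — is to cite \cite{Wibmer:Chevalley} (as the excerpt explicitly announces just before the Proposition), where this construction is carried out using constrained extensions of $\s$-pseudo fields: take a $\s$-pseudo field $k'$ constrained over $k$ into which all the requisite constants embed compatibly, form the $\s$-Picard-Vessiot ring over $k'$ by taking an appropriate quotient of $k' \otimes_k U$, and then descend, observing that the $\s$-constants obtained are algebraic over $k$. So the proof is essentially a pointer: \emph{This is \cite[Proposition 2.something]{Wibmer:Chevalley}, whose proof carries over verbatim to $\hslash \neq 1$ because (\ref{eq:com}) is all that is used in propagating $\de$ to the $\s^i(X)$.} If instead a self-contained argument were wanted, the steps above — universal ring, jet tower, $\s$-compatible maximal $\de$-ideals after a constrained constant extension, descent — are the route, with the constrained-extension step being the crux.
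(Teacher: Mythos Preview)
Your overall architecture is correct: set up the universal $\ds$-ring $U=K\{X,\tfrac{1}{\det X}\}_\s$ with $\de(X)=AX$, filter it by the $S_d=K[X,\tfrac{1}{\det X},\ldots,\s^d(X),\tfrac{1}{\s^d(\det X)}]$, and seek a $\s$-compatible tower of $\de$-maximal $\de$-ideals $\m_d\subset S_d$. But two points are off.

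First, a conceptual slip: classical Picard-Vessiot \emph{rings} for $\de(y)=A_dy$ always exist over any $\de$-field --- you just take $S_d/\m_d$ for a $\de$-maximal $\de$-ideal $\m_d$. No extension of constants is needed for their existence; the constants $(S_d/\m_d)^\de$ simply come out as a (possibly proper) algebraic extension of $K^\de$. What may fail without algebraically closed constants is the existence of Picard-Vessiot \emph{extensions} (where one insists $R^\de=K^\de$). So your sentence ``classical Picard-Vessiot rings for $A_d$ exist only after a finite algebraic extension of $k$'' is not right, and the whole extend-constants-then-descend manoeuvre is unnecessary here.

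Second, and more substantively, you misidentify the key input from \cite{Wibmer:Chevalley}. The existence result (Lemma~2.16 there) does \emph{not} use constrained extensions of $\s$-pseudo fields --- that machinery is what drives the \emph{uniqueness} theorem (Theorem~\ref{thm:unicity} in the paper). The compatibility problem $\m_{d-1}\subset\m_d$ and $\s(\m_{d-1})\subset\m_d$ is solved instead by the prolongation lemma for difference kernels \cite[Chapter~6, Lemma~1]{Cohn:difference}: one inductively extends a $\de$-maximal $\de$-ideal at level $d-1$ to one at level $d$ respecting $\s$. With the tower $(\m_d)$ in hand, set $\m=\bigcup_d\m_d$ and $R=U/\m$; then $R=\bigcup_d R_d$ with each $R_d=S_d/\m_d$ $\de$-simple, and $R^\de=\bigcup_d R_d^\de$ is algebraic over $K^\de$ by the standard fact that the constants of a finitely generated $\de$-simple $K$-$\de$-algebra are algebraic over $K^\de$.
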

\begin{proof}
This is \cite[Lemma 2.16, p. 1392]{Wibmer:Chevalley}. Because of the importance of the result we recall the contruction:
For each of the systems $\de(y)=A_dy$ from Lemma \ref{lemma: PVjets} we are able to construct a (classical) Picard-Vessiot ring individually by taking the quotient of
$$S_{d}:= K\left[X,\tfrac{1}{\det(X)},\s(X),\tfrac{1}{\s(\det(X))},\ldots,\s^d(X), \tfrac{1}{\s^d(\det(X))}\right]$$ by some
$\de$-maximal $\de$-ideal $\m_d$ of $S_d$. Here $X$ is an $n\times n$-matrix of $\s$-indeterminates and the action of $\de$ on $S_d$ is determined by $\de(X)=AX$ and the commutativity relation (\ref{eq:com}). The difficulty is to make this construction compatible with $\s$: We need $\m_{d-1}\subset\m_d$ and $\s(\m_{d-1})\subset \m_d$. This difficulty can be resolved by a recourse to the prolongation lemma for difference kernels (\cite[Lemma 1, Chapter 6, p. 149]{Cohn:difference}).

We set $\m:=\bigcup_{d\geq 0}\m_d$ and $R:=k\{X,\frac{1}{\det(X)}\}_\s/\m$. So $R$ is the union of the $\de$-simple rings $R_d:=S_d/\m_d$. The $\de$-constants of a $\de$-simple $\de$-ring which is finitely generated as an algebra over a $\de$-field $K$ are algebraic over $K$. (See \cite[Lemma 1.17, p. 13]{vdPutSingerDifferential} for the case $K^\de$ algebraically closed or \cite[Theorem 4.4, p. 505]{Takeuchi:hopfalgebraicapproach} for the general case).
Thus $R_d^\de$ is algebraic over $K^\de$ and it follows that also $R^\de$ is algebraic over $K^\de$.
\end{proof}

%

From Propositions \ref{prop:pvringpvext} and \ref{prop:existence} we immediately obtain the existence of $\s$-Picard-Vessiot extensions over $\ds$-fields with algebraically closed $\de$-constants:

\begin{cor}[Existence of $\sigma$-Picard-Vessiot extensions] \label{cor:existenceofPVextension}
Let $K$ be a $\ds$-field and $A\in K^{n\times n}$. Assume that $K^\de$ is an algebraically closed field. Then there exists a $\s$-Picard-Vessiot extension for $\de(y)=Ay$ over $K$. \qed
\end{cor}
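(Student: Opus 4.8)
The plan is to combine the two preceding propositions: Proposition~\ref{prop:existence} produces a $\s$-Picard-Vessiot \emph{ring}, and the converse half of Proposition~\ref{prop:pvringpvext} turns such a ring into a $\s$-Picard-Vessiot \emph{extension}, provided the ring has no new $\de$-constants. The only place where the hypothesis ``$K^\de$ algebraically closed'' enters is precisely in securing this no-new-constants condition.

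First I would apply Proposition~\ref{prop:existence} to the system $\de(y)=Ay$. This yields a $\s$-Picard-Vessiot ring $R$ for $\de(y)=Ay$ with the property that $R^\de$ is an algebraic field extension of $K^\de$. All the substantial work — in particular the construction of a compatible chain of $\de$-maximal ideals $\m_d$, where compatibility with $\s$ is arranged via the prolongation lemma for difference kernels — is already packaged inside that proposition, so I would simply quote it.

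Next, since $K^\de$ is assumed algebraically closed, every algebraic field extension of $K^\de$ is trivial; hence $R^\de=K^\de$. Now the converse statement of Proposition~\ref{prop:pvringpvext} applies directly: because $R$ is a $\s$-Picard-Vessiot ring for $\de(y)=Ay$ with $R^\de=K^\de$, its field of fractions $L$ — which is a $\ds$-field by Lemma~\ref{lemma:sdomain} — is a $\s$-Picard-Vessiot extension for $\de(y)=Ay$ over $K$. This is exactly the assertion of the corollary.

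Since the argument is a two-line deduction from results established just above, there is no genuine obstacle at this stage; I would only take care to point out explicitly where the hypothesis is used (namely to collapse the a priori algebraic extension $R^\de|K^\de$ to an equality) and to remark that, dropping the assumption that $K^\de$ is algebraically closed, one still obtains the relative version announced in the introduction: a $\s$-Picard-Vessiot extension exists after replacing $K$ by a $\ds$-field whose $\de$-constants are a finite algebraic extension of $K^\de$.
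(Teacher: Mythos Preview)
Your proposal is correct and is exactly the argument the paper intends: the corollary is stated with a \qed{} and is introduced by the sentence ``From Propositions \ref{prop:pvringpvext} and \ref{prop:existence} we immediately obtain\ldots'', so the two-line deduction you give (Proposition~\ref{prop:existence} yields a $\s$-Picard-Vessiot ring $R$ with $R^\de$ algebraic over $K^\de$, algebraic closedness forces $R^\de=K^\de$, and Proposition~\ref{prop:pvringpvext} then makes $\quot(R)$ a $\s$-Picard-Vessiot extension) is precisely what is meant.
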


%
%

The standard assumption to guarantee the existence of (classical) Picard-Vessiot extensions is ``algebraically closed constants''. Since we can get by with the same assumption the above results are more or less optimal. In all generality the existence of $\s$-Picard-Vessiot extensions can not be guaranteed. Indeed, if $L|K$ is a $\s$-Picard-Vessiot extension for $\de(y)=Ay$ with fundamental solution matrix $Y\in\Gl_n(L)$ then $K(Y)\subset L$ is a (classical) Picard-Vessiot extension of $K$ for $\de(y)=Ay$. Thus, if there is no Picard-Vessiot extension for $\de(y)=Ay$ there can be no $\s$-Picard-Vessiot extension for $\de(y)=Ay$. A concrete example of a $\de$-field $K$ and an equation $\de(y)=Ay$ such that there exists no Picard-Vessiot extension for $\de(y)=Ay$ over $K$ has been provided by Seidenberg in \cite{seiden}. To obtain a concrete example of a $\ds$-field $K$ and an equation $\de(y)=Ay$ such that there exists no $\s$-Picard-Vessiot extension for $\de(y)=Ay$ over $K$ one simply has to add $\s$ as the identity on $K$.

Nevertheless, since often one can find solutions in some suitable field of functions, there are many natural situations where there exists a $\s$-Picard-Vessiot extension for a given linear differential equation, even if the $\de$-constants are not algebraically closed. The next proposition gives an example of such a ``natural situation''. See also Examples \ref{exa:exponential} to \ref{exa:padic}.


\begin{prop}\label{prop:taylorexp}
Let $k$ be a $\s$-field and let $K=k(x)$ denote the field of rational functions in one variable $x$ over $k$. Extend $\s$ to $K$ by setting $\s(x)=x$ and consider the derivation $\de=\frac{d}{d x}$. Thus $K$ is a $\ds$-field with $\hslash =1$ and $K^\de=k$.
Then for every $A\in K^{n\times n}$, there exists a $\s$-Picard-Vessiot extension $L|K$ for $\de(y)=Ay$.
\end{prop}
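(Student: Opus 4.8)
The plan is to work inside a concrete $\ds$-overfield of $K=k(x)$ in which solutions of any $\de(y)=Ay$ can be found, and then cut down to a $\s$-Picard-Vessiot extension using Lemma \ref{lemma:sdomain} and the existence results above. The natural candidate is the ring $k[[x]]$ of formal power series, extended to its field of fractions $k((x))$ after localizing away from the (finitely many) poles of the entries of $A$. More precisely, let $S\subset K$ be the finite set of poles of the $A_{ij}$ together with $0$; localizing $k[x]$ at the complement of $S$ and completing at $x$ (or simply working in $k[[x-c]]$ for a suitable $c\in k$, or in $k((x))$) we obtain a $\ds$-field $\widehat K$ containing $K$, with $\s$ acting coefficientwise (since $\s(x)=x$), $\de=\frac{d}{dx}$, $\hslash=1$, and $\widehat K^\de=k=K^\de$: indeed a formal power series killed by $\frac{d}{dx}$ is a constant. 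The point is that $\widehat K$ is $\de$-closed enough: by the standard Cauchy existence theorem for linear ODEs over a complete local ring of characteristic zero, the system $\de(Y)=AY$ has a fundamental solution matrix $Y\in\Gl_n(k[[x-c]])$ for any point $c\in k$ that is not a pole of $A$, namely the unique solution with $Y(c)=I_n$ (one solves the recursion on Taylor coefficients; $\det(Y)$ is a unit since $\det(Y)(c)=1$).

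With such a $Y$ in hand inside $\widehat K$, I would set $R:=K\{Y,\tfrac1{\det(Y)}\}_\s\subseteq \widehat K$. Since $\s$ acts coefficientwise on $k[[x-c]]$ and commutes with $\frac{d}{dx}$, the matrices $\s^i(Y)$ are again power series solutions of $\de(y)=\s^i(A)y$, and $R$ is a $K$-$\ds$-subalgebra of $\widehat K$. To produce the $\s$-Picard-Vessiot \emph{extension}, the remaining task is to arrange a $\de$-simple such ring: among all the $\ds$-subrings of $\widehat K$ of the above shape (for the various possible fundamental matrices $YC$, $C\in\Gl_n(k)$, but it is cleaner to argue abstractly) pick one whose field of fractions $L\subseteq\widehat K$ has $L^\de=K^\de$. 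The key observation is that $\widehat K^\de=k=K^\de$, so \emph{any} intermediate $\ds$-field $L$ with $K\subseteq L\subseteq\widehat K$ automatically satisfies $L^\de=K^\de$. Hence $L:=K\langle Y_{ij}\rangle_\s\subseteq\widehat K$ is a $\ds$-field extension of $K$, generated as a $\s$-field by the entries of a fundamental solution matrix, with no new $\de$-constants — that is exactly Definition \ref{defi:PVextPVring}(i)--(ii), so $L|K$ is a $\s$-Picard-Vessiot extension for $\de(y)=Ay$.

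The one genuine subtlety, and what I expect to be the main obstacle, is the choice of the base point $c$: one needs a point of $k$ which is not a pole of any entry of $A$. If $k$ is infinite this is automatic (finitely many poles). If $k$ is finite this may fail for the chosen coordinate $x$, but one can first make a fractional-linear substitution $x\mapsto 1/(x-a)$ or pass to $k[[1/x]]$, i.e. expand at $x=\infty$; since $A$ has only finitely many poles in $\mathbb P^1_{\bar k}$ one of these expansions works, and the substitution commutes with the coefficientwise $\s$ because $\s(x)=x$. (Alternatively one may base-change $k$ to an infinite $\s$-field extension $k'$, run the argument over $k'(x)$, and descend; but this reintroduces the kind of descent one wishes to avoid, so the expansion-at-a-good-point argument is preferable.) A secondary point to check carefully is that the completed local ring is still a $\ds$-ring with $\hslash=1$ — this is immediate since $\s$ is $k$-linear and coefficientwise and $\de=\frac{d}{dx}$ passes to the completion — and that $\det(Y)$ is invertible in $\widehat K$, which holds because its value at $c$ is $1$. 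Modulo these verifications the statement follows, with no appeal to $\s$-closedness of $k$.
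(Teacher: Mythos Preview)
Your approach is the same as the paper's: embed $K$ in a formal power series field $k((x-c))$, take a formal fundamental matrix $Y$ there, and set $L=K\langle Y\rangle_\s$; since $k((x-c))^\de=k$ no new constants appear.

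There is, however, a genuine gap. You assert that $\s$ acts coefficientwise on $k[[x-c]]$ ``since $\s(x)=x$'', but this is only correct if in addition $\s(c)=c$. If $c\notin k^\s$ then the extension of $\s$ from $K$ to $k[[x-c]]$ must send $x-c$ to $x-\s(c)=(x-c)+(c-\s(c))$, and applying this to an honest power series $\sum_i b_i(x-c)^i$ produces $\sum_i \s(b_i)\bigl((x-c)+(c-\s(c))\bigr)^i$, whose constant term $\sum_i\s(b_i)(c-\s(c))^i$ is an infinite sum in $k$ with no reason to make sense. So $k((x-c))$ is not a $\ds$-overfield of $K$ in general; your choice of expansion point must lie in $k^\s$.

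The fix is immediate once you remember the paper's standing hypothesis that all rings contain $\Q$: then $\Q\subset k^\s$, so $k^\s$ is infinite and you can always choose $c\in k^\s$ avoiding the finitely many poles of $A$. This is exactly what the paper does (it takes $a\in k^\s$ regular for the system). Your whole discussion of the case ``$k$ finite'' and the proposed workarounds (expansion at $\infty$, fractional-linear changes, base change to $k'$) is therefore unnecessary, and in fact misses the real constraint on the expansion point.
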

\begin{proof}
Since we are in characteristic zero, there exists an $a\in k^\s$ which is a regular point for $\de(y)=Ay$. That is, no denominator appearing in the entries of $A$ vanishes at $a$. We consider the field $k((x-a))$ of formal Laurent series in $x-a$ as a $\ds$-field by setting
$\de(\sum b_i(x-a)^i)=\sum ib_i(x-a)^{i-1}$ and $\s(\sum b_i(x-a)^i)=\sum\s(b_i)(x-a)^i$. Then $k((x-a))$ is naturally a $\ds$-field extension of $K$. By choice of $a$, there exists a fundamental solution matrix $Y\in \Gl_n(k((x-a))$ for $\de(y)=Ay$.

Since $k((x-a))^\de=k$ it is clear that $L:=K\langle Y\rangle_\s\subset k((x-a))$ is a $\s$-Picard-Vessiot extension for $\de(y)=Ay$.
%
%
\end{proof}

\medskip

Next we shall be concerned with the uniqueness of $\s$-Picard-Vessiot extensions and $\s$-Picard-Vessiot rings for a given linear differential equation $\de(y)=Ay$. To motivate our results we first recall the classical situation: Let $K$ be a $\de$-field and $A\in K^{n\times n}$. If $R_1$ and $R_2$ are two Picard-Vessiot rings for $\de(y)=Ay$, then there exists a finite algebraic extension $k'$ of $k:=K^\de$ containing $k_1:=R_1^\de$ and $k_2:=R_2^\de$ and an isomorphism of $K\otimes_k k'$-$\de$-algebras
\[R_1\otimes_{k_1}k'\simeq R_2\otimes_{k_2}k'.\]
In particular, if $k=K^\de$ is an algebraically closed field, a Picard-Vessiot ring (and thus also a Picard-extension) for a given equation is unique up to $K$-$\de$-isomorphisms.

To obtain a similar uniqueness result for $\s$-Picard-Vessiot extensions, one needs to understand the $\s$-analog of finite algebraic extensions.
This has been worked out in \cite[Section 2.1]{Wibmer:Chevalley}, where it is shown that constrained extensions of $\s$-pseudo fields (\cite[Definition 2.3, p. 1388]{Wibmer:Chevalley}) satisfy properties similar to algebraic extensions of fields. These constrained extensions can also be seen as $\s$-analogs of the constrained extensions of differential fields studied by E. Kolchin in \cite{Kolchin:constrainedExtensions}.

To state the main uniqueness result we need the following definition which will also be relevant later on in Section \ref{sec:sseparability}.

\begin{defi}
Let $k$ be a $\s$-field and $R$ a $k$-$\s$-algebra. We say that $R$ is \emph{$\s$-separable over $k$} if $\s$ is injective
on $R\otimes_k k'$ for every $\s$-field extension $k'$ of $k$.
\end{defi}

Note that in characteristic $p>0$ every field (or ring) can be considered as a difference field (ring) equipped with the Frobenius endomorphism $\s(a)=a^p$. In this situation $\s$-separability is the same thing as separability. The well-known characterizations of separability generalize in a straight forward manner. (Cf. \cite{Hrushovski:elementarytheoryoffrobenius} or \cite{Wibmer:thesis}.)
For example, the well-known fact a reduced $k$-algebra over a perfect field is separable generalizes to ``Every $\s$-reduced $k$-$\s$-algebra over an inversive $\s$-field $k$ is $\s$-separable.'' (Corollary \ref{cor:sigmaseparablesufficesextension} (i).) Here a $\s$-ring $R$ is called $\s$-reduced if $\s\colon R\to R$ is injective.

Now we can state the general uniqueness theorem for $\s$-Picard-Vessiot rings (\cite[Theorem 2.19, p. 1393]{Wibmer:Chevalley}).

\begin{thm}[Uniqueness of $\s$-Picard-Vessiot rings]\label{thm:unicity}
Let $K$ be a $\ds$-field such that $K$ is $\s$-separable over $k:=K^\de$. Assume that $R_1$ and $R_2$ are two $\s$-Picard-Vessiot rings over $K$ for the same equation $\de(y)=Ay$, $A\in K^{n\times n}$. Then there exists a finitely $\s$-generated constrained $\s$-pseudo field extension $k'$ of $k$ containing $k_1:=R_1^\de$ and $k_2:=R_2^\de$ such that
$R_1\otimes_{k_1}k'$ and $R_2\otimes_{k_2}k'$ are isomorphic as $K\otimes_k k'$-$\ds$-algebras.

\end{thm}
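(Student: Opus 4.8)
The plan is to transpose the classical proof of uniqueness of Picard-Vessiot rings — comparing $R_1$ and $R_2$ through the $\ds$-ring $R_1\otimes_K R_2$ and its ring of $\de$-constants — to the present setting, using Lemma \ref{lemma:fonda} in place of the torsor theorem and replacing finite algebraic extensions of the field of constants by finitely $\s$-generated constrained $\s$-pseudo field extensions in the sense of \cite[Section 2.1]{Wibmer:Chevalley}.

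First I would study $T:=R_1\otimes_K R_2$, a $\ds$-ring that is finitely $\s$-generated over $K$. Let $Y_1\in\Gl_n(R_1)$ and $Y_2\in\Gl_n(R_2)$ be fundamental solution matrices for $\de(y)=Ay$ and set $Z:=(Y_1\otimes 1)^{-1}(1\otimes Y_2)\in\Gl_n(T)$. Exactly as in the proof of Lemma \ref{lemma:fonda} one has $\de(Z)=0$; since $R_1$ is $\de$-simple, the $\de$-constants of the $\de$-algebra $T$ over $R_1$ are linearly disjoint from $R_1$ over $R_1^\de=k_1$, and because $1\otimes Y_2=(Y_1\otimes 1)Z$ the entries of $1\otimes Y_2$ and $1/\det(1\otimes Y_2)$ already lie in the subring of $T$ generated by $R_1$ and $C:=T^\de$. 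Hence $T=R_1\otimes_{k_1}C$, and $C$ is a nonzero, finitely $\s$-generated $k$-$\s$-algebra which contains canonical copies of $k_1$ and $k_2$; note that $k_1$ and $k_2$ are $\s$-fields because $R_1$ and $R_2$ are $\de$-simple.

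Next I would apply the constrained Nullstellensatz for $\s$-pseudo fields from \cite{Wibmer:Chevalley}: there exist a finitely $\s$-generated constrained $\s$-pseudo field extension $k'$ of $k$ and a $k$-$\s$-homomorphism $\lambda\colon C\to k'$. Composing with $k_1\hookrightarrow C$ and $k_2\hookrightarrow C$ exhibits $k'$ as a $\s$-field extension of both $k_1$ and $k_2$. Base changing the identity $T=R_1\otimes_{k_1}C$ along $\lambda$ gives a $K$-$\ds$-homomorphism $T\to R_1\otimes_{k_1}k'$, and restricting it to the factor $R_2\hookrightarrow T$ and using the $k_2$-structure on $k'$ yields a $K\otimes_k k'$-$\ds$-homomorphism $\varphi\colon R_2\otimes_{k_2}k'\to R_1\otimes_{k_1}k'$. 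It remains to check that $\varphi$ is an isomorphism, and for this I would invoke the base-change lemmas of \cite{Wibmer:Chevalley}: because $K$ is $\s$-separable over $k$ and $k'$ is constrained over $k$, the rings $R_1\otimes_{k_1}k'$ and $R_2\otimes_{k_2}k'$ are again $\de$-simple with ring of $\de$-constants equal to $k'$. Hence $\varphi$, being a nonzero $\de$-homomorphism out of a $\de$-simple ring, is injective; and since $\lambda(Z)\in\Gl_n(k')$ and $1\otimes Y_2=(Y_1\otimes 1)Z$, the image of $\varphi$ contains $Y_1\otimes 1$, $1/\det(Y_1\otimes 1)$ and $k'$, hence is all of $R_1\otimes_{k_1}k'$. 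Therefore $\varphi$ is the desired isomorphism of $K\otimes_k k'$-$\ds$-algebras.

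I expect the genuine work to lie entirely in the two ingredients imported from \cite[Section 2.1]{Wibmer:Chevalley}: the constrained Nullstellensatz producing $\lambda\colon C\to k'$, and the statement that base change of a $\s$-Picard-Vessiot ring along a finitely $\s$-generated constrained $\s$-pseudo field extension of the constants preserves $\de$-simplicity and introduces no new $\de$-constants relative to the enlarged base. The latter is precisely the point at which the hypothesis that $K$ be $\s$-separable over $k$ is indispensable — without it one cannot even guarantee that $R_i\otimes_{k_i}k'$ is $\s$-reduced. Once these are in hand, the remaining steps follow the classical template with only bookkeeping changes.
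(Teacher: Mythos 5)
The paper does not actually prove this theorem: it is imported verbatim from \cite[Theorem 2.19, p.~1393]{Wibmer:Chevalley}, so there is no in-paper argument to compare against, only the citation. Your reconstruction --- decomposing $R_1\otimes_K R_2$ as $R_1\otimes_{k_1}(R_1\otimes_K R_2)^\de$ via the torsor-type Lemma \ref{lemma:fonda}, mapping the constants into a finitely $\s$-generated constrained $\s$-pseudo field extension by the constrained Nullstellensatz of \cite[Section 2.1]{Wibmer:Chevalley}, and then base-changing --- is precisely the standard argument that the citation points to and is sound; the one step that deserves an extra word is your assertion that $C=(R_1\otimes_K R_2)^\de=k_1\{Z,\tfrac{1}{\det Z}\}_\s$ is finitely $\s$-generated \emph{over $k$}: it is visibly finitely $\s$-generated over $k_1=R_1^\de$, so one must additionally know that $k_1$ itself is a finitely $\s$-generated (indeed constrained) extension of $k$, a fact which is again supplied by the machinery of \cite[Section 2.1]{Wibmer:Chevalley} rather than by anything proved in the present paper.
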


Note that the assumption ``$K$ is $\s$-separable over $k:=K^\de$'' is automatically satisfied if $k$ is $\s$-closed because a $\s$-closed $\s$-field is inversive. (If $a\in k$ and $b\in k^*$ is an element in the inversive closure $k^*$ (\cite[Def. 2.1.6, p. 109]{Levin}) of $k$ such that $\s(b)=a$ then $k\{b\}_\s$ is a $\s$-domain.)

Since the $k'$ in Theorem \ref{thm:unicity} is a pseudo-field rather than a field, we obtain uniqueness over $\s$-closed $\de$-constants only up to powers of $\s$.

\begin{cor} \label{cor:uniqueness}
Let $K$ be a $\ds$-field such that $K^\de$ is a $\s$-closed $\s$-field. Let $R_1$ and $R_2$ be two $\s$-Picard-Vessiot rings for $\de(y)=Ay$ with $A\in K^{n\times n}$. Then there exists an integer $l\geq 1$ such that $R_1$ and $R_2$ are isomorphic as $K$-$\de\s^l$-algebras.
\end{cor}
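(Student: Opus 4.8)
The plan is to deduce Corollary~\ref{cor:uniqueness} from Theorem~\ref{thm:unicity} by unwinding what a finitely $\s$-generated constrained $\s$-pseudo field over a $\s$-closed $\s$-field $k$ looks like. First I would observe that the hypothesis of Theorem~\ref{thm:unicity} applies: a $\s$-closed $\s$-field is inversive (as remarked just after the theorem, using that $k\{b\}_\s$ is a $\s$-domain for $b$ a $\s$-preimage of any $a\in k$ in the inversive closure), hence $K$ is $\s$-separable over $k=K^\de$. So there is a finitely $\s$-generated constrained $\s$-pseudo field extension $k'$ of $k$ containing $k_1=R_1^\de$ and $k_2=R_2^\de$, together with an isomorphism $R_1\otimes_{k_1}k'\cong R_2\otimes_{k_2}k'$ of $K\otimes_k k'$-$\ds$-algebras.

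Next I would analyze the structure of such a $k'$. A $\s$-pseudo field that is finitely $\s$-generated and constrained over the $\s$-closed field $k$ should, by the theory of \cite{Wibmer:Chevalley}, be forced to be ``small'': since $k$ is $\s$-closed, any $\s$-domain finitely $\s$-generated over $k$ admits a $k$-$\s$-morphism to $k$, and one expects that a constrained $\s$-pseudo field extension of a $\s$-closed field is a finite product $k'=k\times\cdots\times k$ of copies of $k$, on which $\s$ acts by permuting the factors cyclically. Equivalently, $k'$ is obtained from $k$ by replacing $\s$ with $\s^l$ for some $l\geq 1$: as a ring $k'\cong k^l$, the endomorphism $\s_{k'}$ cyclically permutes the factors, and the $\s_{k'}^l$-structure on each factor is just $(k,\s)$. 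Also, by Proposition~\ref{prop:nonewconstants}, $R_1^\de=R_2^\de=k$, so $k_1=k_2=k$ and the tensor products $R_i\otimes_{k_i}k'$ simplify to $R_i\otimes_k k'\cong R_i^l$ (as rings), carrying the $\ds$-structure where $\s$ cyclically permutes the $l$ copies of $R_i$.

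Then I would extract the desired isomorphism. The $K\otimes_k k'$-$\ds$-algebra isomorphism $\varphi\colon R_1\otimes_k k'\xrightarrow{\ \sim\ }R_2\otimes_k k'$ is in particular a ring isomorphism $R_1^l\to R_2^l$ commuting with the cyclic-shift $\s$; restricting to a suitably chosen factor and tracking where it lands (using that $\varphi$ commutes with $\s$, so it intertwines the two cyclic actions and hence, after composing with a power of the shift, sends the first factor of $R_1^l$ to the first factor of $R_2^l$), one obtains a ring isomorphism $R_1\to R_2$. It is $K$-linear because $\varphi$ is $K\otimes_k k'$-linear; it commutes with $\de$ because $\varphi$ does and $\de$ acts diagonally; and it commutes with $\s^l$ because on each factor the induced difference operator is precisely $\s^l$ (the shift composed with itself $l$ times returns to the same factor). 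Hence $R_1$ and $R_2$ are isomorphic as $K$-$\de\s^l$-algebras.

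The main obstacle will be the second step: pinning down precisely that a finitely $\s$-generated constrained $\s$-pseudo field over a $\s$-closed $\s$-field is exactly a ``cyclic'' product $k^l$ with $\s^l$ restricting to $(k,\s)$ on each component. This requires invoking the structure theory of $\s$-pseudo fields and constrained extensions from \cite[Section 2.1]{Wibmer:Chevalley}---specifically that constrainedness plays the role of algebraicity, that a $\s$-pseudo field is a finite product of fields with $\s$ permuting them transitively, and that $\s$-closedness of $k$ forces each field factor to coincide with $k$ and the orbit structure to be a single $l$-cycle. Once that normal form for $k'$ is in hand, the rest is the routine bookkeeping with the cyclic action sketched above.
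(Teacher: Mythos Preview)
Your proposal is correct and essentially reconstructs the argument behind \cite[Corollary~2.21]{Wibmer:Chevalley}, which is all the paper's proof invokes. One small simplification in your third step: because $\varphi$ is $K\otimes_k k'$-linear it fixes the primitive idempotents $e_0,\ldots,e_{l-1}\in k'$, so it already sends the $i$-th factor of $R_1^l$ to the $i$-th factor of $R_2^l$ --- no shift is needed; writing $\varphi_i$ for the induced map on the $i$-th factor, the compatibility $\varphi\s=\s\varphi$ (with $\s$ carrying the $i$-th factor to the $(i{+}1)$-th via $\s\colon R_j\to R_j$) gives $\varphi_{i+1}\circ\s=\s\circ\varphi_i$, and iterating $l$ times yields $\varphi_0\circ\s^l=\s^l\circ\varphi_0$.
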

\begin{proof}
This is \cite[Corollary 2.21, p. 1394]{Wibmer:Chevalley}. To be precise, the corollary states that there exists an integer $l\geq 1$ and an isomorphism $\psi\colon R_1\to R_2$ of $K$-$\de$-algebras which commutes with $\s^l$, but maybe not with $\s$.
\end{proof}

\begin{cor} \label{cor:uniqueness2}
Let $K$ be a $\ds$-field such that $K^\de$ is a $\s$-closed $\s$-field. Let $L_1$ and $L_2$ be two $\s$-Picard-Vessiot extensions for $\de(y)=Ay$ with $A\in K^{n\times n}$. Then there exists an integer $l\geq 1$ such that $L_1|K$ and $L_2|K$ are isomorphic as $\de\s^l$-field extensions of $K$.
\end{cor}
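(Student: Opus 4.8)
The plan is to deduce Corollary~\ref{cor:uniqueness2} from Corollary~\ref{cor:uniqueness} by passing from $\s$-Picard-Vessiot rings to their fields of fractions. First I would observe that by Proposition~\ref{prop:nonewconstants}, since $k:=K^\de$ is $\s$-closed, we have $R_i^\de=K^\de$ for the $\s$-Picard-Vessiot rings $R_i$ sitting inside $L_i$; concretely, choose a fundamental solution matrix $Y_i\in\Gl_n(L_i)$ for $\de(y)=Ay$ with $L_i=K\<Y_i\>_\s$, and set $R_i:=K\{Y_i,\tfrac{1}{\det(Y_i)}\}_\s$. By Proposition~\ref{prop:pvringpvext}, $R_i$ is a $\s$-Picard-Vessiot ring for $\de(y)=Ay$ and $L_i$ is its field of fractions.

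Next I would apply Corollary~\ref{cor:uniqueness} to $R_1$ and $R_2$: there is an integer $l\geq 1$ and an isomorphism $\psi\colon R_1\to R_2$ of $K$-$\de$-algebras commuting with $\s^l$. Such a $\psi$ extends uniquely to an isomorphism of the fields of fractions $L_1\to L_2$, still commuting with $\de$ and with $\s^l$ (a ring isomorphism between domains extends canonically to fraction fields, and the compatibility with $\de$ and $\s^l$ is inherited since these operators on $L_i$ are the unique extensions of the corresponding operators on $R_i$, by Lemma~\ref{lemma:sdomain}). This gives the desired $\de\s^l$-isomorphism $L_1\to L_2$ over $K$.

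The only genuine subtlety is the initial reduction: one must know that the $\s$-Picard-Vessiot \emph{extensions} $L_1,L_2$ are exactly the fraction fields of $\s$-Picard-Vessiot \emph{rings} to which Corollary~\ref{cor:uniqueness} applies, and this requires the no-new-constants statement $R_i^\de=K^\de$, which in turn needs $k$ to be $\s$-closed --- precisely the hypothesis available here. (Note that $\s$-closedness also guarantees $K$ is $\s$-separable over $k$, as remarked after Theorem~\ref{thm:unicity}, so the hypotheses of the uniqueness results are met.) I do not expect a serious obstacle; the main point to be careful about is simply tracking that $\psi$ commutes with $\s^l$ rather than with $\s$, exactly as in Corollary~\ref{cor:uniqueness}, so the conclusion is an isomorphism of $\de\s^l$-field extensions and not of $\ds$-field extensions --- consistent with the earlier remark that $\s$-Picard-Vessiot extensions are unique only up to powers of $\s$.

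\begin{proof}
Let $Y_i\in\Gl_n(L_i)$ be a fundamental solution matrix for $\de(y)=Ay$ with $L_i=K\<Y_i\>_\s$, for $i=1,2$, and set $R_i:=K\{Y_i,\tfrac{1}{\det(Y_i)}\}_\s\subset L_i$. Since $K^\de$ is $\s$-closed, Proposition~\ref{prop:nonewconstants} gives $R_i^\de=K^\de$, and then by Proposition~\ref{prop:pvringpvext}, $R_i$ is a $\s$-Picard-Vessiot ring for $\de(y)=Ay$ whose field of fractions is $L_i$. By Corollary~\ref{cor:uniqueness}, there is an integer $l\geq 1$ and an isomorphism $\psi\colon R_1\to R_2$ of $K$-$\de$-algebras with $\psi\s^l=\s^l\psi$. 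As $R_1,R_2$ are domains (Lemma~\ref{lemma:sdomain}), $\psi$ extends uniquely to a field isomorphism $L_1\to L_2$; this extension is again a morphism of $K$-$\de$-algebras and commutes with $\s^l$, because $\de$ and $\s^l$ on $L_i$ are the unique extensions of the corresponding operators on $R_i$. Hence $L_1|K$ and $L_2|K$ are isomorphic as $\de\s^l$-field extensions of $K$.
\end{proof}
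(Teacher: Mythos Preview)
Your proof is correct and follows essentially the same route as the paper's: obtain $\s$-Picard-Vessiot rings $R_i\subset L_i$ via Proposition~\ref{prop:pvringpvext}, apply Corollary~\ref{cor:uniqueness} to get a $K$-$\de\s^l$-isomorphism $R_1\to R_2$, and extend to fraction fields. One small simplification: the detour through Proposition~\ref{prop:nonewconstants} is unnecessary, since the first part of Proposition~\ref{prop:pvringpvext} already shows $R_i$ is a $\s$-Picard-Vessiot ring (and $L_i=\operatorname{Frac}(R_i)$ is immediate from the definitions), and Corollary~\ref{cor:uniqueness} does not require $R_i^\de=K^\de$ as a separate hypothesis.
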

\begin{proof}
We know from Proposition \ref{prop:pvringpvext} that $L_1$ and $L_2$ are the quotient fields of some $\s$-Picard-Vessiot rings $R_1\subset L_1$ and $R_2\subset L_2$ for $\de(y)=Ay$. The $K$-$\de\s^l$-isomorphism $R_1\to R_2$ which exists by Corollary \ref{cor:uniqueness} extends to an isomorphism of $\ds^l$-field extensions of $K$.
\end{proof}

An example, illustrating that in general it is not possible to choose $l=1$ in the above corollaries can be found in \cite[Example 2.22, p. 1394]{Wibmer:Chevalley}. In the remaining part of this subsection we provide some information on when it is possible to choose $l=1$. To formulate our results we need to recall the notion of compatibility of difference field extensions (\cite[Def. 5.1.1, p. 311]{Levin}):
Two extensions $L_1|K$ and $L_2|K$ of $\s$-fields are called \emph{compatible} if there exists a $\s$-field extension $M|K$ and $K$-$\s$-morphisms $L_1\to M$ and $L_2\to M$. 

\begin{prop} \label{prop:l=1}
Let $K$ be a $\ds$-field such that $K^\de$ is a $\s$-closed $\s$-field. Let $L_1$ and $L_2$ be two $\s$-Picard-Vessiot extensions of $K$ for the same equation $\de(y)=Ay$, $A\in K^{n\times n}$. Then $L_1$ and $L_2$ are isomorphic (as $\ds$-field extensions of $K$) if and only if $L_1$ and $L_2$ are compatible $\s$-field extensions of $K$.
\end{prop}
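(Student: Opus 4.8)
The plan is to deduce the statement from Corollary~\ref{cor:uniqueness2}. One direction is immediate: if $L_1$ and $L_2$ are isomorphic as $\ds$-field extensions of $K$, then in particular they are isomorphic as $\s$-field extensions, hence compatible (take $M=L_1$ and the two morphisms to be the isomorphism and the identity). So the real content is the converse: compatibility forces an actual $\ds$-isomorphism, not merely a $\ds^l$-isomorphism.

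Assume $L_1$ and $L_2$ are compatible, so there is a $\s$-field extension $M|K$ together with $K$-$\s$-morphisms $\iota_j\colon L_j\to M$. The key idea is to transport the differential structure along these maps and then apply the uniqueness of $\s$-Picard-Vessiot rings. More precisely, I would first pass to the $\s$-Picard-Vessiot rings $R_j\subset L_j$ supplied by Proposition~\ref{prop:pvringpvext}, with fundamental solution matrices $Y_j\in\Gl_n(R_j)$ for $\de(y)=Ay$; by Corollary~\ref{cor:uniqueness} there is a $K$-$\de$-isomorphism $\psi\colon R_1\to R_2$ commuting with $\s^l$ for some $l\ge 1$. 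We want to upgrade this to one commuting with $\s$. Inside $M$ consider the images $\iota_1(R_1)$ and $\iota_2(R_2)$: both contain fundamental solution matrices for $\de(y)=Ay$ (namely $\iota_1(Y_1)$ and $\iota_2(Y_2)$), and since fundamental solution matrices in a common overring differ by a constant matrix, $\iota_2(Y_2)=\iota_1(Y_1)C$ for some $C\in\Gl_n(\,\cdot\,)$ — here one must be careful, since a priori $\iota_1(R_1)$ and $\iota_2(R_2)$ need only be $\s$-subrings of $M$, not $\de$-subrings, so $C$ lives in the fraction field of the $\s$-subring they generate, but its entries satisfy $\de(C)=0$ once we extend $\de$ appropriately. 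This gives a natural candidate identification of $R_1$ with $R_2$ over $K$ that is manifestly $\s$-equivariant.

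The cleanest way to make this rigorous is via the fonda lemma. Form $R_1\otimes_K R_2$ and its ring of $\de$-constants $D:=(R_1\otimes_K R_2)^\de=k\{Z,\tfrac{1}{\det Z}\}_\s$ with $Z=(Y_1\otimes 1)^{-1}(1\otimes Y_2)$, as in Lemma~\ref{lemma:fonda}. A $K$-$\ds$-isomorphism $R_1\xrightarrow{\sim}R_2$ is the same datum as a $k$-$\s$-algebra morphism $D\to k$ (equivalently a $\s$-prime $\s$-ideal of $D$ whose quotient maps to $k$), using that $k=K^\de$ is $\s$-closed; a $K$-$\de\s^l$-isomorphism corresponds to the analogous datum for the $\s^l$-structure. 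The compatibility hypothesis produces, via $\iota_1\otimes\iota_2\colon R_1\otimes_K R_2\to M$ (more precisely via the induced map on the $\de$-constants after extending $\de$ to the relevant $\s$-subfield of $M$), a $k$-$\s$-algebra morphism $D\to M^\de$; composing with a $k$-$\s$-point of the image — which exists because $k$ is $\s$-closed and the relevant image is a finitely $\s$-generated $\s$-domain over $k$ — yields the desired $k$-$\s$-morphism $D\to k$, hence the $\ds$-isomorphism.

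The main obstacle, and the step deserving the most care, is the bookkeeping around the derivation on $M$: $M$ is only given as a $\s$-field, so one must check that $\de$ extends (uniquely) to the $\s$-subfield of $M$ generated over $K$ by $\iota_1(L_1)$ and $\iota_2(L_2)$ — equivalently to the compositum of the two copies of the Picard-Vessiot extension inside $M$ — in a way compatible with both $\iota_j$ and with the commutation relation \eqref{eq:com}. This follows because each $\iota_j(L_j)$ is $\de$-generated over $K$ by a fundamental solution matrix and these matrices differ by constants, so the derivation is forced; but verifying that the resulting $D\to M^\de$ is well-defined and $\s$-equivariant, and that the target contains a $k$-$\s$-point, is where the argument has to be assembled carefully rather than quoted. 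Once that is in place, the existence of the $k$-$\s$-point is just $\s$-closedness of $k$ applied to (a suitable $\s$-domain quotient of) the image of $D$, and the proposition follows.
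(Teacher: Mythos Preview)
Your core strategy is the same as the paper's: reduce to producing a $k$-$\s$-morphism $D=(R_1\otimes_K R_2)^\de\to k$ via Lemma~\ref{lemma:fonda} and the $\s$-closedness of $k$, then build the $K$-$\ds$-isomorphism $R_2\to R_1\otimes_K R_2=R_1\otimes_k D\to R_1$ from it. However, you have manufactured an unnecessary obstacle. You insist on factoring through $D\to M^\de$, which forces you to extend $\de$ to the compositum of $\iota_1(L_1)$ and $\iota_2(L_2)$ inside $M$; this is the step you flag as ``the main obstacle,'' and indeed it is delicate, since the kernel of $L_1\otimes_K L_2\to M$ has no reason to be a $\de$-ideal.

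The paper bypasses this entirely. The point is that $D$ already sits inside $L_1\otimes_K L_2$ as a $k$-$\s$-subalgebra (no $\de$ on $M$ is ever needed). The compatibility hypothesis gives a $\s$-prime ideal in $L_1\otimes_K L_2$ (the kernel of $L_1\otimes_K L_2\to M$), which contracts to a $\s$-prime ideal of $D$ along the purely $\s$-algebraic inclusion $D\hookrightarrow R_1\otimes_K R_2\hookrightarrow L_1\otimes_K L_2$. Since $D$ is finitely $\s$-generated over the $\s$-closed field $k$, Lemma~\ref{lemma:explainsclosed} then yields $D\to k$ directly. Your invocation of Corollary~\ref{cor:uniqueness}/\ref{cor:uniqueness2} is likewise a detour: the paper's argument is self-contained and does not need to first obtain a $\de\s^l$-isomorphism.
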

\begin{proof}
Of course the extensions $L_1|K$ and $L_2|K$ are compatible if they are isomorphic.

Assume that $L_1|K$ and $L_2|K$ are compatible $\s$-field extensions. We have to show that $L_1|K$ and $L_2|K$ are isomorphic.
Let $R_1\subset L_1$ and $R_2\subset L_2$ denote the corresponding $\s$-Picard-Vessiot rings for $\de(y)=Ay$ and set $k:=K^\de$.
From Lemma \ref{lemma:fonda} we know that $R_1\otimes_K R_2=R_1\otimes_k(R_1\otimes_K R_2)^\de$ and that $U:=(R_1\otimes_K R_2)^\de$ is a finitely $\s$-generated $k$-$\s$-algebra.

Because $L_1|K$ and $L_2|K$ are compatible, there exists a $\s$-prime ideal in $L_1\otimes_K L_2$. (If $M$ is a $\s$-field extension of $K$ containing copies of $L_1$ and $L_2$, then the kernel of $L_1\otimes_K L_2\to M$ is a $\s$-prime ideal.) Via the inclusion
\[U\hookrightarrow R_1\otimes_k U=R_1\otimes_K R_2\hookrightarrow L_1\otimes_K L_2\] this $\s$-prime ideal of $L_1\otimes_K L_2$ contracts to a $\s$-prime ideal of $U$. Because
$U$ is finitely $\s$-generated over the $\s$-closed $\s$-field $k$, the existence of a $\s$-prime ideal in $U$ is sufficient to guarantee the existence of a morphism
$\psi\colon U\to k$ of $k$-$\s$-algebras. This yields a morphism
\[\varphi\colon R_2\to R_1\otimes_K R_2=R_1\otimes_k U\xrightarrow{\operatorname{id}\cdot \psi} R_1\] of $K$-$\ds$-algebras. Because $R_2$ is $\de$-simple $\varphi$ is injective, and because
$R_1$ and $R_2$ are $\s$-generated over $K$ by a fundamental solution matrix for the same equation $\de(y)=Ay$ we see that $\varphi$ is surjective. So $\varphi\colon R_2\to R_1$ is an isomorphism. Of course $\varphi$ extends to an isomorphism $L_2\simeq L_1$.
\end{proof}

\begin{cor} \label{cor:l=1}
Let $K$ be a $\ds$-field such that $K^\de$ is a $\s$-closed $\s$-field. Let $L_1$ and $L_2$ be two $\s$-Picard-Vessiot extensions of $K$ for the same equation. Assume that $K$ is relatively algebraically closed in $L_1$. Then $L_1$ and $L_2$ are isomorphic (as $\ds$-field extensions of $K$).
\end{cor}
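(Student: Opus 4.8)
The plan is to deduce this from Proposition \ref{prop:l=1}, so the whole game reduces to verifying that the two $\s$-Picard-Vessiot extensions $L_1|K$ and $L_2|K$ are compatible as $\s$-field extensions of $K$, i.e. that there is a common $\s$-field extension $M|K$ receiving $K$-$\s$-embeddings of both. First I would record the structural facts already available: by Corollary \ref{cor:uniqueness2} there is an integer $l\geq 1$ and an isomorphism of $\ds^l$-field extensions $\tau\colon L_1\xrightarrow{\sim} L_2$ over $K$. This $\tau$ commutes with $\de$ and with $\s^l$, but a priori not with $\s$. The obstruction to compatibility is exactly that $\tau$ may fail to intertwine $\s_1$ and $\s_2$; I want to use the hypothesis that $K$ is relatively algebraically closed in $L_1$ to repair this.

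The key step is to compare the two endomorphisms $\s_2$ and $\tau\s_1\tau^{-1}$ of $L_2$. Both extend $\s$ on $K$ (the first by definition, the second because $\tau$ is a $K$-morphism and $\s_1$ extends $\s$ on $K$), and both send a fundamental solution matrix $Y_2\in\Gl_n(L_2)$ of $\de(y)=Ay$ to another fundamental solution matrix of the $\s$-conjugated system $\de(y)=\s(A)y$ inside $L_2$; they differ by the element $C:=Y_2^{-1}\cdot\tau(\s_1(Y_1'))$ where $Y_1'=\tau^{-1}(Y_2)$, which lies in $\Gl_n(L_2^{\de})=\Gl_n(k)$ by the standard Wronskian computation recalled after Definition \ref{defi:PVextPVring}. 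Thus the two $\s$-structures on $L_2^{\sharp}$ extending the data agree up to an element of $\Gl_n(k)$; in other words $L_2$, viewed via $\tau$, carries two $K$-$\ds$-field structures with the same underlying $\de$-field $L_2^{\sharp}=\operatorname{Frac}(R)$ and the same $\s$ on $K$. Here I expect to invoke the relative algebraic closedness of $K$ in $L_1$ (equivalently in $L_2$, transported by $\tau$): since $L_2|K$ is then a regular field extension in the relevant sense, the inversive closure and base-change constructions behave well, and one can use the uniqueness of the $\s$-Picard-Vessiot ring inside $L_2$ (Lemma \ref{lemma:sigmaPVringunique}, Lemma \ref{lemma:fonda}) to produce a genuine $K$-$\s$-embedding of $(L_1,\s_1)$ into $(L_2,\s_2)$, or else directly to build a common $\s$-overfield $M$.

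Concretely, the cleanest route I would attempt: form the ring $U=(R_1\otimes_K R_2)^{\de}=k\{Z,\tfrac{1}{\det Z}\}_\s$ of Lemma \ref{lemma:fonda} with $Z=(Y_1\otimes 1)^{-1}(1\otimes Y_2)$, which is a finitely $\s$-generated $k$-$\s$-algebra. Proposition \ref{prop:l=1} will apply once we know $L_1\otimes_K L_2$ has a $\s$-prime ideal, equivalently $U$ has a $\s$-prime ideal; since $k$ is $\s$-closed it suffices to exhibit a $\s$-domain quotient of $U$, and for that it suffices to exhibit \emph{some} $\s$-field extension $M$ of $K$ with $K$-$\s$-maps from $L_1$ and $L_2$. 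The isomorphism $\tau$ from Corollary \ref{cor:uniqueness2} gives a $K$-$\s^l$-map $L_1\to L_2$; to upgrade it to a $\s$-map one notes that the relative algebraic closedness of $K$ in $L_1$ forces $L_1$ to be $\s$-separable in a strong enough sense that the $\s^l$-structure determines a compatible $\s$-structure, using Corollary \ref{cor:sigmaseparablesufficesextension} and the machinery of constrained $\s$-pseudo-field extensions from Theorem \ref{thm:unicity}. The main obstacle is precisely this last point — turning a $\de\s^l$-isomorphism into evidence of $\s$-compatibility — and I expect it to hinge on showing that, because $K$ is algebraically closed in $L_1$, the constrained $\s$-pseudo-field $k'$ produced by Theorem \ref{thm:unicity} is actually a $\s$-field (not a proper pseudo-field), which is exactly the condition that removes the ambiguity of $\s$ up to powers and makes $L_1|K$ and $L_2|K$ compatible; Proposition \ref{prop:l=1} then finishes the proof.
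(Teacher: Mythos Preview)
Your overall strategy is right: reduce to Proposition \ref{prop:l=1} by showing that $L_1|K$ and $L_2|K$ are compatible $\s$-field extensions. But from that point on you make things far harder than they need to be, and the hard step you isolate is never actually carried out.

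The paper's argument is one line: the hypothesis that $K$ is relatively algebraically closed in $L_1$ is a purely $\s$-field-theoretic condition, and a classical theorem in difference algebra (e.g.\ \cite[Theorem 5.1.6, p.~313]{Levin}) says that such an extension $L_1|K$ is compatible with \emph{every} $\s$-field extension of $K$. No Picard--Vessiot machinery, no $\de\s^l$-isomorphism, no analysis of $U$, no constrained pseudo-fields are needed; compatibility is immediate from the hypothesis, and Proposition \ref{prop:l=1} finishes the proof.

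By contrast, your route tries to manufacture compatibility out of the $\de\s^l$-isomorphism from Corollary \ref{cor:uniqueness2} and then ``upgrade'' it to a $\s$-map. You yourself flag this upgrading as the main obstacle, and your sketch for overcoming it is speculative: the claim that relative algebraic closedness forces the constrained $\s$-pseudo-field $k'$ of Theorem \ref{thm:unicity} to be an honest $\s$-field is not justified, and the appeal to $\s$-separability via Corollary \ref{cor:sigmaseparablesufficesextension} does not by itself turn a $\s^l$-isomorphism into a $\s$-compatibility statement. So as written there is a genuine gap. The fix is simply to invoke the classical compatibility theorem directly.
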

\begin{proof}
In view of Proposition \ref{prop:l=1}, it suffices to acknowledge that a $\s$-field extension $L|K$ such that $K$ is relatively algebraically closed in $L$ is compatible with any other $\s$-field extension of $K$. This follows for example from \cite[Theorem 5.1.6, p. 313]{Levin}.
\end{proof}

\begin{rem}
The condition ``$K$ is relatively algebraically closed in $L_1$'' in Corollary \ref{cor:l=1} can be weakened to ``The core of $L_1|K$ is equal to $K$''. This means that every \emph{finite} $\s$-field extension of $K$ inside $L_1$ is equal to $K$. (In general, the relative algebraic closure of $K$ in $L_1$ is of infinite degree over $K$ and may or may not contain finite $\s$-field extensions of $K$.)
\end{rem}
\begin{proof}
This follows from the classical compatibility theorem \cite[Theorem 5.4.22, p. 342]{Levin}.
\end{proof}

\section{The $\s$-Galois group of a linear differential equation} \label{sec: The sGalois group of a linear differential equation}

In this section, we introduce the $\s$-Galois group of a linear differential equation $\de(y)=Ay$ over a $\ds$-field $K$. More precisely, we will define the $\s$-Galois group of a fixed $\s$-Picard-Vessiot extension for $\de(y)=Ay$. It is a $\s$-algebraic group over $K^\de$. We show that the Zariski closure of the $\s$-Galois group is the classical Galois group of $\de(y)=Ay$. We also explain the significance of the higher order Zariski closures of the $\s$-Galois group and show that the $\s$-transcendence degree of a $\s$-Picard-Vessiot extension equals the $\s$-dimension of its $\s$-Galois group.

For a brief introduction to $\s$-algebraic groups, we refer the reader
to the appendix. Here we only recall the definition.

\begin{defn} Let $k$ be a $\s$-field. A \emph{$\s$-algebraic group over $k$} is a (covariant) functor $G$ from the category of $k$-$\s$-algebras to the category of groups which is representable by a finitely $\s$-generated $k$-$\s$-algebra.
I.e., there exists a finitely $\s$-generated $k$-$\s$-algebra $k\{G\}$ such that
\[G\simeq\Alg_k^\s(k\{G\},-).\]
\end{defn}
Here $\Alg_k^\s$ stands for morphisms of $k$-$\s$-algebras. By the Yoneda lemma $k\{G\}$ is unique up to isomorphisms.
If $R\subset S$ is an inclusion of $\ds$-rings, we denote by $\Aut^\ds(S|R)$ the automorphisms of $S$ over $R$ in the category of $\ds$-rings, i.e., the automorphisms are required to be the identity on $R$ and to commute with $\de$ and $\s$.

%
%
%
%

\begin{defn}
Let $L|K$ be a $\s$-Picard-Vessiot extension with $\s$-Picard-Vessiot ring $R\subset L$. Set $k=K^\de$. We define $\sgal(L|K)$ to be the functor from the category of $k$-$\s$-algebras to the category of groups
given by
\[\sgal(L|K)(S):=\Aut^\ds(R \otimes_{k} S | K \otimes_{k} S)\]
for every $k$-$\s$-algebra $S$. The action of $\de$ on $S$ is trivial, i.e., $\de(r\otimes s)=\de(r)\otimes s$ for $r\in R$ and $s\in S$. We call $\sgal(L|K)$ the \emph{$\s$-Galois group of $L|K$}.
\end{defn}
On morphisms $\sgal(L|K)$ is given by base extension: If $\psi\colon S\to S'$ is a morphism of $k$-$\s$-algebras, then 
$(\sgal(L|K))(\psi)\colon\sgal(L|K)(S)\to \sgal(L|K)(S')$ is the morphism of groups which associates to a $K\otimes_k S$-$\ds$-automorphims $\tau\colon R\otimes_kS\to R\otimes_k S$ the $K\otimes_k S'$-$\ds$-automorphims
$R\otimes_k S'=(R\otimes_k S)\otimes_S S'\xrightarrow{\tau\otimes\operatorname{id}} (R\otimes_k S)\otimes_S S'=R\otimes_kS'$.

Note that $\sgal(L|K)(k)=\Aut^\ds(R|K)=\Aut^\ds(L|K)$. To show that $\sgal(L|K)$ is a $\s$-algebraic group we shall need two simple lemmas.
\begin{lemma} \label{lemma:simple}
Let $R$ be a $\de$-simple $\de$-ring, $k:=R^\de$ and $S$ a $k$-algebra, considered as a constant $\de$-algebra. Then $(R\otimes_k S)^\de=S$ and the assignments
$\ida\mapsto R\otimes_k \ida$ and $\idb\mapsto S\cap\idb$ define mutually inverse bijections between the set of ideals of $S$ and the set of $\de$-ideals of $R\otimes_k S$.
In particular, every $\de$-ideal $\idb$ of $R\otimes_k S$ is generated by $\idb\cap S$ as an ideal.
\end{lemma}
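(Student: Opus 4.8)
The plan is to first establish that $(R\otimes_k S)^\de=S$, then build the bijection between ideals of $S$ and $\de$-ideals of $R\otimes_k S$, and finally deduce the ``in particular'' statement. For the constants claim, I would pick a $k$-basis $(s_i)_{i\in I}$ of $S$, so that every element of $R\otimes_k S$ is uniquely of the form $\sum_i r_i\otimes s_i$ with $r_i\in R$. Applying $\de$ (which acts as $\de\otimes\mathrm{id}$) and using uniqueness, such an element is a $\de$-constant iff $\de(r_i)=0$ for all $i$, i.e., iff each $r_i\in R^\de=k$. Hence $(R\otimes_k S)^\de=\sum_i k\otimes s_i=S$, as desired.

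Next, for the correspondence, the nontrivial direction is that a $\de$-ideal $\idb$ of $R\otimes_k S$ is generated by $\idb\cap S$, equivalently that $\idb=R\otimes_k(\idb\cap S)$; the reverse containment $\supseteq$ is obvious, and the two assignments are visibly order-preserving and one is a section of the other once this is known. The key idea is the standard ``$\de$-simplicity forces constants to detect ideals'' argument: let $b=\sum_{i=1}^m r_i\otimes s_i\in\idb$ be a nonzero element of minimal length $m$ (with respect to all expansions against $k$-linearly independent families $(s_i)$ in $S$), arranged so that the $s_i$ are $k$-linearly independent. Then $r_1\neq 0$, and the $\de$-ideal $R\cdot r_1$ of $R$ must equal $R$ by $\de$-simplicity, so after multiplying $b$ by a suitable element of $R$ we may assume $r_1=1$. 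Now $\de(b)=\sum_{i=2}^m\de(r_i)\otimes s_i$ lies in $\idb$ and has length $<m$, so by minimality $\de(b)=0$, forcing all $r_i\in R^\de=k$; hence $b\in S$, so $b\in\idb\cap S$. A routine induction on the length of an arbitrary element of $\idb$ (subtract off an $R$-multiple of such a $b$ to reduce the length) then shows $\idb\subseteq R\otimes_k(\idb\cap S)$, completing the correspondence. The ``in particular'' is then immediate.

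The main obstacle is the minimal-length/descent step: one must be careful that ``length'' is well-defined (take the minimal number of terms over all representations as a finite $R$-combination of $k$-linearly independent elements of $S$), that clearing $r_1$ to $1$ is legitimate ($R$ is a domain by Lemma \ref{lemma:sdomain}, or more elementarily $\de$-simplicity directly gives $1\in R\cdot r_1$), and that subtracting a multiple of $b$ genuinely lowers the length — which uses that the leading $s_1$ appears in both and cancels while the remaining $s_i$ stay $k$-independent. Everything else (the constants computation, the order-theoretic formalities of the bijection) is routine. Note we only used that $R$ is a $\de$-ring with $R^\de=k$ and that $\de$ acts trivially on $S$; the difference structure plays no role here, which is why the statement is phrased purely differentially.
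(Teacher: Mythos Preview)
Your overall strategy is the standard one (and is what the references cited in the paper's proof do), but there is a genuine gap in the ``clear $r_1$ to $1$'' step. You write that ``the $\de$-ideal $R\cdot r_1$ of $R$ must equal $R$ by $\de$-simplicity, so after multiplying $b$ by a suitable element of $R$ we may assume $r_1=1$,'' and later that ``$\de$-simplicity directly gives $1\in R\cdot r_1$.'' This is not correct: $\de$-simplicity only says that the $\de$-ideal generated by $r_1$ is $R$, not the ordinary ideal $Rr_1$. In a $\de$-simple ring nonzero elements need not be units; e.g.\ $R=k[x]$ with $\de=\tfrac{d}{dx}$ is $\de$-simple, yet $x$ is not invertible. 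So multiplying $b$ by a single element of $R$ cannot force $r_1=1$ in general. (The appeal to Lemma~\ref{lemma:sdomain} is also off: that lemma concerns $\ds$-rings, and in any case ``$R$ is a domain'' does not help here.)

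The fix is easy and keeps your minimal-length framework intact. Since $\idb$ is a $\de$-ideal, all elements $\de^j(b)=\sum_i \de^j(r_i)\otimes s_i$ lie in $\idb$. The $\de$-ideal of $R$ generated by $r_1$ is $R$, so there exist $a_0,\ldots,a_N\in R$ with $\sum_j a_j\de^j(r_1)=1$; then $b':=\sum_j a_j\de^j(b)\in\idb$ has first coefficient $1$ and support contained in $\{s_1,\ldots,s_m\}$. Minimality forces $b'$ to still have length $m$, and now $\de(b')=\sum_{i\ge 2}\de(r_i')\otimes s_i\in\idb$ has smaller length, hence is $0$, so $b'\in\idb\cap S$ as you wanted. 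Equivalently, one can phrase this as: the set $J=\{r\in R:\exists\, r_2',\ldots,r_m'\in R \text{ with } r\otimes s_1+\sum_{i\ge 2}r_i'\otimes s_i\in\idb\}$ is a nonzero $\de$-ideal of $R$, hence $1\in J$. With this correction your induction (subtracting an $R$-multiple of such a $b'$ whose support meets that of the given element) goes through.
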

\begin{proof}
The first claim follows immediately when choosing appropriate bases. The second claim follows as in \cite[Prop. 5.6, p. 4484]{Kovacic:differentialgaloistheoryofstronglynormal}. See also \cite[Lemma 10.7, p. 5443]{MauGal}.
\end{proof}

\begin{lemma} \label{lemma:endo=auto}
Let $L|K$ be a $\s$-Picard-Vessiot extension with $\s$-Picard-Vessiot ring $R\subset L$ and $\de$-constants $k$. If $S$ is a $k$-$\s$-algebra, then every
$K\otimes_k S$-$\ds$-endomorphism of $R\otimes_k S$ is an automorphism.
\end{lemma}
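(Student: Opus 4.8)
The plan is to exploit $\de$-simplicity to get injectivity, and then to use a finite-dimensionality argument (essentially: an injective endomorphism of a finite-dimensional vector space is surjective) applied to the right finite-dimensional pieces. Let $\varphi$ be a $K\otimes_k S$-$\ds$-endomorphism of $R\otimes_k S$. First I would prove $\varphi$ is injective. Its kernel $\idb$ is a $\de$-ideal of $R\otimes_k S$. By Lemma \ref{lemma:simple} (applied with the $\de$-simple ring $R$, constant ring $S$, noting $R^\de = k$), every $\de$-ideal of $R\otimes_k S$ is generated by its contraction $\idb\cap S$. But $\varphi$ is the identity on $K\otimes_k S$, hence in particular on $S$, so $\idb\cap S = 0$, whence $\idb = 0$ and $\varphi$ is injective.

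For surjectivity, fix $A\in K^{n\times n}$ and a fundamental solution matrix $Y\in\Gl_n(R)$ with $R = K\{Y,\tfrac{1}{\det(Y)}\}_\s$. Write $Z := \varphi(Y\otimes 1)\in\Gl_n(R\otimes_k S)$; since $\varphi$ commutes with $\de$ and fixes $K\otimes_k S$ (hence $A$), we have $\de(Z) = A Z$, so both $Y\otimes 1$ and $Z$ are fundamental solution matrices for $\de(y)=Ay$ in $R\otimes_k S$. By the remark after Definition \ref{defi:PVextPVring} there is $C\in\Gl_n\big((R\otimes_k S)^\de\big)$ with $Z = (Y\otimes 1)C$. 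Now I would identify the constants: by Lemma \ref{lemma:simple} again, $(R\otimes_k S)^\de = S$. So $C\in\Gl_n(S)\subset\Gl_n(K\otimes_k S)$, and since $\varphi$ fixes $K\otimes_k S$, the matrix $(Y\otimes 1) = Z C^{-1} = \varphi(Y\otimes 1)\varphi(C^{-1}) = \varphi\big((Y\otimes 1)C^{-1}\big)$ lies in the image of $\varphi$, as does $\det(Y\otimes 1)^{-1} = \varphi\big((\det(Y)\otimes 1)^{-1}\big)\cdot(\det C)$ after a similar computation (more simply: the image of $\varphi$ is a $K\otimes_k S$-$\ds$-subalgebra of $R\otimes_k S$ containing all $\s^i$-transforms of the entries of $Y\otimes 1$ and of $\det(Y\otimes 1)^{-1}$, and $R\otimes_k S$ is $\s$-generated over $K\otimes_k S$ by exactly these elements). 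Hence $\varphi$ is surjective, and therefore an isomorphism.

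The one point that needs a little care — and what I expect to be the only real obstacle — is the application of Lemma \ref{lemma:simple} to compute $(R\otimes_k S)^\de$ and to control $\de$-ideals, because $S$ here carries a $\s$-structure while the lemma is purely differential. But this is harmless: the lemma's hypotheses ask only that $R$ be $\de$-simple, $k = R^\de$, and $S$ a $k$-algebra viewed as a constant $\de$-algebra, all of which hold here regardless of $\s$. The compatibility of everything with $\s$ is then automatic since $\varphi$ is assumed to commute with $\s$ and $C$, being a matrix over the $\s$-ring $S$, is acted on by $\s$ coordinatewise. So no genuinely new difference-algebraic input is needed beyond bookkeeping; the argument is the standard Picard-Vessiot one, transported verbatim through the forgetful functor $(-)^\sharp$.
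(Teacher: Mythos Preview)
Your proof is correct and follows essentially the same approach as the paper's: both use the fundamental solution matrix to produce $C\in\Gl_n((R\otimes_k S)^\de)=\Gl_n(S)$ with $\varphi(Y\otimes 1)=(Y\otimes 1)C$ for surjectivity, and Lemma \ref{lemma:simple} for injectivity. The only differences are cosmetic: you treat injectivity first (and spell out explicitly why $\idb\cap S=0$), while the paper does surjectivity first; and your opening sentence about a ``finite-dimensionality argument'' is a slight red herring, since what you actually do is the direct constants-matrix argument, not a dimension count.
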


\begin{proof}
Fix matrices $A\in K^{n\times n}$ and $Y\in\Gl_n(L)$ such that $L|K$ is a $\s$-Picard-Vessiot extension for $\de(y)=Ay$ with fundamental solution matrix $Y$. If $\tau\colon R\otimes_k S\rightarrow R\otimes_k S$ is a $K\otimes_k S$-$\ds$-morphism then $\tau(Y)\in\Gl_n(R\otimes_k S)$ is a fundamental solution matrix for $\de(y)=Ay$. Since also $Y(=Y\otimes 1)\in\Gl_n(R\otimes_k S)$ is a fundamental solution matrix, there exists a (unique) matrix $[\tau]_Y\in\Gl_n((R\otimes_k S)^\de)=\Gl_n(S)$ such that $\tau(Y)=Y[\tau]_Y$. Because $R$ is $\s$-generated by $Y$, it follows that $\tau$ is surjective. The kernel of $\tau$ is a $\de$-ideal of $R\otimes_k S$. It follows from Lemma \ref{lemma:simple} that $\tau$ is injective.
\end{proof}

\begin{prop}\label{prop:defgal} Let $L|K$ be a $\s$-Picard-Vessiot extension with $\s$-Picard-Vessiot ring $R\subset L$. Then $\sgal(L|K)$ is a $\s$-algebraic group over $k=K^\de$. More precisely, $\sgal(L|K)$ is represented by the finitely $\s$-generated $k$-$\s$-algebra $(R \otimes_{K} R)^\de$. The choice of matrices $A\in K^{n\times n}$ and $Y\in\Gl_n(L)$ such that $L|K$ is a $\s$-Picard-Vessiot extension for $\de(y)=Ay$ with fundamental solution matrix $Y$ defines a $\s$-closed embedding
\[\sgal(L|K)\hookrightarrow\Gl_{n,k} \] of $\s$-algebraic groups.
\end{prop}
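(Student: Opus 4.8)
The plan is to establish two things: first, that the functor $\sgal(L|K)$ is representable by the $k$-$\s$-algebra $U:=(R\otimes_K R)^\de$, which is finitely $\s$-generated over $k$ by Lemma \ref{lemma:fonda}, and second, that a choice of fundamental solution matrix $Y$ produces a $\s$-closed embedding into $\Gl_{n,k}$. The heart of the matter is to identify, for each $k$-$\s$-algebra $S$, the set $\Aut^\ds(R\otimes_k S\,|\,K\otimes_k S)$ with $\Alg_k^\s(U,S)$ in a way that is natural in $S$ and that carries the group structure over to the convolution product on $\Alg_k^\s(U,-)$ coming from a Hopf-algebra structure on $U$.

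First I would fix $A\in K^{n\times n}$ and $Y\in\Gl_n(L)$ as in the statement, so $R=K\{Y,\frac{1}{\det Y}\}_\s$. Given a $k$-$\s$-algebra $S$ and $\tau\in\sgal(L|K)(S)$, the argument already used in the proof of Lemma \ref{lemma:endo=auto} shows that $\tau(Y)=Y[\tau]_Y$ for a unique $[\tau]_Y\in\Gl_n((R\otimes_k S)^\de)=\Gl_n(S)$, where the last equality is Lemma \ref{lemma:simple} applied to the $\de$-simple ring $R$ (note $R$ is $\de$-simple since it is a $\s$-Picard-Vessiot ring). Conversely, since $R$ is $\s$-generated over $K$ by the entries of $Y$ and $\frac{1}{\det Y}$, a matrix $C\in\Gl_n(S)$ determines at most one $K\otimes_k S$-$\ds$-endomorphism of $R\otimes_k S$ sending $Y$ to $YC$, and by Lemma \ref{lemma:endo=auto} any such endomorphism is automatically an automorphism. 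Thus $\tau\mapsto[\tau]_Y$ is an injective homomorphism $\sgal(L|K)(S)\hookrightarrow\Gl_n(S)$, natural in $S$; one checks it is a homomorphism because $(\tau_1\tau_2)(Y)=\tau_1(Y[\tau_2]_Y)=\tau_1(Y)[\tau_2]_Y=Y[\tau_1]_Y[\tau_2]_Y$, using that $\tau_1$ fixes $[\tau_2]_Y\in\Gl_n(S)$.

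Next I would cut out the image. Consider the universal case: let $R\otimes_K R$ be given its $\ds$-structure via the left factor being the ``ring'' side and, by Lemma \ref{lemma:fonda}, $R\otimes_K R\cong R\otimes_k U$ with $U=k\{Z,\frac{1}{\det Z}\}_\s$ where $Z=(Y\otimes 1)^{-1}(1\otimes Y)\in\Gl_n(R\otimes_K R)$, and $\de(Z)=0$ so $Z\in\Gl_n(U)$. The inclusion $R\to R\otimes_K R$, $r\mapsto 1\otimes r$, corresponds under this identification to a $K\otimes_k U$-$\ds$-homomorphism $R\otimes_k U\to R\otimes_k U$ sending $Y\otimes1\mapsto (Y\otimes1)Z$, i.e.\ to the ``universal'' element of $\sgal(L|K)(U)$ with associated matrix $Z$. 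For any $k$-$\s$-algebra $S$ and any $\varphi\in\Alg_k^\s(U,S)$, base-changing this universal automorphism along $\varphi$ yields an element of $\sgal(L|K)(S)$ with matrix $\varphi(Z)$; conversely an element $\tau\in\sgal(L|K)(S)$ with matrix $C=[\tau]_Y$ arises this way from the unique $\varphi\in\Alg_k^\s(U,S)$ with $\varphi(Z)=C$ (this $\varphi$ exists because $\tau$, viewed as a $K\otimes_k S$-$\ds$-morphism $R\otimes_k U\to R\otimes_k S$ restricted to $U=k\{Z,\tfrac{1}{\det Z}\}_\s$, lands in $(R\otimes_k S)^\de=S$, sending $Z\mapsto C$, and is a $\s$-algebra map). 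This gives a natural bijection $\sgal(L|K)\cong\Alg_k^\s(U,-)$, so $\sgal(L|K)$ is representable by the finitely $\s$-generated $k$-$\s$-algebra $U$; transporting the group structure endows $U$ with the structure of a $\s$-Hopf-algebra, and $\sgal(L|K)$ is a $\s$-algebraic group over $k$.

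Finally, the embedding: the entries of $Y$ and $\frac{1}{\det Y}$ generate $R$ as a $K$-$\s$-algebra, hence the entries of $Z$ and $\frac{1}{\det Z}$ $\s$-generate $U$ over $k$, which says exactly that the natural transformation $\sgal(L|K)\to\Gl_{n,k}$, $\tau\mapsto[\tau]_Y$, corresponds to a \emph{surjective} map of $k$-$\s$-algebras $k\{\Gl_{n,k}\}=k\{X,\frac{1}{\det X}\}_\s\twoheadrightarrow U$, $X\mapsto Z$. A surjective map of representing $\s$-Hopf-algebras is precisely what a $\s$-closed embedding of $\s$-algebraic groups means (see the appendix), and the map is a group homomorphism by the computation in the previous paragraph. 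Hence $\sgal(L|K)\hookrightarrow\Gl_{n,k}$ is a $\s$-closed embedding, completing the proof.

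The step I expect to be the main obstacle is verifying that the bijection $\sgal(L|K)(S)\cong\Alg_k^\s(U,S)$ is genuinely natural in $S$ \emph{and} intertwines the group laws — in particular, checking that the group structure on $\sgal(L|K)$ is induced by a $\s$-Hopf-algebra structure on $U$ rather than just a pointwise group structure, so that $\sgal(L|K)$ qualifies as a $\s$-algebraic group and not merely a representable functor. This amounts to tracking the comultiplication on $U$ corresponding to composition of automorphisms, which is routine but bookkeeping-heavy; the remaining points (injectivity of $[\,\cdot\,]_Y$, surjectivity onto the closed subfunctor, and that ``$\s$-generated'' translates to ``surjection of Hopf-algebras'') follow quickly from Lemmas \ref{lemma:fonda}, \ref{lemma:simple} and \ref{lemma:endo=auto}.
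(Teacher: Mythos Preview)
Your proof is correct and follows essentially the same route as the paper: both use Lemmas \ref{lemma:simple}, \ref{lemma:endo=auto}, and \ref{lemma:fonda} to identify $\sgal(L|K)(S)$ with $\Alg_k^\s((R\otimes_K R)^\de,S)$ via the matrix $Z=(Y\otimes 1)^{-1}(1\otimes Y)$, and then observe that $k\{X,\tfrac{1}{\det X}\}_\s\to U$, $X\mapsto Z$, is surjective. The only difference is cosmetic---the paper writes the representability as a chain of hom-set adjunctions while you phrase it via the universal element in $\sgal(L|K)(U)$---and your flagged ``obstacle'' about the Hopf structure is not one: by the paper's Definition \ref{defi:sigmaalgebraicgroup} a $\s$-algebraic group is simply a representable group-valued functor, so once naturality of the bijection is checked (which both you and the paper do by tracking $Z\mapsto[\tau]_Y$) the Hopf structure on $U$ comes for free from Yoneda.
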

\begin{proof}
Let $S$ be a $k$-$\s$-algebra. From Lemma \ref{lemma:endo=auto} and Lemma \ref{lemma:fonda}, we obtain the following chain of identifications:

\begin{equation} \label{eq:identifications}
\begin{split}
\sgal(L|K)(S) & =\Alg_{K\otimes_k S}^\ds(R\otimes_k S,R\otimes_k S)=\Alg_K^\ds(R,R\otimes_k S)= \\
 & =\Alg_R^\ds(R\otimes_K R, R\otimes_k S)=\Alg_R^\ds(R\otimes_k(R\otimes_KR)^\de,R\otimes_kS)= \\
& =\Alg_k^\s((R\otimes_K R)^\de,S).
\end{split}
\end{equation}


The last identity holds because every $R$-$\ds$-morphism $R\otimes_k(R\otimes_KR)^\de\to R\otimes_kS$ restricts to a $k$-$\s$-morphism $(R\otimes_KR)^\de\to S$ by taking $\de$-constants.

We have already seen in Lemma \ref{lemma:fonda} that $(R\otimes_K R)^\de=k\{Z,\frac{1}{\det(Z)}\}_\s$ where $Z=(Y\otimes 1)^{-1}(1\otimes Y)\in\Gl_n(R\otimes_K R)$. So
$(R\otimes_KR)^\de$ is finitely $\s$-generated over $k$.

As in the proof of Lemma \ref{lemma:endo=auto}, every automorphism $\tau\in\Gal(L|K)(S)$ is given by a matrix $[\tau]_Y\in\Gl_n(S)=\Gl_n(1\otimes S)$ such that $\tau(Y)=Y[\tau]_Y$.
Then $\tau$ is given in $\Alg_R^\ds(R\otimes_K R, R\otimes_k S)$ by $\tau(1\otimes Y)=(Y\otimes 1)[\tau]_Y$. So, as an element of $\Alg_R^\ds(R\otimes_k(R\otimes_KR)^\de,R\otimes_kS)$ the morphism $\tau$ is given by $\tau(Z)=(Y\otimes 1)^{-1}\tau(1\otimes Y)=[\tau]_Y$.
In summary, we see that, under the identification $\sgal(L|K)(S)=\Alg_k^\s((R\otimes_K R)^\de,S)$, an automorphism $\tau\in\sgal(L|K)(S)$ corresponds to the $k$-$\s$-morphism
$(R\otimes_K R)^\de\to S$ determined by $Z\mapsto [\tau]_Y$. It follows that the identification $\sgal(L|K)(S)=\Alg_k^\s((R\otimes_K R)^\de,S)$ is functorial in $S$. So $\sgal(L|K)$ is represented by $(R\otimes_K R)^\de$.

Moreover, the assignment $\tau\mapsto [\tau]_Y$ defines an injection $\sgal(L|K)(S)\hookrightarrow \Gl_{n,k}(S)$. This yields an embedding of functors
$\sgal(L|K)\hookrightarrow \Gl_{n,k}$. If we set $S:=(R\otimes_K R)^\de=k\{Z\frac{1}{\det(Z)}\}_\s$ the automorphism $\tau_{\operatorname{univ}}\in\sgal(L|K)(S)$ corresponding to $\operatorname{id}\in\Alg_k^\s((R\otimes_K R)^\de, S)$ is given by $\tau_{\operatorname{univ}}(Y\otimes 1)=(Y\otimes 1)Z$, i.e., $[\tau_{\operatorname{univ}}]_Y=Z$.
Thus, the dual morphism of $\sgal(L|K)\hookrightarrow \Gl_{n,k}$ is given by
\[k\{\Gl_{n,k}\}=k\{X,\tfrac{1}{\det(X)}\}_\s\longrightarrow (R\otimes_K R)^\de=k\{Z,\tfrac{1}{\det(Z)}\}_\s, \ \ X\mapsto Z.\] This is clearly surjective.
So $\sgal(L|K)\hookrightarrow \Gl_{n,k}$ is a $\s$-closed embedding. (See Definition \ref{defi:sigmaclosed}.)
\end{proof}

\begin{defi}
Let $L|K$ be a $\s$-Picard-Vessiot extension with $\s$-Picard-Vessiot ring $R$, $\s$-field of $\de$-constants $k$, and $\s$-Galois group $G:=\sgal(L|K)$. We set
\[k\{G\}=(R\otimes_K R)^\de.\]
By the above proposition $k\{G\}$ is a finitely $\s$-generated $k$-$\s$-algebra representing $G$.
\end{defi}

From Lemma \ref{lemma:fonda} we immediately obtain the algebraic recast of the torsor theorem:
\begin{lemma}
Let $L|K$ be a $\s$-Picard-Vessiot extension with $\s$-Picard-Vessiot ring $R$, $\s$-field of $\de$-constants $k$, and $\s$-Galois group $G$.
Then
\[R\otimes_K R=R\otimes_kk\{G\}.\]\qed
\end{lemma}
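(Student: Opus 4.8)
The statement to prove is the algebraic torsor relation $R\otimes_K R = R\otimes_k k\{G\}$, where $G=\sgal(L|K)$ and $k\{G\}=(R\otimes_K R)^\de$ by definition. So literally the claim reads $R\otimes_K R = R\otimes_k (R\otimes_K R)^\de$, which is exactly the content of the isomorphism asserted in Lemma~\ref{lemma:fonda} in the special case $R_1=R_2=R$. The plan is therefore simply to invoke Lemma~\ref{lemma:fonda} with $R_1=R_2=R$ and note that the definition $k\{G\}=(R\otimes_K R)^\de$ identifies the middle factor.

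More precisely, first I would recall that Lemma~\ref{lemma:fonda} requires $R_1^\de = R_2^\de = k = K^\de$; for a $\s$-Picard-Vessiot ring $R$ with $R^\de = K^\de$ this hypothesis holds by assumption (and in the setting where $L|K$ is a $\s$-Picard-Vessiot extension, $R\subset L$ satisfies $R^\de \subset L^\de = K^\de$, so indeed $R^\de = k$). Then Lemma~\ref{lemma:fonda} gives that the canonical map
\[
R\otimes_k (R\otimes_K R)^\de \longrightarrow R\otimes_K R
\]
is an isomorphism of $R$-$\ds$-algebras. Substituting $k\{G\} = (R\otimes_K R)^\de$ yields $R\otimes_K R \simeq R\otimes_k k\{G\}$, as desired.

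There is essentially no obstacle here: the lemma has already done all the work, and the only thing to check is the bookkeeping that the object called $k\{G\}$ in the Definition immediately preceding this statement is the same object $(R_1\otimes_K R_2)^\de$ appearing in Lemma~\ref{lemma:fonda} when $R_1=R_2=R$. Thus the proof is a one-line reference. If one wished to be slightly more self-contained, one could also recall the mechanism from the proof of Lemma~\ref{lemma:fonda}: fixing a fundamental solution matrix $Y\in\Gl_n(R)$ for $\de(y)=Ay$, the matrix $Z=(Y\otimes 1)^{-1}(1\otimes Y)$ has $\de(Z)=0$, so $k\{Z,\frac{1}{\det(Z)}\}_\s \subset (R\otimes_K R)^\de$; since $1\otimes Y = (Y\otimes 1)Z$ and $R$ is $\s$-generated over $K$ by $Y$ and $\frac{1}{\det(Y)}$, one gets $R\otimes_K R = R\cdot k\{Z,\frac{1}{\det(Z)}\}_\s$; and linear disjointness of the $\de$-constants of a $\de$-algebra over the $\de$-simple ring $R$ from $R$ over $R^\de=k$ upgrades this to $R\otimes_K R = R\otimes_k k\{G\}$. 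But for the purposes of this paper, citing Lemma~\ref{lemma:fonda} directly is the clean route.
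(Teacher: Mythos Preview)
Your proposal is correct and matches the paper's approach exactly: the paper states this lemma with a \qed and the preamble ``From Lemma \ref{lemma:fonda} we immediately obtain the algebraic recast of the torsor theorem,'' so it is indeed just Lemma \ref{lemma:fonda} with $R_1=R_2=R$ together with the definition $k\{G\}=(R\otimes_K R)^\de$. Your check that $R^\de=k$ (needed for the hypothesis of Lemma \ref{lemma:fonda}) is the only bookkeeping, and you handle it correctly.
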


We will next explain the relation between our $\s$-Galois group and the Galois group in the sense of Takeuchi.
Let $k$ be a $\s$-field and $X=\Alg_k^\s(k\{X\},-)$ a $k$-$\s$-scheme. (See Definition \ref{defi:ksigmaScheme}.) The $k$-scheme obtained from $X$ by forgetting $\s$ is
$X^\sharp=\Alg_k(k\{X\}^\sharp,-)$. (See Section \ref{subsec:ksigmaSchemeAssociatedwithkScheme}.)

Let $L|K$ be a $\s$-Picard-Vessiot extension with $\s$-Picard-Vessiot ring $R$, $\s$-field of $\de$-constants $k$, and $\s$-Galois group $G$. Let $T$ be a $k$-algebra.
Replacing $S$ by $T$ and forgetting $\s$ in the identifications of equation (\ref{eq:identifications}) above, we see that
\[G^\sharp(T)=\Aut^\de(R\otimes_kT|K\otimes_k T).\]

\begin{rem} \label{rem:compareTakeuchigroup}
Let $L|K$ be a $\s$-Picard-Vessiot extension with $\s$-Galois group $G$. We already noted in Remark \ref{rem:compareTakeuchi} that $L^\sharp|K^\sharp$ is a Picard-Vessiot extension in the sense of Takeuchi. The Galois group of $L^\sharp|K^\sharp$ (in the sense of \cite{Takeuchi:hopfalgebraicapproach}) agrees with $G^\sharp$, the group scheme obtained from $G$ by forgetting $\sigma$.
\end{rem}
\begin{proof}
Let $R\subset L$ denote the $\s$-Picard-Vessiot ring of $L|K$. Initially the Galois group of $L^\sharp|K^\sharp$, or more precisely its representing Hopf algebra $D$, is defined by some abstract algebraic manipulations with corings. See \cite[Section 1]{AmanoMasuokaTakeuchi:HopfPVtheory}. As a $k$-algebra $D=(R\otimes_kR)^\de=k\{G\}^\sharp$. Moreover in \cite[Appendix, Theorem A.2, p. 508]{Takeuchi:hopfalgebraicapproach}, it is shown that $D$ represents the automorphism functor
\[T\mapsto\Aut^\de(R\otimes_k T|K\otimes_k T).\]
\end{proof}


To get a better feeling for what is really going on, let us compute the $\s$-Galois group in some simple examples, including the ones given in the introduction.

\begin{exa} \label{exa:exponential}
Let $K=\C(x)$ be the $\ds$-field of rational functions in the variable $x$ over $\C$, where $\de=\frac{d}{d x}$ and $\s(f(x))=f(x+1)$. So $k=K^\de=\C$, with $\s$ the identity map. Consider the equation $$\de(y)=2xy$$ over $K$. The field $\mathcal{M}$ of meromorphic functions on $\C$ is naturally a $\ds$-field extensions of $K$ (with $\de=\frac{d}{d x}$ and $\s\colon f(x)\mapsto f(x+1)$). Since $\mathcal{M}^\de=k$, it is clear that
$L=K\langle e^{x^2}\rangle_\s\subset\mathcal{M}$ is a $\s$-Picard-Vessiot extension for $\de(y)=2xy$. The $\s$-Picard-Vessiot ring is $R=K\{e^{x^2},e^{-x^2}\}_\s\subset L$.
Since $x^2-2(x+1)^2+(x+2)^2=2$, we have
$$f\s(f)^{-2}\s^2(f)=e^2\in K,$$
where we have set $f=e^{x^2}$ to simplify the notation.
So $R=K\big[e^{x^2},e^{-x^2},e^{(x+1)^2},e^{-(x+1)^2}\big]$. Let $G=\sgal(L|K)$ denote the $\s$-Galois group. We consider $G$ as a $\s$-closed subgroup of $\Gl_{1,k}$ via the fundamental solution matrix $Y=e^{x^2}\in\Gl_1(L)$. Let $S$ be a $k$-$\s$-algebra. For $g\in G(S)\subset S^\times$ we have
$$e^2=g(e^2)=g(f)\s(g(f))^{-2}\s^2(g(f))=g\s(g)^{-2}\s^2(g)e^2.$$
(Note that the above computation takes place in $R\otimes_k S$.) Therefore $g\s(g)^{-2}\s^2(g)=1$.
On the other side, the functions $e^{x^2}$ and $e^{(x+1)^2}$ are algebraically independent over $K$. (This follows for example from the Kolchin-Ostrowski theorem.) So for any $g\in S^\times$, satisfying $g\s(g)^{-2}\s^2(g)=1$, we have a well-defined $K\otimes_k S$-$\ds$-automorphism of $R\otimes_k S$ determined by $e^{x^2}\mapsto ge^{x^2}$.

In summary, $G\leq\Gl_{1,k}$ is given by
$$G(S)=\{g\in S^\times|\ g\s(g)^{-2}\s^2(g)=1\}\leq\Gl_1(S)$$
for any $k$-$\s$-algebra $S$.
\end{exa}

\begin{exa} \label{exa:benign}
As in the above example, let $K=\C(x)$ be the $\ds$-field of rational functions in the variable $x$ over $\C$, where $\de=\frac{d}{d x}$ and $\s(f(x))=f(x+1)$. Consider the equation $$\de(y)=\frac{1}{2x}y$$ over $K$.

Fix an algebraic closure $\overline{K}$ of $K$. Then $\de$ extends uniquely to $\overline{K}$. We can also extend $\s$ to $\overline{K}$. The extension of $\s$ to $\overline{K}$ is of course not unique but one can show that any two extensions are isomorphic. In particular, the $\s$-Galois group will be independent of this choice. The derivation $\de$ and the endomorphisms $\s$ also commute on $\overline{K}$. I.e., $\overline{K}$ is a $\ds$-extension of $K$. Obviously $\sqrt{x}\in \overline{K}$ is a fundamental solution matrix for $\de(y)=\frac{1}{2x}y$. Since $\overline{K}^\de=K^\de=\C=:k$ it is clear that
$$L:=K\langle \sqrt{x}\rangle_\s=K(\sqrt{x},\sqrt{x+1},\ldots)\subset \overline{K}$$ is a $\s$-Picard-Vessiot extension for $\de(y)=\frac{1}{2x}y$. The $\s$-Picard-Vessiot ring $R$ is equal to $L$. Let $G=\sgal(L|K)$ denote the $\s$-Galois group.
Since a $\ds$-automorphism of $R|K$ is determined by its action on $\sqrt{x}$ we see that
$$G(k)=\Aut^\ds(R|K)=C_2,$$ where
$C_2=\{1,-1\}$ is the group with two elements acting on $R$ by $\sqrt{x}\mapsto -\sqrt{x}$.
On the other hand, as the degree of $K(\sqrt{x},\ldots,\sqrt{x+i})$ over $K$ is $2^{i+1}$ for $i\geq 0$, we see that
$$G^\sharp(k^\sharp)=\Aut^\de(R|K)=C_2^\infty=C_2\times C_2\times\cdots.$$
As a $\s$-closed subgroup of $\Gl_{1,k}$ the $\s$-Galois group is given by
$$G(S)=\{g\in S^\times|\ g^2=1\}\leq \Gl_1(S)$$
for every $k$-$\s$-algebra $S$. An elements $g\in G(S)$ is acting on $R\otimes_k S$ by
$\sqrt{x+i}\mapsto \s^i(g)\sqrt{x+i}$ for $i\geq 0$.
\end{exa}

%

\begin{exa}
Let $K=\mathbb{C}(x)$ denote the field of rational functions in the variable $x$ over $\C$. We consider $K$ as $\ds$-field with derivation $\de=x\frac{d}{dx}$ and endomorphism $\s$, given by $\s(f(x))=f(x^d)$ for some integer $d\geq 2$. So $\hslash=d$ and $k:=K^\de=\C$. Let us consider the equation
$$\de(y)=\tfrac{1}{d}y$$
over $K$. It has the solution $x^\frac{1}{d}$. The field $L=\C(x^\frac{1}{d})$ is naturally a $\ds$-field extension of $K$ with
$\s(x^\frac{1}{d})=x$. Since $L^\de=\C$, it is clear that $L|K$ is a $\s$-Picard-Vessiot extension for $x^\frac{1}{d}$. The $\s$-Picard-Vessiot ring $R$ equals $L$. We consider the $\s$-Galois group $G=\sgal(L|K)$ as a $\s$-closed subgroup of $\Gl_{1,k}$.
For any $k$-$\s$-algebra $S$ and every $g\in G(S)\leq\Gl_1(S)$ we have $\s(g(x^\frac{1}{d}))=\s(g)x$. On the other hand,
$\s(g(x^\frac{1}{d}))=g(\s(x^\frac{1}{d}))=g(x)=x$. Therefore $\s(g)=1$. Since $(x^{\frac{1}{d}})^d\in K$ we easily see that $g^d=1$ and that the $\s$-Galois group is given by
$$G(S)=\{g\in S^\times|\ g^d=1,\ \s(g)=1\}\leq\Gl_1(S)$$
for any $k$-$\s$-algebra $S$. Note that $G$ is not $\s$-reduced, i.e., $\s$ is not injective on the $\s$-coordinate ring $k\{G\}$. Indeed, the image of $x-1$ in $k\{G\}=k\{x,x^{-1}\}_\s/[x^d-1, \s(x)-1]$ is a non-zero element in the kernel of $\s$.
\end{exa}

\begin{exa}
Let us consider Bessel's differential equation
$$x^2\de^2(y)+x\de(y)+(x^2-\alpha^2)y=0.$$
The matrix of the equivalent system is
$$
A =\begin{pmatrix} 0& 1 \\
\frac{\alpha^2}{x^2} -1 & \frac{-1}{x} \end{pmatrix}.
$$
As demonstrated in Proposition \ref{prop:taylorexp}, there exists a $\s$-Picard-Vessiot extension for $\de(y)=Ay$ over $\C(\alpha,x)$, where $\de=\frac{d}{d x}$ and $\s(f(\a,x))=f(\a+1,x)$. However, since the classical solutions $J_\alpha(x)$ and $Y_\alpha(x)$ are normalized by some factor meromorphic in $\alpha$, it is more convenient to work with meromorphic rather than rational functions in $\alpha$. Let $\mathcal{M}$ denote the field of meromorphic functions on $\{\alpha\in\C|\ \operatorname{Re}(\a)>0\}$. We consider $\mathcal{M}$ as $\s$-field by $\s(f(\alpha))=f(\alpha+1)$. Let $K=\mathcal{M}(x)$ denote the $\ds$-field of rational functions in $x$ over $\mathcal{M}$ with derivation $\de=\frac{d}{d x}$ and endomorphism $\s\colon K\to K$, extended from $\mathcal{M}$ by $\s(x)=x$. Then $k=K^\de=\mathcal{M}$.
As an ambient $\ds$-field, containing the Bessel function of the first kind $J_\a(x)$ and the Bessel function of the second kind $Y_a(x)$ we can, for example, choose the field $E=\mathcal{M}((x-1))$ of formal Laurent series in $x-1$ with coefficients in $\mathcal{M}$, where, as before, $\de=\frac{d}{d x}$ and $\s(\sum a_i(x-1)^i)=\sum \s(a_i)(x-1)^i$. For generalities on Bessel functions we refer the reader to \cite{Watson:ATreatiseOnTheTheoryOfBesselFunctions}.
The matrix
$$Y=\begin{pmatrix} J_\alpha(x) & Y_\alpha(x) \\
\de(J_\a(x)) & \de(Y_\a(x))\end{pmatrix}\in\Gl_n(E) $$
is a fundamental solution matrix for $\de(y)=Ay$. Since $E^\de=k$, we see that $L=K\langle Y\rangle_\s\subset E$ is a $\s$-Picard-Vessiot extension for $\de(y)=Ay$ over $K$. The recurrence formulas 
$$
\frac{2\alpha}{x} Z_\alpha(x) = Z_{\alpha-1}(x) + Z_{\alpha+1}(x)$$
and
 $$2\de(Z_\alpha(x)) = Z_{\alpha-1}(x) - Z_{\alpha+1}(x),$$
satisfied by $J_\a(x)$ and $Y_\a(x)$ can be rewritten in matrix form as $\s(Y)=BY$, where 
$$
B=
\begin{pmatrix}
\frac{\a}{x} & -1 \\
\frac{-\a(\a +1)}{x^2} +1 & \frac{\a +1}{x}
\end{pmatrix}\in\Gl_n(K).
$$
We consider the $\s$-Galois group $G=\sgal(L|K)$ as $\s$-closed subgroup of $\Gl_{2,k}$ via the fundamental solution matrix $Y$.
Let $S$ be a $k$-$\s$-algebra and $g\in G(S)\leq\Gl_2(S)$.
We have
$$g(\s(Y))=g(BY)=Bg(Y)=BYg.$$ On the other hand,
$$g(\s(Y))=\s(g(Y))=\s(Yg)=BY\s(g).$$ So $\s(g)=g$. The functions $J_\a(x), Y_\a(x), \de(J_\a(x)), \de(Y_\a(x))$ are not algebraically independent. Indeed, the Wronskian of $J_\a(x), Y_\a(x)$ equals $\frac{2}{\pi x}$. In particular, $\det(Y)\in K$.
It follows that $$\det(Y)=g(\det(Y))=\det(g(Y))=\det(Yg)=\det(Y)\det(g).$$ So $\det(g)=1$.

For a fixed $\alpha\in\C$, with $\alpha-\frac{1}{2}\notin\Z$, the (classical) Galois group of Bessel's equation over $\C(x)$
is $\Sl_{2,\C}$ (\cite[Appendix]{Kolchin:AlgebraicGroupsAndAlgebraicDependence}). Roughly speaking, this means that $\det(Y)\in \C(x)$ is the ``only'' algebraic relation among $J_\a(x), Y_\a(x), \de(J_\a(x)), \de(Y_\a(x))$ over $\C(x)$.

This implies that $\det(Y)\in K$ is the ``only'' algebraic relation among $J_\a(x), Y_\a(x), \de(J_\a(x)), \de(Y_\a(x))$ over $K$. Because $R=K\{Y,\frac{1}{\det(Y)}\}_\s=K[Y,\frac{1}{\det(Y)}]$ we find that
$G\leq\Gl_{2,k}$ is given by
$$G(S)=\{g\in\Sl_2(S)|\ \s(g)=g\},$$
for any $k$-$\s$-algebra $S$.
\end{exa}

\begin{exa}
Let $q$ be a complex number of norm greater than $1$.
We consider the Jacobi Theta function
$$
\theta_q(x)=\sum_{n\in\Z}q^{-n(n-1)/2}x^n
$$
and its logarithmic derivative $\ell_q(x)=\frac{\de(\theta_q(x))}{\theta_q(x)}$, where $\de=x\frac{d}{dx}$.
Since $|q|>1$, the formal series $\theta_q$ naturally defines a meromorphic function on $\C^*:=\C\smallsetminus\{0\}$ 
and satisfies the $q$-difference equation
$$
\theta_q(qx)=qx\theta_q(x),
$$
so that $\ell_q(qx)=\ell_q(x)+1$.
This implies that $\s(\de\l(\ell_q\r))=\de\l(\ell_q\r)$, where $\s$ is the $q$-difference operator $f(x)\mapsto f(qx)$.
We want to give an interpretation of these classical formulas in the present framework.

\par
Inside the $\ds$-field $\cM$ of meromorphic function on $\C^*$, we consider the $\ds$-subfield $K:=\cM^\s$ of $q$-elliptic functions
and the differential equation
\begin{equation}\label{eqn:equationlq}
\de(y)=\de(\ell_q) 
\end{equation}
with coefficients in $K$.
Since the $\de\s$-field $L:=K(\ell_q)$ is contained in $\cM$, we deduce that $L^{\de}=K^{\de}=\C=:k$. This 
means that $L|K$ is a $\s$-Picard-Vessiot extension for the
system
$$\de\begin{pmatrix}y_1 \\ y_2 \end{pmatrix}=\begin{pmatrix} 0 & \de(\ell_q) \\
0 & 0 \end{pmatrix}\begin{pmatrix}y_1 \\ y_2 \end{pmatrix}$$
associated with (\ref{eqn:equationlq}).
The $\s$-Picard-Vessiot ring is $R:=K[\ell_q]$ and the $\s$-Galois group $G:=\sgal(L|K)$ is naturally contained in the additive group $\bold{G}_{a,k}$.
For any $k$-$\s$-algebra $S$ and any $g\in G(S)\leq\bold{G}_a(S)$ we have
$$g(\s(\ell_q))=\s(g(\ell_q))=\s(\ell_q+g)=\ell_q+1+\s(g).$$
On the other hand, $g(\s(\ell_q))=\ell_q+1+g$ and consequently $\s(g)=g$.
Since $\ell_q$ does not belong to $K$, and therefore is transcendental over $K$, we see that
$$G(S)=\{g\in S|\ \s(g)=g\}\leq\bold{G}_a(S).$$
\end{exa}

\begin{exa} \label{exa:padic}
We go back to the $p$-adic example of the Dwork exponential already presented in the introduction.
See also Example \ref{exa:dsfields} for the notation. We assume that there exists $\pi\in k$ such that $\pi^{p-1}=-p$.
Our base $\ds$-field is $K:=\cE_{k}^\dag$.
We have already pointed out that
$\exp(\pi x)\notin K$ and that $L:=K(\exp(\pi x))$ is a $\ds$-field. In fact, since $L^\de=K^\de=k$, is is clear that $L|K$ is a $\s$-Picard-Vessiot extension for $$\de(y)=\pi x y.$$
The $\s$-Picard-Vessiot ring is $R:=K[\exp(\pi x),\exp(\pi x)^{-1}]$.
Let us consider the $\s$-Galois group $G:=\sgal(L|K)$ as a $\s$-closed subgroup of $\Gl_{1,k}$.
Notice that $\exp(\pi x)^p=\exp(p\pi x)\in K$. This implies that
$g^p=1$ for every $g\in G(S)\leq\Gl_1(S)$ and any $k$-$\s$-algebra $S$.
Moreover, since $\s(\exp(\pi x))\exp(\pi x)^{-1}\in K$, we also find that $\s(g)=g$.
As $\exp(\pi x)\notin K$, it is now easy to see that
$$G(S)=\{g\in S^\times|\ g^p=1,\ \s(g)=g\}\leq\Gl_1(S).$$
%
\end{exa}

The fact observed in Examples \ref{exa:exponential}, \ref{exa:benign} and \ref{exa:padic}, that the $\s$-algebraic relations satisfied by the solution of a first order linear differential equations $\de(y)=ay$ can be described by $\s$-monomials, is a general pattern which can be derived from the classification of the $\s$-closed subgroups of the multiplicative group (Lemma \ref{lemma:classgm}). See \cite{articletwo} for more details.

We continue by describing the relation between the $\s$-Galois group and the classical Galois group.


\begin{prop} \label{prop:Zariskiclosures}
Let $L|K$ be a $\s$-Picard-Vessiot extension with $\s$-field of $\de$-constants $k=K^\de$. Let $A\in K^{n\times n}$ and $Y\in\Gl_n(L)$ such that $L|K$ is a $\s$-Picard-Vessiot extension for $\de(y)=Ay$ with fundamental solution matrix $Y$. We consider the $\s$-Galois group $G$ of $L|K$ as a $\s$-closed subgroup of $\Gl_{n,k}$ via the embedding associated with the choice of $A$ and $Y$. For $d\geq0$, set $L_d=K\left(Y,\s(Y),\ldots,\s^d(Y)\right)\subset L$.

Then $L_d|K$ is a (classical) Picard-Vessiot extension for the linear system $\de(y)=A_dy$, where $A_d\in\Gl_{n(d+1)}(K)$ is
defined in equation (\ref{eqn:Ad}). 

The (classical) Galois group of $L_d|K$ is naturally isomorphic to $G[d]$, the $d$-th order Zariski closure of $G$ inside $\Gl_{n,k}$. (See Definition \ref{defi:Zariskiclosures}.) In particular, $L_0=K(Y)$ is a (classical) Picard-Vessiot extension for $\de(y)=Ay$ and the Zariski closure of $G$ inside $\Gl_{n,k}$ is the (classical) Galois group of $\de(y)=Ay$.
\end{prop}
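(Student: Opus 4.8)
The plan is to establish the two assertions in sequence: first that $L_d|K$ is a classical Picard-Vessiot extension for $\de(y)=A_dy$, and then that its classical Galois group is canonically $G[d]$. For the first assertion, I would reuse the computation already carried out in the proof of Lemma \ref{lemma: PVjets}: for $i=0,\ldots,d$ one has $\de(\s^i(Y))=\hslash_i\s^i(A)\s^i(Y)$, so the block-diagonal matrix $Y_d=\diag(Y,\s(Y),\ldots,\s^d(Y))$ lies in $\Gl_{n(d+1)}(L_d)$ and satisfies $\de(Y_d)=A_dY_d$. Since $L_d=K(Y,\s(Y),\ldots,\s^d(Y))$ is generated as a field over $K$ by the entries of $Y_d$ and $\frac{1}{\det(Y_d)}$, and since $L_d^\de\subseteq L^\de=K^\de$ (as $L_d\subseteq L$), the defining conditions of a classical Picard-Vessiot extension are met. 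The case $d=0$ is the special case $L_0=K(Y)$.

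For the identification of Galois groups, the key point is to compare the classical torsor description of $\Aut^\de(L_d|K)$ with the $\s$-algebraic description of $G$ coming from Proposition \ref{prop:defgal}. By Lemma \ref{lemma: PVjets}, $R_d=K[Y,\frac{1}{\det(Y)},\ldots,\s^d(Y),\frac{1}{\det(\s^d(Y))}]$ is the classical Picard-Vessiot ring of $\de(y)=A_dy$, and its classical Galois group is represented by $(R_d\otimes_KR_d)^\de$, which — by the same linear-disjointness argument used in Lemma \ref{lemma:fonda} — equals $k[Z_0,\frac{1}{\det(Z_0)},\ldots,Z_d,\frac{1}{\det(Z_d)}]$ where $Z_i=(\s^i(Y)\otimes1)^{-1}(1\otimes\s^i(Y))$. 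Now one observes that inside $R\otimes_KR$ one has $Z_i=\s^i(Z)$ for $Z=(Y\otimes1)^{-1}(1\otimes Y)$, because $\s$ is an endomorphism of $R\otimes_KR$ fixing $K$ and $Z$ is built from $Y$ by ring operations. Hence the classical Galois group of $L_d|K$, as a closed subgroup of $\Gl_{n(d+1),k}$ sitting block-diagonally, is the Zariski closure of the image of $G$ under the map $g\mapsto(g,\s(g),\ldots,\s^d(g))$; by Definition \ref{defi:Zariskiclosures}, this is precisely $G[d]$. More functorially: for a $k$-algebra $T$, an element of $G[d](T)$ is a $k$-algebra homomorphism $k\{G\}[d]\to T$, where $k\{G\}[d]$ is the image of $k\{X,\frac{1}{\det X}\}_\s$ restricted to the first $d$ $\s$-powers of the coordinates inside $k\{G\}=(R\otimes_KR)^\de$; this image is exactly the classical coordinate ring $(R_d\otimes_KR_d)^\de$ computed above, so $G[d]$ and $\Aut^\de(L_d|K)$ are represented by the same Hopf algebra over $k$.

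The main obstacle is bookkeeping the identification $(R_d\otimes_KR_d)^\de\cong k\{G\}[d]$ cleanly — i.e., matching the block structure of $A_d$, the fact that $R$ is the increasing union of the $R_d$, and the definition of the $d$-th order Zariski closure. One must check that taking $\de$-constants commutes appropriately with the inclusion $R_d\otimes_KR_d\hookrightarrow R\otimes_KR$ and with restriction of the $\s$-ring structure, so that $Z_i=\s^i(Z)$ genuinely holds and the $d$-th order Zariski closure is computed by exactly the subalgebra generated by $Z,\s(Z),\ldots,\s^d(Z)$. Once this is set up, the isomorphism of Galois groups is functorial in the test algebra $T$ by construction, and the final sentence (the Zariski closure of $G$, i.e.\ $G[0]$ after passing to the limit, equals the classical Galois group of $\de(y)=Ay$) is immediate from the $d=0$ case together with $R=\bigcup_d R_d$ forcing $k\{G\}=\varinjlim k\{G\}[d]$.
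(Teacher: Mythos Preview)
Your approach is correct and genuinely different from the paper's. You directly identify the Hopf algebras: using Lemma~\ref{lemma:fonda} for the classical Picard--Vessiot ring $R_d$ you compute $(R_d\otimes_KR_d)^\de=k[Z_0,\tfrac{1}{\det Z_0},\ldots,Z_d,\tfrac{1}{\det Z_d}]$, then observe that under the injection $R_d\otimes_KR_d\hookrightarrow R\otimes_KR$ one has $Z_i=\s^i(Z)$, so this subalgebra is exactly the image of $k[\Gl_{n,k}]_d$ in $k\{G\}$, i.e.\ $k[G[d]]$. The paper instead argues indirectly: it uses the minimality in the definition of $G[d]$ to obtain $G[d]\subset G_d$, and then invokes the (Takeuchi) Galois correspondence for $L^\sharp|K^\sharp$ together with the classical correspondence for $L_d|K$ to force equality, reasoning that any element of $L_d$ fixed by $G[d]$ is already fixed by the image of $G^\sharp$ and hence lies in $K$. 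Your argument is more elementary and self-contained---it avoids appealing to the correspondence $L^{G^\sharp}=K$, which at this point in the paper rests on the cited results of Takeuchi---at the cost of the bookkeeping you flag (flatness over $K$ giving injectivity of $R_d\otimes_KR_d\to R\otimes_KR$, and checking that both embeddings into $(\Gl_{n,k})_d$ match so that equality of coordinate rings is equality of closed subgroup schemes). The paper's route, on the other hand, makes the geometric content of $G[d]$ as a scheme-theoretic image more visible and generalises more readily to situations where an explicit Hopf-algebra computation is unavailable.
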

\begin{proof}
By Lemma \ref{lemma: PVjets}, $L_d|K$ is a classical Picard-Vessiot extension for $\de(y)=A_dy$  and  for $d\geq 0$, the differential ring
\[R_d=K\left[Y,\s(Y),\ldots,\s^d(Y),\tfrac{1}{\det(Y\cdots\s^d(Y))}\right]\subset L_d\] is  the Picard-Vessiot ring of $L_d|K$. By Proposition \ref{prop:pvringpvext}, the $K$-$\ds$-algebra  $R=K\{Y,\tfrac{1}{\det(Y)}\}_\s\subset L$ is the $\s$-Picard-Vessiot ring of $L|K$.
We denote the Galois group of $L_d|K$ with $G_d$. So
\[G_d(T)=\Aut^\de(R_d\otimes_k T| K\otimes_k T)\]
for every $k$-algebra $T$. Since an automorphism
$\tau\in G^\sharp(T)=\Aut^\de(R\otimes_k T|K\otimes_k T)$ restricts to an automorphism $\widetilde{\tau}\in G_d(T)=\Aut^\de(R_d\otimes_kT|K\otimes_k T)$, we obtain a morphism
$G^\sharp\to G_d$ of group $k$-schemes. Because of the special shape of $Y_d$, we see that $G_d$ is a closed subscheme of $\Gl_{n,k}\times\cdots\times\Gl_{n,k}=(\Gl_{n,k})_d$. (See Section \ref{sec:ZariskiClosures} for an explanation of this notation.)
But, by definition, $G[d]$ is the smallest closed subscheme of $(\Gl_{n,k})_d$ such that $G^\sharp\to(\Gl_{n,k})_d$ factors through $G[d]\hookrightarrow (\Gl_{n,k})_d$. Thus $G[d]\subset G_d$.
The image of $G^\sharp\to G_d$ is a subfunctor of $G_d$ (not closed in general) contained in $G[d]$. Since every element of $L$ invariant under $G^\sharp$ must lie in $K$, we see that every element of $L_d$ invariant under $G[d]$ must lie in $K$. But then it follows from the Galois correspondence for $L_d|K$ that $G[d]=G_d$.
\end{proof}

We will finish this subsection by showing that the $\s$-transcendence degree of a $\s$-Picard-Vessiot extension equals the $\s$-dimension of its $\s$-Galois group. But let's first recall the definition of $\s$-transcendence degree.

\begin{defn}[Definition 4.1.7 in \cite{Levin}] 
Let $L|K$ be a $\s$-field extension. Elements $a_1,\ldots,a_n\in L$ are called \emph{transformally (or $\s$-algebraically) independent over $K$} if the elements $a_1,\ldots,a_n,\s(a_1),\ldots,\s(a_n),\ldots$ are algebraically independent over $K$. Otherwise, they are called transformally dependent over $K$. A \emph{$\s$-transcendence basis of $L$ over $K$} is a maximal transformally independent over $K$ subset of $L$. Any two $\s$-transcendence bases of $L|K$ have the same cardinality and so we can define the \emph{$\s$-transcendence degree of $L|K$}, or 
$\strdeg(L|K)$ for short,
as the cardinality of any $\s$-transcendence basis of $L$ over $K$.
\end{defn}
The definition of the $\s$-dimension $\sdim_k(G)$ of a $\s$-algebraic group $G$ over a $\s$-field $k$ is given in Section \ref{subsec:sdimension} of the appendix.

\begin{prop}\label{prop:dimensiondegtrans}
Let $L|K$ be a $\s$-Picard-Vessiot extension with $\s$-Galois group $G$ and constant field $k=K^\de$.  Then
\[\strdeg(L|K) = \sdim_k(G).\]
\end{prop}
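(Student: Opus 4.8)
The plan is to reduce the statement to the corresponding classical fact about Picard-Vessiot extensions and linear algebraic groups, using the approximation of $L|K$ by the finite-level extensions $L_d|K$ of Proposition \ref{prop:Zariskiclosures}, together with the analogous approximation of the $\s$-algebraic group $G$ by its Zariski closures $G[d]$. The key dictionary entries are: on the field side, $L=\bigcup_{d\geq 0}L_d$ where $L_d=K(Y,\s(Y),\ldots,\s^d(Y))$ is a classical Picard-Vessiot extension for $\de(y)=A_dy$ with Galois group $G_d\cong G[d]$; on the group side, the $\s$-dimension of $G$ is, by its definition in Section \ref{subsec:sdimension}, governed by the asymptotic growth (in $d$) of $\dim G[d]$. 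So one expects an equality of the shape: $\strdeg(L|K)$ measures the growth in $d$ of $\operatorname{trdeg}(L_d|K)$, while $\sdim_k(G)$ measures the growth in $d$ of $\dim_k G[d]$, and these two growth functions agree term-by-term.

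First I would record the classical Picard-Vessiot fact at each finite level: for $d\geq 0$, $\operatorname{trdeg}(L_d|K)=\dim_k G_d=\dim_k G[d]$. This is the standard dimension statement in Picard-Vessiot theory (the torsor theorem gives $R_d\otimes_K R_d\cong R_d\otimes_k k[G_d]$, hence $\operatorname{trdeg}(L_d|K)=\operatorname{trdeg}(\operatorname{Frac}(k[G_d])|k)=\dim_k G_d$), applied to the extension $L_d|K$ whose Galois group is identified with $G[d]$ in Proposition \ref{prop:Zariskiclosures}. Next I would unwind the definition of $\s$-transcendence degree: if $b_1,\ldots,b_m\in L$ is a $\s$-transcendence basis of $L|K$, then $\operatorname{trdeg}(K(b_1,\ldots,b_m,\ldots,\s^d(b_1),\ldots,\s^d(b_m))|K)$ is, for all large $d$, equal to $m(d+1)$ minus a constant — more precisely the transcendence degree of the $d$-th prolongation has the form $md+c$ for $d\gg 0$, and $m=\strdeg(L|K)$ is the leading coefficient. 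Since each $b_i\in L_{d_0}$ for some fixed $d_0$, one has $\operatorname{trdeg}(L_d|K)\geq$ (the $d$-th prolongation transcendence degree of the $b_i$) for $d\geq d_0$, and conversely $L_d$ is generated over $K$ by finitely many elements each lying in some $L_{d_0}$, so an elementary argument with $\s$-transcendence bases gives that $\operatorname{trdeg}(L_d|K)$ also grows like $\strdeg(L|K)\cdot d + O(1)$. Hence the leading coefficient of $d\mapsto\operatorname{trdeg}(L_d|K)$ is exactly $\strdeg(L|K)$.

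Then I would match this with the group side: by the definition of $\sdim_k(G)$ in Section \ref{subsec:sdimension} (which is set up precisely so that $\sdim_k(G)$ is the leading coefficient of the numerical polynomial $d\mapsto\dim_k G[d]$, the growth of the Zariski closures), combining with the level-wise equality $\dim_k G[d]=\operatorname{trdeg}(L_d|K)$ gives that $d\mapsto\operatorname{trdeg}(L_d|K)$ and $d\mapsto\dim_k G[d]$ are the same function, so they have the same leading coefficient, yielding $\strdeg(L|K)=\sdim_k(G)$.

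The main obstacle I anticipate is the purely difference-algebraic bookkeeping in the middle step: showing cleanly that the leading coefficient of the transcendence-degree growth function $d\mapsto\operatorname{trdeg}(L_d|K)$ equals $\strdeg(L|K)$. One must be careful that $L_d$ is generated by $Y,\s(Y),\ldots,\s^d(Y)$ but a $\s$-transcendence basis of $L|K$ need not consist of entries of $Y$, so one needs the standard fact (from \cite{Levin}) that $\s$-transcendence degree can be computed from the eventual leading term of the prolongation transcendence degrees of \emph{any} $\s$-generating set, and that this leading term is independent of the generating set. Once that is in hand — and the definition of $\sdim$ in the appendix is arranged compatibly — the proof is just assembling the two sides.
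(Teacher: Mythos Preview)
Your argument is correct, but it takes a more circuitous route than the paper's own proof. You descend to the finite levels $L_d|K$ and their classical Galois groups $G[d]$, invoke the classical dimension identity $\operatorname{trdeg}(L_d|K)=\dim_k G[d]$ at each level (via Proposition~\ref{prop:Zariskiclosures}), and then read off the leading coefficient in $d$ on both sides. This works, and the ``obstacle'' you flag---that the growth rate of $\operatorname{trdeg}(L_d|K)$ is $\strdeg(L|K)$---is exactly Lemma~\ref{lemma: sdim equals stranscendence degree} applied over $K$ to the $\s$-domain $R$ with generators $Y,\tfrac{1}{\det(Y)}$, so you are not missing anything essential there.

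The paper avoids the level-wise detour entirely. It uses the $\s$-torsor isomorphism $R\otimes_K R\simeq R\otimes_k k\{G\}$ once, tensors up to $L\otimes_K R\simeq L\otimes_k k\{G\}$, and then applies two appendix lemmas: base-change invariance of $\sdim$ (Lemma~\ref{lemma: sdim stable under base extension}) to get $\sdim_k(G)=\sdim_L(L\otimes_k k\{G\})=\sdim_L(L\otimes_K R)=\sdim_K(R)$, and Lemma~\ref{lemma: sdim equals stranscendence degree} to conclude $\sdim_K(R)=\strdeg(L|K)$. So the paper's proof is three lines and never needs Proposition~\ref{prop:Zariskiclosures} or the classical level-wise torsor identity; the torsor argument is run once at the $\s$-level rather than infinitely many times at the algebraic level. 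Your approach has the pedagogical virtue of making the finite-level picture explicit, but the paper's is shorter and uses strictly less machinery.
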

\begin{proof}
Let $R\subset L$ denote the corresponding $\s$-Picard-Vessiot ring.
Then $R\otimes_K R=R\otimes_k k\{G\}$. Therefore $L\otimes_K R=L\otimes_kk\{G\}$. It follows from Lemma \ref{lemma: sdim stable under base extension} that
\[\sdim_k(G)=\sdim_L(L\otimes_kk\{G\})=\sdim_L(L\otimes_K R)=\sdim_K(R).\]
By Lemma \ref{lemma: sdim equals stranscendence degree}
\[\sdim_K(R)=\strdeg(L|K).\]

\end{proof}

\section{The $\s$-Galois correspondence} \label{sec: The SGalois correspondence}

In this section, we will establish the $\s$-versions of the first and second fundamental theorem of Galois theory.

\bigskip

Let $L|K$ be a $\s$-Picard-Vessiot extension with $\s$-Picard-Vessiot ring $R\subset L$, $k:=K^\de$, the $\s$-field of $\de$-constants and $G:=\sgal(L|K)$, the $\s$-Galois group of $L|K$.
Let $S$ be a $k$-$\s$-algebra, $\tau\in G(S)$ and $a\in L$. We follow \cite{dyckdesc} and \cite{MauGal} in giving meaning to the phrase ``$a$ is invariant under $\tau$''. By definition, $\tau$ is an automorphism of $R\otimes_k S$. The total quotientring $\quot(R\otimes_k S)$ contains $L$. It might not be possible to extend $\s$ to $\quot(R\otimes_k S)$, but in any case $\tau$ extends to an automorphism of rings $\tau\colon \quot(R\otimes_k S)\to \quot(R\otimes_k S)$ and it makes sense to say that $a$ is invariant under $\tau$. If we write $a=\frac{r_1}{r_2}$ with $r_1,r_2\in R$, $r_2\neq 0$ then $a$ is invariant under $\tau$ if and only if $\tau(r_1\otimes 1)\cdot r_2\otimes 1=r_1\otimes 1\cdot \tau(r_2\otimes 1)\in R\otimes_k S$.

If $H$ is a subfunctor of $G$, we say that $a\in L$ is invariant under $H$ if $a$ is invariant under every element of $H(S)\subset G(S)$ for every $k$-$\s$-algebra $S$.
The set of all elements in $L$, invariant under $H$ is denoted with $L^H$. Obviously $L^H$ is an intermediate $\ds$-field of $L|K$.

If $M$ is an intermediate $\ds$-field of $L|K$, then it is immediately clear from Definition \ref{defi:PVextPVring} that $L|M$ is a $\s$-Picard-Vessiot extension with $\s$-Picard-Vessiot ring $MR$, the ring compositum of $M$ and $R$ inside $L$. Let $S$ be a $k$-$\s$-algebra. When we fix a fundamental solution matrix $Y\in\Gl_n(L)$, then an $M\otimes_k S$-$\ds$-automorphism of $MR\otimes_k S$ is given by a matrix $[\tau]_Y\in\Gl_n(S)$. It follows that $\tau$ restricts to a $K\otimes_kS$-$\ds$-automorphism of $R\otimes_kS$. This yields an injection $\sgal(L|M)(S)\hookrightarrow\sgal(L|K)(S)$ whose image consists of precisely those $\tau\in G(S)$ that leave invariant every element of $M$. We will often identify $\sgal(L|M)$ with this subfunctor of $\sgal(L|K)$. Because $\sgal(L|M)$ and $\sgal(L|K)$ can be realized as $\s$-closed subgroups of $\Gl_{n,k}$ it follows that $\sgal(L|M)$ is then a $\s$-closed subgroup of $\sgal(L|K)$.

%


%
%
%

\begin{lemma} \label{lemma:compute}
Let $L|K$ be a $\s$-Picard-Vessiot extension with $\s$-Picard-Vessiot ring $R$, $\s$-Galois group $G=\sgal(L|K)$ and field of $\de$-constants $k$.
Let $a\in L$ and $r_1,r_2\in R$, $r_2\neq 0$ such that $a=\frac{r_1}{r_2}$. If $H\leq G$ is a $\s$-closed subgroup of $G$, then $a$ is invariant under $H$ if and only if
$r_1\otimes r_2-r_2\otimes r_1$ lies in the kernel of $R\otimes_K R=R\otimes_k k\{G\}\to R\otimes_kk\{H\}$. Moreover the invariants of $H$ can be computed as
\begin{equation} \label{eq:computeinvariants}
L^H=\{a\in L |\ 1\otimes a-a\otimes 1\in L\otimes_K L\cdot \I(H)\},
\end{equation}
where $\I(H)\subset k\{G\}$ denotes the defining ideal of $H$. (See Definition \ref{defi:sigmaclosed}.)

Conversely, if $M$ is an intermediate $\ds$-field of $L|K$, the defining ideal of $\sgal(L|M)$ in $k\{G\}$ can be computed as
\begin{equation} \label{eq:computevanishingideal}
\I(\sgal(L|M))=\ker(L\otimes_K L\to L\otimes_M L)\cap k\{G\}.
\end{equation}
\end{lemma}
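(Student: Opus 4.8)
The plan is to reduce everything to the explicit description of $G=\sgal(L|K)$ obtained in the proof of Proposition \ref{prop:defgal}. Under the identifications (\ref{eq:identifications}), an element $\tau\in G(S)$ corresponds to a morphism $\psi_\tau\colon k\{G\}\to S$ of $k$-$\s$-algebras, and the associated $R$-$\ds$-morphism $\widetilde\tau\colon R\otimes_K R\to R\otimes_k S$, $r'\otimes r\mapsto(r'\otimes 1)\tau(r\otimes 1)$, coincides with $\mathrm{id}_R\otimes\psi_\tau$ once we write $R\otimes_K R=R\otimes_k k\{G\}$ via the torsor isomorphism. Hence, for $a=r_1/r_2$ with $r_1,r_2\in R$, $r_2\neq 0$, and $x:=r_1\otimes r_2-r_2\otimes r_1\in R\otimes_K R$, we get $\widetilde\tau(x)=(r_1\otimes 1)\tau(r_2\otimes 1)-(r_2\otimes 1)\tau(r_1\otimes 1)$, so $a$ is invariant under $\tau$ (in the sense recalled just before the lemma) if and only if $\widetilde\tau(x)=0$. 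The first assertion now follows: writing $\pi\colon k\{G\}\twoheadrightarrow k\{H\}=k\{G\}/\I(H)$, freeness of $R$ over $k$ gives $\ker(\mathrm{id}_R\otimes\pi)=R\otimes_k\I(H)$, which is precisely the ideal of $R\otimes_K R=R\otimes_k k\{G\}$ generated by $\I(H)\subseteq k\{G\}$; if $x$ lies there then $\widetilde\tau(x)=0$ for every $\tau\in H(S)$ because $\psi_\tau$ factors through $\pi$, while conversely the universal point $\tau_{\mathrm{univ}}\in H(k\{H\})$, for which $\widetilde{\tau_{\mathrm{univ}}}=\mathrm{id}_R\otimes\pi$, forces $(\mathrm{id}_R\otimes\pi)(x)=0$ as soon as $a$ is $H$-invariant.

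To deduce formula (\ref{eq:computeinvariants}) I would use that, since $R$ is a domain (Lemma \ref{lemma:sdomain}) and $k$ is a field, each $r\otimes 1$ and $1\otimes r$ with $r\in R\smallsetminus\{0\}$ is a non-zero-divisor in $R\otimes_K R$, and $L\otimes_K L$ is the localization of $R\otimes_K R$ at the multiplicative set $W$ they generate. In particular $r_2\otimes r_2\in W$ is a unit in $L\otimes_K L$ with $1\otimes a-a\otimes 1=-(r_2\otimes r_2)^{-1}x$, so by the first assertion it suffices to prove $\big((L\otimes_K L)\cdot\I(H)\big)\cap(R\otimes_K R)=R\otimes_k\I(H)$. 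This amounts to checking that each element of $W$ maps to a non-zero-divisor modulo $R\otimes_k\I(H)$, \ie in $R\otimes_k k\{H\}$; but under $R\otimes_K R\to R\otimes_k k\{H\}$ one has $r\otimes 1\mapsto r\otimes 1$ and $1\otimes r\mapsto\tau_{\mathrm{univ}}(r\otimes 1)$, both non-zero-divisors there because $R$ is a domain, $k\{H\}$ is free over $k$, and $\tau_{\mathrm{univ}}$ is a ring automorphism of $R\otimes_k k\{H\}$. Chaining $a\in L^H\iff x\in R\otimes_k\I(H)\iff x\in(L\otimes_K L)\cdot\I(H)\iff 1\otimes a-a\otimes 1\in(L\otimes_K L)\cdot\I(H)$ gives (\ref{eq:computeinvariants}).

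For the last formula I would exploit that $L|M$ is again a $\s$-Picard-Vessiot extension, with $\s$-Picard-Vessiot ring $R_M=MR$, so that the torsor theorem gives $R_M\otimes_M R_M=R_M\otimes_k k\{\sgal(L|M)\}$ with $k\{\sgal(L|M)\}=(R_M\otimes_M R_M)^\de$. Unwinding the identifications (\ref{eq:identifications}) for $L|K$ and $L|M$ simultaneously, the natural $\de$-equivariant map $R\otimes_K R\to R_M\otimes_M R_M$ restricts on $\de$-constants to the comorphism $k\{G\}\twoheadrightarrow k\{\sgal(L|M)\}$ of the $\s$-closed embedding $\sgal(L|M)\hookrightarrow G$, whence $\I(\sgal(L|M))=k\{G\}\cap\ker\big(R\otimes_K R\to R_M\otimes_M R_M\big)$. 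Since $R_M$ is a domain with field of fractions $L$, flatness yields injections $R_M\otimes_M R_M\hookrightarrow L\otimes_M L$ and $R\otimes_K R\hookrightarrow L\otimes_K L$; in the resulting commuting square $\ker(R\otimes_K R\to R_M\otimes_M R_M)=(R\otimes_K R)\cap\ker(L\otimes_K L\to L\otimes_M L)$, and intersecting with $k\{G\}\subseteq R\otimes_K R$ completes the proof.

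The main obstacle, I expect, is bookkeeping rather than a genuine difficulty: the two ``unwinding'' steps — identifying $\widetilde\tau$ with $\mathrm{id}_R\otimes\psi_\tau$, and, more delicately, recognizing the comorphism of $\sgal(L|M)\hookrightarrow G$ as the restriction to $\de$-constants of $R\otimes_K R\to R_M\otimes_M R_M$ — together with the localization argument of the second paragraph, where one must make sure that enlarging $\I(H)$ to an ideal of $L\otimes_K L$ and contracting it back to $R\otimes_K R$ returns exactly $R\otimes_k\I(H)$.
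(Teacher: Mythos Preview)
Your argument is correct, and for the first assertion it follows essentially the same line as the paper: invariance is tested against the universal point $\tau_{\mathrm{univ}}\in H(k\{H\})$, and the identification $\widetilde\tau=\mathrm{id}_R\otimes\psi_\tau$ is exactly what the paper uses (calling it simply ``$\tau$'').

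For the other two formulas your route differs from the paper's in an instructive way. For (\ref{eq:computeinvariants}) the paper does not argue via non-zero-divisors: it invokes Lemma~\ref{lemma:simple} and \cite[Proposition~2.3]{AmanoMasuokaTakeuchi:HopfPVtheory} to obtain a full bijection between $\de$-ideals of $R\otimes_K R$ and $\de$-ideals of $L\otimes_K L$, from which the contraction identity $(L\otimes_K L\cdot\ida)\cap(R\otimes_K R)=\ida$ for $\ida=R\otimes_k\I(H)$ follows at once. Your localization argument --- observing that $r\otimes 1$ and $1\otimes r=\tau_{\mathrm{univ}}(r\otimes 1)$ remain non-zero-divisors in $R\otimes_k k\{H\}$ --- is more elementary and avoids the external reference, at the cost of being specific to the ideal at hand. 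For (\ref{eq:computevanishingideal}) the paper again takes a more direct path: rather than invoking the torsor isomorphism for $L|M$ and identifying the comorphism of $\sgal(L|M)\hookrightarrow G$, it characterizes $H(S)\subset G(S)$ as those $\tau$ whose associated $\mathrm{id}\otimes\widetilde\tau$ kills the ideal $\idb\subset R\otimes_K R$ generated by all $r_1\otimes r_2-r_2\otimes r_1$ with $r_1/r_2\in M$, notes that $\idb=\ker(R\otimes_K R\to MR\otimes_M MR)$, and then uses Lemma~\ref{lemma:simple} to write $\idb=R\otimes_k(\idb\cap k\{G\})$, giving $\I(H)=\idb\cap k\{G\}$ immediately. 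Your approach works too, but --- as you yourself anticipate --- the bookkeeping needed to pin down that the restriction-to-$\de$-constants of $R\otimes_K R\to R_M\otimes_M R_M$ really is the comorphism is the main labor; the paper sidesteps this entirely.
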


\begin{proof}
The $K$-$\ds$-morphism
\[\psi\colon R\to R\otimes_K R=R\otimes_k k\{G\}\to R\otimes_k k\{H\},\]
where the first map is the inclusion into the second factor, extends to a $K\otimes_k k\{H\}$-$\ds$-morphism $\tau\colon R\otimes_k k\{H\}\to R\otimes_k k\{H\}$. It follows from Lemma \ref{lemma:endo=auto} that $\tau$ is an automorphism. I.e., $\tau\in G(S)$, where $S:=k\{H\}$. Now to say that $a$ is invariant under $\tau$ precisely means that $r_1\otimes r_2-r_2\otimes r_1$ lies in the kernel of $R\otimes_K R=R\otimes_k k\{G\}\to R\otimes_kk\{H\}$.

For the reverse direction, let $S$ be any $k$-$\s$-algebra. According to the identifications made in the proof of Proposition \ref{prop:defgal}, every $\tau\in H(S)\subset \Aut(R\otimes_k S|K\otimes_k S)$ is obtained from a $k$-$\s$-morphism $k\{H\}\to S$ by extending $R\xrightarrow{\psi} R\otimes_k k\{H\}\to R\otimes_k S$ to $R\otimes_k S$.
This shows that $a$ is invariant under $\tau$ if $r_1\otimes r_2-r_2\otimes r_1$ lies in the kernel of $R\otimes_K R=R\otimes_k k\{G\}\to R\otimes_kk\{H\}$.

In summary we see that $a$ is invariant under $H$ if and only if $r_1\otimes r_2-r_2\otimes r_1$ lies in the ideal of $R\otimes_K R$ generated by $\I(H)=\ker(k\{G\}\to k\{H\})$.
Working inside $L\otimes_K L$ we can divide by $r_2\otimes r_2$ to obtain $1\otimes a-a\otimes 1\in L\otimes_K L\cdot\I(H)$ if $a$ is invariant under $H$. This is ``$\subset$ '' of equation ($\ref{eq:computeinvariants}$).
To prove ``$\supset$'' it suffices to see that $(L\otimes_K L\cdot \ida)\cap R\otimes_K R=\ida$, where $\ida=R\otimes_K R\cdot\I(H)$ is the ideal of $R\otimes_K R$ generated by $\I(H)$.
Consider the inclusions of $\de$-rings
\[k\{G\}\subset R\otimes_K R \subset L\otimes_K L.\]
By Lemma \ref{lemma:simple} extension and contraction are mutually inverse bijections between the set of \mbox{($\de$-)ideals} of $k\{G\}$ and the set of $\de$-ideals of $R\otimes_K R$. Similarly, by \cite[Proposition 2.3, p. 135]{AmanoMasuokaTakeuchi:HopfPVtheory} extension and contraction are mutually inverse bijections between the set of ($\de$-)ideals of $k\{G\}$ and the set of
$\de$-ideals of $L\otimes_K L$. This implies that extension and contraction also are mutually inverse bijections between the set of $\de$-ideals of $R\otimes_K R$ and the set of $\de$-ideals of $L\otimes _K L$. In particular, $(L\otimes_K L\cdot \ida)\cap R\otimes_K R=\ida$.

\bigskip

It remains to prove the identity (\ref{eq:computevanishingideal}). So let $M$ be an intermediate $\ds$-field of $L|K$ and set $H:=\sgal(L|M)$.
If $S$ is a $k$-$\s$-algebra and $\tau\in G(S)$, then we denote with $\widetilde{\tau}\colon k\{G\}\to S$ the element of $\Alg_k^\s(k\{G\},S)$ corresponding to $\tau$ under
$G(S)\simeq\Alg_k^\s(k\{G\},S)$. An element $\tau\in G(S)$ leaves invariant an element $a=\frac{r_1}{r_2}\in L$ if and only if
$r_1\otimes r_2-r_2\otimes r_1$ lies in the kernel of $R\otimes_K R=R\otimes_k k\{G\}\xrightarrow{\operatorname{id}\otimes \widetilde{\tau}}R\otimes_k S$. Thus $\tau\in G(S)$ leaves invariant every element of $M$, i.e., $\tau\in H(S)$, if and only if $\idb$ lies in the kernel $R\otimes_K R=R\otimes_k k\{G\}\to R\otimes_k S$, where $\idb$ is the ideal of $R\otimes_K R $ generated by all elements of the form $r_1\otimes r_2-r_2\otimes r_1$ with $r_1, r_2\in R$, $r_2\neq 0$ and $\frac{r_1}{r_2}\in M$.
Note that $\idb$ is precisely the kernel of $R\otimes_K R\to MR\otimes_M MR$. In particular, $\idb$ is a $\ds$-ideal.
It follows from Lemma \ref{lemma:simple} that $\idb=R\otimes_k(\idb\cap k\{G\})$. So $\tau\in G(S)$ lies in $H(S)$ if and only if $\idb\cap k\{G\}\subset\ker\widetilde{\tau}$. This means that $\I(H)=\idb\cap k\{G\}$.
So \[\I(H)=\idb\cap k\{G\}=\ker(R\otimes_K R\to MR\otimes_M MR)\cap k\{G\}=\ker(L\otimes_K L\to L\otimes_M L)\cap k\{G\}.\]
\end{proof}

\begin{thm}[$\s$-Galois correspondence] \label{theo:Galoiscorrespondence}
Let $L|K$ be a $\s$-Picard-Vessiot extension with $\s$-Galois group $G=\sgal(L|K)$. Then there is an inclusion reversing bijection between the set of intermediate $\ds$-fields $M$ of $L|K$ and the set of $\s$-closed subgroups $H$ of $G$ given by
\[M\mapsto\sgal(L|M) \text{ and } H\mapsto L^H.\]
\end{thm}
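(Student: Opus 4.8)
The plan is to establish the two defining properties of a Galois correspondence: that the maps $M\mapsto\sgal(L|M)$ and $H\mapsto L^H$ are well-defined and inclusion-reversing (which is essentially immediate), and that they are mutually inverse. The latter amounts to two statements, namely $L^{\sgal(L|M)}=M$ for every intermediate $\ds$-field $M$, and $\sgal(L|L^H)=H$ for every $\s$-closed subgroup $H\leq G$. Both directions will be deduced from the explicit formulas for invariants and defining ideals recorded in Lemma \ref{lemma:compute}, combined with the corresponding facts for the underlying Takeuchi–Picard-Vessiot extension $L^\sharp|K^\sharp$.

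First I would prove $L^H\supset$-side: set $M:=L^H$ and observe that by the formula \eqref{eq:computevanishingideal}, $\I(\sgal(L|M))=\ker(L\otimes_K L\to L\otimes_M L)\cap k\{G\}$, while \eqref{eq:computeinvariants} applied to $H$ describes $M=L^H$ via the ideal $\I(H)$. The inclusion $H\leq\sgal(L|L^H)$ is clear, so the content is $\I(\sgal(L|L^H))\subset\I(H)$, equivalently that every element of $L\otimes_K L$ vanishing modulo $\ker(L\otimes_K L\to L\otimes_M L)$ already lies in the ideal generated by $\I(H)$. The key input here is the bijection (via extension and contraction) between ($\de$-)ideals of $k\{G\}$ and $\de$-ideals of $L\otimes_K L$, coming from \cite[Proposition 2.3, p. 135]{AmanoMasuokaTakeuchi:HopfPVtheory}, together with the fact that $\ker(L\otimes_K L\to L\otimes_M L)$, being generated by the elements $1\otimes a-a\otimes 1$ with $a\in M$, is by the very definition of $M=L^H$ contained in $L\otimes_K L\cdot\I(H)$; one then contracts back to $k\{G\}$ using that this kernel corresponds to a $\de$-ideal and that contraction of the extension of $\I(H)$ is again $\I(H)$.

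For the other direction, $L^{\sgal(L|M)}=M$, the inclusion $M\subset L^{\sgal(L|M)}$ is definitional. For the reverse inclusion, the cleanest route is to pass to the Takeuchi picture: by Remark \ref{rem:compareTakeuchi} and Remark \ref{rem:compareTakeuchigroup}, $L^\sharp|K^\sharp$ is a Picard-Vessiot extension in the sense of Takeuchi with Galois group $G^\sharp$, and the classical Galois correspondence of Amano–Masuoka–Takeuchi applies: the forgetful-$\s$ version of $\sgal(L|M)$ is exactly the Takeuchi Galois group of $L^\sharp|M^\sharp$, and Takeuchi's theorem gives that the invariants of that group scheme are precisely $M$. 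Concretely, an element $a\in L$ invariant under $\sgal(L|M)$ (in the functorial sense we have defined) is in particular invariant under $\sgal(L|M)^\sharp$ acting as the Takeuchi Galois group of $L^\sharp|M^\sharp$, hence lies in $M$ by \cite[Lemma 1.11 / Theorem]{AmanoMasuokaTakeuchi:HopfPVtheory}. The only subtlety is matching our functorial notion of ``$a$ invariant under $\tau$'' (evaluated in $\quot(R\otimes_k S)$) with the Takeuchi notion; this is handled exactly as in Lemma \ref{lemma:compute}, by reducing to the universal point $S=k\{H\}$.

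The main obstacle I anticipate is the $\sgal(L|L^H)=H$ direction: one must be careful that $\I(H)$ really is recovered by contracting the kernel $\ker(L\otimes_K L\to L\otimes_{L^H}L)$, and this requires knowing that the ideal of $L\otimes_K L$ generated by $\I(H)$ is \emph{itself} the kernel of $L\otimes_K L\to L\otimes_{L^H}L$, not merely contained in it — in other words that $L^H$ is ``large enough'' that no further relations are imposed. This is where the perfect dictionary between $\de$-ideals of $L\otimes_K L$ and ($\de$-)ideals of $k\{G\}$ does the real work, allowing one to test ideal containments upstairs and pull them down; once that dictionary is in hand, together with formula \eqref{eq:computeinvariants}, the argument closes. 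Everything else — inclusion-reversal, well-definedness, and the fact that $\sgal(L|M)$ is a $\s$-closed subgroup — has already been recorded in the discussion preceding Lemma \ref{lemma:compute}.
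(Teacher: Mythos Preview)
Your approach is correct and is essentially the paper's: the paper simply invokes \cite[Theorem~2.6]{AmanoMasuokaTakeuchi:HopfPVtheory} --- the Takeuchi bijection between intermediate $\de$-fields of $L|K$ and Hopf-ideals of $k\{G\}$ --- and then observes via Lemma~\ref{lemma:compute} that the maps $M\mapsto\sgal(L|M)$ and $H\mapsto L^H$ are exactly the restrictions of that bijection to the $\s$-stable objects on each side, which is precisely what you unpack direction by direction. Your anticipated ``obstacle'' is milder than you suggest: for $\I(\sgal(L|L^H))\subset\I(H)$ you only need the inclusion $\ker(L\otimes_K L\to L\otimes_{L^H}L)\subset L\otimes_K L\cdot\I(H)$, not equality, and that inclusion is immediate from the description \eqref{eq:computeinvariants} of $L^H$.
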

\begin{proof}
We know from \cite[Theorem 2.6, p. 136]{AmanoMasuokaTakeuchi:HopfPVtheory} that the assignments
\[M\mapsto \ker(L\otimes_K L\to L\otimes_M L)\cap k\{G\}\]
and \[I\mapsto \{a\in L |\ 1\otimes a-a\otimes 1\in L\otimes_K L\cdot I\}\]
are inverse to each other, and yield a bijection between the set of all intermediate $\de$-fields $M$ of $L|K$ and all Hopf-ideals $I$ of $k\{G\}$. The claim thus follows from Lemma \ref{lemma:compute}.
\end{proof}

\begin{thm}[Second fundamental theorem of $\s$-Galois theory] \label{theo:secondfundamentaltheorem}
Let $L|K$ be a $\s$-Picard-Vessiot extension with $\s$-Galois group $G$. Let $K\subset M\subset L$ be an intermediate $\ds$-field and $H\leq G$ a $\s$-closed subgroup of $G$ such that $M$ and $H$ correspond to each other in the $\s$-Galois correspondence.

Then $M$ is a $\s$-Picard-Vessiot extension of $K$ if and only if $H$ is normal in $G$. If this is the case, the $\s$-Galois group of $M|K$ is the quotient $G/H$. (See Definition \ref{defi:quotients} for the definition and Theorem \ref{theo:existenceofquotients} for the existence of the quotient $G/H$.)
\end{thm}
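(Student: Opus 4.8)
The plan is to follow the same strategy as in the classical and Hopf-algebraic proofs of the second fundamental theorem, carefully tracking the extra endomorphism $\s$ throughout. The central dictionary is the $\s$-Galois correspondence (Theorem \ref{theo:Galoiscorrespondence}): intermediate $\ds$-fields $M$ correspond to $\s$-closed subgroups $H=\sgal(L|M)$, equivalently to $\s$-Hopf ideals $\I(H)\subset k\{G\}$. So I would first reformulate ``$H$ normal in $G$'' in terms of this Hopf ideal and in terms of the coaction, and then match it up with an intrinsic property of $M$.

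First I would show that $H$ is normal in $G$ if and only if the ideal $\idb:=\ker(L\otimes_K L\to L\otimes_M L)\cap k\{G\}=\I(H)$ is stable under the adjoint coaction of $k\{G\}$ on itself; this is the standard translation of normality of an affine ($\s$-)group subscheme into the language of Hopf algebras, and it works verbatim over a $\s$-base because $k\{G\}$, $k\{H\}$ and all the structure maps are morphisms of $\s$-algebras. Equivalently, $H$ is normal iff for every $k$-$\s$-algebra $S$, every $g\in G(S)$ and every $h\in H(S')$ ($S'$ an $S$-$\s$-algebra) one has $ghg^{-1}\in H(S')$; I would use whichever formulation is most convenient. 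Next, I would translate normality of $H$ into a statement about $M$: $M$ is stable under the action of $G$, in the precise sense that for every $k$-$\s$-algebra $S$ and every $\tau\in G(S)$, the automorphism $\tau$ of $\quot(R\otimes_k S)$ maps $M\cdot S$ (the image of $M\otimes_k S$) into itself. Concretely, using the computation of invariants in Lemma \ref{lemma:compute} (equation \eqref{eq:computeinvariants}) together with the fact that $\I(H)$ being a coideal stable under the adjoint coaction is exactly what is needed for the defining relations $1\otimes a-a\otimes1\in L\otimes_K L\cdot\I(H)$ to be preserved after applying an arbitrary $\tau\in G(S)$, I would establish: $H\trianglelefteq G$ $\iff$ $M$ is $G$-stable in the above sense.

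Given $G$-stability of $M$, I would then prove that $M|K$ is itself a $\s$-Picard-Vessiot extension. The idea: pick a fundamental solution matrix $Y\in\Gl_n(L)$ for the defining equation $\de(y)=Ay$ of $L|K$; then $M$, being a finitely $\s$-generated $\ds$-subextension of $L$ by the Galois correspondence (more precisely, $\sgal(L|M)$ is $\s$-algebraic, hence $M$ is finitely $\ds$-generated over $K$ — one gets this from the correspondence plus Proposition \ref{prop:dimensiondegtrans} applied to $L|M$, or directly), is $\s$-generated over $K$ by finitely many elements $b_1,\dots,b_m\in M$. Because $M$ is $G$-stable, the $K$-span inside $M$ of the $G$-orbit of the $b_i$ and their $\de$-derivatives is finite-dimensional over $K$ (this is the standard argument: $G^\sharp$-stability forces the solutions to satisfy a linear differential equation over $K$, by a Wronskian/finite-dimensionality argument as in classical Picard-Vessiot theory, using that $L^\sharp|K^\sharp$ is Picard-Vessiot in the sense of Takeuchi, Remark \ref{rem:compareTakeuchi}); spanning a $\de$-stable finite-dimensional $K$-vector space, these elements are entries of a fundamental solution matrix $Z\in\Gl_N(M)$ of some $\de(y)=By$, $B\in K^{N\times N}$. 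Then $M=K\langle Z\rangle_\s$ and $M^\de\subset L^\de=K^\de$, so $M|K$ is a $\s$-Picard-Vessiot extension for $\de(y)=By$. Conversely, if $M|K$ is a $\s$-Picard-Vessiot extension, say with fundamental matrix $Z\in\Gl_N(M)$ for $\de(y)=By$, then for $\tau\in G(S)$ the matrix $\tau(Z)$ is again a fundamental solution matrix in $\quot(MR\otimes_k S)$, hence $\tau(Z)=Z\cdot C$ with $C\in\Gl_N((\dots)^\de)$; this shows $\tau(M\cdot S)\subset M\cdot S$, i.e.\ $M$ is $G$-stable, hence $H$ is normal.

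Finally, assuming $H\trianglelefteq G$, I would identify $\sgal(M|K)$ with $G/H$. By the quotient construction (Definition \ref{defi:quotients}, Theorem \ref{theo:existenceofquotients}), $G/H$ is the $\s$-algebraic group represented by the $\s$-Hopf subalgebra $k\{G\}^{co\,k\{H\}}$ of $k\{G\}$ consisting of coinvariants under the $k\{H\}$-coaction. On the other hand, restriction of automorphisms $\tau\in G(S)=\Aut^\ds(R\otimes_kS|K\otimes_kS)$ to $M_R\otimes_k S$, where $M_R:=MR$ is the $\s$-Picard-Vessiot ring of $M|K$, gives a natural transformation $G\to\sgal(M|K)$; its kernel is exactly $\sgal(L|M)=H$ by the Galois correspondence, and it is faithfully flat / an epimorphism of $\s$-group functors because restriction of the torsor relation $R\otimes_KR=R\otimes_kk\{G\}$ to $M_R\otimes_KM_R=M_R\otimes_kk\{\sgal(M|K)\}$ exhibits $k\{\sgal(M|K)\}=(M_R\otimes_KM_R)^\de$ as a $\s$-Hopf subalgebra of $(R\otimes_KR)^\de=k\{G\}$ on which $k\{H\}$ coacts trivially, i.e.\ $k\{\sgal(M|K)\}\subset k\{G\}^{co\,k\{H\}}$; a dimension/representability count (or the universal property of the quotient) then forces equality, giving $\sgal(M|K)\cong G/H$. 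I expect the main obstacle to be the passage from ``$M$ is $G$-stable'' to ``$M$ is a $\s$-Picard-Vessiot extension'': the finite-generation of $M$ over $K$ as a $\ds$-field, and especially producing the matrix $Z$ spanning a $\de$-stable finite-dimensional $K$-subspace while remaining only $\s$-generated (not generated) by finitely many elements, is the delicate point — one has to be careful that ``$\s$-finitely generated'' plus $G$-stability really yields a genuine (finite-rank) linear differential equation over $K$, and this is where the reduction to Takeuchi's setting via Remark \ref{rem:compareTakeuchi} does the real work.
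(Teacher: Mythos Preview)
Your treatment of the direction ``$M|K$ is $\s$-Picard-Vessiot $\Rightarrow$ $H$ normal and $\sgal(M|K)\cong G/H$'' is essentially the paper's argument: one shows the $\s$-PV ring $R'$ of $M|K$ sits inside $R$, restriction gives a morphism $G\to\sgal(M|K)$ with kernel $H$, and the dual map is the inclusion $(R'\otimes_KR')^\de\hookrightarrow(R\otimes_KR)^\de$, so Corollary~\ref{cor:quotients} applies. (One small slip: you write $M_R:=MR$ for the $\s$-PV ring of $M|K$, but $MR$ is the $\s$-PV ring of $L|M$; the $\s$-PV ring of $M|K$ is $R'\subset M$, and the paper has to check $R'\subset R$ via Lemma~\ref{lemma:sigmaPVringunique}.) Also, your justification that $M$ is finitely $\s$-generated over $K$ via ``$\sgal(L|M)$ is $\s$-algebraic plus Proposition~\ref{prop:dimensiondegtrans}'' does not work; the paper invokes the nontrivial fact (\cite[Theorem 4.4.1]{Levin}) that an intermediate $\s$-field of a finitely $\s$-generated $\s$-field extension is again finitely $\s$-generated.

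The real divergence is in the direction ``$H$ normal $\Rightarrow$ $M|K$ is $\s$-Picard-Vessiot'', and here your plan has a gap at exactly the point you flag. You want to argue directly: $M$ is $G$-stable, so the $G$-orbit of finitely many $\s$-generators spans a finite-dimensional $\de$-stable $K$-space. But Takeuchi's framework has no finiteness hypothesis, and $L^\sharp|K^\sharp$ being Takeuchi--PV only tells you that $M^\sharp|K^\sharp$ is Takeuchi--PV, which does \emph{not} yield a finite-rank equation $\de(y)=By$ with $M=K\langle Z\rangle_\s$. The paper resolves this by a different mechanism: it uses the Zariski-closure filtration (Proposition~\ref{prop:Zariskiclosures}, Lemma~\ref{lemma:neededforSecondFundTheorem}) to show $H[d]\trianglelefteq G[d]$ for every $d$, then applies the \emph{classical} second fundamental theorem to the genuine (finite-type) Picard--Vessiot extension $L_d|K$ to get that $M\cap L_d=L_d^{H[d]}$ is classical PV over $K$. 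Choosing $d'$ large enough that the $\s$-generators $a_1,\dots,a_m$ of $M$ all lie in $L_{d'}$ then produces the desired fundamental matrix $Y'\in\Gl_{n'}(M\cap L_{d'})$ with $M=K\langle Y'\rangle_\s$. Your Wronskian sketch, if made precise, would have to pass through the same finite-level reduction (the $b_i$ live in some $L_d$ and the finite-dimensionality comes from the classical theory of $L_d|K$), so it collapses to the paper's argument rather than providing an alternative.
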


\begin{proof}
We first assume that $M|K$ is $\s$-Picard-Vessiot. Let $R\subset L$ denote the $\s$-Picard-Vessiot ring of $L|K$ and $R'\subset M$ the $\s$-Picard-Vessiot ring of $M|K$. First of all, we need to convince ourselves that $R'\subset R$: The ring compositum $RR'$ inside $L$ is a $\s$-Picard-Vessiot ring contained in $L$ with quotient field $L$. Indeed, if $R$ is a $\s$-Picard-Vessiot ring for $\de(y)=Ay$ with fundamental solution matrix $Y\in\Gl_n(R)$ and $R'$ is a $\s$-Picard-Vessiot ring for $\de(y)=A'y$ with fundamental solution matrix $Y'\in\Gl_{n'}(R')$, then $RR'$ is a $\s$-Picard-Vessiot ring for
\[\de(y)=\left(\begin{array}{cc} A & 0 \\ 0 & A' \end{array}\right)y \ \text{ with fundamental solution matrix } \ \left(\begin{array}{cc} Y & 0 \\ 0 & Y' \end{array}\right)\in\Gl_{n+n'}(RR').\]
Since $R$ is the only $\s$-Picard-Vessiot ring inside $L|K$ with quotient field $L$ by Lemma \ref{lemma:sigmaPVringunique}, it follows that $RR'=R$, i.e., $R'\subset R$.

Set $G'=\sgal(M|K)$ and let $S$ be a $k$-$\s$-algebra. Because $R'\subset R$ and $R'$ is a $\s$-Picard-Vessiot ring we see that every automorphism $\tau\in G(S)=\Aut^\ds(R\otimes_kS| K\otimes_k S)$ restricts to an automorphism $\tau'\in G'(S)=\Aut^\ds(R'\otimes_k S|K\otimes_k S)$.
This defines a morphism $\phi\colon G\to G'$ of group $k$-$\s$-schemes. Because the quotient field of $R'$ is equal to $M$, it is clear that $H=\sgal(L|M)$ is the kernel of $\phi\colon G\to G'$. In particular, $H$ is normal in $G$.


Thus, by Corollary \ref{cor:quotients}, to see that $\phi\colon G\to G'$ is the quotient morphism of $G$ modulo $H$ it suffices to see that $\phi^*$ is injective. To get an explicit description of $\phi^*\colon k\{G'\}\to k\{G\}$, one has to take $S:=k\{G\}$ and to chase $\operatorname{id}\in\Alg_k^{\ds}(k\{G\},S)\simeq G(S)=G(k\{G\})$ through the identifications of the proof of Proposition \ref{prop:defgal}. One finds that $\phi^*$ is obtained from the inclusion $R'\otimes_K R'\hookrightarrow R\otimes_K R$ by taking $\de$-constants.
I.e., $\phi^*\colon k\{G'\}=(R'\otimes_K R')^\de\hookrightarrow (R\otimes_K R)^\de=k\{G\}$. So clearly $\phi^*$ is injective and we conclude that $G'=G/H$.

\bigskip

It remains to see that $M|K$ is $\s$-Picard-Vessiot if $H$ is normal in $G$. Let $A\in K^{n\times n}$ and $Y\in\Gl_n(L)$ such that $L|K$ is a $\s$-Picard-Vessiot extension for $\de(y)=Ay$ with fundamental solution matrix $Y$. As in Proposition \ref{prop:Zariskiclosures}, set $L_d=K\left(Y,\ldots,\s^d(Y)\right)$ for $d\geq 0$ and consider $G$ as a $\s$-closed subgroup of $\Gl_{n,k}$. The Galois group of $L_d|K$ equals $G[d]$ and $H[d]$ is a normal closed subgroup scheme of $G[d]$ by Lemma \ref{lemma:neededforSecondFundTheorem}. Now it follows from the (classical) second fundamental theorem of Galois theory (see \cite[Corollary 1.40, p. 31]{vdPutSingerDifferential} for the case of algebraically closed constants or \cite[Theorem 2.11, p. 138]{AmanoMasuokaTakeuchi:HopfPVtheory} for the general case) applied to $H[d]\unlhd G[d]$ that $L_d^{H[d]}|K$ is Picard-Vessiot.

An element $a\in L_d$ is invariant under $H$ if and only if it is invariant under $H[d]$. (This follows for example from Lemma \ref{lemma:compute}.)
Therefore $L_d^{H[d]}=L^H\cap L_d=M\cap L_d$ and $M\cap L_d$ is a Picard-Vessiot extension of $K$.


An intermediate $\s$-field of a finitely $\s$-generated $\s$-field extension is again finitely $\s$-generated (\cite[Theorem 4.4.1, p. 292]{Levin}). Thus we can find $a_1,\ldots,a_m\in M$ such that $M=K\langle a_1,\ldots, a_m\rangle_\s$. Now choose $d'\geq 0$ such that $a_1,\ldots,a_m\in L_{d'}$. Because $M\cap L_{d'}|K$ is Picard-Vessiot, there exist matrices $A'\in K^{n'\times n'}$ and $Y'\in\Gl_{n'}(M\cap L_{d'})$ such that $M\cap L_{d'}|K$ is a Picard-Vessiot extension for $\de(y)=A'y$ with fundamental solution matrix $Y'$.

We have $a_1,\ldots,a_m\in M\cap L_{d'}=K(Y')$ and so $M=K\langle a_1,\ldots, a_m\rangle_\s\subset K\langle Y'\rangle_\sigma$. Thus, $M=K\langle Y'\rangle_\sigma$ is a $\s$-Picard-Vessiot extension for $\de(y)=A'y$ since $M^\de\subset L^\de=k$.
\end{proof}

\begin{cor}
Let $L|K$ be a $\s$-Picard-Vessiot extension and $K\subset M\subset L$ an intermediate $\ds$-field such that $M|K$ is a $\s$-Picard-Vessiot extension. If $R$ is the $\s$-Picard-Vessiot ring of $L|K$ then $R\cap M$ is the $\s$-Picard-Vessiot ring of $M|K$.
\end{cor}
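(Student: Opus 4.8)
The plan is to prove the two inclusions $R'\subseteq R\cap M$ and $R\cap M\subseteq R'$ separately, where $R'$ denotes the $\s$-Picard-Vessiot ring of the $\s$-Picard-Vessiot extension $M|K$. For the first inclusion nothing new is needed: the proof of Theorem~\ref{theo:secondfundamentaltheorem} already establishes $R'\subseteq R$, and since obviously $R'\subseteq M$, this gives $R'\subseteq R\cap M$.

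For the reverse inclusion I would compare $R$ and $R'$ through their jet filtrations and then quote classical Picard-Vessiot theory. First, fix $A\in K^{n\times n}$ and $Y\in\Gl_n(L)$ presenting $L|K$ as a $\s$-Picard-Vessiot extension for $\de(y)=Ay$ and set $L_d=K(Y,\dots,\s^d(Y))$; by Lemma~\ref{lemma: PVjets}, $R_d:=K[Y,\tfrac1{\det(Y)},\dots,\s^d(Y),\tfrac1{\det(\s^d(Y))}]$ is the classical Picard-Vessiot ring of $L_d|K$, and $R=\bigcup_d R_d$. Next, since $M|K$ is a $\s$-Picard-Vessiot extension, choose $A'\in K^{n'\times n'}$ and $Y'\in\Gl_{n'}(M)$ with $M=K\langle Y'\rangle_\s$ and $M^\de=K^\de$; the finitely many entries of $Y'$ lie in $M\subseteq L=\bigcup_d L_d$, hence in a single $L_{d_0}$. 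Put $M_e=K(Y',\dots,\s^e(Y'))$; applying Lemma~\ref{lemma: PVjets} to $M|K$ shows that $M_e|K$ is a classical Picard-Vessiot extension with Picard-Vessiot ring $R'_e:=K[Y',\tfrac1{\det(Y')},\dots,\s^e(Y'),\tfrac1{\det(\s^e(Y'))}]$, and $R'=\bigcup_e R'_e$. Finally, since $\s(K)\subseteq K$, the entries of $\s^i(Y')$ lie in $\s^i(L_{d_0})\subseteq L_{d_0+i}$, so $M_e\subseteq L_{d_0+e}$ for every $e$.

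The last step is then as follows. Take $a\in R\cap M$; it lies in $R_d$ for some $d$ and in $M_e$ for some $e$, and after enlarging $e$ one may assume $d_0+e\ge d$. Then $a\in R_d\subseteq R_{d_0+e}$, and $M_e$ is a classical Picard-Vessiot subextension of $L_{d_0+e}|K$ (since $M_e\subseteq L_{d_0+e}$), so $a\in R_{d_0+e}\cap M_e$. Here one uses the classical fact that if $\cR$ is the Picard-Vessiot ring of a classical Picard-Vessiot extension $E|F$ and $F\subseteq N\subseteq E$ is an intermediate $\de$-field with $N|F$ itself Picard-Vessiot, then $\cR\cap N$ is the Picard-Vessiot ring of $N|F$; applied to $M_e\subseteq L_{d_0+e}$ it yields $R_{d_0+e}\cap M_e=R'_e$, whence $a\in R'_e\subseteq R'$. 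This proves $R\cap M\subseteq R'$, so $R\cap M=R'$.

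The main thing to nail down is the classical fact quoted in the last step. For algebraically closed $\de$-constants it is standard (see e.g.\ \cite{vdPutSingerDifferential}): the Picard-Vessiot ring of $N|F$ is the coordinate ring of the quotient of the torsor $\operatorname{Spec}(\cR)$ by the normal subgroup scheme corresponding to $N$, and passing to this quotient amounts, on function rings, to intersecting with $\cR$. The case of arbitrary constants is equally standard and follows from the Hopf-algebraic formalism of \cite{AmanoMasuokaTakeuchi:HopfPVtheory} (cf.\ the second fundamental theorem cited in the proof of Theorem~\ref{theo:secondfundamentaltheorem}). The remaining verifications --- that $R=\bigcup_d R_d$ and $R'=\bigcup_e R'_e$ as $\s$-algebras, that $M=\bigcup_e M_e$, and that $\s^i(L_{d_0})\subseteq L_{d_0+i}$ --- are routine.
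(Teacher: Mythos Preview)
Your proof is correct. Both inclusions are properly established, and the jet-filtration argument for $R\cap M\subseteq R'$ works as written: the containment $M_e\subseteq L_{d_0+e}$ is justified, the reduction to $a\in R_{d_0+e}\cap M_e$ is clean, and the classical fact you quote (that for a Picard-Vessiot extension $E|F$ with ring $\cR$ and an intermediate Picard-Vessiot $N|F$, one has $\cR\cap N$ equal to the Picard-Vessiot ring of $N|F$) is indeed available over arbitrary constants via \cite{AmanoMasuokaTakeuchi:HopfPVtheory}.

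The paper, however, takes a much shorter route. Rather than passing through the jet filtration and invoking the finite-dimensional classical result level by level, it simply forgets $\s$ globally: by Remark~\ref{rem:compareTakeuchi}, both $L^\sharp|K^\sharp$ and $M^\sharp|K^\sharp$ are Picard-Vessiot extensions in the sense of Takeuchi, with Picard-Vessiot rings $R$ and $R'$ respectively. The statement ``$R\cap M$ is the Picard-Vessiot ring of $M|K$'' is then the content of (the proof of) \cite[Theorem~2.8]{AmanoMasuokaTakeuchi:HopfPVtheory} applied directly to $L^\sharp|K^\sharp$. What your detour through $L_d$ and $M_e$ buys is that you only ever invoke the Takeuchi result for finitely generated Picard-Vessiot extensions; but since the Takeuchi framework is built precisely to handle the infinitely generated case without modification, this gain is illusory, and you end up citing the same reference anyway. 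The paper's approach is the economical one.
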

\begin{proof}
Let $R'\subset M$ denote the $\s$-Picard-Vessiot ring of $M|K$. We have already seen at the beginning of the proof of Theorem \ref{theo:secondfundamentaltheorem}
that $R'\subset R\cap M$. To see that $R'=R\cap M$ it suffices to note that the corresponding statement is true if we forget $\sigma$. (See the proof of \cite[Theorem 2.8, p. 137]{AmanoMasuokaTakeuchi:HopfPVtheory}).
\end{proof}

%

\section{$\s$-separability} \label{sec:sseparability}

Let $R$ be a $\s$-ring. There are some natural conditions which we can impose on $R$:

\begin{itemize}
 \item $R$ is $\s$-reduced, i.e., $\s\colon R\to R$ is injective.
\item $R$ is perfectly $\s$-reduced, i.e., if $f\in R$ and $\alpha_1,\ldots,\alpha_n\in\N$ such that $\s^{\alpha_1}(f)\cdots\s^{\alpha_n}(f)=0$, then $f=0$.
\item $R$ is a $\s$-domain, i.e., $R$ is an integral domain and $\s$-reduced. 
\end{itemize}

The importance of perfectly $\s$-reduced $\s$-rings stems from the fact that the finitely $\s$-generated, perfectly $\s$-reduced $k$-$\s$-algebras are precisely the $\s$-coordinate rings of the classical $\s$-varieties over some $\s$-field $k$. See \cite[Section 2.6]{Levin}. $\s$-domains correspond to irreducible $\s$-varieties.

The main point of this section is to understand the implications on the $\s$-Picard-Vessiot extension $L|K$ if we impose one of the above conditions on the $\s$-coordinate ring $k\{G\}$ of the $\s$-Galois group $G=\sgal(L|K)$.
We also use this insight to explain what remains of the $\s$-Galois correspondence if one naively insists that the $\s$-Galois group of a $\s$-Picard-Vessiot extension $L|K$ is the actual automorphism group $\Aut^\ds(L|K)$ of $L|K$.

\medskip

To see the complete picture we need to study the above properties under extension of the base $\s$-field.

\begin{defi} \label{defi:sigmaseparableSigmaregular}
Let $k$ be a $\s$-field and $R$ a $k$-$\s$-algebra. We say that $R$ is
\begin{itemize}
\item  \emph{$\s$-separable} over $k$ if $R\otimes_k k'$ is $\s$-reduced;
\item \emph{perfectly $\s$-separable} over $k$ if $R\otimes_k k'$ is perfectly $\s$-reduced;
\item \emph{$\s$-regular} over $k$ if $R\otimes_k k'$ is a $\s$-domain;
\end{itemize}
for every $\s$-field extension $k'$ of $k$.
\end{defi}
\begin{defi}
Let $k$ be a $\s$-field and $G$ a $\s$-algebraic group over $k$. We say that $G$ is \emph{absolutely $\s$-reduced/perfectly $\s$-reduced/$\s$-integral} if $k\{G\}$ is $\s$-separable/perfectly $\s$-separable/$\s$-regular over $k$.\end{defi}
The properties introduced above are studied in some more detail in Section \ref{subsec:somepropertiesrelatedtobaseextension} of the appendix. Below we make use of these results. See the table after Definition \ref{defi:sigmaschemeabsolutelySigmaReduced} for an overview of the nomenclature.

\begin{prop} \label{prop:correspondenceSigmaSeparabel}
Let $L|K$ be a $\s$-Picard-Vessiot extension with $\s$-field of $\de$-constants $k$ and $\s$-Galois group $G$. Then
\begin{enumerate}
\item $L$ is $\s$-separable over $K$ if and only if $G$ is absolutely $\s$-reduced. In particular, if $k$ is inversive, then $L$ is $\s$-separable over $K$ if and only if $G$ is $\s$-reduced.
\item $L$ is perfectly $\s$-separable over $K$ if and only if $G$ is absolutely perfectly $\s$-reduced. In particular, if $k$ is algebraically closed and inversive, then $L$ is perfectly $\s$-separable over $K$ if and only if $G$ is perfectly $\s$-reduced.
\item $L$ is $\s$-regular over $K$ if and only if $G$ is absolutely $\s$-integral. In particular, if $k$ is algebraically closed and inversive, then $L|K$ is $\s$-regular if and only if $G$ is $\s$-integral.
\end{enumerate}
\end{prop}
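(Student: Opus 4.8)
The three assertions all follow the same template: translate a property of the $\s$-coordinate ring $k\{G\} = (R\otimes_K R)^\de$ into a property of $R$ (and hence of $L$) via the torsor isomorphism $R\otimes_K R \cong R\otimes_k k\{G\}$. The key tool is that for a $\de$-algebra over a $\de$-simple $\de$-ring, the $\de$-constants are linearly disjoint from the base over its own $\de$-constants (the result quoted from \cite{AmanoMasuoka:artiniansimple} in the proof of Lemma~\ref{lemma:fonda}). Concretely: for any $\s$-field extension $k'$ of $k$, base change the torsor identity to get $L\otimes_K L \cong L\otimes_k k\{G\}$, then tensor with $k'$ over $k$ to obtain
\[
(L\otimes_K L)\otimes_k k' \;\cong\; L\otimes_k (k\{G\}\otimes_k k').
\]
The plan is to use this to compare ``$\s$ injective / perfectly $\s$-reduced / domain'' on the two sides.

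\emph{Part (i).} I would first show $L$ is $\s$-separable over $K$ iff $L\otimes_K L$ is $\s$-reduced after every base extension of $k$ — but more efficiently, one reduces to showing that, over a $\ds$-field, $L$ is $\s$-separable over $K$ precisely when $R\otimes_K R$ is $\s$-reduced, using that $R\subset L$ is $\s$-generated over $K$ and that $\s$-separability only needs to be checked on a $\s$-generating set (Section~\ref{subsec:somepropertiesrelatedtobaseextension} of the appendix). Then via $R\otimes_K R = R\otimes_k k\{G\}$, and the fact that $\s$ is already injective on $R$ (Lemma~\ref{lemma:sdomain}), the linear disjointness result gives: $R\otimes_k (k\{G\}\otimes_k k')$ is $\s$-reduced iff $k\{G\}\otimes_k k'$ is $\s$-reduced, for every $k'$; the latter is the definition of $G$ being absolutely $\s$-reduced. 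The ``in particular'' clause is then the statement that over an inversive $k$, $\s$-reduced implies absolutely $\s$-reduced, i.e. Corollary~\ref{cor:sigmaseparablesufficesextension}(i) applied to $k\{G\}$.

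\emph{Parts (ii) and (iii).} These run word-for-word in parallel, replacing ``$\s$-reduced'' by ``perfectly $\s$-reduced'' (resp. ``$\s$-domain''). The one extra input needed is that the relevant property descends and ascends along the faithfully flat, $\s$-reduced extension $k\{G\}\otimes_k k' \to R\otimes_k(k\{G\}\otimes_k k')$ — i.e. a $\s$-algebra $B$ over a $\s$-domain $A$ with $\s$ injective on $A$ has ``$A\otimes$-compositum perfectly $\s$-reduced iff $A$ perfectly $\s$-reduced'', and similarly for being a $\s$-domain, where for the domain statement one additionally uses that $R$ (hence $L$) is $\s$-regular-like in the needed direction. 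I expect these transfer statements to be available in Section~\ref{subsec:somepropertiesrelatedtobaseextension} of the appendix; if not, they are proved exactly as in the classical (non-difference) case, picking a $k'$-basis of $R$ to reduce a product relation $\s^{\alpha_1}(f)\cdots\s^{\alpha_n}(f)=0$ in $R\otimes_k C$ to coordinatewise relations in $C$. The ``in particular'' clauses invoke the analogs of Corollary~\ref{cor:sigmaseparablesufficesextension} for perfectly $\s$-reduced (needing $k$ algebraically closed and inversive) and for $\s$-domains ($\s$-regularity over an algebraically closed inversive field is equivalent to being a $\s$-domain).

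\emph{Main obstacle.} The genuine content is not the torsor bookkeeping but the transfer lemmas along the base change $k\{G\} \hookrightarrow R\otimes_k k\{G\}$: one direction (descent: property on $R\otimes_k k\{G\}$ $\Rightarrow$ property on $k\{G\}$) is immediate since $k\{G\}$ is a subring stable under $\s$, but the ascent direction — that $\s$-reducedness / perfect $\s$-reducedness / being a $\s$-domain lifts from $k\{G\}$ (after arbitrary base extension of $k$) to the compositum with $R$ — is where the linear disjointness of $\de$-constants over a $\de$-simple ring does the real work, and for the ``$\s$-domain'' case one must be careful that $R$ contributes no new idempotents or zero divisors, which is exactly why $R$ being $\de$-simple (hence a $\s$-domain, Lemma~\ref{lemma:sdomain}) is essential. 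I would isolate these transfer statements as the crux and, if the appendix does not already contain them in the required generality, prove them there.
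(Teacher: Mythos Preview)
Your starting point---exploit the torsor identity $R\otimes_K R \simeq R\otimes_k k\{G\}$---is exactly right, and so is your observation that the ``in particular'' clauses come straight from Corollary~\ref{cor:sigmaseparablesufficesextension}. But the route you propose has a real gap.

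The problem is your reduction ``$L$ is $\s$-separable over $K$ precisely when $R\otimes_K R$ is $\s$-reduced.'' One direction is fine (if $L$ is $\s$-separable over $K$ then $L\otimes_K L$, hence its subring $R\otimes_K R$, is $\s$-reduced). But the converse does not follow from anything in the appendix, and your attempt to repair it by tensoring with arbitrary $k'$ over $k$ does not connect back to the definition: $\s$-separability of $L$ over $K$ quantifies over $\s$-field extensions $K'$ of $K$, not over extensions $k'$ of $k$, and there is no reason why $(R\otimes_K R)\otimes_k k'$ being $\s$-reduced for all $k'$ should force $R\otimes_K K'$ to be $\s$-reduced for all $K'$. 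Moreover, your ascent transfer lemma (``$k\{G\}\otimes_k k'$ $\s$-reduced $\Rightarrow$ $R\otimes_k(k\{G\}\otimes_k k')$ $\s$-reduced'') is not the kind of statement that follows from picking a $k$-basis of $R$: if $k$ is not inversive, the $\s$-images of a basis need not remain linearly independent over $k$.

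The paper avoids all of this with a single move you are missing: by Lemma~\ref{lemma:sigmaseparablesufficesextension}, each of the three ``absolute'' properties can be tested on \emph{one} sufficiently large $\s$-field extension (inversive for (i), algebraically closed and inversive for (ii) and (iii)). So choose $L'$ to be an algebraically closed inversive $\s$-field containing $L$; then $L'$ simultaneously serves as a test field over $K$ and over $k$. The torsor identity base-changes to $L'\otimes_K R \simeq L'\otimes_k k\{G\}$, and the whole argument collapses to: $L'\otimes_K R$ has the property $\Leftrightarrow$ $L'\otimes_k k\{G\}$ has the property. Lemma~\ref{lemma:sigmaseparablesufficesextension} (applied once with base $K$, once with base $k$) turns the left side into ``$R$ is $\s$-separable/perfectly $\s$-separable/$\s$-regular over $K$'' and the right side into the corresponding absolute property of $G$. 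Finally Lemma~\ref{lemma:sigmaseparablequotientfield} passes from $R$ to its fraction field $L$. No transfer lemmas along $k\{G\}\hookrightarrow R\otimes_k k\{G\}$ are needed at all.
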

\begin{proof}
We give a simultaneous proof of all the statements. Let $R$ denote the $\s$-Picard-Vessiot ring of $L|K$.
Let $L^*$ denote the inversive closure of $L$ (\cite[Def. 2.1.6, p. 109]{Levin}) and let $L'$ denote an algebraic closure of $L^*$, equipped with an extension of $\s$. Then $L'$ is an algebraically closed and inversive $\s$-field.
The fundamental isomorphism $R\otimes_K R\simeq R\otimes_k k\{G\}$ extends to
$L'\otimes_K R\simeq L'\otimes_k k\{G\}$.

Assume that $L$ is $\s$-separable/perfectly $\s$-separable/$\s$-regular over $k$. Then also $R$ is $\s$-separable/perfectly $\s$-separable/$\s$-regular over $k$.
Therefore $L'\otimes_K R= L'\otimes_k k\{G\}$ is $\s$-reduced/perfectly $\s$-reduced/$\s$-integral. It follows from Lemma \ref{lemma:sigmaseparablesufficesextension} that
$G$ is absolutely $\s$-reduced/perfectly $\s$-reduced/$\s$-integral.

Conversely, if $G$ is absolutely $\s$-reduced/perfectly $\s$-reduced/$\s$-integral, then $R$ must be $\s$-separable/perfectly $\s$-separable/$\s$-regular over $K$. It follows from Lemma \ref{lemma:sigmaseparablequotientfield} that $L$ is $\s$-separable/perfectly $\s$-separable/$\s$-regular over $K$. The ``in particular'' statements are clear from Corollary \ref{cor:absolutelysufficesextension}.
\end{proof}

\begin{cor}\label{cor:inverssep} Let $K$ be $\ds$-field with $\s\colon K\to K$ an automorphism and
let $L|K$ be a $\s$-Picard-Vessiot extension with $\s$-Galois group $G$. Then
$L$ is  $\s$-separable over $K$ and $G$ is absolutely $\s$-reduced.\end{cor}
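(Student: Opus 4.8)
The idea is to reduce to Proposition \ref{prop:correspondenceSigmaSeparabel}(i) together with its ``in particular'' clause, so the only real work is to verify that a $\ds$-field $K$ on which $\s$ is an automorphism is inversive, and that moreover $L$ is automatically $\s$-separable over $K$. First I would recall that, by definition, a $\s$-ring is inversive precisely when $\s$ is an automorphism of the underlying ring; hence the hypothesis ``$\s\colon K\to K$ an automorphism'' says exactly that $K$ is an inversive $\s$-field. In particular $k=K^\de$ is a $\s$-subfield of $K$ on which $\s$ is still bijective (it is bijective on $K$ and $k$ is $\s$-stable by \eqref{eq:com}), so $k$ is inversive as well.

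Next I would invoke the characterization recalled just before Theorem \ref{thm:unicity}: every $\s$-reduced $k$-$\s$-algebra over an inversive $\s$-field $k$ is $\s$-separable (this is Corollary \ref{cor:sigmaseparablesufficesextension}(i) in the appendix, quoted in the main text). The $\s$-Picard-Vessiot ring $R\subset L$ is a $\s$-domain by Lemma \ref{lemma:sdomain} (it is $\de$-simple, hence $\s$ is injective on it), and in particular $\s$-reduced; therefore $R$ is $\s$-separable over $k$. One then passes from $R$ to its field of fractions $L$: since $L=\quot(R)$ and $R$ is $\s$-separable over $K$, Lemma \ref{lemma:sigmaseparablequotientfield} gives that $L$ is $\s$-separable over $K$. (Alternatively, one may note directly that $L$, being a finitely $\s$-generated $\s$-field extension of the inversive field $K$ with $L\otimes_K k'$ a localization of the $\s$-reduced ring $R\otimes_K k'$, is $\s$-separable; but citing Lemma \ref{lemma:sigmaseparablequotientfield} is cleaner.)

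Having established that $L$ is $\s$-separable over $K$, the conclusion that $G$ is absolutely $\s$-reduced is immediate from Proposition \ref{prop:correspondenceSigmaSeparabel}(i): that proposition asserts the equivalence ``$L$ is $\s$-separable over $K$ $\iff$ $G$ is absolutely $\s$-reduced''. I would phrase the write-up as: ``Since $\s$ is an automorphism of $K$, the $\s$-field $K$ is inversive, and hence so is $k=K^\de$. The $\s$-Picard-Vessiot ring $R$ is a $\s$-domain by Lemma \ref{lemma:sdomain}, in particular $\s$-reduced, so it is $\s$-separable over $K$ by Corollary \ref{cor:sigmaseparablesufficesextension}(i); by Lemma \ref{lemma:sigmaseparablequotientfield} its fraction field $L$ is $\s$-separable over $K$. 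The claim now follows from Proposition \ref{prop:correspondenceSigmaSeparabel}(i).''

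I do not anticipate a genuine obstacle here; the statement is essentially a packaging of earlier results. The only point requiring a moment's care is the passage from $R$ to $L$: one must make sure that $\s$-separability of a $\s$-domain transfers to its fraction field, which is exactly what Lemma \ref{lemma:sigmaseparablequotientfield} supplies (localizations are flat, so $(\quot R)\otimes_K k'$ embeds into a localization of $R\otimes_K k'$, and injectivity of $\s$ is preserved under localization and under passing to subrings). If for some reason Lemma \ref{lemma:sigmaseparablequotientfield} only handled $\s$-reducedness in a weaker form, the fallback would be to argue directly with the flatness of localization; but as stated in the excerpt that lemma already covers the $\s$-separable/perfectly $\s$-separable/$\s$-regular trichotomy, so no extra argument is needed.
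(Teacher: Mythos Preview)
Your proof is correct and uses the same key ingredients as the paper, namely Corollary \ref{cor:sigmaseparablesufficesextension}(i) together with Proposition \ref{prop:correspondenceSigmaSeparabel}(i). The paper is slightly more direct: since $L$ is a field it is automatically $\s$-reduced, so Corollary \ref{cor:sigmaseparablesufficesextension}(i) applies to $L$ over the inversive $\s$-field $K$ immediately, and your detour through $R$ and Lemma \ref{lemma:sigmaseparablequotientfield} (as well as the observation that $k$ is inversive) is unnecessary.
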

\begin{proof}
By Corollary \ref{cor:sigmaseparablesufficesextension} (i), since $K$ is inversive, $L|K$ is $\s$-separable.
\end{proof}

\subsection{A ``naive'' point of view: perfect $\s$-separability}\label{subsec:perfsep}

The purpose of this subsection is to explain what remains of the $\s$-Galois correspondence if one naively insists that the $\s$-Galois group of a $\s$-Picard-Vessiot extension $L|K$ is the actual automorphism group $\Aut^\ds(L|K)$ of $L|K$. Such an approach, closer to \cite{cassisinger} and \cite{HardouinSinger}, is in principle possible. It has the advantage of being notationally more convenient. For example, $\Aut^\ds(L|K)$ acts on $L$ whereas $\sgal(L|K)$ is acting (functorially) only on $R$, the $\s$-Picard-Vessiot ring of $L|K$. One disadvantage is that one must assume that the $\de$-constants are $\s$-closed. In the case of differential parameters, this is essentially the only disadvantage: If $k$ is a $\de$-closed $\de$-field, then the $\de$-closed subgroups of $\Gl_n(k)$ are the same as the $\de$-closed subgroups\footnote{In the sense of scheme theory.} of $\Gl_{n,k}$. (Because in characteristic zero every Hopf-algebra is reduced.)
However, in our case, the case of a difference parameter, there are more $\s$-closed subgroups of $\Gl_{n,k}$ than $\s$-closed subgroups of $\Gl_n(k)$ (even if $k$ is $\s$-closed). See the introduction for an example.
So only a certain part of the general $\s$-Galois correspondence (Theorem \ref{theo:Galoiscorrespondence}) will remain if we replace $\sgal(L|K)$ with $\Aut^\ds(L|K)$. Let us illustrate this phenomenon with an example. 

\begin{exa}
Let $L|K$ be the $\s$-Picard-Vessiot extension from Example \ref{exa:benign}. Then $\Aut^\ds(L|K)=C_2$ and so, in the ``naive'' sense, we only have two groups on the group side, namely $C_2$ and the trivial group. The trivial group corresponds to $L$ and the intermediate $\ds$-field of $L|K$ fixed by $C_2$ is
$$L^{C_2}=K(\sqrt{x+i}\sqrt{x+j}|\ i,j\geq 0)\subset L.$$ All the other intermediate $\ds$-fields of $L|K$ are ``lost''. For example, the intermediate $\ds$-field $K(\sqrt{x+1},\sqrt{x+2},\ldots)\subset L$, which corresponds to the $\s$-closed subgroup $H$ of $\sgal(L|K)$ given by
$H(S)=\{g\in S^\times|\ g^2=1,\ \s(g)=1\}$ for any $k$-$\s$-algebra $S$, does not appear in this correspondence.
\end{exa}

Let $L|K$ be a $\s$-Picard-Vessiot extension with $\s$-Picard-Vessiot ring $R$, $\s$-field of $\de$-constants $k$ and $\s$-Galois group $G=\sgal(L|K)$. As
\[G(k)=\Aut^\ds(R|K)=\Aut^\ds(L|K)\] the problem essentially boils down to ``When can a $\s$-algebraic $k$-$\s$-scheme $X$ be recovered from its $k$-rational points $X(k)$?'' But this is well known, it will be possible if $k$ is ``big enough'' and $X$ is perfectly $\s$-reduced.

By Proposition \ref{prop:correspondenceSigmaSeparabel} (ii), the (absolutely) perfectly $\s$-reduced subgroups of $G$ correspond to intermediate $\ds$-fields $M$ of $L|K$ such that $L|M$ is perfectly $\s$-separable. We shall give a more explicit characterization of these intermediate $\ds$-fields. We first need a simple lemma.

\begin{lemma} \label{lemma:relativeAlgebraicClosureIsGalois}
Let $L|K$ be a $\s$-Picard-Vessiot extension and let $K'$ denote the relative algebraic closure of $K$ inside $L$. Assume that $k=K^\de$ is algebraically closed. Then the field extension $K'|K$ is Galois.
\end{lemma}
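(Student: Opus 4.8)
The plan is to reduce to classical Picard-Vessiot theory via the exhaustion $L=\bigcup_{d\ge 0}L_d$, where $L_d=K(Y,\s(Y),\ldots,\s^d(Y))$ for a fixed fundamental solution matrix $Y$ of $L|K$. Since $L=K\langle Y\rangle_\s$, every element of $L$ lies in some $L_d$; in particular, if $K'_d$ denotes the relative algebraic closure of $K$ in $L_d$, then $K'=\bigcup_{d\ge 0}K'_d$, and $L_d\subseteq L_{d+1}$ forces $K'_d\subseteq K'_{d+1}$, so the $K'_d$ form a directed family of subfields of $L$.

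First I would show that each $K'_d|K$ is a finite Galois extension. By Proposition \ref{prop:Zariskiclosures}, $L_d|K$ is a classical Picard-Vessiot extension, and since $k=K^\de$ is algebraically closed the classical theory applies (see \cite{vdPutSingerDifferential}). Differentiating the minimal polynomial of an element of $L_d$ that is algebraic over $K$ shows that its derivative is again algebraic over $K$ (in characteristic zero that minimal polynomial is separable, so $p'$ does not vanish at the element); hence $K'_d$ is an intermediate $\de$-field of $L_d|K$, and by the classical Galois correspondence $K'_d=L_d^{H_d}$ for a closed subgroup $H_d$ of the Galois group $G_d$ of $L_d|K$. For $a\in K'_d$ the finite set of $K$-conjugates of $a$ is permuted by $G_d(k)=\Aut^\de(L_d|K)$, so the stabilizer of $a$ is a closed subgroup of finite index in $G_d$ and therefore contains the identity component $G_d^\circ$; thus $K'_d\subseteq L_d^{G_d^\circ}$. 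Conversely, $L_d^{G_d^\circ}|K$ is Picard-Vessiot with finite Galois group $G_d/G_d^\circ$, hence a finite field extension of $K$, so $L_d^{G_d^\circ}\subseteq K'_d$. Therefore $K'_d=L_d^{G_d^\circ}$, and since $G_d^\circ$ is normal in $G_d$, the second fundamental theorem of classical Galois theory gives that $K'_d|K$ is Galois with group $G_d/G_d^\circ$.

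Finally I would invoke the elementary fact that a directed union of finite Galois extensions is Galois: $K'|K$ is algebraic and separable (characteristic zero), and it is normal because any $a\in K'$ lies in some $K'_d$, over which its minimal polynomial over $K$ already splits, inside $K'_d\subseteq K'$. Hence $K'|K$ is Galois. The argument is largely formal; the only place where the hypothesis that $k$ is algebraically closed really enters (beyond making the classical Galois correspondence available) is the identification $K'_d=L_d^{G_d^\circ}$, which uses that a finite-index closed subgroup of an algebraic group contains its identity component and that a Picard-Vessiot extension with finite Galois group is a genuine Galois field extension. I therefore expect the bookkeeping around the $K'_d$ to be the main, though mild, obstacle, the rest being routine.
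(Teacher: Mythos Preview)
Your proof is correct and follows the same route as the paper: exhaust $L$ by the classical Picard-Vessiot extensions $L_d|K$, show that the relative algebraic closure $K'_d$ of $K$ in each $L_d$ is Galois over $K$, and pass to the directed union. The only difference is cosmetic: the paper simply cites \cite[Proposition 1.34, p.~25]{vdPutSingerDifferential} for the fact that $K'_d|K$ is Galois, while you spell out the standard argument via the identity component $G_d^\circ$.
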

\begin{proof}
As in Proposition \ref{prop:Zariskiclosures}, we can write $L|K$ as a directed union of Picard-Vessiot extensions $L_d|K$ ($d\geq 0$). Because $k=(L_d)^\de$ is algebraically closed, we know that the relative algebraic closure of $K$ in $L_d$ is a Galois extension of $K$ (\cite[Proposition 1.34, p. 25]{vdPutSingerDifferential}). Thus $K'$ is the directed union of Galois extension of $K$. So $K'$ is itself Galois over $K$.
\end{proof}


\begin{lemma} \label{lemma:characterisePerfectlySigmaReducedGroup}
Let $L|K$ be a $\s$-Picard-Vessiot extension with $\s$-Galois group $G$. Assume that $k=K^\de$ is algebraically closed and inversive.
Then $G$ is perfectly $\s$-reduced if and only if $L|K$ satisfies the following properties:
\begin{enumerate}
 \item $L$ is $\s$-separable over $K$.
\item The relative algebraic closure $K'$ of $K$ inside $L$ is a finite field extension of $K$.
\item Every (field) automorphism of $K'|K$ commutes with $\s\colon K'\to K'$.
\end{enumerate}
\end{lemma}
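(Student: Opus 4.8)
The plan is to reduce the statement to a combination of the Galois correspondence, the already-established dictionary between properties of $G$ and properties of $L|K$ (Proposition \ref{prop:correspondenceSigmaSeparabel}), and the classical structure of $L|K$ as a directed union of Picard-Vessiot extensions $L_d|K$. The key observation linking ``perfectly $\s$-reduced'' to conditions (ii) and (iii) is this: a perfectly $\s$-reduced $\s$-algebraic group over an algebraically closed inversive $k$ is essentially the ``$\s$-constant points'' phenomenon — if $G$ is perfectly $\s$-reduced then $k\{G\}$ embeds into a product of $\s$-fields, and the $\s$-nilpotent behaviour that would come from a nontrivial finite quotient on which $\s$ acts nontrivially is precisely what is forbidden. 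So I expect to decompose $G$ (or rather the relevant part of it) into its identity component and its component group, the latter being a finite (étale) group scheme, and to track how $\s$ acts on it.

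First I would establish the ``only if'' direction. Assume $G$ is perfectly $\s$-reduced. Since a perfectly $\s$-reduced ring is in particular $\s$-reduced, we get that $G$ is absolutely $\s$-reduced (here one must be slightly careful: $k$ inversive gives, via Proposition \ref{prop:correspondenceSigmaSeparabel}(i) and its ``in particular'', that $\s$-reduced implies absolutely $\s$-reduced, hence $L|K$ is $\s$-separable), which is (i). For (ii) and (iii): the Zariski closure $G[0]$ of $G$ inside $\Gl_{n,k}$ is the classical Galois group of $L_0=K(Y)$ over $K$ by Proposition \ref{prop:Zariskiclosures}; its component group $\pi_0(G[0])$ is a finite group scheme over the algebraically closed field $k$, hence a finite constant group, and it is the Galois group of $K'_0|K$ where $K'_0$ is the relative algebraic closure of $K$ inside $L_0$ (by \cite[Prop.~1.34, p.~25]{vdPutSingerDifferential}, used in Lemma \ref{lemma:relativeAlgebraicClosureIsGalois}). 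The relative algebraic closure $K'$ of $K$ in $L$ is the directed union of the $K'_d$. The point is that $\s$ maps $K'$ into $K'$ and induces an endomorphism of the profinite group $\Gal(K'|K)$; passing to $G$, the $\s$-action on the component group of $G$ is compatible with this. If this $\s$-action on the (pro)finite component group were not ``finite-order-like'' — concretely, if $K'|K$ were infinite, or if $\s$ did not normalize the stabilizers correctly so that some automorphism of $K'|K$ fails to commute with $\s$ — then I would produce a nonzero $f\in k\{G\}$ with $\s^{\alpha_1}(f)\cdots\s^{\alpha_n}(f)=0$, contradicting perfect $\s$-reducedness. The mechanism: a finite constant group scheme $\Gamma$ over $k$ equipped with an endomorphism $\varphi$ of $\Gamma$ has perfectly $\s$-reduced coordinate ring $k^\Gamma$ (with $\s$ induced by $\varphi$) if and only if $\varphi$ permutes the points of $\Gamma$, i.e. $\varphi$ is bijective; and for the infinite (profinite) case one gets a genuine $\s$-nilpotent element because the coordinate ring $k\{\pi_0(G)\}$ is then not finite-dimensional and the relations among the $\s^i(f)$ for idempotents $f$ force $\s$-nilpotents unless things stabilize after finitely many steps. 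Translating ``$\varphi$ bijective on a finite group compatible with the tower'' back to fields gives exactly: $K'|K$ finite (ii), and every automorphism of $K'|K$ commutes with $\s$ (iii).

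For the converse, assume (i), (ii), (iii). By (i) and Proposition \ref{prop:correspondenceSigmaSeparabel}(i), $G$ is absolutely $\s$-reduced, so $k\{G\}$ is $\s$-reduced and it remains to upgrade this to perfectly $\s$-reduced. Write $k\{G\}$ in terms of the tower: $R\otimes_K R = R\otimes_k k\{G\}$, and the Zariski closures give $k\{G[d]\}$, with $k\{G\} = \varinjlim k\{G[d]\}$ roughly speaking (more precisely $k\{G\}$ is generated by the $k\{G[d]\}$). Each $k\{G[d]\}$ is the coordinate ring of a (reduced, since char $0$) affine group scheme over $k$, hence a finitely generated reduced $k$-algebra, i.e. already $\s$-free of nilpotents as a ring. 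The only source of $\s$-nilpotence is thus in how $\s$ moves between the $G[d]$'s, equivalently in the étale/component-group part; condition (ii) forces the tower of relative algebraic closures to stabilize, and (iii) forces $\s$ to act bijectively on the finite set $\Gal(K'|K)$ and to fix the arithmetic of the tower, so no element $f$ of $k\{G\}$ can have $\s^{\alpha_1}(f)\cdots\s^{\alpha_n}(f)=0$ with $f\neq 0$: if it did, projecting to $k\{G[d]\}$ for $d$ large and using that this ring is reduced and that $\s$ eventually acts as an (automorphism-like) bijection on the component group, one gets $f=0$. Hence $G$ is perfectly $\s$-reduced.

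\textbf{Main obstacle.} The hard part will be making precise and correct the dictionary ``perfect $\s$-reducedness of $G$ $\Leftrightarrow$ $\s$ acts bijectively on the component group and the tower of relative algebraic closures is finite''. In particular I need: (a) that $k\{G\}$ being perfectly $\s$-reduced is detected on the component group once one already knows $G$ is absolutely $\s$-reduced (isolating the ``connected part is harmless in char $0$'' phenomenon from the ``étale part carries the $\s$-nilpotence'' phenomenon); and (b) the precise equivalence, for a direct limit of finite constant group schemes with a $\s$-action, between perfect $\s$-reducedness of the limit coordinate ring and the conjunction ``finite at each stage and eventually stable with $\s$ bijective''. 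This is exactly the kind of statement I would expect to be packaged in Section \ref{subsec:somepropertiesrelatedtobaseextension} of the appendix (the table after Definition \ref{defi:sigmaschemeabsolutelySigmaReduced}, together with Lemma \ref{lemma:sigmaseparablesufficesextension} and Corollary \ref{cor:absolutelysufficesextension}), so I would look there for the clean formulation, and the role of (iii) — that automorphisms of $K'|K$ commute with $\s$ — is precisely what guarantees that the induced map on $\Gal(K'|K)$, and hence on $\pi_0(G)$, is a group-scheme automorphism rather than merely an endomorphism, which is what ``perfectly $\s$-reduced'' demands of the finite étale part.
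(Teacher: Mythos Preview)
Your approach is genuinely different from the paper's and, as stated, has a real gap.

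The paper does not argue on the group side at all. It uses Proposition \ref{prop:correspondenceSigmaSeparabel}(ii) in full strength: since $k$ is algebraically closed and inversive, ``$G$ perfectly $\s$-reduced'' is \emph{equivalent} to ``$L|K$ perfectly $\s$-separable''. This immediately moves the entire problem to a purely field-theoretic statement about $L|K$, with no reference to $k\{G\}$, Zariski closures $G[d]$, or component groups. The field-theoretic statement is then exactly Lemma \ref{lemma:perfectlysigmaseparableMain} from the appendix (in the subsubsection ``More on perfect $\s$-separability''): for a finitely $\s$-generated $\s$-separable extension $L|K$ with $K'|K$ Galois, $L|K$ is perfectly $\s$-separable if and only if $K'|K$ is finite and every automorphism of $K'|K$ commutes with $\s$. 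Lemma \ref{lemma:relativeAlgebraicClosureIsGalois} supplies the Galois hypothesis. That is the whole proof.

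Your plan instead tries to detect perfect $\s$-reducedness of $k\{G\}$ directly via a putative component group $\pi_0(G)$ and the tower $G[d]$. The obstacle you flag is exactly where the gap lies: the paper develops no notion of component group for $\s$-algebraic groups, and the appendix contains no statement of the form ``perfect $\s$-reducedness of $k\{G\}$ is detected on the \'etale part once $G$ is absolutely $\s$-reduced''. What the appendix does contain is the field-theoretic package (Lemmas \ref{lemma:perfectlysigmaseperableReducetoAlgebraicClosure}, \ref{lemma:perfectlysigmaseparabelfinite}, \ref{lemma:perfectlysigmaseparabelGalois}, assembled into Lemma \ref{lemma:perfectlysigmaseparableMain}), which is precisely the analogue of your component-group analysis carried out for $L|K$ rather than for $k\{G\}$. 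So your route is not wrong in spirit, but to complete it you would have to reprove Lemma \ref{lemma:perfectlysigmaseparableMain} on the Hopf-algebra side, which is strictly more work than simply invoking the dictionary of Proposition \ref{prop:correspondenceSigmaSeparabel}(ii) and citing that lemma. You also only use part (i) of Proposition \ref{prop:correspondenceSigmaSeparabel} to get $\s$-separability, overlooking that part (ii) already hands you the full equivalence you need.
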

\begin{proof}
Assume that $G$ is perfectly $\s$-reduced. As $k$ is algebraically closed and inversive, it follows from Proposition \ref{prop:correspondenceSigmaSeparabel} (ii) that $L|K$ is perfectly $\s$-separable. A fortiori $L|K$ is $\s$-separable. Because $k$ is algebraically closed, we know from Lemma \ref{lemma:relativeAlgebraicClosureIsGalois} that $K'|K$ is Galois. We can thus apply Lemma \ref{lemma:perfectlysigmaseparableMain} to conclude that $L|K$ satisfies conditions (ii) and (iii).

The converse direction is similar: If $L|K$ satisfies conditions (i), (ii) and (iii) then it follows from Lemma \ref{lemma:perfectlysigmaseparableMain} that $L|K$ is perfectly $\s$-separable. So $G$ is perfectly $\s$-reduced.
\end{proof}

It seems interesting to note that the $\de$-analogs of conditions (ii) and (iii) are automatically satisfied: If $L|K$ is a finitely $\de$-generated extension of $\de$-fields then the relative algebraic closure $K'$ of $K$ inside $L$ is finite and every automorphism of $K'|K$ commutes with $\de$ (\cite[Corollary 2, Chapter II, Section 11, p. 113]{Kolchin:differentialalgebraandalgebraicgroups} and \cite[Lemma 1, Chapter II, Section 2, p. 90]{Kolchin:differentialalgebraandalgebraicgroups}).

\bigskip

Let $L|K$ be a $\s$-Picard-Vessiot extension for $\de(y)=Ay$ with fundamental solution matrix $Y\in\Gl_n(L)$. For the rest of this section, we will assume that $k=K^\de$ is $\s$-closed.
Every automorphism $\tau\in\Aut^\ds(L|K)$ is given by a matrix $[\tau]_Y\in\Gl_n(k)$ satisfying $\tau(Y)=Y[\tau]_Y$. The mapping
\[\Aut^\ds(L|K)\to\Gl_n(k),\ \tau\mapsto [\tau]_Y\] is an injective morphism of groups. We shall henceforth identify $\Aut^\ds(L|K)$ with the image of this embedding.

The $\s$-closed subsets of $\Gl_n(k)$ are defined as in Section \ref{sec:The semi-classical point of view}, i.e., as the solution sets of systems of $\s$-polynomials in the matrix entries.
Of course $\Aut^\ds(L|K)$ is $\s$-closed in $\Gl_n(k)$. Indeed, with the notation of the proof of Proposition \ref{prop:defgal}, $\Aut^\ds(L|K)$ is the solution set of the kernel of
$k\{X,\frac{1}{\det(X)}\}_\s\to k\{Z,\frac{1}{\det(Z)}\}_\s$, $X\mapsto Z$. If $\widetilde{H}$ is a $\s$-closed subgroup of $\Aut^\ds(L|K)$ we set
\[L^{\widetilde{H}}:=\{a\in L|\ h(a)=a \ \forall \ h\in\widetilde{H}\}.\]

\begin{lemma} \label{lemma:kpoints}
Let $L|K$ be a $\s$-Picard-Vessiot extension for $\de(y)=Ay$ with fundamental solution matrix $Y\in\Gl_n(L)$ and $\s$-Galois group $G=\sgal(L|K)$. Assume that $k=K^\de$ is $\s$-closed.
The assignment $H\mapsto H(k)$ defines a bijection between the set of the $\s$-closed subgroups of $G$ that are perfectly $\s$-reduced and the set of the $\s$-closed subgroups of
$\Aut^\ds(L|K)$. Moreover, if $H\leq G$ is perfectly $\s$-reduced, then $L^H=L^{H(k)}$.
\end{lemma}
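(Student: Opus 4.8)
The plan is to exploit the representability of $\s$-algebraic groups by perfectly $\s$-reduced $\s$-coordinate rings and the fact that, over a $\s$-closed field, a perfectly $\s$-reduced finitely $\s$-generated $k$-$\s$-algebra is determined by its $k$-points. First I would record that $\Aut^\ds(L|K)=G(k)$, so every $\s$-closed subgroup $\widetilde H$ of $\Aut^\ds(L|K)$ sits inside $G(k)$, cut out there by some perfect $\s$-ideal of $k\{X,\tfrac1{\det(X)}\}_\s/\I(G)$; pulling this perfect $\s$-ideal back to $k\{G\}$ and saturating to a Hopf $\s$-ideal defines a $\s$-closed subgroup $H\leq G$, which is perfectly $\s$-reduced because we take a perfect (radical) $\s$-ideal, hence $k\{H\}$ is perfectly $\s$-separable over the $\s$-closed (in particular inversive) field $k$ by Corollary \ref{cor:absolutelysufficesextension}. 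Conversely, if $H\leq G$ is a perfectly $\s$-reduced $\s$-closed subgroup, then $k\{H\}$ is perfectly $\s$-reduced and finitely $\s$-generated over the $\s$-closed field $k$, so its points $H(k)=\Alg_k^\s(k\{H\},k)$ are Zariski-$\s$-dense in $H$ and $k\{H\}$ is recovered as the ring of $\s$-polynomial functions on the $\s$-closed set $H(k)\subseteq\Gl_n(k)$. The mutual inverseness of $H\mapsto H(k)$ and $\widetilde H\mapsto(\text{the }\s\text{-closed subgroup scheme of }G\text{ with points }\widetilde H)$ then follows from this density/recovery statement together with the fact that both operations are compatible with the group structure.

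The heart of the argument, and the step I expect to be the main obstacle, is the precise form of the ``$X$ is recovered from $X(k)$ when $k$ is $\s$-closed and $X$ is perfectly $\s$-reduced'' principle: one needs that for a finitely $\s$-generated perfectly $\s$-reduced $k$-$\s$-algebra $B$, the vanishing $\s$-ideal in $B$ of the set $\Alg_k^\s(B,k)$ is zero, i.e. an element of $B$ killed by evaluation at every $k$-point is already zero. This is exactly where $\s$-closedness is used: a nonzero $f\in B$ would, after inverting $f$, give a nonzero finitely $\s$-generated $k$-$\s$-algebra, but to apply the defining property of a $\s$-closed field one needs a $\s$-domain quotient, which is supplied precisely by perfect $\s$-reducedness (some minimal $\s$-prime not containing $f$ gives a $\s$-domain, whence a $k$-$\s$-point not killing $f$). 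I would cite the appendix for this — it is the $\s$-analog of the Nullstellensatz and should be available there (around Lemma \ref{lemma:explainsclosed} and the material on perfectly $\s$-reduced $\s$-schemes).

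For the final assertion $L^H=L^{H(k)}$, the inclusion $L^H\subseteq L^{H(k)}$ is immediate since $H(k)\subseteq H(S)$ for $S=k$. For the reverse, take $a=\tfrac{r_1}{r_2}\in L^{H(k)}$ with $r_1,r_2\in R$; by Lemma \ref{lemma:compute}, invariance under all of $H$ amounts to $r_1\otimes r_2-r_2\otimes r_1$ lying in the ideal of $R\otimes_K R=R\otimes_k k\{G\}$ generated by $\I(H)$, and by Lemma \ref{lemma:simple} this can be tested after contracting to $k\{G\}$, i.e. it amounts to the image of a certain element $c\in k\{G\}$ lying in $\I(H)$. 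Since $\I(H)$ is a perfect $\s$-ideal of the (after the relevant localization) perfectly $\s$-reduced situation and $k$ is $\s$-closed, $\I(H)$ equals the vanishing $\s$-ideal of $H(k)$ by the Nullstellensatz principle above; but $a\in L^{H(k)}$ says exactly that $c$ vanishes at every point of $H(k)$, so $c\in\I(H)$ and hence $a\in L^H$. Thus $L^{H(k)}\subseteq L^H$, completing the proof.
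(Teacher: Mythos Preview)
Your approach is essentially the paper's: the bijection is exactly Lemma~\ref{lemma:bijectionSigmaClosedandPerfectlySigmaReduced} (the $\s$-Nullstellensatz you sketch is Lemma~\ref{lemma:sigmaClosedIntersection}), and the equality $L^H=L^{H(k)}$ is deduced from $\I(H)=\bigcap_{h\in H(k)}\m_h$. Two remarks.

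First, in the bijection, the phrase ``saturating to a Hopf $\s$-ideal'' is a red herring. If $\widetilde H\subset G(k)$ is a subgroup, its vanishing ideal is already a Hopf ideal, so no saturation is needed; inserting one risks losing $H(k)=\widetilde H$.

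Second, in the argument for $L^{H(k)}\subseteq L^H$ there is a genuine slip. You write that, via Lemma~\ref{lemma:simple}, membership of $r_1\otimes r_2-r_2\otimes r_1$ in $R\otimes_k\I(H)$ ``amounts to the image of a certain element $c\in k\{G\}$ lying in $\I(H)$''. There is no such single $c$: the element $r_1\otimes r_2-r_2\otimes r_1\in R\otimes_k k\{G\}$ is not a $\de$-constant, and Lemma~\ref{lemma:simple} gives a correspondence of $\de$-ideals, not a contraction of elements. More importantly, you never translate the hypothesis $a\in L^{H(k)}$ into a statement about $r_1\otimes r_2-r_2\otimes r_1$. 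The paper does this directly: for each $h\in H(k)$ with evaluation $\operatorname{ev}_h\colon k\{G\}\to k$ and $\m_h=\ker(\operatorname{ev}_h)$, one has $h(a)=a$ iff $r_1\otimes r_2-r_2\otimes r_1$ lies in $\ker(\operatorname{id}\otimes\operatorname{ev}_h)=R\otimes_k\m_h$. Hence $a\in L^{H(k)}$ iff $r_1\otimes r_2-r_2\otimes r_1\in\bigcap_h(R\otimes_k\m_h)=R\otimes_k\bigl(\bigcap_h\m_h\bigr)$. Now Lemma~\ref{lemma:sigmaClosedIntersection} gives $\bigcap_h\m_h=\I(H)$, and Lemma~\ref{lemma:compute} finishes. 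If you prefer your ``contraction'' picture, the correct version is: fix a $k$-basis $(e_i)$ of $R$, write $r_1\otimes r_2-r_2\otimes r_1=\sum_i e_i\otimes c_i$ with $c_i\in k\{G\}$; then the condition is that \emph{all} $c_i$ lie in $\I(H)$, respectively in $\bigcap_h\m_h$, and these coincide.
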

\begin{proof}
The statement about the bijection follows from Lemma \ref{lemma:bijectionSigmaClosedandPerfectlySigmaReduced}.

Let $H$ be a perfectly $\s$-reduced $\s$-closed subgroup of $G$ and let $R$ denote the $\s$-Picard-Vessiot ring of $L|K$. By definition $L^H\subset L^{H(k)}$.  For $h\in H(k)\subset G(k)$, let $\operatorname{ev}_h\colon k\{G\}\to k$ denote the evaluation map. With the notation of the proof of Proposition \ref{prop:defgal} it is given by $Z\mapsto [h]_Y$. Then, for $r\in R$, $h(r)$ is the image of $1\otimes r$ under
$R\otimes_K R=R\otimes_k k\{G\}\xrightarrow{\operatorname{id}\cdot\operatorname{ev}_h}R$. We also set $\m_h=\ker(\operatorname{ev}_h)$.

Let $a\in L$. We may write $a=\frac{r_1}{r_2}$ with $r_1,r_2\in R$, $r_2\neq 0$. We have $h(a)=a$ if and only if $r_1\otimes r_2-r_2\otimes r_1$ lies in the kernel of
$R\otimes_K R=R\otimes_k k\{G\}\xrightarrow{\operatorname{id}\cdot\operatorname{ev}_h}R$.
Thus, $a$ is invariant under $H(k)$ if and only if $r_1\otimes r_2-r_2\otimes r_1$ lies in $R\otimes_k(\bigcap_{h\in H(k)}\m_h)$.

On the other side, by Lemma \ref{lemma:compute}, $a$ is invariant under $H$ if and only if $r_1\otimes r_2-r_2\otimes r_2$ lies in $R\otimes_k\I(H)$, where $\I(H)$ denotes the defining ideal of $H$ in $G$. Thus $L^H=L^{H(k)}$ because $\I(H)=\bigcap_{h\in H(k)}\m_h$ by Lemma \ref{lemma:sigmaClosedIntersection}.
\end{proof}

We now arrive at the reduced version of the $\s$-Galois correspondence which results if one wants to avoid the use of schemes.

\begin{prop} \label{prop:naiveSigmacorrespondence}
Let $L|K$ be a $\s$-Picard-Vessiot extension. Assume that $k=K^\de$ is a $\s$-closed $\s$-field. The assignments
$M\mapsto\Aut^{\ds}(L|M)$ and $H\mapsto L^H$ define mutually inverse bijections between the set of all intermediate $\ds$-fields $M$ of $L|K$ such that $L|M$ is perfectly $\s$-separable and the set of all $\s$-closed subgroups $H$ of $\Aut^{\ds}(L|K)$.

Moreover, for an intermediate $\ds$-field $M$ of $L|K$, the extension $L|M$ is perfectly $\s$-separable if and only if the following assertions are satisfied:
\begin{enumerate}
 \item $L$ is $\s$-separable over $M$.
\item The relative algebraic closure $M'$ of $M$ inside $L$ is a finite field extension of $M$.
\item Every automorphism of $M'|M$ commutes with $\s\colon M'\to M'$.
\end{enumerate}
\end{prop}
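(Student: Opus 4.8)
The plan is to deduce the statement from the $\s$-Galois correspondence (Theorem~\ref{theo:Galoiscorrespondence}) together with Lemma~\ref{lemma:kpoints}, Proposition~\ref{prop:correspondenceSigmaSeparabel}~(ii) and Lemma~\ref{lemma:characterisePerfectlySigmaReducedGroup}, applied not only to $L|K$ but to the intermediate extensions $L|M$. First I would record the routine preliminaries: for any intermediate $\ds$-field $M$ of $L|K$, the extension $L|M$ is again a $\s$-Picard-Vessiot extension (as observed just before Lemma~\ref{lemma:compute}), its field of $\de$-constants is $M^\de=k$ (being squeezed between $K^\de$ and $L^\de$, which coincide), and its $\s$-Galois group, realized as a $\s$-closed subgroup of $G=\sgal(L|K)$, is precisely the group $\sgal(L|M)$ assigned to $M$ by Theorem~\ref{theo:Galoiscorrespondence}, with $\sgal(L|M)(k)=\Aut^\ds(L|M)$ sitting inside $\Aut^\ds(L|K)$ as the subgroup fixing $M$. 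Since a $\s$-closed $\s$-field is algebraically closed and inversive, Proposition~\ref{prop:correspondenceSigmaSeparabel}~(ii) applies to $L|M$ and gives: $L|M$ is perfectly $\s$-separable if and only if $\sgal(L|M)$ is perfectly $\s$-reduced.

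Next I would assemble the bijection. By the previous paragraph, the intermediate $\ds$-fields $M$ with $L|M$ perfectly $\s$-separable are exactly those whose corresponding $\s$-closed subgroup $\sgal(L|M)$ is perfectly $\s$-reduced; under Theorem~\ref{theo:Galoiscorrespondence} these $M$ are in (inclusion-reversing) bijection with the perfectly $\s$-reduced $\s$-closed subgroups $H$ of $G$. By Lemma~\ref{lemma:kpoints}, $H\mapsto H(k)$ is in turn a bijection from the perfectly $\s$-reduced $\s$-closed subgroups of $G$ onto the $\s$-closed subgroups of $\Aut^\ds(L|K)$. Composing, I would trace through the maps: $M\mapsto\sgal(L|M)\mapsto\sgal(L|M)(k)=\Aut^\ds(L|M)$; conversely a $\s$-closed subgroup $\widetilde{H}$ of $\Aut^\ds(L|K)$ equals $H(k)$ for a unique perfectly $\s$-reduced $\s$-closed $H\leq G$, which is sent by the $\s$-Galois correspondence to $L^H$, and by the last assertion of Lemma~\ref{lemma:kpoints} we have $L^H=L^{H(k)}=L^{\widetilde{H}}$. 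This is exactly the claimed pair of mutually inverse bijections $M\mapsto\Aut^\ds(L|M)$ and $H\mapsto L^H$.

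For the ``moreover'' part I would simply apply Lemma~\ref{lemma:characterisePerfectlySigmaReducedGroup} to the $\s$-Picard-Vessiot extension $L|M$ (legitimate, as $M^\de=k$ is algebraically closed and inversive): its $\s$-Galois group $\sgal(L|M)$ is perfectly $\s$-reduced if and only if $L|M$ satisfies conditions (i), (ii), (iii) of that lemma, which are literally the three conditions in the statement with $K, K'$ replaced by $M, M'$. Combining this with the equivalence ``$\sgal(L|M)$ perfectly $\s$-reduced $\iff$ $L|M$ perfectly $\s$-separable'' from Proposition~\ref{prop:correspondenceSigmaSeparabel}~(ii) obtained in the first paragraph completes the proof.

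There is no deep obstacle: the statement is an assembly of results already in hand. The one point I would spell out with some care — and which I expect to be the only genuinely delicate bookkeeping — is the identification on $k$-points of the subfunctor $\sgal(L|M)\leq\sgal(L|K)$ with the intrinsic $\s$-Galois group of $L|M$, and hence with the subgroup of $\Aut^\ds(L|K)$ fixing $M$, so that the composite of the $\s$-Galois correspondence with the bijection of Lemma~\ref{lemma:kpoints} really is given by $M\mapsto\Aut^\ds(L|M)$ and $H\mapsto L^H$.
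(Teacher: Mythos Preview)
Your proof is correct and follows essentially the same approach as the paper, which also deduces the bijection from Theorem~\ref{theo:Galoiscorrespondence}, Proposition~\ref{prop:correspondenceSigmaSeparabel}~(ii) and Lemma~\ref{lemma:kpoints}, and the ``moreover'' part from Lemma~\ref{lemma:characterisePerfectlySigmaReducedGroup}. You spell out the bookkeeping (e.g.\ $M^\de=k$, the identification $\sgal(L|M)(k)=\Aut^\ds(L|M)$, and that $\s$-closed implies algebraically closed and inversive) more explicitly than the paper does, but the logical structure is identical.
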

\begin{proof}
The statement about the bijections follows from the general $\s$-Galois correspondence (Theorem \ref{theo:Galoiscorrespondence}) together with Proposition \ref{prop:correspondenceSigmaSeparabel} (ii) and Lemma \ref{lemma:kpoints}. The second statement follows from Lemma \ref{lemma:characterisePerfectlySigmaReducedGroup}.
\end{proof}

Note that in the setting of Proposition \ref{prop:naiveSigmacorrespondence} one has $L^{\Aut^{\ds}(L|K)}=K$ if and only if $L|K$ is perfectly $\s$-separable.

\settocdepth{section}

\appendix

\section{Appendix: Difference algebraic groups}

While differential algebraic groups, i.e., group objects in the category of differential varieties, are a classical topic in differential algebra (see e.g. \cite{Cassidy:differentialalgebraicgroups}, \cite{Kolchin:differentialalgebraicgroups}, \cite{Buium:DifferntialAlgebraicGroupsOfFiniteDimension}), their difference analog has been neglected by the founding fathers of difference algebra. It appears that almost all results pertaining to groups defined by algebraic difference equations are relatively recent, due to model theorists and motivated by number-theoretic applications. See \cite{Hrushovskietal:ModelTheoryofDifferencefields}, \cite{ChatHrusPet}, \cite{HrushMan}, \cite{Chatdifgr}, \cite{Scanlon:DifferenceAlgebraicSubgroupsOfCommutativeAlgebraicGroups}, \cite{PillayKowalski:ANoteonGroupsDefinableInDifferenceFields}, \cite{KowPillsigma},
\cite{ChatzidakisHrushovski:OnSubgroupsOfSemiabelianVarietiesDefinedByDifferenceEquations}.

Even though, both, the notion of a group definable in ACFA and our notion of a difference algebraic group  (Definition \ref{defi:sigmaalgebraicgroup}), give precise meaning to the idea of a group defined by algebraic difference equations, none of these notions encompasses the other.
Our notion agrees with the notion of a linear $\mathfrak{M}$-group in \cite{Kamensky:TannakianFormalismOverFieldsWithOperators}, for a suitable choice of $\mathfrak{M}$. Also, a difference algebraic group in our sense, such that its coordinate ring is finitely generated as an algebra\footnote{Even in $\s$-dimension zero this will rarely happen.}, is essentially the same thing as an affine algebraic $\s$-group in the sense of \cite{KowPillsigma}.

The main purpose of this appendix is to provide a brief introduction to difference algebraic groups, suitable for the applications in the main text. A more systematic and complete account will eventually be given by the third author. Standard references for difference algebra are \cite{Cohn:difference} and the more recent \cite{Levin}. Many ideas can also be found in \cite{Hrushovski:elementarytheoryoffrobenius}.
We consider most of the constructions presented in this appendix as ``well-known''. However, it is sometimes difficult to pin down suitable references.

\subsection{Some terminology from difference algebra} \label{sec:Someterminoplogyfromdifferencealgebra}
Throughout the text we use some basic notions from difference algebra. For the convenience of the reader not well acquainted with difference algebra, we collect here some conventions and terminology.

\medskip

All rings are commutative with identity. A \emph{difference ring} (or \emph{$\s$-ring} for short) is a ring $R$ together with a ring endomorphism $\s\colon R\to R$.
Algebraic attributes (e.g. Noetherian) are understood to apply to the underlying ring. Attributes that apply to the difference structure are usually prefixed with $\s$ (e.g. finitely $\s$-generated). The expression $\s^0$ is understood to be the identity.

A morphism of $\s$-rings is a morphism of rings that commutes with $\s$. Let $R$ be a $\s$-ring. By an \emph{$R$-$\s$-algebra}, we mean a $\s$-ring $S$ together with a morphism $R\to S$ of $\s$-rings. A morphism of $R$-$\s$-algebras is a morphism of $R$-algebras that is also a morphism of $\s$-rings. If $S$ and $S'$ are $R$-$\s$-algebras, we write
$$\Alg^\s_R(S,S')$$ for the set of $R$-$\s$-algebra morphisms from $S$ to $S'$.

An \emph{$R$-$\s$-subalgebra} $S'$ of an $R$-$\s$-algebra $S$ is an $R$-subalgebra such that the inclusion morphism $S'\to S$ is a morphism of $\s$-rings. The tensor product $S_1\otimes_R S_2$ of two $R$-$\s$-algebras $S_1$ and $S_2$ naturally carries the structure of an $R$-$\s$-algebra by virtue of $\s(s_1\otimes s_2)=\s(s_1)\otimes \s(s_2)$.

Let $k$ be a $\s$-field, i.e., a $\s$-ring whose underlying ring is a field. Let $R$ be a $k$-$\s$-algebra and $B$ a subset of $R$. The smallest $k$-$\s$-subalgebra of $R$ that 
contains $B$ is denoted with $k\{B\}_\s$ and called the \emph{$k$-$\s$-subalgebra $\s$-generated by $B$}. As a $k$-algebra it is generated by $B,\s(B),\ldots$. If there exists a finite subset $B$ of $R$ such that $R=k\{B\}_\s$,
we say that $R$ is \emph{finitely $\s$-generated over $k$}.
The $k$-$\s$-algebra $k\{x\}_\s=k\{x_1,\ldots,x_n\}$ of $\s$-polynomials over $k$ in the $\s$-variables $x_1,\ldots,x_n$ is the polynomial ring over $k$ in the variables $x_1,\ldots,x_n,\s(x_1),\ldots,\s(x_n),\ldots$, with an action of $\s$ as suggested by the names of the variables.


Let $k$ be a $\s$-field. A \emph{$\s$-field extension} $k'$ of $k$ is a $\s$-field containing $k$ such that the inclusion map is a morphism of $\s$-rings. We also say that $k$ is a \emph{$\s$-subfield} of $k'$. If $B\subset k'$, the smallest $\s$-field extension of $k$ inside $k'$ that contains $B$ is denoted with $k\langle B\rangle_\s$. As a field extension of $k$ it is generated by $B,\s(B),\ldots$. We say that $k'$ is a \emph{finitely $\s$-generated $\s$-field extension} of $k$ if there exists a finite subset $B$ of $k'$ such that $k'=k\langle B\rangle_\s$.

\bigskip

Let $R$ be a $\s$-ring. A \emph{$\s$-ideal} $\ida$ of $R$ is an ideal $\ida\subset R$ such that $\s(\ida)\subset\ida$. Then $R/\ida$ is naturally a $\s$-ring. Let $B$ be a subset of $R$. We denote
by $[B]$ the $\s$-ideal generated by $B$ in $R$. As an ideal it is generated by $B,\s(B),\ldots.$
A $\s$-ideal $\ida$ of $R$ is called \emph{reflexive} if $\s^{-1}(\ida)=\ida$, i.e., $\s(r)\in\ida$ implies $r\in\ida$.
A $\s$-ideal $\ida$ of $R$ is called \emph{perfect} if
$\s^{\alpha_1}(r)\cdots\s^{\alpha_n}(r)\in\ida$ implies $r\in \ida$ for all $r\in R$, $n\geq 1$ and $\alpha_1,\ldots,\alpha_n\geq 0$.
A $\s$-ideal $\q$ of $R$ is called \emph{$\s$-prime} if it is a prime ideal and reflexive. Note that this property is stronger than being a prime $\s$-ideal. One can show that the perfect $\s$-ideals are precisely the intersections of $\s$-prime ideals.

A $\s$-ring $R$ is called inversive if $\s\colon R\to R$ is an automorphism. A $\s$-ring $R$ is called \emph{$\s$-reduced} if $\s\colon R\to R$ is injective. (Equivalently, the zero ideal is reflexive.) We say that $R$ is \emph{perfectly $\s$-reduced} if the zero ideal of $R$ is perfect. If the zero ideal is $\s$-prime we say that $R$ is a \emph{$\s$-domain}. This is equivalent to saying that $R$ is an integral domain with $\s\colon R\to R$ injective.

\subsection{$\sigma$-schemes}
Throughout the appendix $k$ denotes an arbitrary $\s$-field. Because the main text deals with derivations,
we have made it a general assumption that all fields are of characteristic zero. However, this appendix does not require the characteristic zero assumption. All products are understood to be products over $k$.

It is widely recognized that a functorial approach to algebraic groups has many benefits (\cite{Waterhouse:IntrotoAffineGroupSchemes}, \cite{demazuregabriel}, \cite{Milne:BasicTheoryOfAffineGroupSchemes}). Here we will adopt a similar point of view.

\begin{defi} \label{defi:ksigmaScheme}
Let $k$ be a $\s$-field. A \emph{$k$-$\s$-scheme} (or \emph{$\s$-scheme over $k$}) is a (covariant) functor from the category of of $k$-$\s$-algebras to the category of sets which is representable. Thus a functor $X$ from the category of $k$-$\s$-algebras to the category of sets is a $k$-$\s$-scheme if and only if there exists a $k$-$\sigma$-algebra $k\{X\}$ and an isomorphism of functors
\[X\simeq\Alg_k^\s(k\{X\},-).\]
By the Yoneda lemma, the $k$-$\s$-algebra $k\{X\}$ is uniquely determined up to unique $k$-$\s$-isomorphisms. We call it the \emph{$\s$-coordinate ring of $X$}. A morphism of $k$-$\s$-schemes is a morphism of functors. If $\phi\colon X\to Y$ is a morphism of $k$-$\s$-schemes, we denote the dual morphism of $k$-$\s$-algebras with $\phi^*\colon k\{Y\}\to k\{X\}$.

A $k$-$\s$-scheme $X$ is called \emph{$\s$-algebraic} (over $k$) if $k\{X\}$ is finitely $\s$-generated over $k$.
We say that a $k$-$\s$-scheme $X$ is \emph{$\s$-reduced/perfectly $\s$-reduced/$\s$-integral} if $k\{X\}$ is $\s$-reduced/perfectly $\s$-reduced/a $\s$-domain.
\end{defi}

It would be somewhat more accurate to add the word ``affine'' into the above definition. However, to avoid endless iterations of the word ``affine'' we make the following convention.

\bigskip

\noindent {\bf {\large Convention:} All schemes and $\s$-schemes considered are affine.}

\bigskip

The above definition does not agree\footnote{In essence this is due to the fact that, starting with a difference ring $R$, one can not recover $R$ from the global sections on $\spec^\s(R)$. Indeed, if the worst comes to the worst, $\spec^\s(R)$, the set of $\s$-prime ideals of $R$, is empty.} with the definition of a difference scheme given in \cite{Hrushovski:elementarytheoryoffrobenius}. The approach presented here is essentially equivalent to the approach in \cite{MoosaScanlon:GeneralizedHasseSchmidtVarietiesAndTheirJetSpaces}. The classical difference varieties (as in \cite{Cohn:difference} and \cite{Levin}) correspond to perfectly $\s$-reduced, $\s$-algebraic $k$-$\s$-schemes.

\begin{rem} \label{rem: yoneda for sigma schemes}
By the Yoneda lemma, the category of $k$-$\s$-schemes is anti-equivalent to the category of $k$-$\s$-algebras.
\end{rem}

\begin{defi} \label{defi:sigmaclosed}
Let $X$ be a $k$-$\s$-scheme. By a \emph{$\s$-closed $\s$-subscheme $Y\subset X$}, we mean a subfunctor $Y$ of $X$ which is represented by $k\{X\}/\I(Y)$ for some $\s$-ideal $\I(Y)$ of $k\{X\}$. To be precise, the requirement is that there exists a $\s$-ideal $\I(Y)$ of $k\{X\}$ and an isomorphism $Y\simeq\Alg_k^\s(k\{X\}/\I(Y),-)$ such that
\[
\xymatrix{
Y \ar[d]_-{\simeq} \ar@{^{(}->}[r]& X\ar[d]^\simeq\\
\Alg_k^\s(k\{X\}/\I(Y),-) \ar@{^{(}->}[r] & \Alg_k^\s(k\{X\},-)
}
\]
commutes. The ideal $\I(Y)$ of $k\{X\}$ is uniquely determined by $Y$ and vice versa. We call it the \emph{defining ideal of $Y$} (in $k\{X\}$).

A morphism of $k$-$\s$-schemes $\phi\colon Y\to X$ is called a \emph{$\s$-closed embedding} if it induces an isomorphism of $Y$ with a $\s$-closed $\s$-subscheme of $X$. This is equivalent to saying that $\phi^*\colon k\{X\}\to k\{Y\}$ is surjective.
\end{defi}

The reader displeased by the apparent foolery of the above definitions should indulge in the following example. In principle we are only interested in the situation described in this example.

\begin{ex} \label{ex sigmaaffinespace}
Affine $n$-space over $k$ (or difference affine $n$-space over $k$, if we want to be very precise) is the $k$-$\s$-scheme $\A_k^n$ such that $\A_k^n(S)=S^n$ for every $k$-$\s$-algebra $S$. It is represented by $k\{x\}_\s=k\{x_1,\ldots,x_n\}_\s$ -- the $\s$-polynomial ring over $k$ in the $\s$-variables $x_1,\ldots,x_n$.

Let $F\subset k\{x\}_\s$ be a system of algebraic difference equations. For any $k$-$\s$-algebra $S$, we consider the $S$-rational solutions
\[\V_S(F):=\{a\in S^n| \ p(a)=0 \text{ for all } p\in F\}\]
of $F$ in $S^n$. The functor $X$ defined by $X(S)=\V_S(F)$ is a $\s$-closed $\s$-subscheme of $\A_k^n$. It is represented by
$k\{X\}=k\{x\}_\s/[F]$. Here $[F]$ denotes the difference ideal of $k\{x\}_\s$ generated by $F$. Note that the defining ideal $\I(X)$ of $X$ in $k\{x\}_\s$ equals
\[[F]=\{p\in k\{x\}_\s| \ p(a)=0 \text{ for all } a\in X(S) \text{ and all $k$-$\s$-algebras } S\}.\]
Moreover, every $\s$-closed $\s$-subscheme of $\A^n_k$ is of the above described form.

If $X$ is a $\s$-algebraic $\s$-scheme over $k$, then choosing a $\s$-closed embedding of $X$ into $\A^n_k$ is equivalent to specifying $n$ generators of $k\{X\}$ as $k$-$\s$-algebra.
\end{ex}

\begin{lemma}\label{lemma:inverseimagescheme}
Let $\phi\colon X\to Y$ be a morphism of $k$-$\s$-schemes and $Z\subset Y$ a $\s$-closed $\s$-subscheme. By setting $\phi^{-1}(Z)(S)=\phi(S)^{-1}(Z(S))$ for every $k$-$\s$-algebra $S$, we can naturally define a $\s$-closed $\s$-subscheme $\phi^{-1}(Z)$ of $X$ (the \emph{inverse image of $Z$}). Indeed, $\phi^{-1}(Z)$ is the $\s$-closed $\s$-subscheme of $X$ defined by the ideal of $k\{X\}$ generated by $\phi^*(\I(Z))$.
\end{lemma}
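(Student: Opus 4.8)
The plan is to define $\phi^{-1}(Z)$ directly as the subfunctor $S\mapsto\phi(S)^{-1}(Z(S))$ of $X$ and then verify it is $\s$-closed by identifying its defining ideal. First I would fix representability: write $X=\Alg_k^\s(k\{X\},-)$, $Y=\Alg_k^\s(k\{Y\},-)$, and $Z=\Alg_k^\s(k\{Y\}/\I(Z),-)$, so that the inclusion $Z\hookrightarrow Y$ corresponds dually to the quotient map $k\{Y\}\to k\{Y\}/\I(Z)$, and $\phi$ corresponds to $\phi^*\colon k\{Y\}\to k\{X\}$. Let $\idb$ denote the $\s$-ideal of $k\{X\}$ generated by $\phi^*(\I(Z))$; this is a genuine $\s$-ideal because $\phi^*$ commutes with $\s$, so $\s(\phi^*(\I(Z)))=\phi^*(\s(\I(Z)))\subset\phi^*(\I(Z))$, and hence the $\s$-ideal it generates is just the ordinary ideal generated by $\phi^*(\I(Z))$ together with its $\s$-translates. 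Set $W:=\Alg_k^\s(k\{X\}/\idb,-)$, which is by construction a $\s$-closed $\s$-subscheme of $X$.

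The core of the argument is then the functorial identity $W(S)=\phi(S)^{-1}(Z(S))$ for every $k$-$\s$-algebra $S$. Unwinding the Yoneda identifications: a point of $X(S)$ is a $k$-$\s$-morphism $\psi\colon k\{X\}\to S$; its image $\phi(S)(\psi)\in Y(S)$ is $\psi\circ\phi^*\colon k\{Y\}\to S$; and this lies in $Z(S)$ precisely when $\psi\circ\phi^*$ kills $\I(Z)$, i.e. $\psi(\phi^*(\I(Z)))=0$. Since $\psi$ is a ring homomorphism, $\psi$ kills $\phi^*(\I(Z))$ if and only if it kills the ideal generated by $\phi^*(\I(Z))$; and since $\psi$ also commutes with $\s$, it kills that ideal if and only if it kills the $\s$-ideal $\idb$ generated by $\phi^*(\I(Z))$. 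But $\psi$ kills $\idb$ exactly when $\psi$ factors through $k\{X\}/\idb$, i.e. when $\psi\in W(S)$. This chain of equivalences, natural in $S$, gives $W=\phi^{-1}(Z)$ as subfunctors of $X$, and exhibits $\phi^{-1}(Z)$ as the $\s$-closed $\s$-subscheme defined by $\idb$, as claimed. Uniqueness of the defining ideal is the uniqueness already recorded in Definition \ref{defi:sigmaclosed}.

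I do not expect any serious obstacle here; the only point requiring a little care is the interplay between "ideal generated by" and "$\s$-ideal generated by": one must observe that a $k$-$\s$-algebra morphism $\psi$ annihilates $\phi^*(\I(Z))$ if and only if it annihilates the $\s$-ideal it generates, which holds precisely because $\psi$ is simultaneously a ring homomorphism and compatible with $\s$. Everything else is a routine chase through the Yoneda correspondence of Remark \ref{rem: yoneda for sigma schemes}.
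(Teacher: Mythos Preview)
Your proposal is correct and follows essentially the same route as the paper's proof: both arguments set $\ida$ (your $\idb$) equal to the ideal generated by $\phi^*(\I(Z))$, observe it is already a $\s$-ideal because $\s(\phi^*(\I(Z)))=\phi^*(\s(\I(Z)))\subset\phi^*(\I(Z))$, and then run the Yoneda chain $\phi(S)(\psi)\in Z(S)\Leftrightarrow\I(Z)\subset\ker(\psi\circ\phi^*)\Leftrightarrow\phi^*(\I(Z))\subset\ker\psi\Leftrightarrow\ida\subset\ker\psi$. The only difference is cosmetic: the paper notes directly that the ordinary ideal generated is already a $\s$-ideal, so your careful distinction between ``ideal generated by'' and ``$\s$-ideal generated by'' is unnecessary here (they coincide), though your reasoning about it is correct.
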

\begin{proof}
Let $\ida$ denote the ideal of $k\{X\}$ generated by $\phi^*(\I(Z))$. Note that $\ida$ is a $\s$-ideal. For $\psi\in X(S)=\Alg_k^\s(k\{X\},S)$ we have
\[\phi(S)(\psi)\in Z(S) \Leftrightarrow \I(Z)\subset\ker(\phi(S)(\psi))=\ker(\psi\circ\phi^*)\Leftrightarrow \phi^*(\I(Z))\subset\ker\psi.\]
Thus $\psi\in\phi^{-1}(Z)(S)$ if and only if $\ida\subset\ker\psi$. This means that $\phi^{-1}(Z)$ is the $\s$-closed $\s$-subscheme of $X$ defined by $\ida$.
\end{proof}

\subsection{The semi-classical point of view} \label{sec:The semi-classical point of view}

The classical set-up for difference algebraic geometry, as it can be found in the standard textbooks \cite{Cohn:difference} and \cite{Levin}, is in spirit close to the ``Foundations of algebraic geometry'' as laid down by Andr\'{e} Weil. The story roughly runs as follows: Suppose we want to study $\s$-algebraic equations over a fixed $\s$-field $k$. Usually $k$ will not contain ``enough'' solutions, so one has to look for solutions in $\s$-field extensions of $k$. One fixes a family of $\s$-overfields of $k$ which is ``large enough'', called the universal system of $\s$-overfields of $k$ (\cite[Definition 2.6.1, p. 149]{Levin}).
A difference variety over $k$ is then the set of solutions in the universal system of $\s$-overfields of $k$ of some set of $\s$-polynomials with coefficients in $k$.
There is a one-to-one correspondence between the $\s$-varieties defined by $\s$-polynomials in the $\s$-variables $x_1,\ldots,x_n$ and the perfect $\s$-ideals of the 
$\s$-polynomial ring $k\{x_1,\ldots,x_n\}_\s$ (\cite[Theorem 2.6.4, p. 151]{Levin}).

It is a characteristic feature of difference algebra that one really needs to consider a \emph{family} of $\s$-overfields of $k$, i.e., in general one can not find one big $\s$-overfield of $k$ containing ``enough'' solutions. However, if we assume that $k$ itself is ``large enough'', we can discard the universal family and we arrive at a setting analogous to \cite[Chapter I]{hartshorne}. This is what we mean with the semi-classical point of view, it is usually adopted by model theorists. See \cite{Macintyre:GenricAutomorphismsOfFields},\cite{Hrushovskietal:ModelTheoryofDifferencefields},\cite{ChatHrusPet}. We shall now outline very briefly the semi-classical set-up. The results below are used in Section \ref{sec:sseparability}.

We start by recalling the precise meaning of ``large enough'':
\begin{defi}
A $\s$-field $k$ is called \emph{$\s$-closed} if for every finitely $\s$-generated $k$-$\s$-algebra $R$ which is a $\s$-domain, there exists a morphism $R\to k$ of $k$-$\s$-algebras.
\end{defi}
In other words, a $\s$-field $k$ is $\s$-closed if and only if every system of algebraic difference equations over $k$, which has a solution in a $\s$-field extension of $k$, already has a solution in $k$. The $\s$-closed $\s$-fields are also called models of ACFA. One has to exercise some caution: If $R$ is a finitely $\s$-generated $k$-$\s$-algebra over a $\s$-closed $\s$-field $k$, there need not exist a morphism $R\to k$. In fact, we have the following:

\begin{lemma} \label{lemma:explainsclosed}
Let $k$ be a $\s$-closed $\s$-field and $R$ a finitely $\s$-generated $k$-$\s$-algebra. The following statements are equivalent:
\begin{enumerate}
 \item There exists a morphism $R\to k$ of $k$-$\s$-algebras.
\item There exists a $\s$-prime ideal in $R$.
\end{enumerate}
\end{lemma}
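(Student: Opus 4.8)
The plan is to prove the two implications separately, with the interesting direction being (ii)$\Rightarrow$(i). The direction (i)$\Rightarrow$(ii) is immediate: if $\psi\colon R\to k$ is a morphism of $k$-$\s$-algebras, then $\ker\psi$ is a prime ideal of $R$ (since $k$ is a field) and it is reflexive (since $\psi\s=\s\psi$ and $\s$ is injective on $k$, so $\s(r)\in\ker\psi$ forces $\s(\psi(r))=0$, hence $\psi(r)=0$). Thus $\ker\psi$ is a $\s$-prime ideal of $R$.

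For (ii)$\Rightarrow$(i), suppose $\q\subset R$ is a $\s$-prime ideal. Then $R/\q$ is a $k$-$\s$-algebra which is an integral domain, and $\s\colon R/\q\to R/\q$ is injective because $\q$ is reflexive. Hence $R/\q$ is a $\s$-domain in the sense recalled in the excerpt. Moreover $R/\q$ is still finitely $\s$-generated over $k$, being a quotient of the finitely $\s$-generated $k$-$\s$-algebra $R$. Since $k$ is $\s$-closed, by definition there exists a morphism $R/\q\to k$ of $k$-$\s$-algebras. Composing with the canonical projection $R\to R/\q$ yields a morphism $R\to k$ of $k$-$\s$-algebras, establishing (i).

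I do not expect a genuine obstacle here: the statement is essentially an unwinding of the definition of $\s$-closedness together with the elementary observation that a $\s$-prime ideal is exactly what is needed to produce a $\s$-domain quotient. The only point that requires a moment's care is checking that the quotient $R/\q$ remains a $\s$-domain, which amounts to recalling that ``$\q$ prime and reflexive'' translates precisely to ``$R/\q$ an integral domain with $\s$ injective'' — this is noted in Section \ref{sec:Someterminoplogyfromdifferencealgebra}. Finite $\s$-generation of the quotient is clear since the images of a finite $\s$-generating set of $R$ $\s$-generate $R/\q$.
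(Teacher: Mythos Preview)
Your proof is correct and follows exactly the same approach as the paper's own proof: the kernel of a $k$-$\s$-morphism $R\to k$ is $\s$-prime, and conversely a $\s$-prime $\q$ yields a finitely $\s$-generated $\s$-domain $R/\q$, to which the definition of $\s$-closedness applies. You supply a bit more detail than the paper (the reflexivity check and finite $\s$-generation of the quotient), but there is no substantive difference.
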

\begin{proof}
The implication (i)$\Rightarrow$(ii) is clear since the kernel of a $k$-$\s$-morphism $R\to k$ is a $\s$-prime ideal.
Conversely, if $\q$ is a $\s$-prime ideal of $R$, then $R/\q$ is a $\s$-domain, and since $k$ is $\s$-closed there exists a $k$-$\s$-morphism
$R/\q\to k$ which we can compose with $R\to R/\q$ to obtain a $k$-$\s$-morphism $R\to k$.
\end{proof}

A maximal $\s$-ideal, i.e., a maximal element in the set of all proper $\s$-ideals ordered by inclusion, need not be prime. By a \emph{maximal $\s$-prime ideal}, we mean a $\s$-prime ideal which is maximal in the set of all $\s$-prime ideals ordered by inclusion. If $R$ is a finitely $\s$-generated $k$-$\s$-algebra over a $\s$-closed $\s$-field $k$ and $\q\subset R$ a maximal $\s$-prime ideal, then $R/\q=k$. So, in this case, a maximal $\s$-prime ideal is maximal as an ideal.

The following lemma is surely well-known. For lack of a suitable reference we include a proof.
\begin{lemma} \label{lemma:sigmaClosedIntersection}
Let $k$ be a $\s$-closed $\s$-field, $R$ a finitely $\s$-generated $k$-$\s$-algebra and $\ida$ a perfect $\s$-ideal of $R$. Then $\ida$ is the intersection of all maximal $\s$-prime ideals of $R$ containing $\ida$.
\end{lemma}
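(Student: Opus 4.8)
The plan is to reduce the statement to the fact that a perfect $\s$-ideal is the intersection of the $\s$-prime ideals containing it (recalled just before Definition \ref{defi:ksigmaScheme}, and available in \cite[Section 2.6]{Levin}), and then to show that in the finitely $\s$-generated setting over a $\s$-closed $k$, every $\s$-prime ideal is itself an intersection of \emph{maximal} $\s$-prime ideals. Since $\ida$ is perfect, we have $\ida=\bigcap_{\q}\q$ where $\q$ ranges over the $\s$-prime ideals of $R$ containing $\ida$. So it suffices to prove: for every $\s$-prime ideal $\q$ of $R$, $\q$ is the intersection of the maximal $\s$-prime ideals of $R$ containing $\q$. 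Combining these two facts gives $\ida=\bigcap_{\m}\m$ with $\m$ running over maximal $\s$-prime ideals containing $\ida$ (note $\supset$ is automatic, and $\subset$ follows because each $\q\supset\ida$ is itself such an intersection).

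To prove the reduction step, fix a $\s$-prime ideal $\q$ and pass to $R/\q$, which is a finitely $\s$-generated $k$-$\s$-algebra that is a $\s$-domain. Let $f\in R/\q$ be a nonzero element; I want a maximal $\s$-prime ideal of $R/\q$ avoiding $f$. Localize: $(R/\q)_f=(R/\q)[1/f]$ is again a finitely $\s$-generated $k$-$\s$-algebra (adjoin the $\s$-variable $t$ and impose $tf=1$; the $\s$-structure extends because $\s(f)\neq 0$ in the domain $R/\q$) and it is still a $\s$-domain, hence nonzero with $(0)$ a $\s$-prime ideal. Now apply Lemma \ref{lemma:explainsclosed}: since $k$ is $\s$-closed and $(R/\q)_f$ is finitely $\s$-generated with a $\s$-prime ideal, there is a $k$-$\s$-morphism $(R/\q)_f\to k$; its kernel is a $\s$-prime ideal, and since $R/\q=k$ for a maximal $\s$-prime quotient (as recalled in the paragraph before the lemma), composing $R/\q\to (R/\q)_f\to k$ yields a maximal $\s$-prime ideal $\m/\q$ of $R/\q$ with $f\notin\m/\q$. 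Thus no nonzero element of $R/\q$ lies in all maximal $\s$-prime ideals of $R/\q$, i.e. $\q=\bigcap\{\m:\m\supset\q,\ \m\text{ maximal }\s\text{-prime}\}$, and maximal $\s$-prime ideals of $R$ containing $\q$ correspond to maximal $\s$-prime ideals of $R/\q$.

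The main obstacle is the bookkeeping around localization: one must check that $(R/\q)[1/f]$ genuinely carries a $\s$-algebra structure compatible with $R/\q$ and remains a $\s$-domain and finitely $\s$-generated — all of which hinge on $f$ (hence $\s^i(f)$) being a nonzerodivisor in the domain $R/\q$ — and that a $k$-$\s$-morphism out of the localization restricts to a $k$-$\s$-morphism out of $R/\q$ whose kernel is maximal among $\s$-prime ideals, which uses the ``$R/\m=k$'' remark preceding the lemma together with the fact that $k$ has no proper $\s$-prime quotient. Once these routine verifications are in place, the two-step intersection argument closes the proof.
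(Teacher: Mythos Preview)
Your proof is correct and follows essentially the same route as the paper's: localize away from $f$ in a $\s$-stable manner, then apply Lemma \ref{lemma:explainsclosed} to extract a $k$-point whose kernel is a maximal $\s$-prime missing $f$. The only differences are that you first reduce to a $\s$-prime $\q$ (so $R/\q$ is already a $\s$-domain) whereas the paper localizes $R/\ida$ directly and checks the resulting zero ideal is perfect, and that your notation $(R/\q)[1/f]$ should be read as the $\s$-localization $(R/\q)\{1/f\}_\s$ inverting all $\s^i(f)$ --- as your parenthetical correctly describes --- since the ordinary localization at powers of $f$ alone need not be $\s$-stable.
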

\begin{proof}
Let $f\in R$ such that $f$ is contained in every maximal $\s$-prime ideal containing $\ida$. We have to show that $f\in\ida$. Suppose $f\notin\ida$. Let $g$ denote the image of $f$ in $R/\ida$. Since $\ida$ is perfect and $f\notin\ida$, the multiplicatively closed subset $S$ of $R/\ida$ generated by $g,\s(g),\ldots$ does not contain zero. So the localization $R':=S^{-1}(R/\ida)$ is not the zero ring. Note that $R'$ is naturally a $\s$-ring. Moreover, $R'=(R/\ida)\{\frac{1}{g}\}_\s$ is finitely $\s$-generated over $k$. It is easy to see that the zero ideal of $R'$ is perfect. Because every perfect $\s$-ideal is the intersection of $\s$-prime ideals (\cite[p. 88]{Cohn:difference}), this implies that there exists a $\s$-prime ideal in $R'$. Since $k$ is $\s$-closed we deduce the existence of a $k$-$\s$-morphism $R'\to k$ from Lemma \ref{lemma:explainsclosed}. Composing with the canonical map $R\to R'$ this yields a $k$-$\s$-morphism $\psi\colon R\to k$. By construction, the kernel of $\psi$ is a maximal $\s$-prime ideal of $R$ not containing $f$; a contradiction.
\end{proof}
Let $k$ be a $\s$-closed $\s$-field. A subset of $k^n$ is called \emph{$\s$-closed}\footnote{It is not hard to see that this actually defines a topology on $k^n$.} if it is of the form
\[\V_k(F):=\{a\in k^n|\ p(a)=0\ \forall\ p\in F\}\]
for some subset $F$ of $k\{x_1,\ldots,x_n\}_\s$. Equivalently, a subset of $k^n$ is $\s$-closed if it is of the form $X(k)$ for some $\s$-closed $\s$-subscheme $X$ of $\A^n_k$. (Cf. Example \ref{ex sigmaaffinespace}.) The $\s$-closed subsets of $k^n$ (for some $n$) are sometimes also called \emph{$\s$-varieties}.

If $\ida$ is a perfect $\s$-ideal of $k\{x_1,\ldots,x_n\}_\s$, then we can reinterpret Lemma \ref{lemma:sigmaClosedIntersection} as ``Every $\s$-polynomial that vanishes on $\V_k(\ida)$ must lie in $\ida$.'' It follows that
$\ida\mapsto \V_k(\ida)$ defines a bijection between the $\s$-closed subsets of $k^n$ and the perfect $\s$-ideals of $k\{x_1,\ldots,x_n\}_\s$. This in turn implies the following lemma.

\begin{lemma} \label{lemma:bijectionSigmaClosedandPerfectlySigmaReduced}
Let $k$ be a $\s$-closed $\s$-field. The assignment $X\mapsto X(k)$ defines a bijection between the perfectly $\s$-reduced $\s$-closed $\s$-subschemes of $\A^n_k$ and the $\s$-closed subsets of $k^n$. \qed
\end{lemma}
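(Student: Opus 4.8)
The statement to prove (Lemma~\ref{lemma:bijectionSigmaClosedandPerfectlySigmaReduced}): Over a $\s$-closed $\s$-field $k$, the assignment $X \mapsto X(k)$ is a bijection between perfectly $\s$-reduced $\s$-closed $\s$-subschemes of $\A^n_k$ and $\s$-closed subsets of $k^n$.

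The plan is to leverage the two correspondences already established in the excerpt and glue them together. First I would recall from Example~\ref{ex sigmaaffinespace} that every $\s$-closed $\s$-subscheme $X$ of $\A^n_k$ is of the form $\V(F)$ for some $F \subset k\{x_1,\ldots,x_n\}_\s$, with $\s$-coordinate ring $k\{X\} = k\{x\}_\s/[F]$ and defining ideal $\I(X) = [F]$; moreover $X \mapsto \I(X)$ is an inclusion-reversing bijection between $\s$-closed $\s$-subschemes of $\A^n_k$ and $\s$-ideals of $k\{x\}_\s$. Under this bijection, $X$ is perfectly $\s$-reduced precisely when $k\{x\}_\s/\I(X)$ is perfectly $\s$-reduced, i.e.\ precisely when $\I(X)$ is a perfect $\s$-ideal (by the definition of perfectly $\s$-reduced recalled in Section~\ref{sec:Someterminoplogyfromdifferencealgebra}: the zero ideal of the quotient is perfect iff $\I(X)$ is perfect). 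So the perfectly $\s$-reduced $\s$-closed $\s$-subschemes of $\A^n_k$ are in inclusion-reversing bijection with the \emph{perfect} $\s$-ideals of $k\{x\}_\s$, via $X \mapsto \I(X)$.

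Next I would invoke the discussion immediately preceding the lemma in the excerpt: using Lemma~\ref{lemma:sigmaClosedIntersection}, every $\s$-polynomial vanishing on $\V_k(\ida)$ lies in $\ida$ whenever $\ida$ is perfect, hence $\ida \mapsto \V_k(\ida)$ is a bijection between perfect $\s$-ideals of $k\{x\}_\s$ and $\s$-closed subsets of $k^n$, with inverse sending a $\s$-closed subset $V$ to its vanishing ideal $\I(V) := \{p \in k\{x\}_\s \mid p(a) = 0 \ \forall a \in V\}$ (which is automatically perfect). Composing the two bijections gives an inclusion-reversing-composed-with-inclusion-reversing, hence inclusion-preserving, bijection between perfectly $\s$-reduced $\s$-closed $\s$-subschemes of $\A^n_k$ and $\s$-closed subsets of $k^n$. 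The final point is to check that this composite bijection is exactly $X \mapsto X(k)$: given $X = \V(F)$ with $\I(X) = [F]$ a perfect $\s$-ideal, we have $X(k) = \Alg^\s_k(k\{x\}_\s/[F], k)$, and evaluating a $k$-$\s$-algebra morphism $k\{x\}_\s/[F] \to k$ at the images of $x_1,\ldots,x_n$ identifies $X(k)$ with $\{a \in k^n \mid p(a) = 0 \ \forall p \in [F]\} = \V_k([F]) = \V_k(\I(X))$, which is precisely the image of $X$ under the composite. Conversely, passing from a $\s$-closed subset $V$ to its vanishing ideal and then to the scheme it defines recovers the scheme whose $k$-points are $V$.

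I do not anticipate a serious obstacle here; the lemma is essentially a formal consequence of the $\s$-scheme/$\s$-ideal dictionary (Example~\ref{ex sigmaaffinespace}) and the Nullstellensatz-type statement for perfect $\s$-ideals derived from Lemma~\ref{lemma:sigmaClosedIntersection}. The one point requiring a little care — and the closest thing to a ``hard part'' — is verifying that the composite of the two abstract bijections coincides with the concrete functor-of-points map $X \mapsto X(k)$, rather than with some twist of it; this is handled by the explicit identification of $\Alg^\s_k(k\{x\}_\s/[F], k)$ with $\V_k([F])$ spelled out above. Everything else is bookkeeping with the definitions of perfect $\s$-ideal, perfectly $\s$-reduced, and defining ideal already in place.
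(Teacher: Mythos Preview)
Your proposal is correct and follows exactly the route the paper intends: the lemma is stated with a \qed\ because it is an immediate consequence of the bijection $\ida\mapsto\V_k(\ida)$ between perfect $\s$-ideals and $\s$-closed subsets of $k^n$ (derived just above from Lemma~\ref{lemma:sigmaClosedIntersection}), combined with the identification of perfectly $\s$-reduced $\s$-closed $\s$-subschemes with perfect $\s$-ideals via $X\mapsto\I(X)$. Your verification that the composite bijection is indeed $X\mapsto X(k)$ is the only detail the paper leaves implicit, and you handle it correctly.
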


Let $X\subset k^n$ and $Y\subset k^m$ be $\s$-closed. If one defines a morphism $f\colon X\to Y$ to be a mapping given by $\s$-polynomials then one finds easily that the category of $\s$-varieties is equivalent to the category of perfectly $\s$-reduced, $\s$-algebraic $k$-$\s$-schemes.

\subsection{The $k$-$\sigma$-scheme associated with a $k$-scheme} \label{subsec:ksigmaSchemeAssociatedwithkScheme}

There is a natural way to associate a $k$-$\s$-scheme to a scheme over $k$ which formalizes the fact that solutions of a system of algebraic equations can be interpreted as solutions of difference equations, i.e., algebraic equations are difference equations.
Cf. \cite[Section 3.2, p. 23]{Hrushovski:elementarytheoryoffrobenius} and \cite[Section 3.2, p. 25]{Kamensky:TannakianFormalismOverFieldsWithOperators}. We first treat the algebraic point of view.

\bigskip

Let $R$ be a $k$-algebra and $d\geq 0$. We set ${}^{\s^d}R=R\otimes_k k$, where the tensor product is formed by using $\s^d\colon k\to k$ on the right hand side. We consider ${}^{\s^d}R$ as $k$-algebra via the right factor. So if $R=k[x]/\ida$ then ${}^{\s^d}R$ equals $k[x]/\ida'$ where $\ida'\subset k[x]=k[x_1,\ldots,x_n]$ denotes the ideal generated by the polynomials obtained from polynomials from $\ida$ by applying $\s^d$ to the coefficients. Thus if $a\in k^n$ is a $k$-rational point of $R$ then $\s^d(a)\in k^n$ is a $k$-rational point of ${}^{\s^d}R$.

There is a natural map $\psi_d$ from ${}^{\s^d}R=R\otimes_k k$ to ${}^{\s^{d+1}}R=R\otimes_k k$ given by $\psi_d(r\otimes\lambda)=r\otimes\sigma(\lambda)$. We set \[R_d=R\otimes_k{}^{\s}R\otimes_k \cdots\otimes_k{}^{\s^d}R.\]
We have natural inclusions
$R_d\hookrightarrow R_{d+1}$ of $k$-algebras and ring morphisms $\s_d\colon R_d\to R_{d+1}$ defined by
$\sigma_d(r_0\otimes\cdots\otimes r_d)=1\otimes\psi_0(r_0)\otimes\cdots\otimes\psi_d(r_d)$. The $\s_d$'s are not morphisms of $k$-algebras but make the diagram
\[
\xymatrix{
R_d\ar[r]^-{\s_d} & R_{d+1} \\
k \ar[u]\ar[r]^\s & \ar[u] k
}
\]
commutative. Now we can define $[\s]_kR$ as the limit (i.e., the union) of the $R_d$'s ($d\geq 0$). Taking the limit of the $\s_d$'s yields a morphism $\s\colon [\s]_kR\to [\s]_kR$, turning $[\s]_kR$ into a $k$-$\s$-algebra.
The inclusion $R=R_0\hookrightarrow [\s]_kR$ is characterized by the following universal property.

\begin{lemma}
Let $R$ be a $k$-algebra. There exists a $k$-$\s$-algebra $[\s]_kR$ together with a morphism $\psi\colon R\to [\s]_kR$ of $k$-algebras satisfying the following universal property: For every $k$-$\s$-algebra $S$ and every morphism $\psi'\colon R\to S$ of $k$-algebras there exists a unique morphism $\varphi\colon [\s]_kR\to S$ of $k$-$\s$-algebras making
\[
\xymatrix{
R \ar[rr]^-\psi \ar[rd]_{\psi'} & & [\s]_kR \ar@{..>}[ld]^\varphi \\
& S &
}
\]
commutative.
\end{lemma}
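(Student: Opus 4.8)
The plan is to verify that the $k$-$\s$-algebra $[\s]_kR=\bigcup_{d\ge 0}R_d$ constructed above, together with the inclusion $\psi\colon R=R_0\hookrightarrow[\s]_kR$, enjoys the asserted universal property; the existence part produces $\varphi$ explicitly and the uniqueness part shows it is forced. \textbf{Construction of $\varphi$.} Given a $k$-$\s$-algebra $S$ and a morphism $\psi'\colon R\to S$ of $k$-algebras, observe that for each $j\ge 0$ the composite $\s^j\circ\psi'\colon R\to S$ is a ring homomorphism which is $\s^j$-semilinear over $k$, i.e.\ $(\s^j\psi')(\lambda r)=\s^j(\lambda)(\s^j\psi')(r)$ for $\lambda\in k$, since $\psi'$ is $k$-linear and the structure map $k\to S$ is a morphism of $\s$-rings. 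Hence it factors through a morphism of $k$-algebras $\varphi_j\colon{}^{\s^j}R=R\otimes_k k\to S$, $r\otimes\lambda\mapsto\lambda\,\s^j(\psi'(r))$, where ${}^{\s^j}R$ carries its $k$-algebra structure via the right factor. Since $S$ is commutative, the morphisms $\varphi_0,\ldots,\varphi_d$ assemble, by the universal property of $\otimes_k$, into a morphism of $k$-algebras $\varphi_d\colon R_d\to S$. By construction $\varphi_{d+1}$ restricts to $\varphi_d$ along $R_d\hookrightarrow R_{d+1}$ (the extra factor contributes a $1$), so the $\varphi_d$ glue to a morphism of $k$-algebras $\varphi\colon[\s]_kR\to S$ with $\varphi\circ\psi=\varphi_0=\psi'$.

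\textbf{Compatibility with $\s$.} Recall from the construction of the $\s_d$ that $\s$ on $[\s]_kR$ sends the generator $r\otimes 1$ of the $j$-th factor ${}^{\s^j}R$ to the generator $r\otimes 1$ of the $(j{+}1)$-st factor ${}^{\s^{j+1}}R$. Both $\varphi\circ\s$ and $\s\circ\varphi$ are ring homomorphisms $[\s]_kR\to S$, and $[\s]_kR$ is generated as a ring by $k$ together with the elements $r\otimes 1$ ($r\in R$) in all the factors, so it suffices to compare the two maps on these generators. On $k$ both equal the structure map $k\to S$ precomposed with $\s$ (using that $\varphi$ is $k$-linear and $k\to S$ is a $\s$-morphism). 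On $r\otimes 1$ in the $j$-th factor one has $\varphi(\s(r\otimes 1))=\varphi_{j+1}(r\otimes 1)=\s^{j+1}(\psi'(r))=\s\bigl(\s^j(\psi'(r))\bigr)=\s(\varphi_j(r\otimes 1))$. Thus $\varphi\circ\s=\s\circ\varphi$, so $\varphi$ is a morphism of $k$-$\s$-algebras.

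\textbf{Uniqueness.} Suppose $\varphi'\colon[\s]_kR\to S$ is a morphism of $k$-$\s$-algebras with $\varphi'\circ\psi=\psi'$. Applying $\s^j$ inside $[\s]_kR$ to the slot-$0$ generator $r\in R$ yields the slot-$j$ generator $r\otimes 1\in{}^{\s^j}R$, whence $\varphi'(r\otimes 1)=\varphi'(\s^j(r))=\s^j(\varphi'(r))=\s^j(\psi'(r))=\varphi(r\otimes 1)$. Since $\varphi'$ and $\varphi$ are $k$-linear and agree on a set of ring generators of $[\s]_kR$, we conclude $\varphi'=\varphi$.

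\textbf{Main obstacle.} There is no genuine difficulty; the argument is essentially bookkeeping. The only point demanding care is the semilinear twist: one must set up the maps out of the factors ${}^{\s^j}R$ so that $\varphi$ is compatible with $\s$ on \emph{both} sides simultaneously, not merely on the copy of $R$ sitting in degree zero.
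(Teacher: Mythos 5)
Your proof is correct and follows exactly the route the paper intends: the paper's own proof is the one-line remark that the claim ``follows immediately from the universal property of the tensor product and the limit,'' and your argument is precisely the careful unwinding of those two universal properties (the semilinear factorization through each ${}^{\s^j}R$, assembly via the coproduct property of $\otimes_k$, gluing over the union, and checking $\s$-equivariance and uniqueness on ring generators). The only cosmetic issue is that you overload the symbol $\varphi_d$ for both the map out of the single factor ${}^{\s^d}R$ and the map out of $R_d$; renaming one of them would make the write-up cleaner.
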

\begin{proof}
This follows immediately from the universal property of the tensor product and the limit.
\end{proof}

Note that if $R=k[x]$, the polynomial ring in the variables $x=(x_1,\ldots,x_n)$, then $[\s]_kR=k\{x\}_\s$, the $\s$-polynomial ring in the $\s$-variables $x=(x_1,\ldots,x_n)$, and 
the inclusion map $R\to [\s]_kR$ is simply saying that a polynomial is a $\s$-polynomial. Moreover, $R_d\subset k\{x\}_\s$ is the $k$-subalgebra of
 all $\s$-polynomials of order at most $d$.

More generally, if $R=k[x]/\ida$ then $[\s]_kR=k\{x\}_\s/[\ida]$. In particular, if $R$ is finitely generated as a $k$-algebra, then $[\s]_kR$ is finitely generated as a $k$-$\s$-algebra.

Alternatively $R_d$ can be described as the $k$-subalgebra of $[\s]_kR$ generated by all elements of the form $\s^i(r)$ for $i\leq d$ and $r\in R$.

If $\varphi\colon R\to R'$ is a morphism of $k$-algebras, then so is $R\to R'\to [\s]_kR'$, and from the universal property we obtain a morphism $[\s]_k(\varphi)\colon [\s]_k R\to [\s]_k R'$. We thus obtain a functor $[\s]_k$ from the category of $k$-algebras to the category of $k$-$\s$-algebras.

If $S$ is a $k$-$\s$-algebra, we denote by $S^\sharp$ the underlying $k$-algebra of $S$. I.e., $(-)^\sharp$ is the forgetful functor from $k$-$\s$-algebras to $k$-algebras that forgets $\s$.
For every $k$-algebra $R$ and every $k$-$\s$-algebra $S$, we have
\[\Alg_k^\s([\s]_kR,S)\simeq \Alg_k(R,S^\sharp).\]
In other words, $[\s]_k$ is left adjoint to $(-)^\sharp$.

\bigskip

We now return to schemes. If $V=\spec(k[V])$ is a scheme over $k$, we can define a functor $[\s]_kV$ from the category of $k$-$\s$-algebras to the category of sets by setting
\[([\s]_kV)(S)=V(S^\sharp)\]
for every $k$-$\s$-algebra $S$. Then $[\s]_kV$ is a $k$-$\s$-scheme. Indeed, as
\[([\s]_kV)(S)=V(S^\sharp)=\Alg_k(k[V],S^\sharp)=\Alg_k^\s([\s]_kk[V],S)\]
for every $k$-$\s$-algebra $S$, we find that $[\s]_kV$ is represented by $[\s]_kk[V]$, i.e.,
$k\{[\s]_kV\}=[\s]_kk[V]$. If confusion is unlikely, we shall sometimes denote the $k$-$\s$-scheme $[\s]_kV$ associated with $V$ with the same letter $V$. For example, we shall write  $k\{V\}$ instead of $k\{[\s]_kV\}$, $\A^n_k$ instead of $[\s]_k\A^n_k$, as in Example \ref{ex sigmaaffinespace} or $\Gl_{n,k}$ instead of $[\s]_k\Gl_{n,k}$. Note that if $V$ is algebraic over $k$, then $[\s]_kV$ is $\s$-algebraic over $k$.

\bigskip

From a $k$-$\s$-scheme $X$, one can obtain a scheme $X^\sharp$ over $k$ by forgetting the $\s$-structure, i.e., $X^\sharp=\operatorname{Spec}(k\{X\}^\sharp)$ or $X^\sharp=\Alg_k(k\{X\}^\sharp,-)$. This defines a forgetful functor $(-)^\sharp$ from the category of $k$-$\s$-schemes to the category of $k$-schemes. If $V$ is a scheme over $k$ and $X$ a $k$-$\s$-scheme then
\[\Hom(X,[\s]_k V)\simeq\Hom(X^\sharp,V).\]
So, on schemes, $[\s]_k$ is right adjoint to $(-)^\sharp$.

%
%

\subsection{Zariski closures} \label{sec:ZariskiClosures}

We next introduce the Zariski closures of a $\s$-closed $\s$-subscheme of a scheme. Cf. \cite[Section 4.3]{Hrushovski:elementarytheoryoffrobenius}.

Let $V$ be a $k$-scheme. For $d\geq 0$, we set $^{\s^d}V=V\times_{\spec(k)}\spec(k)$, where the morphism on the right hand side is induced from $\s^d\colon k\to k$.
Note that if $V$ descents to $k^\s$, i.e., $V=V'\times_{\spec(k^\s)}\spec(k)$ for some scheme $V'$ over $k^\s$, then $^{\s^d}V=V$. (This, for example, is the case for $V=\A^n_k$ or $V=\Gl_{n,k}$.)

We also set
\[V_d=V\times ^\s V\times\cdots\times ^{\s^d}V.\]
Of course this notation is compatible with the notation from the previous section: If $V=\spec(k[V])$, then $^{\s^d}V=\spec(^{\s^d}k[V])$ and $V_d=\spec(k[V]_d)$.
Since $k\{V\}=[\s]_kk[V]$ is the union of the $k$-subalgebras $k[V]_d$ of $k\{V\}$ we can see $[\s]_kV$ as the projective limit of the $V_d$'s.

By a $\s$-closed $\s$-subscheme $X$ of $V$, we mean a $\s$-closed $\s$-subscheme of $[\s]_kV$. By definition, $X$ is given by a $\s$-ideal $\I(X)$ of $k\{V\}$.

We define $X[d]$ to be the closed subscheme of $V_d$ defined by the ideal $\I(X)\cap k[V]_d\subset k[V]_d$. Clearly $\I(X[d])=\I(X)\cap k[V]_d$ is the largest ideal of $k[V]_d$ such that
$k[V]_d\to k\{V\}\to k\{X\}=k\{V\}/\I(X)$ factors through $k[V]_d\to k[V]_d/\I(X[d])$. The geometric significance of this is subsumed in the following definition.

\begin{defi} \label{defi:Zariskiclosures}
Let $V$ be a scheme over $k$ and $X$ a $\s$-closed $\s$-subscheme of $V$. For $d\geq 0$, the smallest closed subscheme $X[d]$ of $V_d$ such that
$X^\sharp\to V_d$ factors through $X[d]\hookrightarrow V_d$ is called the \emph{$d$-th order Zariski closure\footnote{This is called the $d$-th order weak Zariski closure in \cite[Section 4.3, p. 32]{Hrushovski:elementarytheoryoffrobenius}.} of $X$ inside $V$.} The $0$-th order Zariski closure is also called the \emph{Zariski closure}. If the Zariski closure of $X$ inside $V$ is equal to $V_0=V$ we say that $X$ is \emph{Zariski dense} in $V$.
\end{defi}

The above definition can be subsumed by saying that $X[d]$ is the scheme-theoretic image of $X^\sharp\to V_d$. (Cf. \cite[Exercise II.3.11 (d), p. 92]{hartshorne}.)

We have natural dominant projections $X[d+1]\to X[d]$ and $X^\sharp=\varprojlim X[d]$.

\begin{ex}
Let $V=\A^n_k$ and $X$ the $\s$-closed $\s$-subscheme of $V$ defined by a $\s$-ideal $\I(X)$ of $k\{\A^n_k\}=k\{x\}=k\{x_1,\ldots,x_n\}$. (Cf. Example \ref{ex sigmaaffinespace}.) Then  $k\{X\}=k\{x\}/\I(X)=k\{\overline{x}\}$ and
\[X[d]=\spec(k[\overline{x},\s(\overline{x}),\ldots,\s^d(\overline{x})])\]
for every $d\geq 0$.
\end{ex}

We note that this notion of Zariski closure is more subtle than the naive set theoretic Zariski closure in, say, $k^n$ where $k$ is a $\s$-closed $\s$-field.

\subsection{Some properties related to base extension} \label{subsec:somepropertiesrelatedtobaseextension}
In this section, we study the $\s$-analogs of separable and regular algebras, which, in classical algebraic geometry, correspond to absolutely reduced and absolutely integral schemes. Cf. \cite[Lemma 3.26, p. 25]{Hrushovski:elementarytheoryoffrobenius}. This is used in Section \ref{sec:sseparability}.

Let $k$ be a $\s$-field. Recall (Definition \ref{defi:sigmaseparableSigmaregular}) that a $k$-$\s$-algebra $R$ is called $\s$-separable/perfectly $\s$-separable/$\s$-regular if $R\otimes_k k'$ is $\s$-reduced/perfectly $\s$-reduced/a $\s$-domain for every $\s$-field extension $k'$ of $k$.

\begin{lemma} \label{lemma:sigmaseparablesufficesextension}
Let $k$ be a $\s$-field and $R$ a $k$-$\s$-algebra.
\begin{enumerate}
 \item Let $k'$ be an inversive $\s$-field extension of $k$. Then $R$ is $\s$-separable over $k$ if and only if $R\otimes_k k'$ is $\s$-reduced.
\item Let $k'$ be an inversive algebraically closed $\s$-field extension of $k$. Then $R$ is perfectly $\s$-separable/$\s$-regular over $k$ if and only if $R\otimes_k k'$ is perfectly $\s$-reduced/a $\s$-domain.
\end{enumerate}
\end{lemma}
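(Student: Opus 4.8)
The plan is to reduce the "only if" direction (the nontrivial one) to a statement about a single, sufficiently large $\s$-field extension, using the standard behaviour of $\s$-reducedness under tensor products and localisation. For (i), suppose $R\otimes_k k'$ is $\s$-reduced for one inversive $\s$-field extension $k'$ of $k$; I must show $R\otimes_k k''$ is $\s$-reduced for an arbitrary $\s$-field extension $k''$ of $k$. The key point is that since $k'$ is inversive, $k'$ is $\s$-separable over $k$ (this is essentially the content referenced elsewhere: an inversive $\s$-field is $\s$-separable over every $\s$-subfield, because any purely inseparable-type obstruction in the difference sense is killed by inversivity). Then I would pick a common $\s$-field extension, or rather pass to the compositum: embed both $k'$ and $k''$ into a larger $\s$-field $\ell$ (one can always do this after possibly enlarging, e.g. by taking an inversive algebraically closed extension of an appropriate amalgam, but for the argument it suffices to have a $\s$-field $\ell$ receiving a $k$-$\s$-embedding of $k'$). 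Since $k'$ is $\s$-separable over $k$ and $\ell\supseteq k'$, the algebra $R\otimes_k\ell=(R\otimes_k k')\otimes_{k'}\ell$ is $\s$-reduced: here I use that $\s$-reducedness of $R\otimes_k k'$ over $k'$ together with $\s$-separability propagates along the base change $k'\to\ell$. Finally $R\otimes_k k''\hookrightarrow R\otimes_k\ell$ (flatness of $k''\to\ell$ over $k$, since field extensions are faithfully flat), and a $\s$-subring of a $\s$-reduced ring is $\s$-reduced, so $R\otimes_k k''$ is $\s$-reduced. This is exactly the $\s$-analog of the classical fact that to test geometric reducedness it suffices to test over one perfect extension.

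For (ii), the structure is the same but one needs the extension $k'$ to be both inversive and algebraically closed. The reason is that perfect $\s$-reducedness and the $\s$-domain property are "geometric" conditions that are only insensitive to base change along extensions that are simultaneously separable (inversive, on the $\s$-side) and primary/regular (algebraically closed, on the classical side). Concretely: assume $R\otimes_k k'$ is perfectly $\s$-reduced (resp. a $\s$-domain) for one inversive algebraically closed $k'$. Given an arbitrary $\s$-field extension $k''$, choose a $\s$-field $\ell$ that is inversive and algebraically closed and admits $k$-$\s$-embeddings of both $k'$ and $k''$ (take, e.g., an inversive algebraically closed $\s$-field containing a compositum; such objects exist by forming the inversive closure and then the algebraic closure and extending $\s$). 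Because $k'$ is inversive and algebraically closed, $k'$ is $\s$-regular over $k$, hence $R\otimes_k\ell=(R\otimes_k k')\otimes_{k'}\ell$ inherits perfect $\s$-reducedness (resp. the $\s$-domain property) from $R\otimes_k k'$ along the base change $k'\to\ell$ — this uses that $\ell|k'$ is itself $\s$-regular (being an extension of an algebraically closed inversive field) and the known stability statements for perfectly $\s$-reduced / $\s$-integral algebras under $\s$-regular base change. Then $R\otimes_k k''\hookrightarrow R\otimes_k\ell$ by faithful flatness, and a $\s$-subring of a perfectly $\s$-reduced ring (resp. of a $\s$-domain) is again perfectly $\s$-reduced (resp. a $\s$-domain). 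This gives the desired conclusion for all $k''$.

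The main obstacle, and the place where the proof really has content, is establishing the propagation step: that if $A$ is a $\s$-reduced (resp. perfectly $\s$-reduced, resp. $\s$-domain) $k'$-$\s$-algebra and $\ell|k'$ is an inversive (resp. inversive and algebraically closed) $\s$-field extension, then $A\otimes_{k'}\ell$ retains the property. This is the difference-algebra analog of "reduced $\otimes$ perfect $=$ reduced" and "domain $\otimes$ algebraically closed separably generated $=$ domain", and it is presumably where one invokes a structural characterisation of $\s$-separable / perfectly $\s$-separable / $\s$-regular algebras (the cited analog of the characterisations of separability, cf. the remarks in the main text referencing \cite{Hrushovski:elementarytheoryoffrobenius} and \cite{Wibmer:thesis}). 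Once that lemma is in hand — or if it is taken as a previously established ingredient — the rest is the flatness-and-subring bookkeeping sketched above. I would also want to double-check that extensions of $\s$ to algebraic and inversive closures exist (they do, by standard arguments), so that the ambient $\ell$ can always be arranged.
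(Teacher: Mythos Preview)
Your reduction architecture---pass to a large common $\s$-overfield $\ell$ and use that $\s$-subrings inherit $\s$-reducedness/perfect $\s$-reducedness/$\s$-integrality---matches the paper's. But you correctly identify the ``propagation step'' as the real content and then leave it as a black box, and that is a genuine gap. Your propagation statement (``$A$ $\s$-reduced over an inversive $\s$-field $k'$ implies $A\otimes_{k'}\ell$ is $\s$-reduced for every $\s$-field $\ell\supseteq k'$'') is literally Corollary~\ref{cor:sigmaseparablesufficesextension}(i), which in the paper is \emph{deduced from} the lemma you are trying to prove. Invoking it here is circular. The same circularity occurs in your (ii): you appeal to ``stability of perfectly $\s$-reduced under $\s$-regular base change'' and to ``$\ell|k'$ is $\s$-regular because $k'$ is algebraically closed and inversive'', both of which are instances (or consequences) of the very lemma.

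The paper's proof supplies exactly what you are missing. For (i) it cites an external characterisation (essentially: $R$ is $\s$-separable over $k$ iff $R\otimes_k k^*$ is $\s$-reduced, where $k^*$ is the inversive closure of $k$; since any inversive $k'\supseteq k$ contains $k^*$, one gets $R\otimes_k k^*\hookrightarrow R\otimes_k k'$, and no amalgam of $k'$ and $k''$ is needed). For $\s$-regularity in (ii) it combines (i) with the classical fact that $R\otimes_k k'$ an integral domain for one algebraically closed $k'$ forces $R$ to be regular over $k$. For perfect $\s$-separability---the case with the most content---it does \emph{not} invoke a generic stability principle: instead it reduces to $k=k'$ inversive and algebraically closed, writes the zero ideal of $R$ as an intersection of $\s$-prime ideals $\q$ (since $R$ is perfectly $\s$-reduced), uses the already-proved $\s$-regular case to see that each $(R/\q)\otimes_k k''$ is a $\s$-domain, and concludes $\bigcap_\q(\q\otimes k'')=0$, so $R\otimes_k k''$ is perfectly $\s$-reduced. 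That $\s$-prime decomposition argument is the missing idea in your sketch.

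A secondary issue: in your (i) you assume one can always amalgamate $k'$ and $k''$ into a common $\s$-overfield $\ell$. For (ii) the paper justifies this using that $k'$ is algebraically closed (Levin, Theorem 5.1.6), but for (i), with $k'$ merely inversive, two $\s$-field extensions of $k$ need not be compatible. The inversive-closure route avoids this problem entirely.
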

\begin{proof}
Point (i) follows from \cite[Proposition 1.5.2, p. 17]{Wibmer:thesis}, cf. \cite[Lemma 3.26 (2), p. 25]{Hrushovski:elementarytheoryoffrobenius}.
The case of $\s$-regularity in (ii) follows from (i) and the fact that $R$ is regular over $k$ if $R\otimes_k k'$ is an integral domain. (See e.g. \cite[Corollary 1, Chapter 5, $\S$ 17, No. 5, A.V.143]{Bourbaki:Algebra2}.)

It remains to see that $R$ is perfectly $\s$-separable if $R\otimes_k k'$ is perfectly $\s$-reduced. So let $k''$ be a $\s$-field extension of $k$. We have to show that $R\otimes_k k''$ is perfectly $\s$-reduced. Because $k'$ is algebraically closed there exists a $\s$-field extension $k'''$ of $k$ containing $k'$ and $k''$
(cf. \cite[Theorem 5.1.6, p. 313]{Levin}).
If $R\otimes_k k'''$ is perfectly $\s$-reduced then also $R\otimes_k k''\subset R\otimes_k k'''$ is perfectly $\s$-reduced. Therefore we can assume that $k'\subset k''$.
As $R\otimes_k k''=(R\otimes_k k')\otimes_{k'} k''$ we can reduce to showing that every perfectly $\s$-reduced $k$-$\s$-algebra over an algebraically closed inversive $\s$-field $k$ is perfectly $\s$-separable over $k$. In other words, we may assume that $k=k'$.

Let $\q$ be a $\s$-prime ideal of $R$. Then $R/\q$ is a $\s$-domain and it follows from the case of $\s$-regularity proved above that $(R/\q)\otimes_k k''$ is a $\s$-domain. Consequently $\q\otimes k''$ is a $\s$-prime ideal of $R\otimes_k k''$. Because $R$ is perfectly $\s$-reduced, the intersection of all $\s$-prime ideals of $R$ is the zero
ideal (\cite[Proposition 2.3.4, p. 122]{Levin} or \cite[End of Section 6, Chapter 3, p. 88]{Cohn:difference}). It follows that
\[\bigcap_\q (\q\otimes k'')=(\bigcap_\q\q)\otimes k''=(0)\subset R\otimes_k k'',\]
where the intersection is taken over all $\s$-prime ideals $\q$ of $R$. Thus the zero ideal of $R\otimes_k k''$ is the intersection of $\s$-prime ideals. This shows that $R\otimes_k k''$ is perfectly $\s$-reduced.
\end{proof}

\begin{cor} \label{cor:sigmaseparablesufficesextension}
Let $R$ be a $k$-$\s$-algebra.
\begin{enumerate}
 \item If $k$ is inversive, then $R$ is $\s$-separable over $k$ if and only if $R$ is $\s$-reduced.
\item If $k$ is inversive and algebraically closed, then $R$ is perfectly $\s$-separable/$\s$-regular over $k$ if and only if $R$ is perfectly $\s$-reduced/a $\s$-domain.
\end{enumerate}
\end{cor}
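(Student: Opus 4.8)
The statement is a direct corollary of Lemma \ref{lemma:sigmaseparablesufficesextension}, obtained by specializing to $k'=k$. The plan is as follows. First I would note that a $\s$-field is trivially a $\s$-field extension of itself, via the identity map, and that the canonical map $R\to R\otimes_k k$, $r\mapsto r\otimes 1$, is an isomorphism of $k$-$\s$-algebras. So for any property $P$ of $\s$-rings among ``$\s$-reduced'', ``perfectly $\s$-reduced'', ``$\s$-domain'', the $\s$-ring $R\otimes_k k$ has property $P$ if and only if $R$ does.

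For part (i): since $k$ is inversive, $k$ itself is an inversive $\s$-field extension of $k$, and part (i) of Lemma \ref{lemma:sigmaseparablesufficesextension} applied with this choice of $k'$ says that $R$ is $\s$-separable over $k$ if and only if $R\otimes_k k$ is $\s$-reduced; by the identification $R\otimes_k k\cong R$ this is equivalent to $R$ being $\s$-reduced, which is exactly claim (i). For part (ii): since $k$ is in addition algebraically closed, $k$ is an inversive algebraically closed $\s$-field extension of $k$, so part (ii) of Lemma \ref{lemma:sigmaseparablesufficesextension} applied with $k'=k$ yields that $R$ is perfectly $\s$-separable (respectively $\s$-regular) over $k$ if and only if $R\otimes_k k$, hence $R$, is perfectly $\s$-reduced (respectively a $\s$-domain).

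I do not expect any obstacle here: all of the substance is already contained in Lemma \ref{lemma:sigmaseparablesufficesextension}, and this corollary is merely the case $k'=k$ of that lemma together with the trivial identification $R\otimes_k k\cong R$ of $k$-$\s$-algebras.
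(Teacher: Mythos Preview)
Your proposal is correct and matches the paper's own proof, which simply says the corollary is clear from Lemma \ref{lemma:sigmaseparablesufficesextension}. Your explicit specialization $k'=k$ together with the identification $R\otimes_k k\cong R$ is exactly what that one-line justification unpacks to.
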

\begin{proof}
This is clear from Lemma \ref{lemma:sigmaseparablesufficesextension}.
\end{proof}

\begin{lemma} \label{lemma:localize}
Let $R$ be a $\s$-ring and $S$ a multiplicatively closed $\s$-stable subset of $R$ consisting of non-zero divisors. If $R$ is $\s$-reduced/perfectly $\s$-reduced/a $\s$-domain, then so is $S^{-1}R$.
\end{lemma}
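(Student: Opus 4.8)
The statement is a routine compatibility check between localization and the difference structure, so the plan is to verify each of the three properties directly from the defining conditions, using only the fact that $S$ is $\s$-stable and multiplicatively closed and consists of non-zero divisors (so that $R\hookrightarrow S^{-1}R$ and $\s$ extends to $S^{-1}R$ by $\s(r/s)=\s(r)/\s(s)$; this is well-defined precisely because $S$ is $\s$-stable). First I would dispatch the underlying \emph{algebraic} assertions, which are classical: a localization of a reduced ring is reduced, and a localization of a domain at a multiplicative set not containing $0$ is again a domain (here $0\notin S$ since $S$ consists of non-zero divisors and $R$ is not the zero ring in the cases of interest). These need no difference-theoretic input.

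\emph{$\s$-reduced case.} Suppose $R$ is $\s$-reduced, i.e.\ $\s\colon R\to R$ is injective, and suppose $\s(r/s)=0$ in $S^{-1}R$ for some $r\in R$, $s\in S$. Then $\s(r)/\s(s)=0$, so there is $t\in S$ with $t\s(r)=0$ in $R$. Since $S$ is $\s$-stable, pick $u\in S$ with $\s(u)=$ some element of $S$; more directly, since $t\in S$ and $t$ is a non-zero divisor, $\s(r)=0$, whence $r=0$ by injectivity of $\s$ on $R$, so $r/s=0$. Thus $\s$ is injective on $S^{-1}R$.

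\emph{Perfectly $\s$-reduced case.} Suppose the zero ideal of $R$ is perfect and that $\s^{\alpha_1}(r/s)\cdots\s^{\alpha_n}(r/s)=0$ in $S^{-1}R$ for some $r\in R$, $s\in S$, and $\alpha_1,\ldots,\alpha_n\geq 0$. Clearing denominators, $\s^{\alpha_1}(r)\cdots\s^{\alpha_n}(r)$ becomes a non-zero divisor times zero, hence $\s^{\alpha_1}(r)\cdots\s^{\alpha_n}(r)=0$ in $R$; since the zero ideal of $R$ is perfect, $r=0$, so $r/s=0$. For the $\s$-domain case combine the algebraic fact that $S^{-1}R$ is a domain with the $\s$-reduced case just proved (a $\s$-domain is exactly an integral domain with $\s$ injective).

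\emph{Main obstacle.} There is essentially no deep obstacle here; the only point requiring minor care is making sure $\s$ genuinely extends to $S^{-1}R$ and that, when clearing denominators, the element of $S$ one multiplies by can be cancelled — which is guaranteed by the hypothesis that elements of $S$ are non-zero divisors. One should also note in passing that in the $\s$-domain case one needs $R\neq 0$ so that $0\notin S$; this is automatic since the zero ring is not a domain. I would therefore present the proof as three short paragraphs mirroring the three bullet conditions, each reducing the assertion in $S^{-1}R$ to the corresponding assertion in $R$ by clearing denominators and using that elements of $S$ are non-zero divisors.
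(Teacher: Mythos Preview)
Your proposal is correct and matches the paper's approach; the paper simply records ``This is a straight forward verification'' without spelling out the details, and your argument is exactly the routine verification intended. One minor stylistic point: the aside in the $\s$-reduced paragraph about picking $u\in S$ with $\s(u)\in S$ is a dead end you immediately abandon---just delete it and go straight to ``since $t$ is a non-zero divisor, $\s(r)=0$''.
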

\begin{proof}
This is a straight forward verification.
\end{proof}

\begin{lemma} \label{lemma:sigmaseparablequotientfield}
Let $k$ be a $\s$-field and $R$ a $k$-$\s$-domain. If $R$ is $\s$-separable/perfectly $\s$-separable/$\s$-regular over $k$, then also the quotientfield of $R$ is $\s$-separable/perfectly $\s$-separable/$\s$-regular over $k$.
\end{lemma}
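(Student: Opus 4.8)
Let $k$ be a $\s$-field and $R$ a $k$-$\s$-domain. If $R$ is $\s$-separable/perfectly $\s$-separable/$\s$-regular over $k$, then so is the quotient field of $R$.

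The plan is to reduce the statement about the quotient field $L=\quot(R)$ to the already-established statement about $R$ by exploiting the fact that tensoring with a $\s$-field extension $k'$ commutes with localization, together with Lemma \ref{lemma:localize}. More precisely, fix a $\s$-field extension $k'$ of $k$. Since $L=S^{-1}R$ where $S=R\setminus\{0\}$, and localization is flat, we have a natural identification $L\otimes_k k'=S^{-1}(R\otimes_k k')$, where we regard $S$ as a multiplicatively closed subset of $R\otimes_k k'$ via $r\mapsto r\otimes 1$. The set $S$ is $\s$-stable in $R\otimes_k k'$ because $\s(r\otimes 1)=\s(r)\otimes 1$ and $S$ is $\s$-stable in $R$. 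By hypothesis $R\otimes_k k'$ is $\s$-reduced (resp.\ perfectly $\s$-reduced, resp.\ a $\s$-domain), so the only point that needs checking before invoking Lemma \ref{lemma:localize} is that the elements of $S$, viewed in $R\otimes_k k'$, are non-zero divisors.

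The verification that $S$ consists of non-zero divisors in $R\otimes_k k'$ is the one spot requiring a small argument, and it is where the hypotheses genuinely enter. In the $\s$-regular case, $R\otimes_k k'$ is a $\s$-domain, hence an integral domain, so every non-zero element---in particular every $r\otimes 1$ with $r\in S$---is a non-zero divisor; Lemma \ref{lemma:localize} then applies directly. For the $\s$-separable and perfectly $\s$-separable cases, $R\otimes_k k'$ need not be a domain, so I would argue as follows: $R$ is free as a $k$-module (it is a domain over a field, hence has a $k$-basis), so $R\otimes_k k'$ is a free $R$-module, hence faithfully flat over $R$. Therefore the image of the non-zero divisor $r\in R$ under the flat map $R\to R\otimes_k k'$ remains a non-zero divisor: if $r\otimes 1$ annihilates some $\xi\in R\otimes_k k'$, then, writing $\xi=\sum_i r_i\otimes \lambda_i$ with the $\lambda_i$ linearly independent over $k$, the relation $\sum_i (rr_i)\otimes\lambda_i=0$ forces $rr_i=0$ for all $i$, hence $r_i=0$ for all $i$ since $R$ is a domain, so $\xi=0$. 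With this in hand, Lemma \ref{lemma:localize} gives that $L\otimes_k k'=S^{-1}(R\otimes_k k')$ is $\s$-reduced (resp.\ perfectly $\s$-reduced).

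Finally, since $k'$ was an arbitrary $\s$-field extension of $k$, the three conclusions---$L\otimes_k k'$ is $\s$-reduced, perfectly $\s$-reduced, a $\s$-domain---are exactly the definitions of $L$ being $\s$-separable, perfectly $\s$-separable, $\s$-regular over $k$ (Definition \ref{defi:sigmaseparableSigmaregular}). I do not expect any real obstacle here; the only mildly delicate point is the non-zero-divisor check above, and even that is routine once one recalls that a domain over a field is free. One could alternatively phrase the whole argument by noting that $R\otimes_k k'\hookrightarrow L\otimes_k k'$ (injectivity again from flatness of $k\to k'$), realizing $L\otimes_k k'$ as the localization of $R\otimes_k k'$ at the multiplicative set generated by $S$, which is the formulation Lemma \ref{lemma:localize} is tailored to.
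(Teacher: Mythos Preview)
Your proof is correct and follows exactly the same approach as the paper's: both identify $L\otimes_k k'$ with $S^{-1}(R\otimes_k k')$ for $S=R\setminus\{0\}$ and then invoke Lemma~\ref{lemma:localize}. You are simply more explicit than the paper about verifying that the elements $r\otimes 1$ are non-zero divisors in $R\otimes_k k'$ (which the paper tacitly uses but does not spell out).
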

\begin{proof}
Let $k'$ be a $\s$-field extension of $k$ and $S=R\smallsetminus\{0\}$ the multiplicatively closed subset of non-zero divisors of $R$. Because $R$ is a $\s$-domain, $S$ is stable under $\s$ and so the quotientfield $L$ of $R$ is naturally a $\s$-ring. Since $R\otimes_ k k'$ is $\s$-reduced/perfectly $\s$-reduced/a $\s$-domain it follows from Lemma \ref{lemma:localize} that also $S^{-1}(R\otimes_ k k')$ is $\s$-reduced/perfectly $\s$-reduced/a $\s$-domain. But
$L\otimes_k k'=S^{-1}(R\otimes_ k k')$.
\end{proof}


The property of a $k$-$\s$-scheme to be $\s$-reduced/perfectly $\s$-reduced/$\s$-integral is not stable under base extension. So we need to supplement these definitions. To speak meaningfully about base extensions of $k$-$\s$-schemes we record that:

\begin{rem} \label{rem:productsexistinksschemes}
The category of $k$-$\s$-schemes has products. Indeed, if $X$ and $Y$ are $k$-$\s$-schemes then $X\times Y$ is represented by $k\{X\}\otimes_k k\{Y\}$.
\end{rem}
\begin{proof}
This follows from Remark \ref{rem: yoneda for sigma schemes} and the fact that the tensor product is the coproduct in the category of $k$-$\s$-algebras.
\end{proof}

Let $X$ be a $k$-$\s$-scheme and $k'$ a $\s$-field extension of $k$. Let $Y$ denote the $k$-$\s$-scheme represented by the $k$-$\s$-algebra $k'$. We say that
$X_{k'}:=X\times Y$ is obtained from $X$ via the base extension $k'|k$.

\begin{defi} \label{defi:sigmaschemeabsolutelySigmaReduced}
Let $X$ be a $k$-$\s$-scheme. We say that $X$ is \emph{absolutely $\s$-reduced/perfectly $\s$-reduced/$\s$-integral} if $X_{k'}$ is $\s$-reduced/perfectly $\s$-reduced/$\s$-integral for every $\s$-field extension $k'$ of $k$.
\end{defi}
Thus a $k$-$\s$-scheme $X$ is absolutely $\s$-reduced/perfectly $\s$-reduced/$\s$-integral if and only if $k\{X\}$ is $\s$-separable/perfectly $\s$-separable/$\s$-regular over $k$.
Corollary \ref{cor:sigmaseparablesufficesextension} is reinterpreted as:

\begin{cor} \label{cor:absolutelysufficesextension}
Let $X$ be a $k$-$\s$-scheme.
\begin{enumerate}
 \item If $k$ is inversive, then $X$ is absolutely $\s$-reduced if and only if $X$ is $\s$-reduced.
\item If $k$ is inversive and algebraically closed, then $X$ is absolutely perfectly $\s$-reduced/$\s$-integral if and only if $X$ is perfectly $\s$-reduced/$\s$-integral.
\end{enumerate} \qed
\end{cor}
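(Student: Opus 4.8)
The plan is to deduce this directly from Corollary \ref{cor:sigmaseparablesufficesextension} applied to the $k$-$\s$-algebra $R:=k\{X\}$, and then to translate the conclusion through the dictionary recorded in the table above. The only non-formal ingredient is the identification of $\s$-coordinate rings under base extension: by Remark \ref{rem:productsexistinksschemes} the base extension $X_{k'}=X\times Y$ (with $Y$ represented by $k'$) is represented by $k\{X\}\otimes_k k'$, so $X_{k'}$ is $\s$-reduced / perfectly $\s$-reduced / $\s$-integral exactly when $k\{X\}\otimes_k k'$ is $\s$-reduced / perfectly $\s$-reduced / a $\s$-domain. Hence, as already observed after Definition \ref{defi:sigmaschemeabsolutelySigmaReduced}, $X$ is absolutely $\s$-reduced / perfectly $\s$-reduced / $\s$-integral if and only if $k\{X\}$ is $\s$-separable / perfectly $\s$-separable / $\s$-regular over $k$.

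For part (i): assuming $k$ inversive, $X$ is absolutely $\s$-reduced iff $k\{X\}$ is $\s$-separable over $k$, which by Corollary \ref{cor:sigmaseparablesufficesextension} (i) is equivalent to $k\{X\}$ being $\s$-reduced, i.e.\ to $X$ being $\s$-reduced. For part (ii): assuming $k$ inversive and algebraically closed, $X$ is absolutely perfectly $\s$-reduced (respectively absolutely $\s$-integral) iff $k\{X\}$ is perfectly $\s$-separable (respectively $\s$-regular) over $k$, which by Corollary \ref{cor:sigmaseparablesufficesextension} (ii) is equivalent to $k\{X\}$ being perfectly $\s$-reduced (respectively a $\s$-domain), i.e.\ to $X$ being perfectly $\s$-reduced (respectively $\s$-integral).

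There is no real obstacle here: the statement is a pure reformulation of Corollary \ref{cor:sigmaseparablesufficesextension} in scheme-theoretic language, and the entire content of the proof is spelling out the three rows of the nomenclature table together with Remark \ref{rem:productsexistinksschemes}. The one place where a (trivial) verification is needed is the compatibility $k\{X_{k'}\}=k\{X\}\otimes_k k'$, which is exactly Remark \ref{rem:productsexistinksschemes}; everything else is bookkeeping.
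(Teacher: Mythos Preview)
Your proof is correct and matches the paper's approach exactly: the paper simply states that this corollary is a reinterpretation of Corollary \ref{cor:sigmaseparablesufficesextension} and marks it with \qed, relying (as you do) on the dictionary recorded in the table after Definition \ref{defi:sigmaschemeabsolutelySigmaReduced}.
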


\subsubsection{More on perfect $\s$-separability}

\begin{lemma} \label{lemma:perfectlysigmaseperableReducetoAlgebraicClosure}
Let $L|K$ be an extension of $\s$-fields and let $K'$ denote the relative algebraic closure of $K$ inside $L$. Assume that $L|K$ is $\s$-separable and separable (as field extension). Then $L|K$ is perfectly $\s$-separable if and only if $K'|K$ is perfectly $\s$-separable.
\end{lemma}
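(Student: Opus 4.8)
The plan is to work with the tower $K\subseteq K'\subseteq L$. Since $L|K$ is a separable field extension and $K'$ (being the relative algebraic closure of $K$ in $L$) is relatively algebraically closed in $L$, the extension $L|K'$ is \emph{regular}. This is the structural fact I would exploit. The forward implication is formal: if $L|K$ is perfectly $\s$-separable and $k'$ is any $\s$-field extension of $K$, then $K'\otimes_K k'\hookrightarrow L\otimes_K k'$ is an injection of $\s$-rings ($K'\hookrightarrow L$, tensored with the flat $K$-module $k'$), and a $\s$-subring of a perfectly $\s$-reduced ring is perfectly $\s$-reduced; since $k'$ is arbitrary, $K'|K$ is perfectly $\s$-separable.

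For the reverse implication, assume $K'|K$ is perfectly $\s$-separable. The key reduction I would carry out is to show that $L|K'$ is \emph{$\s$-regular}. First I would check that $L|K'$ is $\s$-separable: given a $\s$-field extension $k''$ of $K'$, write $K'$ as the filtered union of its finite (separable) subextensions $K_0|K$, so that $L\otimes_{K'}k''=\varinjlim_{K_0}(L\otimes_{K_0}k'')$; for each finite $K_0$ the multiplication map $K_0\otimes_K k''\to k''$ (from $K_0\subseteq k''$) is a surjection of $\s$-rings onto the $\s$-domain $k''$, hence its kernel is reflexive and the corresponding idempotent $e\in K_0\otimes_K k''$ is $\s$-fixed, which identifies $L\otimes_{K_0}k''$ with the $\s$-stable direct factor $e\cdot(L\otimes_K k'')$ of $L\otimes_K k''$. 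As $L\otimes_K k''$ is $\s$-reduced ($L|K$ being $\s$-separable, via Lemma \ref{lemma:sigmaseparablesufficesextension}), each $L\otimes_{K_0}k''$ is $\s$-reduced, hence so is the filtered colimit $L\otimes_{K'}k''$. Now, because $L|K'$ is regular, $L\otimes_{K'}k''$ is an integral domain; being $\s$-reduced as well, it is a $\s$-domain. Thus $L|K'$ is $\s$-regular.

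To conclude, let $k'$ be any $\s$-field extension of $K$ and set $B:=K'\otimes_K k'$, so $L\otimes_K k'=L\otimes_{K'}B$. By hypothesis $B$ is perfectly $\s$-reduced, so $(0)$ is the intersection of the $\s$-prime ideals $\q$ of $B$, and for each such $\q$ one has $\q\cap K'=(0)$ (as $K'^{\times}\subseteq B^{\times}$), so the fraction field $F_\q$ of the $\s$-domain $B/\q$ is a $\s$-field extension of $K'$. For each $\q$ the composite $L\otimes_{K'}B\to L\otimes_{K'}(B/\q)\hookrightarrow L\otimes_{K'}F_\q$ lands in a $\s$-domain by $\s$-regularity of $L|K'$, so its kernel $\mathfrak{p}_\q$ is a $\s$-prime of $L\otimes_{K'}B$. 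Given $0\neq f=\sum_{j}\ell_j\otimes b_j$ with the $\ell_j\in L$ linearly independent over $K'$, some $b_{j_0}\neq 0$; picking $\q$ with $b_{j_0}\notin\q$, the image of $f$ in $L\otimes_{K'}F_\q$ is nonzero (the $\ell_j\otimes 1$ stay $F_\q$-linearly independent), so $f\notin\mathfrak{p}_\q$. Hence $(0)=\bigcap_\q\mathfrak{p}_\q$ is an intersection of $\s$-prime ideals, i.e. perfect, so $L\otimes_K k'$ is perfectly $\s$-reduced; as $k'$ is arbitrary, $L|K$ is perfectly $\s$-separable.

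The main obstacle I anticipate is precisely the descent of $\s$-separability from $L|K$ to $L|K'$: one has to understand how $\s$ acts on the idempotents of $K_0\otimes_K k''$ and see that the diagonal idempotent is $\s$-fixed, so that $L\otimes_{K_0}k''$ becomes a $\s$-stable direct summand (not merely a quotient) of $L\otimes_K k''$. Once this and the passage ``regular $+$ $\s$-separable $\Rightarrow$ $\s$-regular'' are in hand, the rest is bookkeeping with Lemma \ref{lemma:sigmaseparablesufficesextension}, Lemma \ref{lemma:sigmaseparablequotientfield}, and the characterisation of perfect $\s$-ideals as intersections of $\s$-prime ideals; alternatively, the relevant statements on $\s$-separable and $\s$-regular extensions may simply be cited from the appendix.
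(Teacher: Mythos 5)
Your proof is correct, but it takes a genuinely different route from the paper's. The paper never establishes that $L|K'$ is $\s$-regular (or even $\s$-separable): it only observes that, $L|K'$ being a regular field extension, $L\otimes\q$ is a \emph{prime $\s$-ideal} of $L\otimes_K M$ (prime and $\s$-stable, with no claim of reflexivity) for each $\s$-prime $\q$ of $K'\otimes_K M$, and that these intersect in $(0)$; it then closes with a short trick: if $(0)$ is an intersection of prime $\s$-ideals and $\s$ is injective on the ring (which holds because $L|K$ is $\s$-separable), then $\s^{\alpha_1}(a)\cdots\s^{\alpha_n}(a)=0$ forces $\s^{\alpha}(a)=0$ with $\alpha=\max_i\alpha_i$, hence $a=0$. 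You instead upgrade the intermediate step to ``$L|K'$ is $\s$-regular'', which makes your ideals $\mathfrak{p}_\q$ honestly reflexive, so that $(0)$ is an intersection of $\s$-prime ideals and perfectness follows from the standard characterization with no further trick; the price is the extra work of descending $\s$-separability from $L|K$ to $L|K'$ via the idempotent/colimit argument. That argument is sound, but one justification should be tightened: reflexivity of $\ker(K_0\otimes_K k''\to k'')$ alone does not force the idempotent $e$ to be $\s$-fixed --- $\s$-stability of the kernel only gives $e\s(e)=e$, and the implication can fail for a product of fields on which $\s$ kills a primitive idempotent. You also need $\s$ to be injective on $K_0\otimes_K k''$, which you do have since this ring embeds into the $\s$-reduced ring $L\otimes_K k''$; then $\s$ permutes the primitive idempotents, and $e\s(e)=e$ together with primitivity forces $\s(e)=e$. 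With that point made explicit, both proofs are complete: the paper's is shorter, while yours yields the stronger intermediate fact that $L|K'$ is $\s$-regular.
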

\begin{proof}
Clearly $K'|K$ is perfectly $\s$-separable if $L|K$ is perfectly $\s$-separable.

Assume that $K'|K$ is perfectly $\s$-separable. Let $M$ be a $\s$-field extension of $K$. We have to show that $L\otimes_K M=L\otimes_{K'}(K'\otimes_K M)$ is perfectly $\s$-reduced.
Because $L|K$ is separable also $L|K'$ is separable and since $K'$ is relatively algebraically closed in $L$ we see that the field extension $L|K'$ is regular. This implies that $L\otimes\q$ is a prime ideal of $L\otimes_{K'}(K'\otimes_K M)$ for every prime ideal $\q$ of $K'\otimes_K M$. As $K'\otimes_K M$ is perfectly $\s$-reduced, the zero ideal of $K'\otimes_K M$ is the intersection of $\s$-prime ideals.
It follows that the zero ideal of $L\otimes_K M=L\otimes_{K'}(K'\otimes_K M)$ is the intersection of the prime $\s$-ideals $L\otimes\q$, where $\q$ runs through the $\s$-prime ideals of $K'\otimes_K M$. In particular, the zero ideal of $L\otimes_K M$ is the intersection of prime $\s$-ideals.

Since $L|K$ is $\s$-separable, $\s$ is injective on $L\otimes_K M$. It is now easy to see that $L\otimes_K M$ is perfectly $\s$-reduced: Indeed, let $a\in L\otimes_K M$ and $\alpha_1,\ldots,\alpha_n\geq 0$ such that
$\s^{\alpha_1}(a)\cdots\s^{\alpha_n}(a)=0$. If $\q'$ is a prime $\s$-ideal of $L\otimes_K M$ then $\s^\alpha(a)\in\q'$, where $\alpha$ denotes the maximum of the $\alpha_i$. Therefore $\s^\alpha(a)=0$ and it follows that $a=0$.
\end{proof}

\begin{lemma} \label{lemma:perfectlysigmaseparabelfinite}
Let $L|K$ be a finitely $\s$-generated perfectly $\s$-separable extension of $\s$-fields such that the underlying field extension is algebraic. Then $L|K$ is finite.
\end{lemma}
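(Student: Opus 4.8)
Let $L|K$ be a finitely $\s$-generated perfectly $\s$-separable extension of $\s$-fields such that the underlying field extension is algebraic. Then $L|K$ is finite.

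The plan is to reduce the statement to a claim about limit degrees, and then to extract that claim from perfect $\s$-separability by passing to a suitable spectrum.

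First I would write $L=K\langle a_1,\dots,a_n\rangle_\s$ and set $L_d:=K(\s^j(a_i)\mid 1\le i\le n,\ 0\le j\le d)\subseteq L$, so that $L=\bigcup_d L_d$, each $L_d|K$ is finite (as $L|K$ is algebraic), $\s(L_{d-1})\subseteq L_d$, and $L_{d+1}=L_d\cdot\s(L_d)$ (with the convention $L_{-1}=K$). A standard translation-of-fields argument shows that $d\mapsto[L_{d+1}:L_d]$ is non-increasing: $L_{d+1}=L_d(\s^{d+1}a_1,\dots,\s^{d+1}a_n)$ and $\s^{d+1}a_1,\dots,\s^{d+1}a_n$ generate $\s(L_d)$ over $\s(L_{d-1})\subseteq L_d$, whence $[L_{d+1}:L_d]\le[\s(L_d):\s(L_{d-1})]=[L_d:L_{d-1}]$. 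Thus $[L_{d+1}:L_d]$ is eventually a constant $m\ge 1$, and since $[L_d:K]=\prod_{j\le d}[L_j:L_{j-1}]$, the extension $L|K$ is finite if and only if $m=1$, if and only if $\s(L_{d_0})\subseteq L_{d_0}$ for some $d_0$ (in which case $L_{d_0}$ is a $\s$-subfield of $L$ containing all the $\s^j(a_i)$, hence equals $L$). So it suffices to rule out $m\ge 2$.

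Assume $m\ge 2$. Fix an algebraically closed inversive $\s$-field extension $E$ of $K$ (for instance the algebraic closure of the inversive closure of $K$ equipped with an extension of $\s$, exactly as in the proof of Proposition~\ref{prop:correspondenceSigmaSeparabel} applied to $K$ in place of $L$). By perfect $\s$-separability, $A:=L\otimes_K E$ is perfectly $\s$-reduced. Since $L|K$ is algebraic and, being of characteristic zero, separable, $A$ is a reduced ring of Krull dimension zero, hence von Neumann regular; its spectrum $X:=\operatorname{Spec}(A)$ is a profinite space, canonically homeomorphic to $\varprojlim_d\operatorname{Hom}_K(L_d,E)$, and $\#\operatorname{Hom}_K(L_d,E)=[L_d:K]\to\infty$, so $X$ is infinite. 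The endomorphism $\s$ of $A$ induces a continuous map $\phi:=\operatorname{Spec}(\s)\colon X\to X$ (it carries maximal ideals to maximal ideals because $A$ is zero-dimensional), and $V(\s^\alpha(f))=\phi^{-\alpha}(V(f))$ for every $f\in A$ and $\alpha\ge 0$. Evaluating the definition of ``perfectly $\s$-reduced'' on the idempotents of $A$, whose zero sets are precisely the clopen subsets of $X$, then translates into the following \emph{recurrence property}: for every nonempty clopen $U\subseteq X$ and every finite $T\subseteq\Z_{\ge 0}$ one has $\bigcap_{\alpha\in T}\phi^{-\alpha}(U)\ne\emptyset$.

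It remains to contradict this recurrence property using $m\ge 2$, and I expect this to be the delicate step. The mechanism is that $m\ge 2$, i.e. $\s(L_d)\not\subseteq L_d$ for all $d$, forces the level maps $\tilde\s_d\colon\operatorname{Hom}_K(L_{d+1},E)\to\operatorname{Hom}_K(L_d,E)$, $\psi\mapsto\s_E^{-1}\circ\psi\circ\s_L$, through which $\phi$ factors, to be strictly expanding, and from this one produces a nonempty clopen $U$ and exponents $\alpha_1,\dots,\alpha_r$ with $\bigcap_i\phi^{-\alpha_i}(U)=\emptyset$. The clean model case is $\s|_K=\mathrm{id}$ with $L|K$ normal: taking $\s|_E=\mathrm{id}$, one identifies $X$ with the profinite group $\Gamma=\operatorname{Aut}_K(L)$ and $\phi$ with right translation by $\s|_L\in\Gamma$; applying the recurrence property to a coset of an open normal subgroup $N\trianglelefteq\Gamma$ with $T=\{0,1\}$ forces $\s|_L\in N$, and letting $N$ range over all open normal subgroups gives $\s|_L=\mathrm{id}$, hence $L=K(a_1,\dots,a_n)$ is finite over $K$ — contradicting $m\ge 2$. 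The general case follows the same strategy, but the lack of normality of $L|K$ and of surjectivity of $\s$ on $K$ must be absorbed by a more careful bookkeeping with the finite-level maps $\tilde\s_d$; this is where the real work lies.
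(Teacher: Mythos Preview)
Your argument is incomplete. You carry the reduction to the limit degree $m$ cleanly, and the translation of perfect $\s$-reducedness of $A=L\otimes_KE$ into the recurrence property on clopens of $X=\operatorname{Spec}(A)$ is correct. But the contradiction with $m\ge 2$ is only established in the special case $\s|_K=\mathrm{id}$ with $L|K$ normal; for the general case you write ``the lack of normality of $L|K$ and of surjectivity of $\s$ on $K$ must be absorbed by a more careful bookkeeping \dots\ this is where the real work lies'' and stop. That is precisely the step where the lemma lives, and you have not supplied it. The profinite-dynamical picture (continuous $\phi$ on an infinite profinite $X$ with the recurrence property) is not obviously contradictory on its own, and without the group structure the coset trick that finishes your special case has no evident analogue.

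The paper's proof is completely different and much shorter: one tensors up to an algebraic closure $\overline K$ (equipped with an extension of $\s$), observes that $L\otimes_K\overline K$ is a finitely $\s$-generated $\overline K$-$\s$-algebra which is perfectly $\s$-reduced, and then invokes the Ritt--Raudenbush basis theorem for perfect $\s$-ideals to write the zero ideal as a finite intersection of $\s$-prime ideals. Since $L\otimes_K\overline K$ is zero-dimensional, those finitely many $\s$-primes are maximal and exhaust $\operatorname{Spec}(L\otimes_K\overline K)$; hence $L\otimes_K\overline K$ has only finitely many minimal primes, which forces $[L:K]<\infty$. In your language this says exactly that $X$ is finite, which is the contradiction you were looking for. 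So the missing ingredient in your approach is a finiteness input of Ritt--Raudenbush type; once you have it, the dynamical reformulation is unnecessary.
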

\begin{proof}
Let $\overline{K}$ denote an algebraic closure of $K$ containing $L$. We can extend $\s\colon L\to L$ to $\s\colon \overline{K}\to\overline{K}$. By assumption there is an $L$-tuple $a$ such that $L=K\langle a\rangle_\s$. Because $L$ is algebraic over $K$, we have $L=K\{a\}_\s$.
By assumption $L\otimes_K\overline{K}$ is perfectly $\s$-reduced. Because $L\otimes_K\overline{K}$ is a finitely $\s$-generated $\overline{K}$-$\s$-algebra, it follows from the $\s$-basis theorem (See \cite[Theorem 2.5.5, p. 143 and Theorem 2.5.11, p. 147]{Levin}.) that the zero ideal of $L\otimes_K\overline{K}$ is the intersection of finitely many $\s$-prime ideals. In particular $L\otimes_K\overline{K}$ has only finitely many minimal prime ideals. This is only possible if $L|K$ is finite.
\end{proof}

\begin{lemma}  \label{lemma:perfectlysigmaseparabelGalois}
Let $L|K$ be an extension of $\s$-fields such that the underlying extension of fields is finite and Galois. Then $L|K$ is perfectly $\s$-separable if and only if every (field) automorphism of $L|K$ commutes with $\s$.
\end{lemma}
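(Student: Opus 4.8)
The plan is to reduce the statement to the structure of $L\otimes_K L$ as a $K$-algebra, which is governed by the Galois group $\Gamma=\Aut(L|K)$. First I recall the classical fact that for a finite Galois extension $L|K$ with group $\Gamma$, the multiplication-and-twist map gives an isomorphism of $L$-algebras $L\otimes_K L\xrightarrow{\sim}\prod_{\gamma\in\Gamma}L$, sending $a\otimes b\mapsto (a\cdot\gamma(b))_{\gamma\in\Gamma}$. The factor indexed by $\gamma$ comes with the projection corresponding to the kernel $\mathfrak{p}_\gamma$ of $a\otimes b\mapsto a\gamma(b)$; these are exactly the (finitely many, maximal) prime ideals of $L\otimes_K L$, and the zero ideal is their intersection. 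Now $L\otimes_K L$ carries its $\s$-structure via $\s(a\otimes b)=\s(a)\otimes\s(b)$, and the point is to understand how $\s$ permutes the idempotents, equivalently the primes $\mathfrak{p}_\gamma$: one computes that $\s(\mathfrak{p}_\gamma)$ (or rather the prime whose quotient receives $\s$) corresponds to $\s\circ\gamma\circ\s^{-1}$ — here one must be slightly careful because $\s$ need not be surjective on $L$, but since $L|K$ is algebraic and $\s$ fixes $K$, $\s\colon L\to L$ is injective with image a subfield over which $L$ is still Galois with the same group, so $\s\gamma\s^{-1}$ makes sense as an element of $\Gamma$ acting after restriction; alternatively work inside an algebraic closure $\overline K$ with $\s$ extended, as in Lemma \ref{lemma:perfectlysigmaseparabelfinite}.

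With this dictionary in hand, the equivalence becomes transparent. Suppose every $\gamma\in\Gamma$ commutes with $\s$. Then $\s$ fixes each prime $\mathfrak{p}_\gamma$, hence each $\mathfrak{p}_\gamma$ is a $\s$-ideal, and since it is moreover prime with $L\otimes_K L/\mathfrak{p}_\gamma\cong L$ on which $\s$ is injective, each $\mathfrak{p}_\gamma$ is $\s$-prime; the zero ideal of $L\otimes_K L$ being the intersection $\bigcap_\gamma\mathfrak{p}_\gamma$ of $\s$-prime ideals is therefore perfect. To get perfect $\s$-separability one needs this not just for $L\otimes_K L$ but for $L\otimes_K M$ for every $\s$-field extension $M$ of $K$ — but $L\otimes_K M=L\otimes_K L\otimes_L(L\otimes_K M)$... more directly: $L|K$ finite Galois gives $L\otimes_K M\cong\prod_{\gamma\in\Gamma}(L\cdot M)$-type decomposition over $M$ via the same formula (after possibly identifying $L$ with its image in some extension), and the commutation hypothesis again forces $\s$ to fix each factor, so the zero ideal of $L\otimes_K M$ is an intersection of $\s$-prime ideals; as in the last paragraph of the proof of Lemma \ref{lemma:perfectlysigmaseperableReducetoAlgebraicClosure}, together with $\s$-separability of $L|K$ (which holds because $L|K$ is finite separable and each $\mathfrak{p}_\gamma$ is reflexive) this yields that $L\otimes_K M$ is perfectly $\s$-reduced. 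Hence $L|K$ is perfectly $\s$-separable.

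Conversely, suppose some $\gamma_0\in\Gamma$ does not commute with $\s$. Then $\s$ does not fix the prime $\mathfrak{p}_{\gamma_0}$: it maps the factor indexed by $\gamma_0$ into the factor indexed by $\s\gamma_0\s^{-1}\neq\gamma_0$. Choosing an element $e$ of $L\otimes_K L$ that is the idempotent supported on the $\gamma_0$-factor (or a suitable $a\otimes b-b\otimes a$ detecting it), one finds a nonzero $f\in L\otimes_K L$ with $\s(f)$ and $f$ supported on disjoint factors, so that $f\cdot\s(f)=0$ — a violation of perfect $\s$-reducedness of $L\otimes_K L$ itself (so one does not even need a larger $M$). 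Hence $L|K$ is not perfectly $\s$-separable. The main obstacle, and the place to be careful, is the non-surjectivity of $\s$ on $L$: one must justify that $\s$ nonetheless induces a well-defined permutation-like action on the set of primes $\{\mathfrak{p}_\gamma\}$ and that ``$\gamma$ commutes with $\s$'' is the exact condition for $\mathfrak{p}_\gamma$ to be $\s$-stable. This is handled cleanly by passing to an algebraic closure $\overline K$ with a chosen extension of $\s$ and noting $L\otimes_K\overline K\cong\prod_{\gamma\in\Gamma}\overline K$ with $\s$ permuting factors by $\gamma\mapsto\s\gamma\s^{-1}$, then descending; the rest is bookkeeping with idempotents.
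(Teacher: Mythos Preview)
Your approach is the paper's: use the Galois decomposition $L\otimes_K L\cong\prod_{\gamma\in\Gamma}L$, identify the primes $\mathfrak p_\gamma$ with the automorphisms, and show that perfect $\s$-reducedness of the zero ideal is equivalent to each $\mathfrak p_\gamma$ being $\s$-prime, which is equivalent to $\gamma\s=\s\gamma$. Two places where the paper is cleaner than your sketch. First, your fix for ``$\s\gamma\s^{-1}$'' by passing to $\overline K$ does not work in general: extending $\s$ to $\overline K$ need not make it surjective (take $K=\C(s,t)$ with $\s(s)=\s(t)=t$). The paper sidesteps this entirely by working with the always well-defined map $\q\mapsto\s^{-1}(\q)$ on the finite set of primes and deriving the identity $\tau_\q\s=\s\tau_{\s^{-1}(\q)}$ directly from the relation $1\otimes a-\tau_\q(a)\otimes 1\in\q$; no inverse of $\s$ on $L$ is ever needed, and the equivalence ``$\q\mapsto\s^{-1}(\q)$ is the identity $\Leftrightarrow$ every $\gamma$ commutes with $\s$'' drops out. (Your idempotent argument for the converse is fine and in fact does not require $\s\gamma\s^{-1}$ either: once $\s^{-1}(\mathfrak p_{\gamma_0})\neq\mathfrak p_{\gamma_0}$ one has $\s(e_{\gamma_0})\in\mathfrak p_{\gamma_0}$, so $e_{\gamma_0}\cdot\s(e_{\gamma_0})=0$.) Second, for ``commutation $\Rightarrow$ perfectly $\s$-separable'' your attempted decomposition of $L\otimes_K M$ for arbitrary $M$ is not valid as stated. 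The paper instead fixes one $M$: an inversive algebraically closed $\s$-field containing $L$; then $M\otimes_K L=M\otimes_L(L\otimes_K L)=M\oplus\cdots\oplus M$ with $\s$ acting diagonally (this is exactly where the commutation hypothesis enters), visibly perfectly $\s$-reduced, and Lemma~\ref{lemma:sigmaseparablesufficesextension}~(ii) then gives perfect $\s$-separability over $K$.
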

\begin{proof}
 Because $L|K$ is Galois, there is a bijection between the prime ideals of $L\otimes_K L$ and the automorphisms of $L|K$: If $\q$ is a prime ideal of $L\otimes_K L$ then
$k(\q)=(L\otimes_K L)/\q$ and the inclusions $\tau_1$ and $\tau_2$ into the first and second factor, respectively, are isomorphisms. So $\tau:=\tau_\q:=\tau_1^{-1}\tau_2$ is an automorphism of $L|K$. Conversely, if $\tau$ is an automorphism of $L|K$, then the kernel $\q=\q_\tau$ of $L\otimes_K L\to L,\ a\otimes b\mapsto a\tau(b)$ is a prime ideal of $L\otimes_K L$. The relation between $\tau$ and $\q$ is determined by $1\otimes a-\tau(a)\otimes 1\in\q$ for every $a\in L$.


Let $\q_1,\ldots,\q_m$ denote the prime ideals of $L\otimes_K L$. They are maximal and minimal and $\q_1\cap\cdots\cap\q_m=(0)$. 
We have a mapping $\q\mapsto\s^{-1}(\q)$ from the prime ideals of $L\otimes_K L$ into the prime ideals of $L\otimes_K L$.

A $\s$-ideal $\ida$ of a $\s$-ring which is the finite intersection of prime ideals, is perfect if and only if the prime ideals minimal above $\ida$ are $\s$-prime ideals. (See \cite[Proposition 2.3.4, p. 122]{Levin} or \cite[End of Section 6, Chapter 3, p. 88]{Cohn:difference}.)
It follows that $L\otimes_K L$ is perfectly $\s$-reduced if and only if $\q\mapsto\s^{-1}(\q)$ is the identity.

Let $\q$ be a prime ideal of $L\otimes_K L$ and $a\in L$. Then $1\otimes a-\tau_{\s^{-1}(\q)}(a)\otimes 1\in\s^{-1}(\q)$ and so
$1\otimes\s(a)-\s(\tau_{\s^{-1}(\q)}(a))\otimes 1\in\q$. Therefore $\tau_\q(\s(a))=\s(\tau_{\s^{-1}(\q)}(a))$ for every $a\in L$. In other words, $\tau_\q\s=\s\tau_{\s^{-1}(\q)}$.
If $\q\mapsto\s^{-1}(\q)$ is the identity, then $\s$ commutes with every automorphism of $L|K$. Conversely, if $\tau_\q\s=\s\tau_\q$ for every prime ideal $\q$, then
$\s\tau_\q=\tau_\q\s=\s\tau_{\s^{-1}(\q)}$ implies $\tau_\q=\tau_{\s^{-1}(\q)}$ and so $\q=\s^{-1}(\q)$.

In summary, we see that $L\otimes_K L$ is perfectly $\s$-reduced if and only if $\s$ commutes with every automorphism of $L|K$.
If $L|K$ is perfectly $\s$-separable then $L\otimes_K L$ is perfectly $\s$-reduced and so $\s$ must commute with every automorphism of $L|K$.

It remains to see that $L|K$ is perfectly $\s$-separable if $\s$ commutes with every automorphism of $L|K$. Let $M$ be an inversive algebraically closed $\s$-field extension of $K$ containing $L$. By assumption, we have $L\otimes_K L=L\oplus\cdots\oplus L$ with $\s$ given by $\s(a_1\oplus\cdots\oplus a_m)=\s(a_1)\oplus\cdots\oplus\s(a_m)$.
Thus
\[M\otimes_K L=M\otimes_L(L\otimes_K L)=M\oplus\cdots\oplus M\]
is perfectly $\s$-reduced. It follows from Lemma \ref{lemma:sigmaseparablesufficesextension} that $L$ is perfectly $\s$-separable over $K$.
\end{proof}

\begin{lemma} \label{lemma:perfectlysigmaseparableMain}
Let $L|K$ be a finitely $\s$-generated $\s$-separable extension of $\s$-fields and let $K'$ denote the relative algebraic closure of $K$ inside $L$. Assume that $L|K$ is separable and that $K'$ is Galois over $K$. Then $L$ is perfectly $\s$-separable over $K$ if and only if $K'|K$ is finite and every automorphism of $K'|K$ commutes with $\s$.
\end{lemma}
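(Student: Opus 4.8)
The plan is to reduce the statement to the behaviour of the relative algebraic closure $K'$ and then assemble the three preparatory lemmas on perfect $\s$-separability together with Levin's theorem on intermediate difference fields.

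First I would invoke Lemma \ref{lemma:perfectlysigmaseperableReducetoAlgebraicClosure}: since $L|K$ is assumed $\s$-separable and separable, that lemma says $L|K$ is perfectly $\s$-separable if and only if $K'|K$ is perfectly $\s$-separable. Hence it suffices to prove that $K'|K$ is perfectly $\s$-separable if and only if $K'|K$ is finite and every automorphism of $K'|K$ commutes with $\s$. Note that the hypotheses of Lemma \ref{lemma:perfectlysigmaseperableReducetoAlgebraicClosure} are exactly the ones we have, so nothing is lost in this step.

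For the forward implication, assume $K'|K$ is perfectly $\s$-separable. Because $L|K$ is finitely $\s$-generated, the intermediate $\s$-field $K'$ is again finitely $\s$-generated over $K$ by \cite[Theorem 4.4.1, p. 292]{Levin}; and $K'|K$ is algebraic by the very definition of the relative algebraic closure. Thus Lemma \ref{lemma:perfectlysigmaseparabelfinite} applies and shows that $K'|K$ is finite. Being finite and Galois (by hypothesis), $K'|K$ now falls under Lemma \ref{lemma:perfectlysigmaseparabelGalois}, which gives that every automorphism of $K'|K$ commutes with $\s$. Conversely, if $K'|K$ is finite and every automorphism of $K'|K$ commutes with $\s$, then, since $K'|K$ is finite and Galois, Lemma \ref{lemma:perfectlysigmaseparabelGalois} yields that $K'|K$ is perfectly $\s$-separable, and the reduction of the first paragraph closes the argument.

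The proof is essentially a bookkeeping exercise combining the four cited results, so there is no serious obstacle; the one point that requires a little attention is the verification that $K'|K$ is finitely $\s$-generated, which is precisely where the hypothesis that $L|K$ is finitely $\s$-generated enters, via Levin's theorem on intermediate difference fields. One should also double-check that the separability and $\s$-separability hypotheses propagate correctly from $L|K$ to $K'|K$ (they do, since any field sub\-extension of a separable extension is separable, and $K'|K$ is $\s$-separable because $K'\otimes_K k'\subset L\otimes_K k'$ is $\s$-reduced for every $\s$-field extension $k'$ of $K$).
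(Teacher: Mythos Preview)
Your proposal is correct and follows essentially the same route as the paper's proof: both combine Lemma~\ref{lemma:perfectlysigmaseperableReducetoAlgebraicClosure}, Levin's intermediate-field theorem, Lemma~\ref{lemma:perfectlysigmaseparabelfinite}, and Lemma~\ref{lemma:perfectlysigmaseparabelGalois} in the same way. The only cosmetic difference is that you invoke the full reduction Lemma~\ref{lemma:perfectlysigmaseperableReducetoAlgebraicClosure} up front for both directions, whereas the paper uses it only for the reverse implication and relies on the obvious fact that $K'|K$ inherits perfect $\s$-separability from $L|K$ for the forward one.
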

\begin{proof}
Assume that $L|K$ is perfectly $\s$-separable. Then $K'|K$ is also perfectly $\s$-separable. An intermediate $\s$-field of a finitely $\s$-generated $\s$-field extension is finitely $\s$-generated (\cite[Theorem 4.4.1, p.292]{Levin}). Therefore $K'|K$ is finitely $\s$-generated. It follows from Lemma \ref{lemma:perfectlysigmaseparabelfinite} that $K'|K$ is finite and from Lemma \ref{lemma:perfectlysigmaseparabelGalois} that every automorphism of $K'|K$ commutes with $\s$.

The reverse direction follows from Lemma \ref{lemma:perfectlysigmaseparabelGalois} and Lemma \ref{lemma:perfectlysigmaseperableReducetoAlgebraicClosure}.
\end{proof}

\subsection{Difference dimension} \label{subsec:sdimension}

Let $k$ be a $\s$-field. We would like to define a notion of dimension for a $k$-$\s$-scheme $X$ which is $\s$-algebraic over $k$, i.e., $k\{X\}$ is finitely $\s$-generated over $k$. If $X$ is $\s$-integral, this is classical: The $\s$-dimension of $X$ (or $k\{X\}$) is the $\s$-transcendence degree of the ``function field'' $\quot(k\{X\})$ over $k$. (See \cite[Section 7.1, p. 394]{Levin}.)

As in \cite[Section 4]{Hrushovski:elementarytheoryoffrobenius} one can generalize this definition by considering

\[\sup\{ \strdeg(k(\q)|k)|\ \q \text{ is a $\s$-prime ideal of } k\{X\}\}.\]

This definition has some drawbacks: Firstly, it does not quite make sense if $k\{X\}$ has no $\s$-prime ideals. Secondly, it is not stable under extension of the base $\s$-field. Our aim here is to introduce a notion of $\s$-dimension which agrees with the classical definition if $k\{X\}$ is $\s$-integral and which is stable under extension of the base $\s$-field. It is well-known that the $\s$-dimension can be computed as 
the leading coefficient of an appropriate dimension polynomial (\cite[Def. 4.2.21, p. 273]{Levin}). Here we follow this idea.

%
%

Let $a=(a_1,\ldots,a_m)$ be a $\s$-generating set for $k\{X\}$ over $k$. The basic idea is to define the $\s$-dimension of $k\{X\}$ over $k$ as the ``growth rate'' of the sequence
\[d_i:=\dim(k[a,\s(a),\ldots,\s^{i}(a)]),\ i=0,1,\ldots \]
where $\dim$ denotes the usual Krull-dimension. There are two difficulties: First we need to make precise what we mean by ``growth rate'', and then we need to show that the definition is independent of the choice of generators $a$.

\begin{prop} \label{prop: sigmadim independent}
Let $k$ be a $\s$-field, $R$ a $k$-$\s$-algebra and $a=(a_1,\ldots,a_m)$ a $\s$-generating set for $R$ over $k$. Then
\[\limsup_{i\to\infty}\left(\dim(k[a,\ldots,\s^{i}(a)])/(i+1)\right)\]
is independent of the choice of $a$.
\end{prop}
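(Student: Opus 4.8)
The plan is to show that the quantity $\limsup_{i\to\infty}\left(\dim(k[a,\ldots,\s^{i}(a)])/(i+1)\right)$ is invariant when passing between two $\s$-generating sets $a=(a_1,\ldots,a_m)$ and $b=(b_1,\ldots,b_n)$ of $R$ over $k$. First I would reduce to the case where one generating set refines the other: for general $a$ and $b$, the concatenation $(a,b)$ is again a $\s$-generating set, so it suffices to compare $a$ with $(a,b)$. Thus we may assume $b=(a,c)$ for some finite tuple $c$ from $R$. Since $R=k\{a\}_\s$, each component of $c$ is a polynomial over $k$ in $a,\s(a),\ldots,\s^{N}(a)$ for some fixed $N\geq 0$; hence $\s^j(c)$ lies in $k[a,\ldots,\s^{j+N}(a)]$ for every $j$, and conversely $k[a,\ldots,\s^i(a)]\subset k[b,\ldots,\s^i(b)]$ trivially.

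The key estimate is then a sandwich on Krull dimensions. Write $d_i^a:=\dim(k[a,\ldots,\s^i(a)])$ and $d_i^b:=\dim(k[b,\ldots,\s^i(b)])$. The inclusion $k[a,\ldots,\s^i(a)]\subset k[b,\ldots,\s^i(b)]$ gives $d_i^a\leq d_i^b$. For the reverse direction, the inclusion $k[b,\ldots,\s^i(b)]\subset k[a,\ldots,\s^{i+N}(a)]$ (valid because each $\s^j(b)$ involves $a,\ldots,\s^{j+N}(a)$) gives $d_i^b\leq d_{i+N}^a$. Dividing by $i+1$ and taking $\limsup$ as $i\to\infty$, the factors $(i+1)/(i+N+1)\to 1$, so both $\limsup_i d_i^a/(i+1)$ and $\limsup_i d_i^b/(i+1)$ coincide. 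Here I use only that Krull dimension is monotone under inclusion of finitely generated $k$-subalgebras of a fixed ring (equivalently under surjections/injections of finitely generated $k$-algebras), which is standard.

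The one technical point that deserves care, and which I expect to be the main obstacle, is the monotonicity statement $\dim(k[S])\leq\dim(k[T])$ for finitely generated $k$-subalgebras $k[S]\subseteq k[T]$ of an arbitrary commutative ring $R$: a priori $R$ need not be Noetherian or a domain, and $k[S]\hookrightarrow k[T]$ is an injection, not a surjection, so one cannot directly invoke ``dimension drops under quotients''. The resolution is that $k[T]$ is a finitely generated $k$-algebra, hence Noetherian, and $k[S]$ is a $k$-subalgebra of it which is also finitely generated, so both are finitely generated $k$-algebras; by Noether normalization (or by going-up / the fact that for finitely generated algebras over a field, Krull dimension equals transcendence degree of the total quotient ring when the ring is a domain, and in general the supremum of transcendence degrees of $k[T]/\mathfrak p$ over minimal primes $\mathfrak p$), injectivity of $k[S]\to k[T]$ forces $\dim k[S]\leq\dim k[T]$. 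Concretely: for any prime $\mathfrak q$ of $k[T]$, its contraction $\mathfrak p=\mathfrak q\cap k[S]$ yields an injection $k[S]/\mathfrak p\hookrightarrow k[T]/\mathfrak q$ of finitely generated $k$-domains, so $\dim k[S]/\mathfrak p\leq\dim k[T]/\mathfrak q$ by comparison of transcendence degrees; taking suprema over all $\mathfrak q$ (and noting every prime of $k[S]$ is such a contraction, since $k[S]\to k[T]$ being injective of finitely generated $k$-algebras satisfies going-up, or more simply since $\operatorname{Spec} k[T]\to\operatorname{Spec} k[S]$ has dense image) gives $\dim k[S]\leq\dim k[T]$. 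Once this lemma is in place, the $\limsup$ computation above closes the argument.
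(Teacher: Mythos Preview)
Your proof is correct and follows essentially the same route as the paper: inclusions of the truncated subalgebras give $d'_i\leq d_{i+N}$, and the index shift washes out after dividing by $i+1$ and taking $\limsup$. The paper compares two arbitrary generating sets directly and uses the bound $d_{j+i}\leq d_i+mj$ (``$mj$ extra generators''), whereas you reduce via the concatenation $(a,b)$ to the nested case and use monotonicity twice; these are equivalent minor variants.

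One small slip: injections of finitely generated $k$-algebras do \emph{not} in general satisfy going-up, so that parenthetical should be dropped. Your alternative justification via dense image is the right one, once made precise: the contractions to $k[S]$ of the finitely many minimal primes of $k[T]$ intersect to the nilradical of $k[S]$, so every minimal prime $\mathfrak p$ of $k[S]$ equals some such contraction $\mathfrak q\cap k[S]$; then $k[S]/\mathfrak p\hookrightarrow k[T]/\mathfrak q$ are finitely generated $k$-domains and transcendence-degree comparison gives $\dim k[S]\leq\dim k[T]$. The paper simply takes this monotonicity for granted, so on this point you have been more careful than the paper itself.
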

\begin{proof}
Let $a'=(a'_1,\ldots,a'_{m'})$ be another $\s$-generating of $R$ over $k$. Then all the components of $a'$ lie in $k[a,\ldots,\s^{j}(a)]$ for some $j\in\N$. It follows that
\[k[a',\ldots,\s^i(a')]\subset k[a,\ldots,\s^{j+i}(a)].\]
We abbreviate $d_i:=\dim(k[a,\ldots,\s^{i}(a)])$ and $d'_i:=\dim(k[a',\ldots,\s^{i}(a')])$. Because of the above inclusion $d'_i\leq d_{j+i}$  for $i\in\N$.
Since $k[a,\ldots,\s^{j+i}(a)]$ can be generated by $mj$ elements over $k[a,\ldots,\s^{i}(a)]$, we have $d_{j+i}\leq d_i+mj$. In summary,
 \[\tfrac{d'_i}{i+1}\leq\tfrac{d_{j+i}}{i+1}\leq\tfrac{d_i}{i+1}+\tfrac{mj}{i+1}.\]
 Because $\lim_{i\to\infty}\tfrac{mj}{i+1}=0$ we obtain
\[\limsup_{i\to\infty}\tfrac{d'_i}{i+1}\leq \limsup_{i\to\infty}\tfrac{d_i}{i+1}.\]
 By symmetry the above values are actually equal.
\end{proof}

Because we want the difference dimension to be an integer we make the following definition.

\begin{defi} \label{defi:sigmadimension}
Let $k$ be a $\s$-field and $R$ a finitely $\s$-generated $k$-$\s$-algebra. We define the $\s$-dimension of $R$ over $k$ as
\[\s\text{-}\dim_k(R)=\left\lfloor\limsup_{i\to\infty}\left(\dim(k[a,\ldots,\s^{i}(a)])/(i+1)\right)\right\rfloor,\]
where $\lfloor x\rfloor$ denotes the largest integer not greater than $x$ and $a=(a_1,\ldots,a_m)$ is a $\s$-generating set of $R$ over $k$. (By Proposition \ref{prop: sigmadim independent} this definition does not depend on the choice of $a$.) If $X$ is a $k$-$\s$-scheme such that $k\{X\}$ is finitely $\s$-generated over $k$, then we set $\sdim_k(X)=\sdim_k(k\{X\})$.
\end{defi}

\begin{lemma} \label{lemma: sdim equals stranscendence degree}
Let $k$ be a $\s$-field and $R$ a $\s$-domain which is finitely $\s$-generated over $k$. Then
\[\sdim_k(R)=\strdeg(\quot(R)|k).\]
\end{lemma}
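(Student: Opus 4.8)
The idea is to evaluate the $\limsup$ in Definition \ref{defi:sigmadimension} directly and show that it is already equal to the non-negative integer $\strdeg(\quot(R)|k)$, so that the floor in the definition of $\sdim_k$ has no effect. Write $L=\quot(R)$ and fix a finite $\s$-generating set $a=(a_1,\dots,a_m)$ of $R$ over $k$, so that $L=k\langle a\rangle_\s$. For each $i\geq 0$ the ring $k[a,\dots,\s^i(a)]$ is a finitely generated $k$-algebra and an integral domain (it is a subring of the field $L$), hence its Krull dimension equals $\operatorname{trdeg}_k$ of its fraction field $k(a,\dots,\s^i(a))$. Setting $e_i:=\operatorname{trdeg}_k k(a,\dots,\s^i(a))$, the statement reduces to proving $\lim_{i\to\infty} e_i/(i+1)=\strdeg(L|k)=:d$.

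For the lower bound, choose a $\s$-transcendence basis $b_1,\dots,b_d$ of $L$ over $k$ (the cardinality $d$ is well defined by \cite[Def. 4.1.7]{Levin}). Each $b_h$ lies in $\quot(R)$, hence in $k[a,\dots,\s^M(a)]$ for some $M$ independent of $h$; therefore $\s^l(b_h)\in k(a,\dots,\s^{l+M}(a))$, and so $k(b,\dots,\s^i(b))\subseteq k(a,\dots,\s^{i+M}(a))$. Since the $(i+1)d$ elements $\s^l(b_h)$ with $0\le l\le i$, $1\le h\le d$ are algebraically independent over $k$, we obtain $e_{i+M}\geq (i+1)d$, whence $\liminf_i e_i/(i+1)\geq d$.

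For the upper bound, I would use that $L$, being finitely $\s$-generated over $k$, is finitely $\s$-generated over $k\langle b\rangle_\s$, and that $L$ is $\s$-algebraic over $k\langle b\rangle_\s$ (by maximality of the $\s$-transcendence basis, \cite[Section 4.1]{Levin}); in particular each $a_j$ is $\s$-algebraic over $F:=k\langle b\rangle_\s$. Unwinding this, there are integers $r_j,t_j$ such that $\s^{r_j}(a_j)$ is algebraic over $k\big(b,\dots,\s^{t_j}(b),\,a_j,\dots,\s^{r_j-1}(a_j)\big)$; applying the endomorphism $\s^n$ and using $\s(k)\subseteq k$ shows that for every $l\geq r_j$ the element $\s^l(a_j)$ is algebraic over $k\big(\{\s^{l'}(b_h):l'\le i\}\cup\{\s^{l'}(a_j):l'<l\}\big)$ as soon as $t_j+l-r_j\leq i$. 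Building up the field $k(\{\s^l(b_h):l\le i\}\cup\{\s^l(a_j):l\le i\})$ by adjoining the $\s^l(a_j)$ in order of increasing $l$, only the "exceptional" exponents ($l<r_j$ or $l>i+r_j-t_j$, finitely many for each $j$, say at most $C$ in total) can increase the transcendence degree beyond $\operatorname{trdeg}_k k(\{\s^l(b_h):l\le i\})\le d(i+1)$. Hence $e_i\leq d(i+1)+C$ for a constant $C$ independent of $i$, giving $\limsup_i e_i/(i+1)\leq d$. Combining the two bounds yields $\lim_i e_i/(i+1)=d$, and since $d$ is a non-negative integer, $\sdim_k(R)=\lfloor d\rfloor=d=\strdeg(L|k)$.

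The one delicate point is the uniform additive bound in the upper-bound step: one must check that replacing bounded-order relations over $k\langle b\rangle_\s$ by relations valid inside $k(a,\dots,\s^i(a))$ costs only a bounded error, uniformly in $i$. This is essentially a direct verification of the existence and degree-one shape of the difference dimension polynomial (\cite[Def. 4.2.21]{Levin}); alternatively one may simply invoke that machinery and read off that the coefficient of its degree-one term is $\strdeg(L|k)$, which together with $\dim=\operatorname{trdeg}$ for finitely generated domains gives the claim at once.
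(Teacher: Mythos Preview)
Your argument is correct and essentially self-contained, whereas the paper's proof is a one-line citation: it invokes the existence of the difference dimension polynomial from \cite[Def.~4.2.21, p.~273]{Levin}, which gives directly that $\dim(k[a,\ldots,\s^i(a)])=d(i+1)+e$ for $i\gg 0$ with $d=\strdeg(\quot(R)|k)$ and some integer $e$, so that the limit is exactly $d$. You mention this alternative yourself in the final paragraph, so you already have the paper's proof in hand.

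Your direct approach has the virtue of unpacking \emph{why} the growth is linear with leading coefficient $d$: the lower bound comes from embedding a $\s$-transcendence basis into the truncated generators (up to a bounded shift $M$), and the upper bound from the observation that each $a_j$ satisfies a $\s$-algebraic relation over $k\langle b\rangle_\s$ of some fixed order $(r_j,t_j)$, which after applying powers of $\s$ shows that all but a bounded number of the $\s^l(a_j)$ are algebraic over the field already built. The count of ``exceptional'' exponents is indeed bounded by $\sum_j\max(r_j,t_j)$, uniformly in $i$, so the delicate point you flag is genuinely under control. One small slip: you write $b_h\in k[a,\ldots,\s^M(a)]$, but $b_h$ lies only in the fraction field; the argument only uses $b_h\in k(a,\ldots,\s^M(a))$, which is what you correctly employ thereafter. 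The trade-off is clear: the paper's proof is shorter but outsources the content to Levin; your proof is longer but makes the mechanism transparent and avoids importing the full dimension-polynomial machinery.
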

\begin{proof}
Assume that $a=(a_1,\ldots,a_m)$ $\s$-generates $R$ over $k$. Set $d=\strdeg(\quot(R)|k)$. There exists an integer $e$ such that
\[\dim(k[a,\ldots,\s^i(a)])=d(i+1)+e \text{ for } i\gg 0.\]
See \cite[Def. 4.2.21, p. 273]{Levin}. Therefore
\[\limsup_{i\to\infty}\left(\dim(k[a,\ldots,\s^{i}(a)])/(i+1)\right)=d.\]
\end{proof}

\begin{lemma} \label{lemma: sdim stable under base extension}
Let $k$ be a $\s$-field and $R$ a finitely $\s$-generated $k$-$\s$-algebra. If $k'$ is a $\s$-field extension of $k$ then
\[\sdim_{k'}(R\otimes_kk')=\sdim_k(R).\]
\end{lemma}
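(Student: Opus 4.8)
The plan is to reduce the claim to the defining formula for $\s$-dimension in Definition \ref{defi:sigmadimension} and to the behaviour of Krull dimension under the base field extension $k'|k$. Choose a finite $\s$-generating set $a=(a_1,\ldots,a_m)$ of $R$ over $k$. Then the same tuple $a$ (viewed inside $R\otimes_k k'$ via $r\mapsto r\otimes 1$) is a finite $\s$-generating set of $R\otimes_k k'$ over $k'$, because $R\otimes_k k'$ is generated as a $k'$-algebra by the elements $\s^i(a_j\otimes 1)=\s^i(a_j)\otimes 1$. So by Definition \ref{defi:sigmadimension} both $\sdim_k(R)$ and $\sdim_{k'}(R\otimes_k k')$ are computed as the floor of a $\limsup$ of the quantities $\dim(k[a,\ldots,\s^i(a)])/(i+1)$ and $\dim(k'[a\otimes 1,\ldots,\s^i(a)\otimes 1])/(i+1)$ respectively. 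Thus it suffices to prove that for each fixed $i\geq 0$,
\[
\dim\big(k'[a\otimes 1,\ldots,\s^i(a)\otimes 1]\big)=\dim\big(k[a,\ldots,\s^i(a)]\big).
\]

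The key step is the following: if $B$ is a finitely generated $k$-algebra and $k'|k$ is any field extension, then $\dim(B\otimes_k k')=\dim(B)$. This is standard commutative algebra; one way to see it is to write $B=k[x_1,\ldots,x_N]/\mathfrak{p}_1\cap\cdots\cap\mathfrak{p}_r$ (or just use that dimension is the max over minimal primes), reduce to $B$ a domain, and then use that for a finitely generated $k$-domain $B$ one has $\dim(B)=\operatorname{trdeg}(\quot(B)|k)$, while $B\otimes_k k'$ is a finitely generated $k'$-algebra all of whose irreducible components have function field of the same transcendence degree over $k'$ (transcendence degree being insensitive to the base field extension for the fraction field of $B\otimes_k k'$ localized at a minimal prime; equivalently use that $\dim$ is preserved under flat base change with fibres of dimension $0$, or cite \cite[Ch.~V]{Bourbaki:Algebra2}). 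Here I would apply this with $B=k[a,\ldots,\s^i(a)]$, the $k$-subalgebra of $R$ generated by the finitely many elements $\s^j(a_l)$, $0\leq j\leq i$. Since $R\otimes_k k'$ is obtained from $R$ by the flat base change $-\otimes_k k'$, the subalgebra $k'[a\otimes 1,\ldots,\s^i(a)\otimes 1]$ of $R\otimes_k k'$ is canonically isomorphic to $B\otimes_k k'$ (flatness ensures the natural map $B\otimes_k k'\to R\otimes_k k'$ is injective, so its image is exactly that subalgebra). Hence $\dim(k'[a\otimes 1,\ldots,\s^i(a)\otimes 1])=\dim(B\otimes_k k')=\dim(B)=\dim(k[a,\ldots,\s^i(a)])$.

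Putting these together: for every $i$ the two numerators agree, so the sequences $i\mapsto\dim(k[a,\ldots,\s^i(a)])/(i+1)$ and $i\mapsto\dim(k'[a\otimes1,\ldots,\s^i(a)\otimes1])/(i+1)$ are literally the same sequence, hence have the same $\limsup$ and the same floor. Therefore $\sdim_{k'}(R\otimes_k k')=\sdim_k(R)$, as desired.

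The only genuinely nontrivial ingredient is the invariance of Krull dimension of a finitely generated algebra under arbitrary field extension of the base, $\dim(B\otimes_k k')=\dim(B)$; I expect this to be the main point, and it is a classical fact (e.g. from the theory of transcendence degree and dimension of finitely generated algebras, cf.\ \cite[Chapter~5, \S17]{Bourbaki:Algebra2}) that can simply be quoted. The rest — identifying the subalgebra $k'[a\otimes1,\ldots,\s^i(a)\otimes1]$ with $B\otimes_k k'$ using flatness of $k'$ over $k$, and checking that $a\otimes 1$ still $\s$-generates $R\otimes_k k'$ — is routine.
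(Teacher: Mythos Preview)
Your proof is correct and follows essentially the same approach as the paper: choose a finite $\s$-generating set $a$ of $R$ over $k$, observe that $a\otimes 1$ $\s$-generates $R\otimes_k k'$ over $k'$, and invoke the invariance of Krull dimension of a finitely generated algebra under base field extension to get $\dim(k[a,\ldots,\s^i(a)])=\dim(k'[a,\ldots,\s^i(a)])$. Your version is simply more explicit, spelling out the flatness argument identifying $k'[a\otimes 1,\ldots,\s^i(a)\otimes 1]$ with $k[a,\ldots,\s^i(a)]\otimes_k k'$ and sketching why Krull dimension is preserved.
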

\begin{proof}
Assume that $a=(a_1,\ldots,a_m)$ $\s$-generates $R$ over $k$. Then $a$ also $\s$-generates $R\otimes_kk'$ over $k'$. The claim now follows from
the fact that \[\dim(k[a,\ldots,\s^i(a)])=\dim(k[a,\ldots,\s^i(a)]\otimes_k k')=\dim(k'[a,\ldots,\s^i(a)]).\]
\end{proof}

\begin{lemma}
Let $k$ be a $\s$-field and $R$ a finitely generated $k$-algebra. Then
$$\sdim_k([\s]_kR)=\dim(R).$$
\end{lemma}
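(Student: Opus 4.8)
The plan is to unwind both sides of the claimed equality through Definition \ref{defi:sigmadimension}, after describing the finite pieces $R_i$ of the filtration of $[\s]_kR$ concretely enough to compute their Krull dimension.

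First I would fix a finite set $a=(a_1,\dots,a_m)$ of $k$-algebra generators of $R$. Then $a$ is a $\s$-generating set of $[\s]_kR$ over $k$: the $k$-$\s$-subalgebra $\s$-generated by $a$ contains every $\s^j(a)$, hence contains each $R_j$, hence equals $\bigcup_j R_j=[\s]_kR$. So by Definition \ref{defi:sigmadimension} together with Proposition \ref{prop: sigmadim independent} we have $\sdim_k([\s]_kR)=\big\lfloor\limsup_{i\to\infty}\dim\!\big(k[a,\s(a),\dots,\s^i(a)]\big)/(i+1)\big\rfloor$. The first real step is to identify $k[a,\s(a),\dots,\s^i(a)]$, as a subring of $[\s]_kR$, with $R_i=R\otimes_k{}^{\s}R\otimes_k\cdots\otimes_k{}^{\s^i}R$. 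One inclusion is immediate, since $\s^j(R)\subseteq R_j\subseteq R_i$ for $j\le i$; for the other, recall from Section \ref{subsec:ksigmaSchemeAssociatedwithkScheme} that $R_i$ is generated over $k$ by the elements $\s^j(r)$ with $r\in R$ and $j\le i$, and that writing $r=P(a)$ for a polynomial $P$ over $k$ gives $\s^j(r)=P^{\s^j}(\s^j(a))\in k[\s^j(a)]$, where $P^{\s^j}$ denotes $P$ with $\s^j$ applied to its coefficients (which therefore lie in $\s^j(k)\subseteq k$).

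Next I would compute $\dim(R_i)=(i+1)\dim R$. By Noether normalization, choose a finite injection $k[y_1,\dots,y_n]\hookrightarrow R$ with $n=\dim R$. Applying the functor ${}^{\s^j}(-)=-\otimes_{k,\s^j}k$, which is exact because $k$ is flat over $k$, turns this into a finite injection ${}^{\s^j}k[y]\hookrightarrow{}^{\s^j}R$; and ${}^{\s^j}k[y]$ is again a polynomial ring in $n$ variables over $k$, since $k[y]$ is a free $k$-module on its monomials. Tensoring these maps over $k$ and composing, one gets that $R_i$ is finite over a polynomial ring in $n(i+1)$ variables over $k$, whence $\dim(R_i)=n(i+1)$ because integral extensions preserve Krull dimension. (Equivalently, one may simply quote the fact that $\dim(A\otimes_kB)=\dim A+\dim B$ for finite type $k$-algebras.)

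Putting these together, $\limsup_{i\to\infty}\dim(R_i)/(i+1)=\lim_{i\to\infty}n(i+1)/(i+1)=n$, which is already an integer, so the floor in Definition \ref{defi:sigmadimension} is vacuous and $\sdim_k([\s]_kR)=n=\dim R$, as claimed. I do not expect a serious obstacle here: the content is limited to two standard facts on the behaviour of Krull dimension under the coordinate twist ${}^{\s^j}(-)$ and under $\otimes_k$, plus the mildly fiddly identification of $k[a,\dots,\s^i(a)]$ with $R_i$. If anything, the point needing the most care is keeping track of which tensor factor the twist ${}^{\s^j}$ acts on, and checking that ${}^{\s^j}$ genuinely sends a polynomial ring to a polynomial ring and preserves the "finite over" relation.
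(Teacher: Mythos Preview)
Your proposal is correct and follows essentially the same line as the paper's proof: pick algebra generators $a$ of $R$, observe they $\s$-generate $[\s]_kR$, identify $k[a,\ldots,\s^i(a)]$ with $R_i$, and use $\dim R_i=(i+1)\dim R$ to conclude. The only difference is one of detail: the paper simply asserts the identity $\dim R_i=(i+1)\dim R$, whereas you supply an argument via Noether normalization and base change (or, equivalently, the additivity of Krull dimension under $\otimes_k$ for finite-type $k$-algebras).
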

\begin{proof}
Assume that $a=(a_1,\ldots,a_m)$ generates $R$ over $k$. Then $a$ $\s$-generates $[\s]_kR$ over $k$. Moreover, with the notation of Section \ref{subsec:ksigmaSchemeAssociatedwithkScheme}, for every $i\geq 0$ we have $k[a,\ldots,\s^i(a)]=R_i$.
Therefore $$\dim(k[a,\ldots,\s^i(a)])=\dim R_i=(i+1)\dim(R)$$ and
$\sdim_k([\s]_kR)=\lim_{i\to\infty}\dim(k[a,\ldots,\s^i(a)])/(i+1)=\dim(R)$.
\end{proof}
The geometric interpretation of the above lemma is:
\begin{cor} \label{cor:dimensionconsistent}
Let $k$ be a $\s$-field and $V$ a scheme of finite type over $k$. Then
$$\sdim_k([\s]_kV)=\dim(V).$$ \qed
\end{cor}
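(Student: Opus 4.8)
The plan is to deduce Corollary \ref{cor:dimensionconsistent} directly from the preceding lemma, which states that $\sdim_k([\s]_kR) = \dim(R)$ for a finitely generated $k$-algebra $R$. The corollary is just the scheme-theoretic restatement of that lemma under the convention that ``scheme of finite type'' means ``affine scheme of finite type'' (recall the standing convention in the appendix that all schemes are affine). So the entire content is a translation between the ring language and the scheme language.

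First I would write $V = \spec(R)$ where $R = k[V]$ is a finitely generated $k$-algebra, which is possible precisely because $V$ is affine and of finite type over $k$. By the definitions in Section \ref{subsec:ksigmaSchemeAssociatedwithkScheme}, the $k$-$\s$-scheme $[\s]_kV$ is represented by the $k$-$\s$-algebra $[\s]_kk[V] = [\s]_kR$; that is, $k\{[\s]_kV\} = [\s]_kR$. Then, by Definition \ref{defi:sigmadimension}, $\sdim_k([\s]_kV) = \sdim_k(k\{[\s]_kV\}) = \sdim_k([\s]_kR)$. Applying the preceding lemma gives $\sdim_k([\s]_kR) = \dim(R)$, and since $\dim(R) = \dim(\spec(R)) = \dim(V)$ by definition of the Krull dimension of an affine scheme, we conclude $\sdim_k([\s]_kV) = \dim(V)$.

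There is really no obstacle here: the statement is a corollary in the literal sense, obtained by unwinding definitions. The only point requiring a moment's care is making sure the finiteness hypothesis is used correctly — $V$ of finite type over $k$ is exactly what guarantees $R = k[V]$ is finitely generated as a $k$-algebra, hence that $[\s]_kR$ is finitely $\s$-generated over $k$ (as noted in Section \ref{subsec:ksigmaSchemeAssociatedwithkScheme}), so that $\sdim_k$ is defined for it in the first place. Everything else is the bookkeeping of Definition \ref{defi:sigmadimension} and the identification $k\{[\s]_kV\} = [\s]_kk[V]$. I expect the proof in the paper to be a single sentence of the form ``This is just a reformulation of the previous lemma,'' possibly with the substitution $R = k[V]$ made explicit.
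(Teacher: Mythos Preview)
Your proposal is correct and matches the paper's approach exactly: the paper gives no proof at all (just the \qed), introducing the corollary with ``The geometric interpretation of the above lemma is:''. Your unpacking of the definitions---writing $V=\spec(k[V])$, using $k\{[\s]_kV\}=[\s]_kk[V]$, and invoking the preceding lemma---is precisely the intended content.
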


\begin{rem}
In the situation of Definition \ref{defi:sigmadimension}, one can show that if $R$ is a $k$-$\s$-Hopf algebra (i.e., the $\s$-coordinate ring of a $\s$-algebraic group, see Section \ref{subsec:groupksschemes} below), then \[\lim_{i\to\infty}\left(\dim(k[a,\ldots,\s^{i}(a)])/(i+1)\right)\]
exists and is an integer. So the floor function and the limes superior are not needed. Since we shall not require this fact we omit the details. 
\end{rem}

\subsection{Group $k$-$\sigma$-schemes} \label{subsec:groupksschemes}

We already noted in Remark \ref{rem:productsexistinksschemes} that the category of $k$-$\s$-schemes has products: If $X$ and $Y$ are $k$-$\s$-schemes then $X\times Y$ is represented by $k\{X\}\otimes_k k\{Y\}$. There also is a terminal object: the functor sending every $k$-$\s$-algebra to a one element set. It is represented by $k$. Therefore the following definition makes sense.

\begin{defi} \label{defi:sigmaalgebraicgroup}
A \emph{group $k$-$\s$-scheme} is a group object in the category of $k$-$\s$-schemes. In other words, a group $k$-$\s$-scheme is a $k$-$\s$-scheme $G$ such that $G(S)$ is equipped with a group structure which is functorial in $S$. A morphism of group $k$-$\s$-schemes $\phi\colon G\to H$ is a morphism of $k$-$\s$-schemes such that $\phi(S)\colon G(S)\to H(S)$ is a morphism of groups for every $k$-$\s$-algebra $S$.

A \emph{$\s$-algebraic group over $k$} is a group $k$-$\s$-scheme that is $\s$-algebraic (over $k$).
\end{defi}

Let $G$ be a group scheme over $k$. Then $[\s]_kG$ is a group $k$-$\s$-scheme. (This is clear from $([\s]_k G)(S)=G(S^\sharp)$.) By a \emph{$\s$-closed subgroup of $G$}, we mean a $\s$-closed subgroup of $[\s]_k G$. As in Section \ref{subsec:ksigmaSchemeAssociatedwithkScheme}, we write $k\{G\}$ instead of $k\{[\s]_kG\}$ for the $\s$-coordinate ring of $G$. So, for example,
the $\s$-coordinate ring of the general linear group $\Gl_{n,k}$ (over $k$) is $k\{\Gl_{n,k}\}=k\{X,\frac{1}{\det(X)}\}_\s$.
Here $X=(x_{ij})_{1\leq i,j\leq n}$ is an $n\times n$ matrix of $\s$-indeterminates over $k$ and $k\{X,\frac{1}{\det(X)}\}_\s$ is obtained from the $\s$-polynomial ring $k\{x_{ij}|\ 1\leq i,j\leq n\}_\s$ by localizing at the multiplicatively closed subset generated by $\det(X),\s(\det(X)),\ldots$.

\begin{exa}
Let $G$ be defined by
$$G(S)=\{g\in\Gl_n(S)|\ g\s(g)^{\mathrm{T}}=\s(g)^{\mathrm{T}}g=I\}$$ for every $k$-$\s$-algebra $S$, where $I$ denotes the identity matrix of size $n$. Then $G$ is a $\s$-closed subgroup of $\Gl_{n,k}$.
\end{exa}

\begin{exa}
A homogeneous, linear $\s$-polynomial $p=a_n\s^n(x)+\cdots+a_1\s(x)+a_0x\in k\{x\}_\s$ defines a $\s$-closed subgroup $G$ of the additive group $\bold{G}_{a,k}$ by
$$G(S)=\{g\in S|\ p(g)=0\}\leq\bold{G}_a(S),$$
for any $k$-$\s$-algebra $S$.
\end{exa}
\begin{exa}
Given $m_0,m_1,\ldots,m_n\in\Z$, we can define a $\s$-closed subgroup $G$ of the multiplicative group $\bold{G}_{m,k}$ by
$$G(S)=\{g\in S^\times|\ g^{m_0}\s(g)^{m_1}\cdots\s^n(g)^{m_n}=1\}\leq\bold{G}_m(S),$$
for any $k$-$\s$-algebra $S$.
\end{exa}

\begin{defi}
A $k$-$\s$-Hopf algebra is a $k$-$\s$-algebra equipped with the structure of a Hopf algebra over $k$ such that the Hopf algebra structure maps are morphisms of difference rings. A morphism of $k$-$\s$-Hopf algebras is a morphism of Hopf algebras over $k$ which is also a morphism of $k$-$\s$-algebras.
\end{defi}

\begin{rem}
The category of group $k$-$\s$-schemes is anti-equivalent to the category of $k$-$\s$-Hopf algebras.
\end{rem}
\begin{proof}
This is all tautology, cf. \cite[Section 1.4]{Waterhouse:IntrotoAffineGroupSchemes}.
\end{proof}

\begin{defi}
Let $G$ be a group $k$-$\s$-scheme. By a \emph{$\s$-closed subgroup $H$ of $G$} (In symbols: $H\leq G$.) we mean a $\s$-closed $\s$-subscheme $H$ of $G$ such that $H(S)$ is a subgroup of $G(S)$ for every $k$-$\s$-algebra $S$. We call $H$ \emph{normal} if $H(S)$ is a normal subgroup of $G(S)$ for every $k$-$\s$-algebra $S$. In symbols: $H\trianglelefteq G$.
\end{defi}

\begin{rem}
$\s$-Closed subgroups correspond to $\s$-Hopf ideals, i.e., Hopf ideals which are difference ideals. Normal $\s$-closed subgroups correspond to normal $\s$-Hopf-ideals, i.e., $\s$-Hopf-ideals which are normal Hopf ideals.
\end{rem}

If $\phi\colon G\to H$ is a morphism of group $k$-$\s$-schemes, then we can define a functor $\ker(\phi)$ from $k$-$\s$-algebras to groups by setting $\ker(\phi)(S)=\ker(\phi(S))$ for every $k$-$\s$-algebra $S$. It follows from Lemma \ref{lemma:inverseimagescheme} that $\ker(\phi)=\phi^{-1}(1_H)$ is a $\s$-closed $\s$-subscheme of $G$. Here $1_H\subset H$ denotes the $\s$-closed $\s$-subscheme of the identity element. Obviously $\ker(\phi)$ is a $\s$-closed normal subgroup of $G$. The question, whether every $\s$-closed normal subgroup of $G$ is the kernel of some morphism $G\to H$ will be answered in Section \ref{subsec:quotients} below.

\vspace{5mm}

Let $G$ be a group scheme over $k$ and let $d\geq 0$. Then $^{\s^d}G$ is a group scheme over $k$ and also $G_d=G\times ^\s G\times\cdots\times ^{\s^d}G$ is a group scheme over $k$. Moreover the natural projection
$([\s]_k G)^\sharp\to G_d$ is a morphism of group schemes over $k$.

\begin{lemma} \label{lemma:neededforSecondFundTheorem}
Let $G$ be a group scheme over $k$, $H$ a $\s$-closed subgroup of $G$ and $d\geq 0$.
\begin{enumerate}
\item Then $H[d]$ is a closed subgroup scheme of $G_d$ and $H^\sharp\to H[d]$ is a morphism of group schemes over $k$.
\item If $N$ is a normal $\s$-closed subgroup of $H$, then $N[d]$ is a normal closed subgroup scheme of $H[d]$.
\end{enumerate}
\end{lemma}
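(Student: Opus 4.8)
The plan is to reduce everything to the corresponding statements about the Zariski closures $H[d]$ and $N[d]$ as closed subschemes of $G_d$, using the functorial description of these closures together with the fact that the projection $([\s]_kG)^\sharp \to G_d$ is a morphism of group schemes over $k$. First I would recall that, by Definition \ref{defi:Zariskiclosures}, $H[d]$ is the scheme-theoretic image of the composite $H^\sharp \hookrightarrow ([\s]_kG)^\sharp \to G_d$; equivalently $\I(H[d]) = \I(H) \cap k[G]_d$ inside $k[G]_d$. The key observation for (i) is that the scheme-theoretic image of a homomorphism of group schemes over a field is a closed subgroup scheme, since taking scheme-theoretic image is compatible with products (because $k[G]_d \to k[G]_d/\I(H[d])$ stays surjective after tensoring, the image of $H^\sharp \times H^\sharp$ is $H[d] \times H[d]$), and the group operation morphisms $G_d \times G_d \to G_d$, $G_d \to G_d$ restrict correctly. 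Concretely, one checks that the comultiplication, counit, and antipode of $k[G_d]$ descend to $k[G_d]/\I(H[d])$ by chasing the compatibility of $\I(H) = \I(H[d]) \cdot k\{G\}$ with the Hopf structure, or even more directly by noting that $H^\sharp(T) \to H[d](T)$ is surjective on enough points so that $H[d]$ inherits the group law as the closure of the image subgroup. That $H^\sharp \to H[d]$ is a morphism of group schemes is then immediate from the construction, since it is the corestriction of the group scheme morphism $H^\sharp \to G_d$.

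For (ii), I would argue as follows. Since $N$ is normal in $H$, conjugation gives, for every $k$-$\s$-algebra $S$, an action of $H(S)$ on $N(S)$ inside $G(S)$; passing to $\sharp$ and projecting to $G_d$, we get that $N[d]$ — being the scheme-theoretic image of $N^\sharp$ — is stable under the conjugation action of $H[d]$. More precisely: the conjugation morphism $c\colon H_d \times G_d \to G_d$, $(h,g) \mapsto hgh^{-1}$, restricts to $H[d] \times N^\sharp \to G_d$ landing set-theoretically (on all $T$-points) in $N[d]$; since $N[d]$ is closed and $H[d] \times N^\sharp \to H[d] \times N[d]$ is schematically dominant (scheme-theoretic image commutes with the product by a fixed scheme $H[d]$, as $\I(N[d])$ remains a defining ideal after the relevant base change), the morphism $H[d] \times N[d] \to G_d$ factors through the closed subscheme $N[d]$. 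Hence $N[d]$ is normalized by $H[d]$, i.e.\ it is a normal closed subgroup scheme of $H[d]$.

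The main obstacle I anticipate is the bookkeeping around ``scheme-theoretic image commutes with products,'' which is true over a field but requires a short flatness/surjectivity argument: if $A \twoheadrightarrow \bar A$ is the quotient defining the scheme-theoretic image of $\operatorname{Spec}\bar A \to \operatorname{Spec} A$, then $A \otimes_k B \twoheadrightarrow \bar A \otimes_k B$ still defines the scheme-theoretic image of $\operatorname{Spec}(\bar A \otimes_k B) \to \operatorname{Spec}(A \otimes_k B)$, because $-\otimes_k B$ is exact and preserves surjections. Once this is in place, the rest is formal diagram-chasing with Hopf algebra structure maps, and I would not expect to write it out in full detail; the statement ``a homomorphism of affine group schemes over a field has a closed subgroup scheme as its scheme-theoretic image, and this is compatible with normality'' is standard (cf.\ \cite{Waterhouse:IntrotoAffineGroupSchemes}) and can be invoked with the above remark supplying the only nonformal input.
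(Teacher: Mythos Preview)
Your approach is correct but noticeably more elaborate than the paper's. The paper argues purely on the Hopf-algebra side: since $k[G]_d$ is a $k$-sub-Hopf algebra of $k\{G\}$, the intersection $\I(H[d])=\I(H)\cap k[G]_d$ of the Hopf ideal $\I(H)$ with $k[G]_d$ is again a Hopf ideal, which immediately gives (i). For (ii) the paper observes that the image $\overline{\I(N)}$ of $\I(N)$ in $k\{H\}=k\{G\}/\I(H)$ is a normal Hopf ideal, and that $\I(N[d])\subset k[H[d]]$ is obtained from $\overline{\I(N)}$ by intersecting with the sub-Hopf algebra $k[H[d]]=k[G]_d/\I(H[d])$, hence is itself a normal Hopf ideal. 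No scheme-theoretic images, no product compatibility, no conjugation argument.

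Your geometric route via scheme-theoretic images and conjugation is valid and conceptually natural, and the ``image commutes with products over a field'' lemma you isolate is exactly the right nonformal input. But it costs you an extra layer of argument (descending the conjugation map from $H^\sharp\times N^\sharp$ to $H[d]\times N[d]$ through two schematic dominance steps), whereas the paper's observation that ``intersection of a (normal) Hopf ideal with a sub-Hopf algebra is a (normal) Hopf ideal'' dispatches both parts in a sentence each. Your approach would generalize better to contexts where the Hopf-algebraic shortcut is unavailable; the paper's buys brevity here.
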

\begin{proof} For (i) we note that the intersection of the Hopf ideal
$\I(H)\subset k\{G\}$ with $k[G]_d$ yields a Hopf ideal $\I(H[d])=k[G]_d\cap \I(H)\subset k[G]_d$ because $k[G]_d$ is a $k$-sub-Hopf algebra of $k\{G\}$.

For (ii) let $\I(H)\subset\I(N)\subset k\{G\}$ denote the ideals of $H$ and $N$ respectively. By assumption, $\overline{\I(N)}$, the image of $\I(N)$ in $k\{H\}=k\{G\}/\I(H)$ is a normal Hopf ideal. The ideal $\I(N[d])\subset k[H[d]]=k[G]_d/\I(H[d])$ of $N[d]$ in $H[d]$ is obtained from $\overline{\I(N)}$ by intersecting with $k[H[d]]$ via
$k[G]_d/\I(H[d])\hookrightarrow k\{G\}/\I(H)$. Thus $\I(N[d])$ is a normal Hopf ideal.
\end{proof}


To illustrate the use of Zariski closures, let us describe the $\s$-closed subgroups of tori.
Let $k$ be a $\s$-field. As usual, we denote by $\bold{G}_m$ the multiplicative group scheme over $k$. We think of $\bold{G}_{m}^n$ as a $\s$-algebraic group over $k$, i.e., $\bold{G}_{m}^n(S)=(S^\times)^n$ for any $k$-$\s$-algebra $S$. The $\s$-coordinate ring of $\Gm^n$ is
$k\{\Gm^n\}= k \{ x_1,\ldots,x_n, \frac{1}{x_1},\ldots,\frac{1}{x_n}  \}_\s$. By a multiplicative function $\psi\in k \{ x_1,\ldots,x_n, \frac{1}{x_1},\ldots,\frac{1}{x_n}  \}_\s$ we mean an element which is of the form
$$\psi(x)=x^{\alpha_0}\s(x^{\alpha_1})\cdots \s^l(x^{\alpha_l})$$
for some $\alpha_i \in\Z^n$  and $l \in \N$. Here $x^\beta:=x_1^{\beta_1}\cdots x_n^{\beta_n}$ for $\beta\in\Z^n$.

\begin{lemma}\label{lemma:classgm}
Let $k$ be a $\s$-field and let $G$ be a $\s$-closed subgroup of $\Gm^n$.
Then there exists a set $\Psi$ of multiplicative functions such that
$$G(S)=\{g\in \Gm^n(S)|\ \psi(g)=1 \text{ for } \psi\in\Psi\}$$
for any $k$-$\s$-algebra $S$.\end{lemma}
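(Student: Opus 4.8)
The plan is to reduce the statement to a purely algebraic fact about $\s$-stable subgroups of the character lattice of $\Gm^n$, exploiting the Zariski closures $G[d]$ introduced in Section~\ref{sec:ZariskiClosures}. First I would recall that $G[d]$ is a closed subgroup scheme of $(\Gm^n)_d=\Gm^n\times{}^\s\Gm^n\times\cdots\times{}^{\s^d}\Gm^n$, and since each factor descends to $k^\s$ we may identify $(\Gm^n)_d$ with $\Gm^{n(d+1)}$ over $k$. By the classical theory of diagonalizable groups (over any field, a closed subgroup scheme of a torus is again diagonalizable, and is cut out by a subgroup of the character lattice), $G[d]$ is defined inside $\Gm^{n(d+1)}$ by a subgroup $\Lambda_d\subset\Z^{n(d+1)}$: explicitly
\[
G[d](T)=\{(g_0,\ldots,g_d)\in(\Gm^n(T))^{d+1}\mid g_0^{\beta_0}\cdots g_d^{\beta_d}=1 \ \forall\,(\beta_0,\ldots,\beta_d)\in\Lambda_d\}
\]
for every $k$-algebra $T$. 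The coordinates $g_i$ correspond, on the level of the $\s$-scheme $G\subset\Gm^n$, to $\s^i$ applied to a point, so a relation $g_0^{\beta_0}\cdots g_d^{\beta_d}=1$ on $G[d]$ translates into the multiplicative $\s$-function relation $x^{\beta_0}\s(x)^{\beta_1}\cdots\s^d(x)^{\beta_d}=1$ holding on $G$.

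Next I would assemble these: let $\Psi$ be the set of all multiplicative functions $\psi(x)=x^{\beta_0}\s(x^{\beta_1})\cdots\s^d(x^{\beta_d})$ arising, as $d$ ranges over $\N$, from the elements of $\Lambda_d$. I claim $G(S)=\{g\in\Gm^n(S)\mid \psi(g)=1\ \forall\,\psi\in\Psi\}$ for every $k$-$\s$-algebra $S$. One inclusion is immediate: every such $\psi$ vanishes identically on $G$ because the corresponding character vanishes on $G[d]$ and $G^\sharp\to(\Gm^n)_d$ factors through $G[d]$. For the reverse inclusion one uses that $k\{G\}=k\{\Gm^n\}/\I(G)$ and that $\I(G)=\bigcup_d\big(\I(G)\cap k[\Gm^n]_d\big)=\bigcup_d\I(G[d])$, so that $\I(G)$ is generated as a $\s$-ideal by the binomials $x^{\beta_0}\s(x)^{\beta_1}\cdots\s^d(x)^{\beta_d}-1$ with $(\beta_i)\in\Lambda_d$; hence a point $g\in\Gm^n(S)$ killing all of $\Psi$ kills all of $\I(G)$ and therefore lies in $G(S)=\Alg_k^\s(k\{G\},S)$. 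Here the key input is that the defining ideal $\I(G[d])$ of a diagonalizable subgroup of a split torus is generated by such binomials — standard linear algebra over the Laurent polynomial ring, together with the fact that $k[\Gm^n]_d$ is a Hopf subalgebra of $k\{\Gm^n\}$ (Lemma~\ref{lemma:neededforSecondFundTheorem}(i)).

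The step I expect to be the main obstacle is the careful bookkeeping identifying the character lattice relations of $G[d]$ inside $\Gm^{n(d+1)}$ with multiplicative $\s$-functions, i.e.\ checking that the ``shift by $\s$'' matching of the $(d+1)$ blocks of coordinates with $x,\s(x),\ldots,\s^d(x)$ is compatible with the $\s$-action on $k\{\Gm^n\}$ described in Section~\ref{sec:ZariskiClosures} (the twist $^{\s^d}\Gm^n=\Gm^n$ because $\Gm$ descends to $k^\s$ makes this clean, but the identification still needs to be stated precisely). Everything else — diagonalizability of closed subgroups of tori, the description of $\I(G)$ as the union of the $\I(G[d])$, and the passage from ideal generators to the functional equations defining $G(S)$ — is routine given the results already in the appendix. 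Once the dictionary is set up, taking $\Psi:=\{\psi_{\beta}\mid \beta\in\Lambda_d,\ d\geq 0\}$ completes the proof.
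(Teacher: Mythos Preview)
Your proposal is correct and follows essentially the same route as the paper: pass to the Zariski closures $G[d]\subset(\Gm^n)_d\cong\Gm^{n(d+1)}$, invoke the classical fact that a closed subgroup scheme of a torus is cut out by characters, and translate those characters back into multiplicative $\s$-functions. The paper's proof is a two-line sketch of exactly this argument; your write-up simply fills in the bookkeeping (the identification $\I(G)=\bigcup_d\I(G[d])$ and the dictionary between characters of $\Gm^{n(d+1)}$ and $\s$-monomials) that the paper leaves implicit.
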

\begin{proof}
For $d \in \N$, let $G[d]$ be the $d$-th order Zariski closure of $G$ inside $\bold{G}_m^n$. Then $G[d]$ is an algebraic subgroup of $(\bold{G}_m^n)_d=\bold{G}_m^{n(d+1)}$ (Lemma \ref{lemma:neededforSecondFundTheorem}). The claim now follows from the fact that an algebraic subgroup of $\bold{G}_m^{n(d+1)}$ is the intersection of kernels of characters of $\bold{G}_m^{n(d+1)}$.
\end{proof}

\subsection{Quotients} \label{subsec:quotients}

In the category of groups, the quotient $G/N$ of a group $G$ by a normal subgroup $N$ is characterized by the following universal property: Every morphism of groups $\phi\colon G\to H$ such that $N\subset\ker(\phi)$ factors uniquely through $G\to G/N$. Replacing groups with group $k$-$\s$-schemes, we arrive at the following definition.

\begin{defi} \label{defi:quotients}
Let $G$ be group $k$-$\s$-scheme and $N\trianglelefteq G$ a normal $\s$-closed subgroup. By a \emph{quotient of $G$ modulo $N$}, we mean a morphism
$\pi\colon G\to G/N$ of group $k$-$\s$-schemes with $N\subset\ker(\pi)$ satisfying the following universal property: For every morphism $\phi\colon G\to H$ of group $k$-$\s$-schemes with $N\subset\ker(\phi)$ there exists a unique morphism $\psi\colon G/N\to H$ of group $k$-$\s$-schemes making
\[
\xymatrix{
G \ar[rr]^\pi \ar[rd]_-\phi & & G/N \ar@{..>}[ld]^\psi\\
& H &
}
\]
commutative.
\end{defi}

As usual, if the quotient exists it is unique up to unique isomorphisms. In all generality, the existence of quotients of group schemes is a somewhat delicate issue. Since we are only interested in the affine case and normal closed subgroups everything can be done on the ring side and no heavy geometric machinery is necessary. We will follow the purely Hopf-algebraic approach of M. Takeuchi presented in \cite{Takeuchi:correspondence}.

Below we will use some standard notations from the theory of Hopf algebras: If $R$ is a Hopf algebra over a field $k$ then $\Delta\colon R\to R\otimes R$ denotes the comultiplication and $\varepsilon\colon R\to k$ denotes the counit. The kernel of $\varepsilon$ is denoted with $R^+$, i.e., $R^+$ is the vanishing ideal of the unit element.

\begin{prop}[M. Takeuchi] \label{prop:Takeuchi}
Let $k$ be a field, $R$ a Hopf algebra over $k$ with comultiplication $\Delta$ and $\ida\subset R$ a normal Hopf ideal.
Set $R(\ida)=\{r\in R| \ \Delta(r)-r\otimes 1\in R\otimes_k\ida\}$.
Then $R(\ida)$ is a sub-Hopf algebra of $R$ with $RR(\ida)^+=\ida$, i.e., the ideal of $R$ generated by $R(\ida)^+$ is equal to $\ida$. Moreover, $R(\ida)$ is the only sub-Hopf algebra of $R$ with this property and the inclusion map $\iota\colon R(\ida)\hookrightarrow R$ satisfies the following universal property:
Every morphism $\psi\colon R'\to R$ of $k$-Hopf algebras such that $\psi(R'^+)\subset\ida$ factors uniquely through $\iota$.
\end{prop}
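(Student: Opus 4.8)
\textbf{Proof proposal for Proposition~\ref{prop:Takeuchi}.}

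The plan is to follow Takeuchi's Hopf-algebraic approach from \cite{Takeuchi:correspondence} essentially verbatim, since the statement is purely about Hopf algebras over a field and involves no difference structure. First I would verify that $R(\ida)$ is a $k$-subalgebra of $R$: given $r,s\in R(\ida)$, the identity $\Delta(rs)-rs\otimes 1 = \Delta(r)(\Delta(s)-s\otimes 1) + (\Delta(r)-r\otimes1)(s\otimes 1)$ shows $\Delta(rs)-rs\otimes 1\in R\otimes_k\ida$ using that $\ida$ is an ideal; linearity is immediate, and $1\in R(\ida)$. To see that $R(\ida)$ is a sub-Hopf algebra one checks that it is stable under the comultiplication $\Delta$, the counit $\varepsilon$, and the antipode $S$. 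Stability under $\varepsilon$ is clear. For $\Delta$, apply $\Delta\otimes\operatorname{id}$ and $\operatorname{id}\otimes\Delta$ to the defining relation and use coassociativity together with the fact that $\Delta(\ida)\subset R\otimes_k\ida + \ida\otimes_k R$ (a consequence of $\ida$ being a Hopf ideal). For the antipode one uses that $\ida$ being a \emph{normal} Hopf ideal means $S(\ida)\subset\ida$, so that applying $S$ to the relation and composing with the appropriate convolution identities keeps us inside $R(\ida)$.

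Next I would establish the identity $RR(\ida)^+=\ida$. The inclusion $RR(\ida)^+\subset\ida$ follows because for $r\in R(\ida)^+$ we have $\varepsilon(r)=0$, so applying $\operatorname{id}\otimes\varepsilon$ to $\Delta(r)-r\otimes 1\in R\otimes_k\ida$ gives $r = (\operatorname{id}\otimes\varepsilon)(\Delta(r)-r\otimes 1)\cdot(\text{correction})$—more carefully, $r=(\operatorname{id}\otimes\varepsilon)\Delta(r)-r\varepsilon(1)+r = (\operatorname{id}\otimes\varepsilon)(\Delta(r)-r\otimes 1)\in\ida$. The reverse inclusion $\ida\subset RR(\ida)^+$ is the substantive point: one shows that the quotient coring $R/\ida$, together with the natural right $R$-comodule structure, is such that $\ida$ is precisely generated by the elements killed by the ``coinvariants'' projection. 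Concretely, using that $\ida$ is a normal Hopf ideal, $R/RR(\ida)^+$ carries an induced Hopf algebra structure, and a faithful-flatness / cotensor argument (Takeuchi's key lemma on Hopf algebras being faithfully flat over sub-Hopf algebras, or the dual ``coflatness'' statement) forces $RR(\ida)^+=\ida$. This faithful flatness input is the step I expect to be the main obstacle, and it is exactly where one must invoke the results cited from \cite{Takeuchi:correspondence}.

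Finally, uniqueness and the universal property. For uniqueness: if $B\subset R$ is a sub-Hopf algebra with $RB^+=\ida$, then $B^+\subset\ida$ forces $B\subset R(\ida)$ directly from the definition of $R(\ida)$ (for $b\in B$, $\Delta(b)-b\otimes 1\in R\otimes_k B^+\subset R\otimes_k\ida$, after correcting by $\varepsilon$); conversely the faithful flatness of $R$ over both $B$ and $R(\ida)$ together with $RB^+=\ida=RR(\ida)^+$ gives $B=R(\ida)$. For the universal property: given $\psi\colon R'\to R$ with $\psi(R'^+)\subset\ida$, note that for $r'\in R'$ we have $\Delta(\psi(r'))-\psi(r')\otimes 1 = (\psi\otimes\psi)(\Delta(r')-r'\otimes 1)$, and $\Delta(r')-r'\otimes 1\in R'\otimes_k R'^+$ (since $(\operatorname{id}\otimes\varepsilon)(\Delta(r')-r'\otimes 1)=0$), so $\Delta(\psi(r'))-\psi(r')\otimes 1\in R\otimes_k\psi(R'^+)\subset R\otimes_k\ida$, i.e.\ $\psi(R')\subset R(\ida)$. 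Thus $\psi$ factors through $\iota$, and the factorization is unique because $\iota$ is injective. I would present the first, third, and fourth paragraphs in full detail and cite \cite{Takeuchi:correspondence} for the faithful-flatness ingredient underlying $RR(\ida)^+=\ida$.
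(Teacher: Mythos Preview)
Your approach is essentially the paper's: the paper's proof is nothing more than a list of citations to \cite{Takeuchi:correspondence} (Lemma 4.4 for the sub-Hopf algebra claim, Theorem 4.3 for $RR(\ida)^+=\ida$, Corollary 3.10 for uniqueness, Lemma 4.7 for the universal property), while you spell out the elementary steps directly and defer only the hard inclusion $\ida\subset RR(\ida)^+$ to Takeuchi's faithful-flatness machinery. Two small remarks: to show $R(\ida)^+\subset\ida$ you should apply $\varepsilon\otimes\operatorname{id}$ (not $\operatorname{id}\otimes\varepsilon$) to $\Delta(r)-r\otimes 1\in R\otimes_k\ida$, which yields $r-\varepsilon(r)1\in\ida$; and for the universal property the paper takes a slightly different route, observing that $\psi(R')$ is a sub-Hopf algebra with $\psi(R')^+\subset\ida$ and invoking Takeuchi's characterization of $R(\ida)$ as the \emph{largest} sub-Hopf algebra $T$ with $T^+\subset\ida$ --- your direct computation that $\psi(R')\subset R(\ida)$ is equally valid.
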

\begin{proof}
The statement that $R(\ida)$ is a sub-Hopf algebra of $R$ is Lemma 4.4 in \cite{Takeuchi:correspondence}. That $RR(\ida)^+=\ida$ is proved in \cite[Theorem 4.3]{Takeuchi:correspondence}.
By \cite[Corollary 3.10]{Takeuchi:correspondence} the map $T\mapsto RT^+$ from sub-Hopf algebras of $R$ to normal Hopf-ideals of $R$ is injective. Thus $R(\ida)$ is the only sub-Hopf algebra of $R$ with $RR(\ida)^+=\ida$. If $\psi\colon R'\to R$ is a morphism of $k$-Hopf algebras such that $\psi(R'^+)\subset\ida$ then $\psi(R')$ is a sub-Hopf algebra of $R$ and
$\psi(R')^+=\psi(R'^+)\subset\ida$. By \cite[Lemma 4.7]{Takeuchi:correspondence} the Hopf-algebra $R(\ida)$ is the greatest sub-Hopf algebra of $R$ such that $R(\ida)^+\subset \ida$. Therefore $\psi(R')\subset R(\ida)$, i.e., $\psi$ factors through $\iota$.
\end{proof}

\begin{thm} \label{theo:existenceofquotients}
Let $G$ be a group $k$-$\s$-scheme and $N\trianglelefteq G$ a normal $\s$-closed subgroup. Then the quotient $\pi\colon G\to G/N$ exists and satisfies $N=\ker(\pi)$.
\end{thm}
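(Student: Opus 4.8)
The plan is to reduce the statement to the purely Hopf-algebraic theorem of Takeuchi (Proposition \ref{prop:Takeuchi}) by passing through the anti-equivalence between group $k$-$\s$-schemes and $k$-$\s$-Hopf algebras (Remark before Definition \ref{defi:quotients}). Write $R=k\{G\}$ for the $\s$-coordinate ring of $G$; since $G$ is a group $k$-$\s$-scheme, $R$ is a $k$-$\s$-Hopf algebra. The normal $\s$-closed subgroup $N$ corresponds to a normal $\s$-Hopf ideal $\ida\subset R$, i.e.\ an ideal that is simultaneously a normal Hopf ideal and a difference ideal ($\s(\ida)\subset\ida$).

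First I would apply Proposition \ref{prop:Takeuchi} to the underlying Hopf algebra and Hopf ideal: set $R(\ida)=\{r\in R\mid \Delta(r)-r\otimes 1\in R\otimes_k\ida\}$. Takeuchi's result gives that $R(\ida)$ is a sub-Hopf algebra of $R$ with $R\cdot R(\ida)^+=\ida$, characterized by a universal property for Hopf-algebra morphisms $R'\to R$ sending the augmentation ideal into $\ida$. The key extra point to verify is that $R(\ida)$ is a \emph{$\s$-sub}-Hopf algebra, i.e.\ stable under $\s\colon R\to R$. This follows because $\s$ is a Hopf algebra endomorphism, so $\Delta(\s(r))=(\s\otimes\s)(\Delta(r))$; if $r\in R(\ida)$ then $\Delta(r)-r\otimes 1\in R\otimes_k\ida$, hence applying $\s\otimes\s$ gives $\Delta(\s(r))-\s(r)\otimes 1\in R\otimes_k\s(\ida)\subset R\otimes_k\ida$, so $\s(r)\in R(\ida)$. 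Thus $R(\ida)$ is a $k$-$\s$-Hopf algebra and the inclusion $\iota\colon R(\ida)\hookrightarrow R$ is a morphism of $k$-$\s$-Hopf algebras.

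Next I would dualize: let $\pi\colon G\to G/N$ be the morphism of group $k$-$\s$-schemes corresponding under the anti-equivalence to $\iota$, where $k\{G/N\}:=R(\ida)$. One checks $N\subset\ker(\pi)$ directly: $\ker(\pi)$ is the $\s$-closed subscheme defined by the ideal of $R$ generated by $\pi^*(R(\ida)^+)=R(\ida)^+$, which is $R\cdot R(\ida)^+=\ida=\I(N)$ by Takeuchi's theorem; so in fact $\ker(\pi)=N$, giving the asserted equality. The universal property is then a translation of the universal property in Proposition \ref{prop:Takeuchi}: given a morphism $\phi\colon G\to H$ of group $k$-$\s$-schemes with $N\subset\ker(\phi)$, dualize to a morphism $\phi^*\colon k\{H\}\to R$ of $k$-$\s$-Hopf algebras; the condition $N\subset\ker(\phi)$ means $\phi^*(k\{H\}^+)\subset\ida$ (since the ideal generated by $\phi^*(k\{H\}^+)$ defines $\ker(\phi)\supset N$, hence is contained in $\I(N)=\ida$), so by Takeuchi $\phi^*$ factors uniquely through $\iota$ via some $k$-Hopf-algebra map $\psi^*\colon k\{H\}\to R(\ida)$; one must note this $\psi^*$ automatically commutes with $\s$ because $\iota$ does and is injective. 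Dualizing $\psi^*$ back gives the unique $\psi\colon G/N\to H$ of group $k$-$\s$-schemes with $\psi\circ\pi=\phi$.

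The main obstacle is mostly bookkeeping rather than a deep difficulty: one has to be careful that at every stage the difference structure is respected — that $R(\ida)$ is $\s$-stable, that the factored map $\psi^*$ is a morphism of $\s$-rings, and that the set-theoretic description $N\subset\ker(\pi)\Leftrightarrow \pi^*(\text{aug. ideal})\subset\I(N)$ matches the scheme-theoretic kernel from Lemma \ref{lemma:inverseimagescheme}. Once these compatibilities are recorded, the result is an immediate consequence of Proposition \ref{prop:Takeuchi} together with the anti-equivalence of categories. The finiteness (``$\s$-algebraic'') question — whether $G/N$ is again $\s$-algebraic when $G$ is — is a separate matter and is not part of this statement; I would not address it here.
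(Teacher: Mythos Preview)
Your proof is correct and follows essentially the same route as the paper: both reduce to Takeuchi's Hopf-algebraic theorem (Proposition \ref{prop:Takeuchi}), check that $R(\ida)$ is $\s$-stable, and then translate the universal property and the identity $R\cdot R(\ida)^+=\ida$ back through the anti-equivalence. Your write-up is in fact slightly more explicit than the paper's at two points the paper leaves to the reader---the $\s$-stability of $R(\ida)$ and the fact that the factored map $\psi^*$ commutes with $\s$ by injectivity of $\iota$.
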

\begin{proof}
We know from Proposition \ref{prop:Takeuchi} that $k\{G\}(\I(N))=\{r\in k\{G\}| \ \Delta(r)-r\otimes 1\in k\{G\}\otimes_k\I(N)\}$ is a sub-Hopf algebra of $k\{G\}$. It is also a $k$-$\s$-subalgebra. Let $H$ denote the group $k$-$\s$-scheme with $\s$-coordinate ring $k\{H\}=k\{G\}(\I(N))$. We will show that the morphism $\phi\colon G\to H$ of group $k$-$\s$-schemes corresponding to the inclusion $k\{H\}\subset k\{G\}$ of $k$-$\s$-Hopf algebras is the quotient of $G$ modulo $N$.

So let $\varphi\colon G\to H'$ be a morphism of group $k$-$\s$-schemes with $N\subset\ker(\varphi)$. Since $\ker(\varphi)=\varphi^{-1}(1_{H'})$ is the $\s$-closed $\s$-subscheme of $G$ defined by the ideal of $k\{G\}$ generated by $\varphi^{*}(k\{H'\}^+)$ (Lemma \ref{lemma:inverseimagescheme}) we conceive that the algebraic meaning of $N\subset\ker(\varphi)$ is $\varphi^{*}(k\{H'\}^+)\subset \I(N)$. It follows from Proposition \ref{prop:Takeuchi} that the morphism $\varphi^*\colon k\{H'\}\to k\{G\}$ factors uniquely through $k\{H\}\hookrightarrow k\{G\}$, i.e., $\varphi^*(k\{H'\})\subset k\{H\}$. The induced morphism $k\{H'\}\to k\{H\}$ of $k$-$\s$-Hopf algebras gives rise to the desired morphism $H\to H'$ making

\[
\xymatrix{
G \ar[rr]^\phi \ar[rd]_-\varphi & & H \ar@{..>}[ld]\\
& H' &
}
\]
commutative. Thus $\phi\colon G\to H$ is the quotient of $G$ modulo $N$. The algebraic meaning of $N=\ker(\phi)$ is $k\{G\}k\{H\}^+=\I(N)$. This identity
holds by Proposition \ref{prop:Takeuchi}.
\end{proof}

\begin{cor} \label{cor:quotients}
Let $G$ be a group $k$-$\s$-scheme, $N\trianglelefteq G$ a normal $\s$-closed subgroup and $\phi\colon G\to H$ a morphism of group $k$-$\s$-schemes such that $N=\ker(\phi)$ and
$\phi^*\colon k\{H\}\to k\{G\}$ is injective. Then $\phi$ is the quotient of $G$ modulo $N$, i.e., $H=G/N$.
\end{cor}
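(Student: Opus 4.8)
The plan is to compare $\phi$ with the canonical quotient $\pi\colon G\to G/N$ whose existence is guaranteed by Theorem \ref{theo:existenceofquotients}. Since $N\subseteq\ker(\phi)$ (indeed $N=\ker(\phi)$), the universal property of $\pi$ produces a unique morphism $\psi\colon G/N\to H$ of group $k$-$\s$-schemes with $\psi\circ\pi=\phi$. It then suffices to show that $\psi$ is an isomorphism, for then $\phi$ inherits the universal property of $G\to G/N$ and we may identify $H$ with $G/N$ via $\psi$.

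On the level of $\s$-coordinate rings we have $\phi^*=\pi^*\circ\psi^*$, where, by the construction in the proof of Theorem \ref{theo:existenceofquotients}, $\pi^*\colon k\{G/N\}=k\{G\}(\I(N))\hookrightarrow k\{G\}$ is the inclusion of the Takeuchi sub-Hopf algebra. As $\phi^*$ is injective by hypothesis, $\psi^*$ is injective as well, so the only thing left to check is surjectivity of $\psi^*$. Because $\pi^*$ is injective, this is equivalent to the equality of images $\phi^*(k\{H\})=\pi^*(k\{G/N\})=k\{G\}(\I(N))$ inside $k\{G\}$.

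To establish this equality I would invoke the uniqueness statement in Proposition \ref{prop:Takeuchi}: $k\{G\}(\I(N))$ is the \emph{unique} sub-Hopf algebra $T$ of $k\{G\}$ with $k\{G\}\cdot T^+=\I(N)$. Now $\phi^*(k\{H\})$ is a sub-Hopf algebra of $k\{G\}$, being the image of the Hopf algebra morphism $\phi^*$, and it is even a $k$-$\s$-sub-Hopf algebra since $\phi^*$ commutes with $\s$. Moreover $\phi^*(k\{H\})^+=\phi^*(k\{H\}^+)$, because $\phi^*$ preserves counits, and by Lemma \ref{lemma:inverseimagescheme} the defining ideal of $\ker(\phi)=\phi^{-1}(1_H)$ is exactly the ideal of $k\{G\}$ generated by $\phi^*(k\{H\}^+)$. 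Hence the hypothesis $N=\ker(\phi)$ reads $k\{G\}\cdot\phi^*(k\{H\})^+=\I(N)$, so $\phi^*(k\{H\})$ is one of the sub-Hopf algebras singled out by the uniqueness clause, whence $\phi^*(k\{H\})=k\{G\}(\I(N))$, as desired. It follows that $\psi^*\colon k\{H\}\to k\{G/N\}$ is an isomorphism of $k$-$\s$-Hopf algebras, hence $\psi\colon G/N\to H$ is an isomorphism of group $k$-$\s$-schemes, and $\phi=\psi\circ\pi$ is, through this isomorphism, the quotient of $G$ modulo $N$.

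The argument is almost entirely formal; the one genuinely delicate point is the translation of the geometric condition $N=\ker(\phi)$ into the algebraic identity $k\{G\}\cdot\phi^*(k\{H\})^+=\I(N)$ together with the correct invocation of the uniqueness part of Takeuchi's theorem. Everything else (chasing $\phi^*=\pi^*\circ\psi^*$, passing between injectivity/surjectivity of $\psi^*$ and the comparison of images, and the anti-equivalence between group $k$-$\s$-schemes and $k$-$\s$-Hopf algebras) is routine.
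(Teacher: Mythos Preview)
Your proof is correct and follows essentially the same route as the paper. The paper's argument is a bit more terse: it directly identifies $k\{H\}$ with its image in $k\{G\}$ via the injective $\phi^*$, translates $N=\ker(\phi)$ into $k\{G\}k\{H\}^+=\I(N)$, and then invokes the uniqueness clause of Proposition \ref{prop:Takeuchi} to conclude $k\{H\}=k\{G\}(\I(N))=k\{G/N\}$; your explicit construction of $\psi$ via the universal property and the verification that $\psi^*$ is bijective amount to the same thing with a little extra scaffolding.
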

\begin{proof}
Identifying $k\{H\}$ with the image of $\phi^*$, we may assume that $k\{H\}\subset k\{G\}$. The assumption $N=\ker(\phi)$ translates to $\I(N)=k\{G\}k\{H\}^+$. But from Proposition \ref{prop:Takeuchi} we know that $k\{G\}(\I(N))$ is the only sub-Hopf algebra with this property. Thus $k\{H\}=k\{G\}(\I(N))=k\{G/N\}$ and $H=G/N$ as desired.
\end{proof}

\bibliography{bibdata}

\end{document}